\documentclass[11pt,reqno,twoside]{article}
\usepackage{mysty}

\usepackage{mathtools}
%\mathtoolsset{showonlyrefs}

%\usepackage[inline]{showlabels}
%\renewcommand{\showlabelfont}{\color{blue}}

%%%%% font
%\usepackage[T1]{fontenc}
%\usepackage{tgtermes}

\usepackage{mathptmx}
\usepackage[T1]{fontenc}

%\setlength{\jot}{4pt} % line space of equations, slightly bigger than default

% iref: item ref
\makeatletter
\let\reftagform@=\tagform@
\def\tagform@#1{\maketag@@@{(\ignorespaces\textcolor{black}{#1}\unskip\@@italiccorr)}}
\newcommand{\iref}[1]{\textup{\reftagform@{\tcr{\ref{#1}}}}}
\makeatother

\makeatletter
\newcommand{\pushright}[1]{\ifmeasuring@#1\else\omit\hfill$\displaystyle#1$\fi\ignorespaces}
\newcommand{\pushleft}[1]{\ifmeasuring@#1\else\omit$\displaystyle#1$\hfill\fi\ignorespaces}
\makeatother

%% section title
\usepackage{titlesec}
\titlespacing*{\section}{0pt}{1.25\baselineskip}{0.25\baselineskip}
\titlespacing*{\subsection}{0pt}{0.75\baselineskip}{0.125\baselineskip}
\titlespacing*{\subsubsection}{0pt}{0.5\baselineskip}{0.125\baselineskip}
\titlespacing*{\paragraph}{0pt}{0.25\baselineskip}{0.25\baselineskip}

%% paragraph
\setlength{\parindent}{1.5em}
\setlength{\parskip}{2pt}

%% figure
\setlength{\intextsep}{2pt}
\setlength{\belowcaptionskip}{1pt}
\setlength{\abovecaptionskip}{4pt}

\makeatletter
\g@addto@macro\normalsize{%
  \setlength\abovedisplayskip{5pt}
  \setlength\belowdisplayskip{5pt}
  \setlength\abovedisplayshortskip{5pt}
  \setlength\belowdisplayshortskip{5pt}
}
\makeatother

%%%%%%%%%%%%%%%%%%%%%%%%%%%%%%%%%%%%%%%%%%%%%%
\begin{document}
	
%\setlength{\abovedisplayskip}{5pt}
%\setlength{\belowdisplayskip}{3pt}

%%%%%%%%%%%%%%%%%%%%%%%%%%%%%%%%%%%
\title{\vspace{-3mm}Geometry of First-Order Methods and Adaptive Acceleration\vspace{-3mm}}
\author{
		Clarice Poon\thanks{Department of Mathematics, University of Bath, Bath UK. E-mail: cmhsp20@bath.ac.uk.}\and
		Jingwei Liang\thanks{DAMTP, University of Cambridge, Cambridge UK. E-mail: jl993@cam.ac.uk.}
		}
\date{}
\maketitle

%\fontsize{11}{14}\selectfont{ % select font size

%%%%%%%%%%%%%%%%%%%%%%%%%%%%%%%%%%%
\begin{center}
\begin{small}
\begin{minipage}[c]{0.925\textwidth}
{

\vspace{-4mm}

\begin{center}
{\bf Abstract}
\end{center}

\vspace{-4mm}

%\begin{abstract}
First-order operator splitting methods are ubiquitous among many fields through science and engineering, such as inverse problems, signal/image processing, statistics, data science and machine learning, to name a few. 
In this paper, we study a geometric property of first-order methods when applying to solve non-smooth optimization problems. With the tool of ``partial smoothness'', we design a framework to analyze the trajectory of the fixed-point sequence generated by first-order methods and show that locally, the fixed-point sequence settles onto a regular trajectory such as a straight line or a spiral. 
Based on this finding, we discuss the limitation of current widely used ``inertial acceleration'' technique, and propose a trajectory following adaptive acceleration algorithm. 
Global convergence is established for the proposed acceleration scheme based on the perturbation of fixed-point iteration. Locally, we first build connections between the acceleration scheme and the well-studied ``vector extrapolation technique'' in the field of numerical analysis, and then discuss local acceleration guarantees of the proposed acceleration scheme. 
Moreover, our result provides a geometric interpretation of these vector extrapolation techniques. 
Numerical experiments on various first-order methods are provided to demonstrate the advantage of the proposed adaptive acceleration scheme. 
%for the acceleration of first-order methods. VE requires the choice of the number of past points $k$, which is problem dependent. There is often tradeoff between numerically stability and accurate extrapolation in the choice of $k$. \todo{...}One of our main contributions is that under the framework of partial smoothness, we show that eventually, the trajectory of the sequence in the case of DR or PD lies approximately in space of dimension two, and hence, vector extrapolation methods can be effectively applied with $k=4$.
%\end{abstract}

}
\end{minipage}
\end{small}
\end{center}

\begin{keywords}
Non-smooth optimization, first-order methods, inertial acceleration, partial smoothness, finite activity identification, trajectory of sequence, vector extrapolation.
\end{keywords}
\begin{AMS}
	65B05, 65K05, 65K10, 90C25, 90C31.
\end{AMS}

\vspace{-4mm}

%%%%%%%%%%%%%%%%%%%%%%%%%%%%%%%%%%%%
%\setcounter{tocdepth}{1}
%%\renewcommand{\baselinestretch}{0.125}\tiny
%\setlength{\cftbeforetoctitleskip}{0em}
%
%\begin{center}
%\begin{small}
%\begin{minipage}[c]{0.95\textwidth}
%{
%\hypersetup{hidelinks}
%\tableofcontents
%}
%\end{minipage}
%\end{small}
%\end{center}

%\renewcommand{\baselinestretch}{1.0}\normalsize

%
%\begin{center}
%\begin{footnotesize}
%\begin{minipage}[c]{0.875\textwidth}
%\textbf{Guide} how to read the paper...
%\end{minipage}
%\end{footnotesize}
%\end{center}
%%%%%%%%%%%%%%%%%%%%%%%%%%%%%%%%%%%

%%%%%%%%%%%%%%%%%%%%%%%%%%%%%%%%%%%
% !TEX root = ../A2FoM.tex

\section{Introduction}\label{sec:intro}

% First-order methods are ubiquitous among many fields through science and engineering, such as inverse problems, signal/image processing, operation research, statistics, data science and machine learning, to name a few.

Non-smooth optimization is an active research area of modern optimization, which aims to find solutions of structured problems that are the sum of smooth and non-smooth functions, possibly under constraints and composition with (non)linear operators. It plays a fundamental role in various fields through science and engineering, such as inverse problems, signal/image processing, compressed sensing, statistics, data science and machine learning, etc. 
In the literature, numerical schemes, typically first-order (operator/proximal splitting) methods, have been designed to solve non-smooth optimization problems. 
Over the past decades, driven by the real-world problems arising from the aforementioned fields, non-smooth optimization and first-order methods have experienced tremendous growth and success, especially in large-scale problems.  
However, despite the huge success, first-order methods suffer a significant drawback: slow speed of convergence, which has made them the bottleneck of solving today's even larger-scale problems. With the increasing size of data sets and growing complexity of mathematical models of real-world problems, the need for novel fast and low computational cost algorithms is becoming increasingly strong.

In this paper, we denote $\fom$ the class of first-order methods and $\calF \in \fom$ a first-order method, for instance the proximal gradient descent; See Example \ref{eg:gd}. The iteration of $\calF$ usually can be (re)formulated as a fixed-point iteration in a real Hilbert space $\calH$
\beq\label{eq:fom}
\zkp = \calF (\zk)  ,
\eeq
where $\seq{\zk} \subset \calH$ is the fixed-point iterates that converges to $\zsol \in \fix(\calF)$ with $\fix(\calF)\eqdef\ba{z\in\calH: z = \calF(z)}$ which is assumed to be non-empty. %, and $\zsol$ is either directly or can be used to obtain the solution of the underlying optimization problem. 
We refer to \cite{bauschke2011convex} for more detailed accountant of fixed-point theory.

\subsection{Acceleration of first-order methods}

In the literature, numerous approaches are proposed to accelerate first-order methods, among them, the ``inertial technique'' and over-relaxation are probably the most widely used. 
Both these approaches belong to the realm of {\it extrapolation techniques}.  
Let $\calF$ be the first-order method in \eqref{eq:fom}, a general combination of extrapolation and $\calF$ takes the following form
\beq\label{eq:efom}
\begin{aligned}
	\zbark &= \calE(\zbarkm, \zk, \zkm, ...) , \\% = \zk + \msum_{i=0}^{q-1} a_{k,i}(z_{k-i}-z_{k-i-1}) , \\
	\zkp &= \calF(\zbark, \zk, \zkm, ...)  ,
\end{aligned}
\eeq
where $\calE$ is the extrapolation step that computes the point $\zbark$ based on $\zbarkm$ and the history of $\zk$ including $\ba{\zk,\zkm,..}$. 
%The formulation \eqref{eq:ifom} abstracts many existing inertial schemes in the literature, below is an example in terms of inertial gradient descent.  
In what follows we present a brief overview of the inertial technique and over-relaxation.

\subsubsection{Inertial acceleration} 

The very first inertial scheme is the ``heavy-ball method'' \cite{polyak1964some} proposed by Polyak which can significantly speed-up the performance of gradient descent, particularly when the problem is strongly convex and twice differentiable. 
The theoretical foundation of inertial acceleration is due to Nesterov, in \cite{nesterov83} he showed that a different combination of inertial and gradient descent can improve the $O(1/k)$ convergence rate of objective function value to $O(1/k^2)$. This result was further extended to the non-smooth case by Beck and Teboulle in \cite{fista2009} where they proposed the ``fast iterative shrinkage-thresholding algorithm'', a.k.a FISTA for speeding up Forward--Backward splitting method \cite{lions1979splitting} (\ie the proximal gradient descent). 
Note that, gradient descent and its proximal version are descent methods, that is the objective function value along the iteration is monotonically non-increasing along iteration\footnote{Descent methods include gradient descent and its proximal version (a.k.a. Forward--Backward splitting) and proximal point algorithm, note that the problem does not necessarily have to be convex \cite{attouch2013convergence}  for the method to be descent. Other first-order methods, such as Douglas--Rachford/ADMM and Primal--Dual splitting methods, are non-descent in general.}.

Over the years, the huge success of the accelerated (proximal) gradient descent schemes has motivated people to extend the inertial acceleration to other first-order methods.
Let $\calF$ be the first-order method in \eqref{eq:fom}, a generic inertial version of $\calF$ would read 
\beq\label{eq:ifom}
\begin{aligned}
%	\zbark &= \calE(\zk, \zkm, ...) , \\% = \zk + \msum_{i=0}^{q-1} a_{k,i}(z_{k-i}-z_{k-i-1}) , \\
	\zbark &= \zk + \ak (\zk - \zkm) , \\ 
	\zkp &= \calF(\zbark, \zk)  . 
\end{aligned}
\eeq
%where $\calE$ computes/extrapolates the point $\zbark$ based on its previous point $\zbarkm$ and the history of $\zk$ including $\ba{\zk,\zkm,..}$. 
The inertial scheme first extrapolate a point $\zbark$ along the direction of $\zk-\zkm$, and then update the next $\zkp$ based on $\zbark$ with or without $\zk$.  
The formulation \eqref{eq:ifom} abstracts many existing inertial schemes in the literature, below is an example of gradient descent.

\begin{example}[Gradient descent]\label{eg:gd}
Consider an unconstrained smooth minimization problem,
$\min_{x\in\bbR^n} F(x)$ 
where $F: \bbR^n \to \bbR$ is proper convex differentiable with gradient $\nabla F$ being $L$-Lipschitz continuous. The iteration of gradient descent reads (for this case we use $\xk$ instead of $\zk$)
\[
\xkp = \xk - \gamma \nabla F(\xk)  ,
\]
where $\gamma \in ]0, 2/L[$ is the step-size. 
The fixed-point operator of gradient descent reads $\calF \eqdef \Id - \gamma \nabla F$. The ``heavy-ball method'' \cite{polyak1964some} takes the following form of iteration
\beq\label{eq:heavy-ball}
\begin{aligned}
	\xbark %= \calE(\xk, \xkm)	
		&= \xk + \ak(\xk - \xkm) , \\
	\xkp %= \calF(\xbark, \xk) 	
		&= \xbark - \gamma \nabla F(\xk)   ,
\end{aligned}
\eeq
where $\ak \in [0,1]$ is the inertial parameter. 
% which implies $\calE(\xk, \xkm, ...) = \xk + \ak(\xk - \xkm)$ and $\calF(\yk, \xk, \xkm, ...) = \yk - \gamma \nabla F(\xk) $.
%If we further replace $\nabla F(\xk)$ with $\nabla F(\xbark)$, and compute $\ak$ via
%$t_{k} = \frac{ - t_{k-1}^2 + \ssqrt{t_{k-1}^4 + 4t_{k-1}^2 }}{2} ,~\ak = \frac{t_{k-1}(1-t_{k-1})}{t_{k-1}^2+t_{k}} $ with $t_0 = 1$ and $a_0 = 0$, 
If we further replace $\nabla F(\xk)$ with $\nabla F(\xbark)$, and compute $\ak$ via
$t_{k} = \frac{ 1 + \sqrt{1 + 4t_{k-1}^2 }}{2} ,~\ak = \frac{t_{k-1}-1}{t_{k}} $ with $t_0 = 1$, 
then \eqref{eq:heavy-ball} becomes the scheme of \cite{nesterov83} which achieves $O(1/k^2)$ convergence rate for $F(\xk) - F(\xsol)$ where $\xsol$ is a global minimizer of $F$. 
%When $F$ is only convex, such scheme achieves $O(1/k^2)$ convergence rate for $F(\xk) - F(\xsol)$ where $\xsol$ is a global minimizer of $F$. 
\end{example}

Other examples of inertial first-order methods include: the inertial versions of Proximal Point Algorithm \cite{alvarez2001inertial,alvarez2000minimizing}, Forward--Backward splitting \cite{moudafi2003convergence,lorenz2015inertial,liang2016thesis}, Douglas--Rachford splitting \cite{boct2015inertialDR} and inertial Primal--Dual splitting \cite{boct2016inertial}, or in general the inertial version of \KM fixed-point iteration \cite{mainge2008convergence,dong2018modified} which covers many of the inertial first-order methods as special cases. 
However, despite its overwhelming popularity, the combination of inertial technique and first-order methods suffers several drawbacks: 
\begin{itemize}[leftmargin=2em]
	\item {\bf Restricted parameter choices} Unlike the elegant inertial (proximal) gradient descent methods \cite{polyak1964some,nesterov83,fista2009}, the choices of (inertial) parameters for general inertial first-order methods, \eg \cite{boct2015inertialDR,lorenz2015inertial,mainge2008convergence,dong2018modified}, are quite restricted and complicated. 
	
	\item {\bf Complicated convergence proof} For inertial (proximal) gradient descent methods \cite{polyak1964some,nesterov83,fista2009}, a Lyapunov stability function can be found easily, even in the non-convex case, as (proximal) gradient descent is a descent method. Things become much more complicated for other first-order methods as they are non-decent, and Lyapunov functions can only be obtained under stronger assumptions or do not exist at all. As a result, the convergence proof becomes more complicated, which is also another reason of restricted parameter choices. 
	
	\item {\bf Lack of acceleration guarantees} For non-descent methods, there are limited acceleration guarantees, unless stronger assumptions, such as smoothness or strong convexity, are imposed. Examples of inertial schemes failing to provide acceleration can be easily found when no stronger assumptions are available; See Section \ref{sec:failure} for examples, and also \cite{PoonLiang2019b} and \cite[Chapter 4.5]{liang2016thesis}. 
\end{itemize}
Finally, it is worth mentioning that, in the literature, most inertial schemes consider only the momentum created by two past points, namely $\zk - \zkm$. %, for the rest of the paper, we also use ``$1$-step inertial'' to refer ``two-point inertial''. 
For certain cases, use the momentum of more than two points could be beneficial, see Section \ref{sec:failure} for example. 
This is mentioned in \cite{polyak1964some}, and related work can be found in \cite{liang2016thesis,dong2019mikm}.

\subsubsection{Over-relaxation} 
In the field of fixed-point theory, another popular approach to accelerate convergence is the \emph{over-relaxation} which is the generalization of the successive over relaxation for linear systems. For the fixed-point iteration \eqref{eq:fom}, the relaxation of it reads
\beq\label{eq:rfom}
\zkp = \zk + \lambda_{k} \Pa{ \calF(\zk) - \zk} ,
\eeq
where $\lambda_{k} \in ]0, \bar{\lambda}]$ is the relaxation parameter and $\bar{\lambda}$ is the upper bound of $\lambda$ determined by the property of $\calF$. For example $\bar{\lambda} = \frac{1}{\alpha}$ when $\calF$ is so-called $\alpha$-averaged non-expansive; see Definition \ref{dfn:averaged-operator} and \cite{bauschke2011convex} for more detailed discussions.  
When $\bar{\lambda} > 1$ and $\lambda_{k} \in ]1, \bar{\lambda}]$, \eqref{eq:rfom} is the over-relaxed version of $\calF$.

Below we briefly show that over-relaxed $\calF$ is equivalent to an inertial version of \eqref{eq:fom} which is a special case of \eqref{eq:ifom}. Denote $\ak = \lambda_k-1$, then we can rewrite \eqref{eq:rfom} as\footnote{Strictly speaking, \eqref{eq:rfom} is equivalent to \beqn
\begin{aligned}
\zk &= \zbark + \ak \pa{ \zbark - \zkm} , \\
\zbarkp &= \calF(\zk) .
\end{aligned}
\eeqn We switch $\zk$ and $\zbark$ in order to comply with \eqref{eq:efom}.} 
\beq\label{eq:rfom_2}
\begin{aligned}
\zbark &= \zk + \ak \pa{ \zk - \zbarkm} , \\
\zkp &= \calF(\zbark) .
\end{aligned}
\eeq
%\jingwei{for the above equation, $\zk$ and $\zbark$ are switched...} 
Instead of extrapolating a point along the direction $\zk-\zkm$, relaxation uses $\zk-\zbarkm$. 
Over-relaxation can also significantly improved the convergence speed of \eqref{eq:fom}, such as the over-relaxed projection based algorithms for feasibility problems \cite{gubin1967method,bauschke2016optimal}. 
However, same as the inertial scheme \eqref{eq:ifom}, over-relaxation is not guarantee to provide acceleration. For instance, in \cite{bauschke2016optimal,liang2017localDR}, the authors showed that the optimal $\lambda_k$ for Douglas--Rachford splitting when applied to (locally) polyhedral problem is $1$, that is no relaxation provides the best performance. 

Generally speaking, over-relaxation suffers the same problem as inertial schemes, that its acceleration guarantees are method and problem dependent:
\begin{itemize}
\item For descent methods, \eg Forward--Backward splitting, owing to the result of \cite{liang2017activity}, it can be shown that locally over-relaxation can provide acceleration. % which is \emph{independent} of the problem to solve. 
\item While for other algorithms, such as Douglas--Rachford splitting, the performance of over-relaxation depends on the problem to solve and parameters of the algorithm \cite{bauschke2016optimal,liang2017localDR}.
\end{itemize}

\subsection{Main contributions}

In this paper, motivated by the behavior of inertial first-order methods and the over-relaxation scheme, we present a systematic study on the geometric properties of first-order methods and their acceleration. 
We first show that the performance of inertial and relaxation is determined by the trajectory of the generated fixed-point sequence $\seq{\zk}$. Namely, different trajectories result in different outcomes (see Section \ref{sec:trajectory-fom}). 
When considering non-smooth optimization, we present a unified framework for analyzing the local trajectory of the fixed-point sequence of first-order methods. Based on this finding, we propose a generic {\it trajectory following linear prediction scheme} for accelerating first-order methods. 
More precisely, our contributions in this paper are summarized below.

\paragraph{Geometry of \fom~via trajectory of fixed-point sequences} 
In the literature of first-order methods, numerous first-order operator splitting methods are proposed based on the structures of the optimization problems at hand. 
However, the study on the structure of first-order methods is rather limited, and this is mainly due to the non-linearity of the fixed-point iteration. 
In this paper, by focusing on non-smooth optimization, with the help of ``partial smoothness'' (Definition \ref{dfn:psf}), in Section \ref{sec:trajectory-fom} we propose a generic framework for analyzing the trajectory of $\seq{\zk}$ generated by the fixed-point iteration \eqref{eq:fom} (Section \ref{sec:framework}). 
More precisely, we utilize the fact that $\calF$ can be linearized locally around the solution along some $C^2$-smooth manifold(s), up to residuals. This means there exists a square matrix $M_{\calF}$ such that
\[
\zkp - \zk = M_{\calF} (\zk - \zkm) + o(\norm{\zk-\zkm}) .
\]
Based on the spectral properties of $M_{\calF}$, we show that different first-order methods admit different types of trajectories for the fixed-point sequence $\seq{\zk}$: 
\begin{itemize}[leftmargin=2em]
\item For (proximal) gradient descent, we show that the spectrum of $M_{\calF}$ is real, as a result the eventual trajectory of $\seq{\zk}$ is a straight line; See Section \ref{subsec:trajectory-fb}. 

\item For other popular first-order methods, such as Douglas--Rachford splitting and alternating direction method of multipliers (ADMM), based on the properties of the functions and parameters, we show that the leading eigenvalue of $M_{\calF}$ can be either real or complex, and the eventual trajectory of $\seq{\zk}$ could be either a straight line (real leading eigenvalue) or a spiral (complex leading eigenvalue); See Section \ref{subsec:trajectory-dr}. For Primal--Dual splitting methods, when the problem is locally polyhedral, the leading eigenvalue of $M$ is complex and the eventual trajectory of $\seq{\zk}$ is a spiral; see Section \ref{subsec:trajectory-pd}.  % \todo{say this is in the case of polyhedral terms...}
\end{itemize}

\paragraph{Limitation of inertial and over-relaxation}
The trajectory of first-order methods allows us to analyze the limitations of inertial technique and over-relaxation. In Section \ref{sec:failure}, based on examples of Douglas--Rachford splitting method, we show that for inertial
\begin{itemize}
\item When the trajectory of $\seq{\zk}$ is a \emph{straight line}, then inertial can provide substantial acceleration. 

\item When the trajectory of $\seq{\zk}$ is a \emph{logarithmic spiral}, we show that inertial will always fail to provide acceleration, and one should not consider relaxation either. 

\item When the trajectory of $\seq{\zk}$ is an \emph{elliptical spiral}, inertial and over-relaxation can provide acceleration under proper implementation. 
\end{itemize}
%\jingwei{Need to verify the statements for SOR...}

\paragraph{An adaptive acceleration via linear prediction}

The limitation of inertial and over-relaxation techniques, particularly their failures, implies that the correct acceleration scheme should be able to adapt to the trajectory of the underlying sequence, which is another core contribution of this work.  
% 
%Motivated by trajectory of sequence, in Section \ref{sec:lp} we propose a generic trajectory following \emph{linear prediction} scheme for accelerating first-order methods. 
By exploiting the eventual regularity, \ie either straight line or spiral, of the trajectory of $\seq{\zk}$, in Section \ref{sec:lp} we propose an adaptive linear prediction scheme for accelerating first-order methods which is able to follow the trajectory of the fixed-point sequence. 
Global convergence based on perturbation of fixed-point iteration is provided. Local acceleration guarantees are also provided for the proposed adaptive scheme, based on the connections with existing vector extrapolation techniques.

Our proposed linear prediction scheme belongs to the realm of vector extrapolation techniques, while our derivation provides an alternative geometric interpretation for polynomial extrapolation methods such as minimal polynomial extrapolation (MPE) \cite{cabay1976polynomial} and reduced rank extrapolation (RRE) \cite{eddy1979extrapolating,mevsina1977convergence}. 
Our linear prediction bridges the gap between inertial schemes and polynomial extrapolation methods. 
Moreover, our geometric interpretation of linear prediction provides insights on how to enhance the robustness and performance of extrapolation methods.

\subsection{Related work}

Over the past decades, owing to the tremendous success of inertial acceleration \cite{nesterov83,fista2009}, the inertial technique has been widely adapted to accelerate other first-order algorithms. For example the inertial versions of Douglas--Rachford and alternating direction method of multipliers (ADMM) \cite{bot2014inertial,pejcic2016accelerated,kadkhodaie2015accelerated,francca2018admm}, Primal--Dual splitting \cite{chan2014inertial,liang2016thesis}. 
In terms of %\KM 
fixed-point iteration, the inertial versions of it are also studied in the literature  \cite{mainge2008convergence,dong2018general}. 
Multi-step inertial schemes, \ie using the momentum created by more than two past points, are also considered in the literature, see for instance \cite{dong2019mikm,liang2016thesis}. 
However, for most of these works, to ensure acceleration guarantees of inertial, stronger assumptions are needed, such as Lipschitz continuity or strong convexity, see \cite{francca2018admm} for ADMM. When it comes to general non-smooth problems, some of them would fail to provide acceleration. 
Moreover, as discussed in \cite[Chapter 4]{liang2016thesis}, for certain problems and algorithms, such as basis pursuit problem and Douglas--Rachford splitting method, only multi-step inertial scheme can provide acceleration.

For more generic acceleration techniques, there are extensive works in numerical analysis on the topic of convergence acceleration for sequences. Given an arbitrary sequence $\seq{\zk}\subset\RR^n$ with limit $\zsol$, the goal of convergence acceleration is to find a transformation  $\Ee_k : \{z_{k-j}\}_{j=1}^q\to  \zbark \in \RR^n$ such that $\zbark$ converges faster to $\zsol$. In general, the process by which $\{\zk\}$ is generated is unknown, $q$ is chosen to be a small integer, and $\zbark$ is referred to as the extrapolation of $\zk$. Some of the best known examples  include Richardson's extrapolation \cite{richardson1927viii}, the $\Delta^2$-process of Aitken \cite{aitken1927xxv} and Shank's algorithm \cite{shanks1955non}. 
Much of the work on the extrapolation of vector sequences was initiated by Wynn \cite{wynn1962acceleration} who generalized the work of Shank to vector sequences. 
We refer to the article \cite{brezinski2001convergence} and books \cite{brezinski2013extrapolation,sidi2003practical} for a detailed historical perspective on the development of these techniques.   
%We describe here two particular vector extrapolation methods:  minimal polynomial extrapolation (MPE) introduced by \cite{cabay1976polynomial} and Reduced Rank Extrapolation (RRE) \cite{eddy1979extrapolating,mevsina1977convergence} (which is also a variant of Anderson acceleration developed independently in \cite{Anderson}). See supplementary material for more details about MPE and RRE. 
In Section \ref{sec:acc-gua}, the formulation two such schemes are provided: minimal polynomial extrapolation (MPE) \cite{cabay1976polynomial} and Reduced Rank Extrapolation (RRE) \cite{eddy1979extrapolating,mevsina1977convergence} (which is also a variant of Anderson acceleration developed independently in \cite{Anderson}), which are particularly relevant to this present work. 

More recently, there has been a series of work on  a regularized version of RRE \cite{scieur2016regularized,scieur2017nonlinear,bollapragada2018nonlinear}. As mentioned in \cite{sidi2008vector}, the stability of vector extrapolation techniques depend on the stability of computing the extrapolation coefficients. To address this instability, \cite{scieur2016regularized}  proposed to apply Tikhonov regularization when computing the extrapolation coefficients. {The regularization parameter in these works rely on a grid search based on objective function which is only doable for descent methods. Building on this idea, \cite{fu2019anderson} extends this idea of regularisation to the Douglas-Rachford splitting method by considering the relative primal-dual gap.  We however stress that the objectives and contributions of our work is different: we directly handle the non-smoothness of optimization problems by studying the eventual trajectories of the generated sequences.  This geometry that we uncover provides an understanding of when inertial techniques or vector extrapolation techniques can be applied. Finally, we mention that this work is a substantial extension of our conference paper \cite{PoonLiang2019b}, where we carried out the trajectory analysis for  Alternating Direction Method of Multipliers (ADMM) iterations.
}

%We remark however the regularization parameter in these works rely on a grid search based on objective function which is only doable for descent methods. The contributions of this work are therefore different: First, we are interested in more general optimization problems (such as finding the intersection of convex sets or basis pursuit) where there may not be an objective to minimize. Second, we directly handle the non-smoothness of optimization problems by studying the eventual trajectories of the generated sequences. 

%their applicability to the general first-order methods is unclear. \todo{We probably need to give more citations to the work from Scieur and also the dual extrapolation work of Salmon. Points to make
%\begin{itemize}
%\item their results do not deal with non-smoothness as we do in our work
%\item RNA is based on minimizing some objective, but in general, there may not be an objective involved.
%\end{itemize}
%}

\paragraph*{Paper organization}

Necessary notations and definitions are collected in Section \ref{sec:math_bg}. In Section \ref{sec:trajectory-fom}, we propose a generic framework for analyzing the trajectory of first-order methods, and the analysis of Forward--Backward, Douglas--Rachford and Primal--Dual splitting methods are discussed in detail. 
The limitations of the inertial technique and over-relaxation when applied to non-descent methods are discussed in Section \ref{sec:failure}. The trajectory motivated linear prediction acceleration scheme is described in Section \ref{sec:lp}, where global convergence is also provided. In Section \ref{sec:acc-gua}, by connecting linear prediction with existing polynomial extrapolation methods, local acceleration guarantees are provided. 
%Some further discussions are provided in Section \ref{sec:discussion} and various numerical experiments are presented in Section \ref{sec:exp}. 
Numerical experiments are presented in Section \ref{sec:exp}. 
Trajectory of linear systems and proofs of main theorems are collected in the appendix.

% !TEX root = ../A2FoM.tex

\section{Mathematical background}
\label{sec:math_bg}

Throughout the paper, $\bbR^n$ is a $n$-dimensional Euclidean space equipped with scalar inner product $\iprod{\cdot}{\cdot}$ and associated norm $\norm{\cdot}$. $\Id$ denotes the identity operator on $\bbR^n$. $\lsc(\bbR^n)$ denotes the class of proper convex and lower semi-continuous functions on $\bbR^n$.

\subsection{Convex and set-valued analysis}

%\paragraph{Sets}
For a nonempty convex set $C \subset \bbR^n$, denote by $\Aff(C)$ its affine hull and by $\LinHull(C)$ the smallest subspace parallel to $\Aff(C)$. Denote $\iota_{C}$ the indicator function of $C$, $\normal{C}$ the associated normal cone operator and $\PT{C}$ the orthogonal projection operator on $C$. 
 
%

%\paragraph{Functions}
The sub-differential of a proper convex and lower semi-continuous function $R \in \lsc(\bbR^n)$ is the set-valued operator defined by $\partial R :  \bbR^n \setvalued \bbR^n ,~ x\mapsto \Ba{ g\in\bbR^n | R(x') \geq R(x) + \iprod{g}{x'-x} , \forall x' \in \bbR^n }$. 
%\beqn%\label{eq:partial-R}
%\partial R :  \bbR^n \setvalued \bbR^n ,~ x\mapsto \Ba{ g\in\bbR^n | R(x') \geq R(x) + \iprod{g}{x'-x} , \forall x' \in \bbR^n }.
%\eeqn
%
Let $\gamma > 0$, the proximity operator or proximal mapping, of $R$ is defined by 
%\beqn%\label{eq:prox}
$\prox_{\gamma R}(\cdot) \eqdef \argmin_{x\in\RR^n}~ \gamma R(x)  + \sfrac{1}{2}\norm{x-\cdot}^2 $.
%\eeqn
%
%\paragraph{Conjugacy and duality}
%The Fenchel conjugate, or simply conjugate, of $R$ is also called {Legendre transform} or {Legendre-Fenchel transform} and defined by $R^*(v) \eqdef \sup_{x\in\bbR^n} ~ \pa{ \iprod{x}{v} - R(x) } $.
The Fenchel conjugate, or simply conjugate, of $R$ is defined by $R^*(v) \eqdef \sup_{x\in\bbR^n} ~ \pa{ \iprod{x}{v} - R(x) } $. 
A function $R: \bbR^n \to ]-\infty, +\infty]$ is polyhedral if its epigraph, $\mathrm{epi}(R) \eqdef \Ba{ (x, t): x\in\bbR^n, v \in \bbR, v \geq R(x)} \subseteq \bbR^{n+1}$, is a polyhedral set.

%\begin{definition}[Conjugate]\label{def:conjugate}
%Let $R \in \lsc(\bbR^n)$, the {Fenchel conjugate} of $R$ is defined by
%\[
%R^*(v) \eqdef \sup_{x\in\bbR^n} ~ \pa{ \iprod{x}{v} - R(x) } .
%\]
%\end{definition}

%%The {biconjugate} of $F$ is defined by $F^{**} \eqdef (F^*)^{*}$, and by Definition \ref{def:conjugate} there holds $F^{**} \leq F$, moreover we have the following Fenchel-Moreau theorem \cite[Corollary 12.2.1]{rockafellar1997convex}.
%%
%%\begin{theorem}[Fenchel-Moreau]\label{thm:fenchel-moreau}
%%Let $F: \bbR^n  \to \overbar{\bbR}$ be a proper function, then $F = F^{**}$ if and only if $F$ is convex and lower semi-continuous.
%%\end{theorem}
%%
%%
%The proximity operators of a $\lsc\pa{\bbR^n}$ function and its conjugate are connected by {Moreau's identity}.
%
%\begin{theorem}[Moreau's identity]\label{thm:moreau-identity}
%Let function $R \in \lsc(\bbR^n)$ and $\gamma > 0$, then
%\beq\label{eq:moreau-identity}
%\Id = \prox_{\gamma R} (\cdot) + \gamma ~ \prox_{R^{*}/\gamma} \bPa{\qfrac{\cdot}{\gamma}} .
%\eeq
%\end{theorem} 

\vgap

%\paragraph{Operators}
%Given a set-valued mapping $A: \bbR^n \setvalued \bbR^n$, define its range $\ran(A) = \ba{ y \in \bbR^n : \exists x \in \bbR^n ~~\st~~ y \in A(x) }$ and its graph as $\gra(A) \eqdef \ba{ (x,u)\in \bbR^n \times \bbR^n : \ u \in A(x) }$.

\begin{definition}[Monotone operator]\label{def:mon-opt}
A set-valued mapping $A : \bbR^n \setvalued \bbR^n$ is said to be monotone if, given any $x_1,x_2 \in \bbR^n$ there holds
\beqn%\label{eq:monotone}
\iprod{x_1-x_2}{v_1-v_2} \geq 0  ,~~ \forall v_1 \in A(x_1) ~~{\rm and}~~ v_2 \in A(x_2) . % (x_1,v_1) \in \gra(A) \qandq (x_2, v_2) \in \gra(A)  .
\eeqn%
It is maximal monotone if its $\gra (A) \eqdef \ba{(x, v) \in \bbR^{n}\times \bbR^{n} | v \in A(x) }$ can not be contained in the graph of any other monotone operators.
\end{definition}

For a maximal monotone operator $A$, $(\Id + A)^{-1}$ denotes its resolvent. It is known that for function $R \in \lsc(\bbR^n)$, its sub-differential $\partial R$ is maximal monotone \cite{rockafellar1997convex}, and that $\prox_{R}=(\Id + \partial R)^{-1}$.

%
%\begin{definition}[Cocoercive operator]\label{def:coco-opt}
%Let $\beta\in ]0,\pinf[$, $B:\bbR^n\rarrow\bbR^n$, then $B$ is $\beta$-cocoercive if 
%\beq\label{eq:cocoercive}
%\iprod{B(x_1)-B(x_2)}{x_1-x_2} \geq \beta\norm{B(x_1)-B(x_2)}^2   ,~~ \forall x_1, x_2 \in\bbR^n .
%\eeq
%\end{definition}
%
%If an operator is $\beta$-cocoercive, then it is ${\beta}^{-1}$-Lipschitz continuous.

\begin{definition}[Non-expansive operator] \label{dfn:averaged-operator}
An operator $\calF: \bbR^n \rarrow \bbR^n$ is non-expansive if
\beqn
\norm{\calF (x) - \calF (y)} \leq \norm{x-y} ,~~~~ \forall x, y  \in  \bbR^n  .
\eeqn
That is, $\calF$ is $1$-Lipschitz continuous. 
For any $\alpha \in ]0,1[$, $\calF$ is called $\alpha$-averaged if there exists a non-expansive operator $\calF'$ such that $\calF = \alpha \calF' + (1-\alpha)\Id$.
\end{definition}

The fixed points of non-expansive operators in general are not available explicitly. To find them, one has to apply certain iterative procedures, one of the most-known is the \KM iteration \cite{krasnosel1955two,mann1953mean}. 

\begin{definition}[\KM iteration] \label{dfn:KMi}
	Let $\calF : \bbR^n  \rightarrow \bbR^n $ be a non-expansive operator such that $\fix(\calF ) \neq \emptyset$. Let $\lambda_{k} \in [0, 1]$ and choose $x_{0} \in \bbR^n$ arbitrarily, the \KM iteration of $ \calF $ reads
	\beqn%\label{eq:exact-KMi}
	\zkp 
	= \zk + \lambda_{k} \pa{ \calF  (\zk) - \zk } .
	\eeqn%
	Moreover, if $\lambda_{k} \in [0, 1]$ is such that $\sum_{k\in\bbN} \lambda_{k}(1-\lambda_{k}) = \pinf$, then $\seq{\zk}$ converges to a point in $\fix(\calF )$ \cite{bauschke2011convex}. % the following holds
%	\begin{enumerate}[label={\rm (\roman{*})}]
%%		\item
%%		$\seq{\zk}$ is \Fejer monotone with respect to $\fix(\calF )$.
%		% 
%		\item
%		$\seq{\zk-\calF (\zk)}$ converges {strongly} to $0$.
%		% 
%		\item
%		$\seq{\zk}$ converges \btcr{weakly} to a point in $\fix(\calF )$.
%	\end{enumerate}
\end{definition}

When $\calF$ is $\alpha$-averaged, the upper bound of $\lambda_k$ becomes $\frac{1}{\alpha}$, and the condition needed for convergence of \KM iteration changes to $\sum_{k\in\bbN} \lambda_{k}(\frac{1}{\alpha}-\lambda_{k}) = \pinf$.

%The next theorem shows the convergence of \KM iteration.
%
%\begin{theorem}\label{thm:convergence-KMi}% [Convergence of \KM iteration]
%	Let $\calF : \bbR^n  \to \bbR^n $ be a non-expansive operator such that $\fix(\calF ) \neq \emptyset$. 
%	Consider the \KM iteration of $\calF $, and choose $\lambda_{k} \in [0, 1]$ such that $\sum_{k\in\bbN} \lambda_{k}(1-\lambda_{k}) = \pinf$, then the following holds
%	\begin{enumerate}[label={\rm (\roman{*})}]
%		\item
%		$\seq{\xk}$ is \Fejer monotone with respect to $\fix(\calF )$.
%		% 
%		\item
%		$\seq{\xk-\calF (\xk)}$ converges {strongly} to $0$.
%		% 
%		\item
%		$\seq{\xk}$ converges {weakly} to a point in $\fix(\calF )$.
%	\end{enumerate}
%\end{theorem}

\subsection{Angle between subspaces}

Let $T_1, T_2$ be two subspaces, and without the loss of generality, assume $1\leq p\eqdef\dim(T_1) \leq q\eqdef\dim(T_2) \leq n-1$.

\begin{definition}[Principal angles] 
% Let $p=\dim(T_1)$. 
The principal angles $\theta_k \in [0,\frac{\pi}{2}]$, $k=1,\ldots,p$ between subspaces $T_1$ and $T_2$ are defined by, with $u_0 = v_0 \eqdef 0$, and
\begin{align*}
\cos(\theta_k) \eqdef \iprod{u_k}{v_k} = \max \iprod{u}{v} ~~~\mathrm{s.t.}~~~	
& u \in T_1, v \in T_2, \norm{u}=1, \norm{v}=1, ~~
\iprod{u}{u_i}=\iprod{v}{v_i}=0, ~ i=0,\dotsm,k-1 .
\end{align*}
% with $u_0 = v_0 \eqdef 0$.
The principal angles $\theta_k$ are unique and satisfy $0 \leq \theta_1 \leq \theta_2 \leq \dotsm \leq \theta_p \leq \pi/2$.
\end{definition}

\begin{definition}[Friedrichs angle]\label{def:friedrichs-angle}
The Friedrichs angle $\theta_{F} \in ]0,\frac{\pi}{2}]$ between $T_1$ and $T_2$ is
\begin{equation*}
\cos\pa{ \theta_F } \eqdef \max \iprod{u}{v} ~~~\mathrm{s.t.}~~~
u \in T_1 \cap (T_1 \cap T_2)^\perp, \norm{u}=1 ,~
v \in T_2 \cap (T_1 \cap T_2)^\perp, \norm{v}=1 .
\end{equation*}
\end{definition}

% The following relation between the Friedrichs and principal angles is of paramount importance to our analysis, whose proof can be found in \cite[Proposition~3.3]{Bauschke14}.

%The following lemma shows the relation between the Friedrichs and principal angles, whose proof can be found in \cite[Proposition~3.3]{Bauschke14}.

\begin{lemma}[{\cite{Bauschke14}}]%[Principal angles and Friedrichs angle]
\label{lem:fapa}
The Friedrichs angle is exactly $\theta_{d+1}$ where $d \eqdef \dim(T_1 \cap T_2)$. Moreover, $\theta_{F} > 0$.
\end{lemma}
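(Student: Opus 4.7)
The plan is to prove the two claims in sequence: first identify $\theta_{d+1}$ with the Friedrichs angle, then show $\theta_F>0$.

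\textbf{Step 1: the first $d$ principal angles vanish.}
First I would show that $\theta_i=0$ for $i=1,\dotsc,d$, and that the optimizers $u_i,v_i$ can be taken to satisfy $u_i=v_i$ and to form an orthonormal basis of $T_1\cap T_2$. The construction is inductive: pick any orthonormal basis $e_1,\dotsc,e_d$ of $T_1\cap T_2$, and take $u_i=v_i=e_i$. Each pair is admissible in the variational problem defining $\theta_i$ (lies in $T_1$, in $T_2$, has unit norm, and is orthogonal to the previously chosen vectors), and yields $\iprod{u_i}{v_i}=1$, so the maximum $\cos(\theta_i)=1$ is attained, giving $\theta_i=0$. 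Because the principal angles themselves are unique, any valid optimizer at step $i\leq d$ must also attain cosine $1$, so by Cauchy--Schwarz $u_i=v_i$, and consequently $u_i\in T_1\cap T_2$. An orthonormal family of $d$ vectors in a $d$-dimensional space must span it, so $\mathrm{span}(u_1,\dotsc,u_d)=T_1\cap T_2$.

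\textbf{Step 2: identification of $\theta_{d+1}$ with $\theta_F$.}
The $(d+1)$-st principal angle is defined by maximizing $\iprod{u}{v}$ over unit vectors $u\in T_1$, $v\in T_2$ satisfying $\iprod{u}{u_i}=\iprod{v}{v_i}=0$ for $i=1,\dotsc,d$. By Step~1 these orthogonality constraints are exactly $u\perp T_1\cap T_2$ and $v\perp T_1\cap T_2$. Thus the admissible set reduces to
\[
u\in T_1\cap(T_1\cap T_2)^\perp,\quad v\in T_2\cap(T_1\cap T_2)^\perp,\quad \norm{u}=\norm{v}=1,
\]
which is exactly the feasible set in Definition \ref{def:friedrichs-angle}. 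Therefore $\cos(\theta_{d+1})=\cos(\theta_F)$, and since both angles lie in $[0,\pi/2]$, $\theta_{d+1}=\theta_F$.

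\textbf{Step 3: strict positivity of $\theta_F$.}
The hard part, though still short, is the strict inequality $\theta_F>0$, equivalently $\cos(\theta_F)<1$. I would argue by contradiction: if $\cos(\theta_F)=1$, then the supremum is attained (the feasible set is a product of compact sphere intersections, which is compact, and the inner product is continuous) by some $u\in T_1\cap(T_1\cap T_2)^\perp$, $v\in T_2\cap(T_1\cap T_2)^\perp$ with $\iprod{u}{v}=1$ and $\norm{u}=\norm{v}=1$. The equality case of Cauchy--Schwarz forces $u=v$, so this common vector lies in $T_1\cap T_2$. But it also lies in $(T_1\cap T_2)^\perp$, hence equals $0$, contradicting $\norm{u}=1$. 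The main subtlety here is ensuring the supremum is attained; this follows from compactness of the feasible set (note the empty-set case cannot occur when $d<p$, which is the relevant regime, and if $T_1\subseteq T_2$ then $d=p$ and $\theta_{d+1}$ is vacuous). Combining the three steps completes the proof.
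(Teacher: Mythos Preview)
Your proof is correct. Note that the paper itself does not supply a proof of this lemma --- it simply cites the result from \cite{Bauschke14} --- so there is no ``paper's proof'' to compare against; you have filled in a standard argument that the paper omits. Your three steps (the first $d$ principal vectors form an orthonormal basis of $T_1\cap T_2$; hence the $(d{+}1)$-st variational problem coincides with the Friedrichs definition; and a Cauchy--Schwarz/compactness contradiction gives strict positivity) are the usual route and are sound. The only mild looseness is in Step~1, where you implicitly rely on the paper's stated uniqueness of the principal angles to pass from your particular choice $u_i=v_i=e_i$ to an arbitrary optimizing sequence; this is fine since the paper takes that uniqueness as given.
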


%% Basically, this implies that Friedrichs angle is the smallest angle other than $0$.
%
%\begin{remark}
%One approach to obtain the principal angles is through the singular value decomposition. 
%For instance, let $X\in\bbR^{n\times p}$ and $Y\in\bbR^{n\times q}$ form orthonormal bases for the subspaces $T_1$ and $T_2$ respectively. Let $U \Sigma V^T$ be the SVD of $X^TY \in \bbR^{p\times q}$, then $\cos(\theta_k) = \sigma_k,~ k = 1,2,\ldots,p$ and $\sigma_k$ corresponds to the $k^{\text{th}}$ largest singular value in $\Sigma$.
%\end{remark}

\subsection{Partial smoothness}

% Let $\calM$ be a $C^2$-smooth embedded submanifold of $\bbR^n $ around a point $x$. For the sake of simplicity, we shall state $C^2$-manifold instead of $C^2$-smooth embedded submanifold of $\bbR^n $. The natural embedding of a submanifold $\calM$ into $\bbR^n $ permits to define a Riemannian structure on $\calM$, and we simply say $\calM$ is a Riemannian manifold. $\tanSp{\calM}{x}$ denotes the tangent space to $\calM$ at any point $x$ in $\calM$. 

%Given $R \in \lsc(\bbR^n)$ and a point $x\in \dom(R)$ such that the sub-differential $\partial R(x)$ is non-empty, define the following subspace
%\beq\label{eq:Tx}
%T_{x} \eqdef \LinHull\Pa{\partial R(x)}^\perp .
%\eeq

Let $\calM$ be a $C^2$-smooth Riemannian manifold, denote $\tanSp{\calM}{x}$ the tangent space to $\calM$ at any point $x$ in $\calM$. 
The definition below of partial smoothness is adapted from \cite{LewisPartlySmooth} to the case of $\lsc(\bbR^n)$ functions.

\begin{definition}[{Partly smooth function \cite{LewisPartlySmooth}}]\label{dfn:psf}
A function $R \in \lsc(\bbR^n)$ is partly smooth at $\xbar$ relative to a set $\calM_{\xbar}$ if $\calM_{\xbar}$ is a $C^2$ manifold around $\xbar$, and:
%A function $R: \bbR^n \to ]-\infty, +\infty]$ is partly smooth at $\xbar$ relative to a set $\calM_{\xbar}$ for $\ubar \in \partial R(\xbar) \neq \emptyset$ if $\calM_{\xbar}$ is a $C^2$ manifold around $\xbar$, and:
%
\begin{itemize}[label={}, leftmargin=2.5cm, itemsep=1pt]%[label={\rm (\arabic{*})}]
\item[\bf Smoothness] \label{PS:C2}
$R$ restricted to $\calM_{\xbar}$ is $C^2$-smooth around $\xbar$.
%
%\item[\bf Regularity] \label{PS:Regular}
%$R$ is regular at all $x\in\calM_{\xbar}$ near $\xbar$, with $\partial R(x) \neq \emptyset$, and prox-regular at $\xbar$ for $\ubar$.
%
\item[\bf Sharpness] \label{PS:Sharp}
The tangent space $\tanSp{\calM_{\xbar}}{\xbar} = \LinHull\pa{\partial R(\xbar)}^\bot$.
\item[\bf Continuity] \label{PS:DiffCont}
The set-valued mapping $\partial R$ is continuous at $x$ relative to $\calM_{\xbar}$.
\end{itemize}
\end{definition}

%\vspace{-2mm}

{\noindent}Loosely speaking, a partly smooth function behaves \emph{smoothly} along the smooth manifold $\calM_{\xbar}$, and \emph{sharply} transversal to $\calM_{\xbar}$. % This concept, as well as that of identifiable surfaces \cite{Wright-IdentSurf}, captures the essential features of the geometry of non-smoothness when it can be localized along the so-called active/identifiable manifold. % For convex functions, a closely related idea is developed in \cite{Lemarechal-ULagrangian}. The behavior of the function and of its minimizers depend essentially on its restriction to this manifold, hence offering a powerful framework for algorithmic and sensitivity analysis theory.
The class of partly smooth functions at $\xbar$ relative to $\calM_{\xbar}$ is denoted as $\PSF{\xbar}{\calM_{\xbar}}$.
We reference \cite[Chapter 5]{liang2016thesis} and the references therein for popular examples of partly smooth functions which include: indicator function of partly smooth set, $\ell_1$-norm, $\ell_{1,2}$-norm, $\ell_{\infty}$-norm, total variation and nuclear norm, etc. % sparsity promoting $\ell_1$-norm, group sparsity $\ell_{1,2}$-norm, anti-sparsity $\ell_{\infty}$-norm, piecewise constant promoting total variation and low-rank promoting nuclear norm, etc. % are listed below in Table \ref{tab:psf-examples}. 
In the past few year, partial smoothness has proven to be a powerful tool for analyzing the local convergence behaviors of first-order methods \cite{liang2014local,liang2016thesis,liang2017activity,molinari2018convergence} when applied to non-smooth optimization.

\subsection{Sequence trajectory}\label{sec:pre-type-trajectory}

Let $\seq{\zk}$ be a sequence in $\bbR^{n}$ whose limiting point exists. %, by connecting all the points with line segments we obtain the trajectory of the sequence. 
Given $k$, define $\vk \eqdef \zk - \zkm$ the displacement vector. To characterize the trajectory of $\seq{\zk}$, we use the angle $\theta_k$ between %two consecutive
$\vk, \vkm$ which is define by
\beq\label{eq:angle-thetak}
\theta_k 
\eqdef \angle(\vk, \vkm) 
= \arccos\Ppa{ \sfrac{\iprod{\vk}{\vkm}}{\norm{\vk}\norm{\vkm}} } .
\eeq
In this paper, we are interested in three different types of trajectories, which are summarized in Table \ref{tab:types} below: straight line and two types of spiral (logarithmic and elliptical).  %For each column, we first describe the behavior of $\theta_k$ (\eg $\theta_{k} \to 0$ followed by a plot; The second plot shows the corresponding trajectory of the sequence $\seq{\zk}$ in $\bbR^3$. For Type II and III trajectories, the plots in the last row are the side and top view of the trajectory. 
For these three types of trajectories, we have
\begin{enumerate}[label={(\Roman{*})},leftmargin=3em]
	\item For Type I trajectory, $\theta_k$ converges to $0$ which means eventually $\seq{\zk}$ lies in a \emph{straight line}. 
	
	\item For Type II trajectory, instead of converging to $0$, $\theta_k$ converge to some $\theta_{F} \in ]0, \pi/2[$ implying that the trajectory of $\seq{\zk}$ is a \emph{logarithmic spiral}. See the top view of the Type II trajectory above. 
	
	\item For Type III trajectory, different from the former two cases, $\theta_k$ eventually oscillate in an interval, which results in an \emph{elliptical spiral}. See the top view of the Type III trajectory above.
	
\end{enumerate}
Detailed discussion on these trajectories are presented in Section \ref{sec:trajectory-ls} of the appendix.

\begin{remark}[What determines the trajectory of $\seq{\zk}$]\label{rmk:what}
Suppose the sequence $\seq{\zk}$ above is generated by a linear system of the form $\zkp = M \zk$ where $M$ is a matrix whose spectral radius is strictly smaller than $1$\footnote{If the spectral radius of $M$ is equal to $1$, then as long as the power of $M$ converges, \ie there exists a matrix $\widetilde{M}$ such that $\widetilde{M} = \lim_{k\to\pinf} M^k$, then we can consider the leading eigenvalue of $M-\widetilde{M}$ instead of $M$.}. The type of trajectory of $\seq{\zk}$ is determined by the \emph{leading eigenvalue} of $M$ --- real leading eigenvalue leads to straight-line trajectory, and complex eigenvalue leads to spiral trajectory. 
For the type of spiral trajectory, it relies on the further properties of the leading eigenvalue; Section \ref{sec:trajectory-ls} of the appendix. 
\end{remark}

\renewcommand{\arraystretch}{1.125}

\begin{table}[!ht]
\begin{center}
\caption{Three types of trajectory of sequence.}
\label{tab:types}
\begin{tabular}{ |c|c|c| }
	\hline
	{Type I: straight line} & {Type II: logarithmic spiral} & {Type III: elliptical spiral} \\
	\hline
	{$\theta_k \to 0$} & {$\theta_k \to \theta_{F} \in ]0, \pi/2[$} & {$\theta_k \to [\utheta, \otheta] \subset ]0, \pi/2[$} \\ %, 0< \utheta < \otheta < \pi/2$} \\
	\hline
	% \hline
	& & \\[-2ex]
	\includegraphics[scale=0.45]{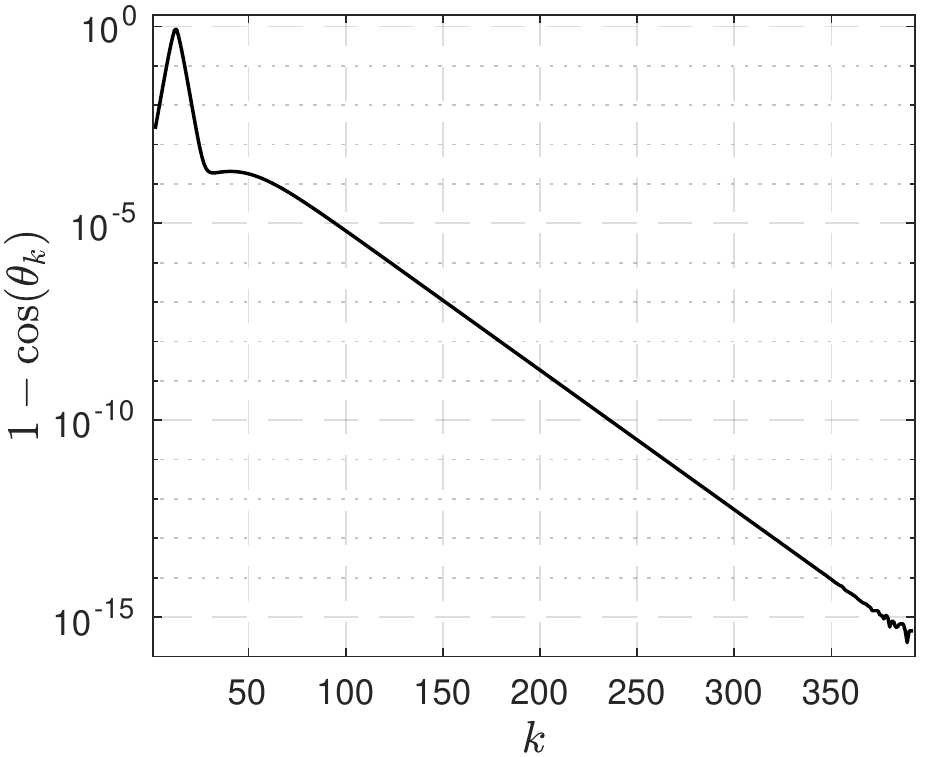}
	&
	\includegraphics[scale=0.45]{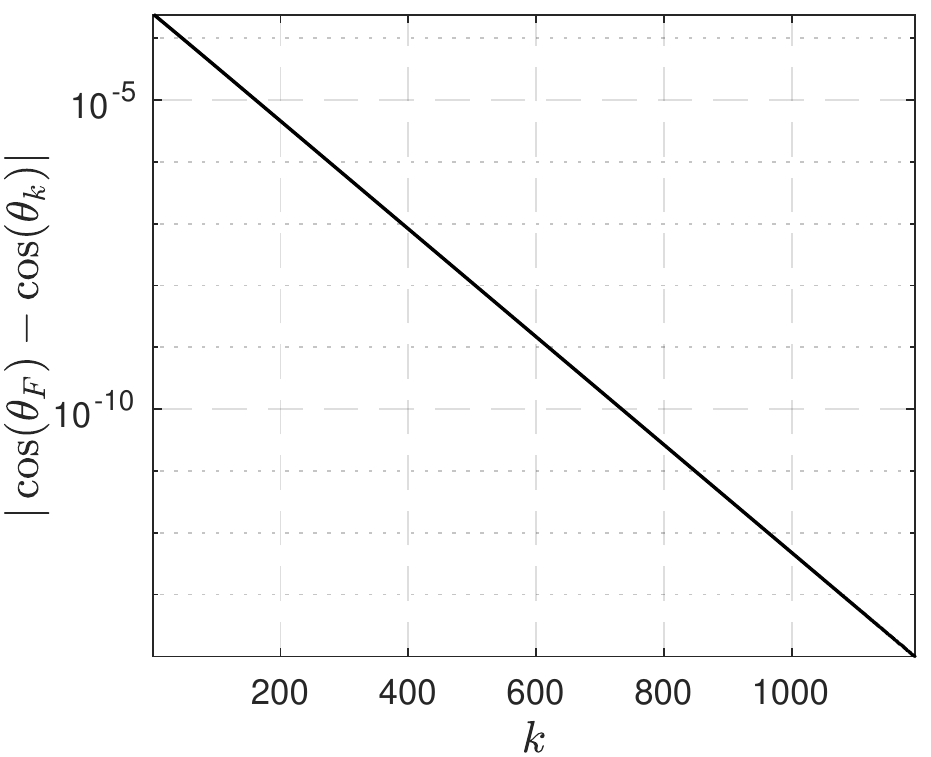}
	&
	\includegraphics[scale=0.45]{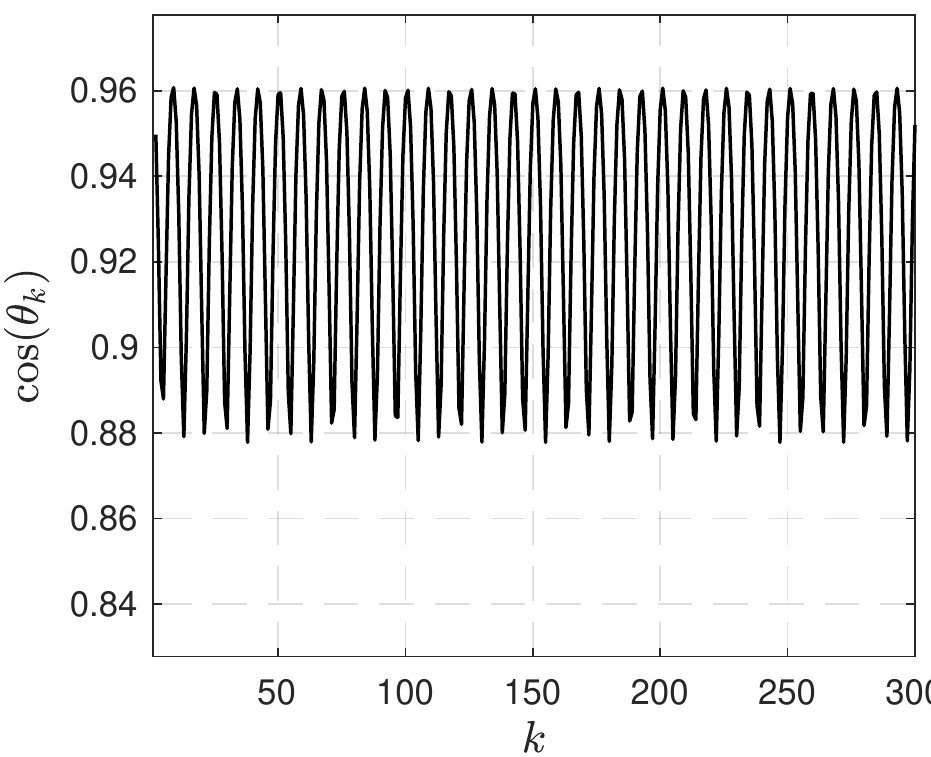} \\
	\hline
	& & \\[-2.5ex]
	\includegraphics[scale=0.45]{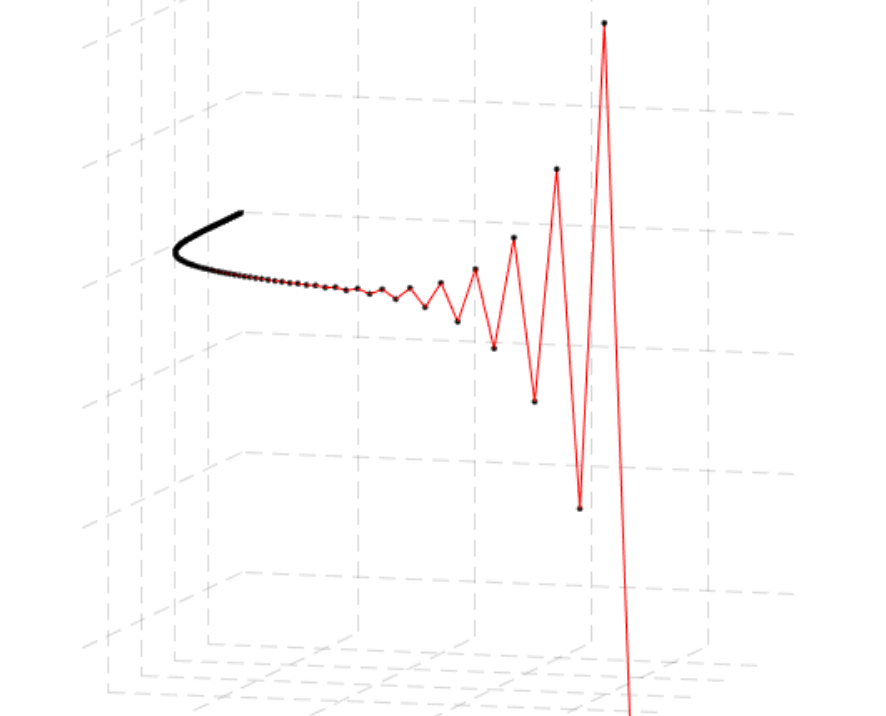}
	&
	\includegraphics[scale=0.4]{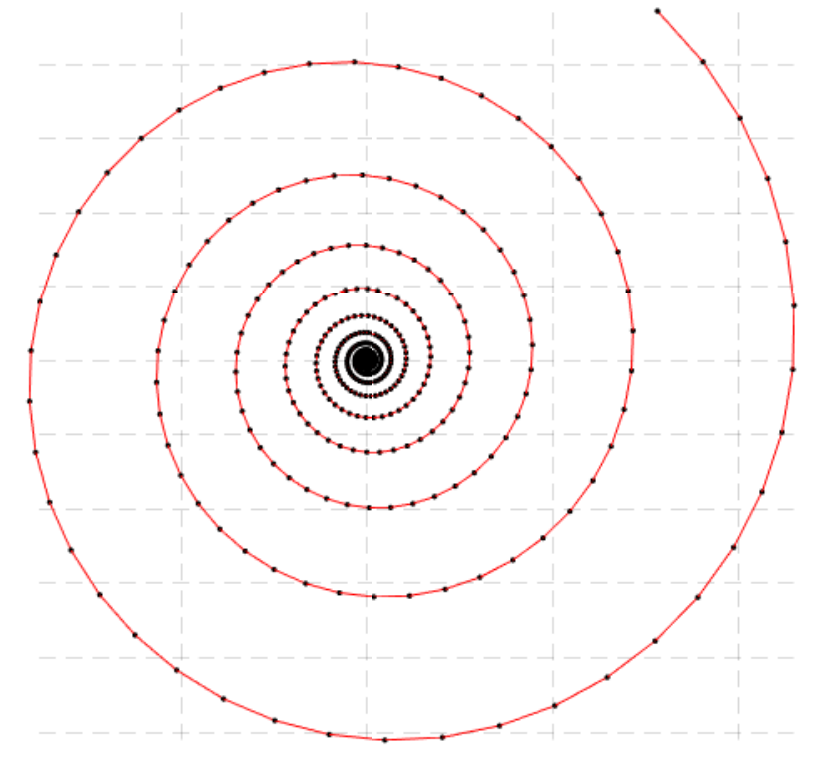}
	&
	\includegraphics[scale=0.4]{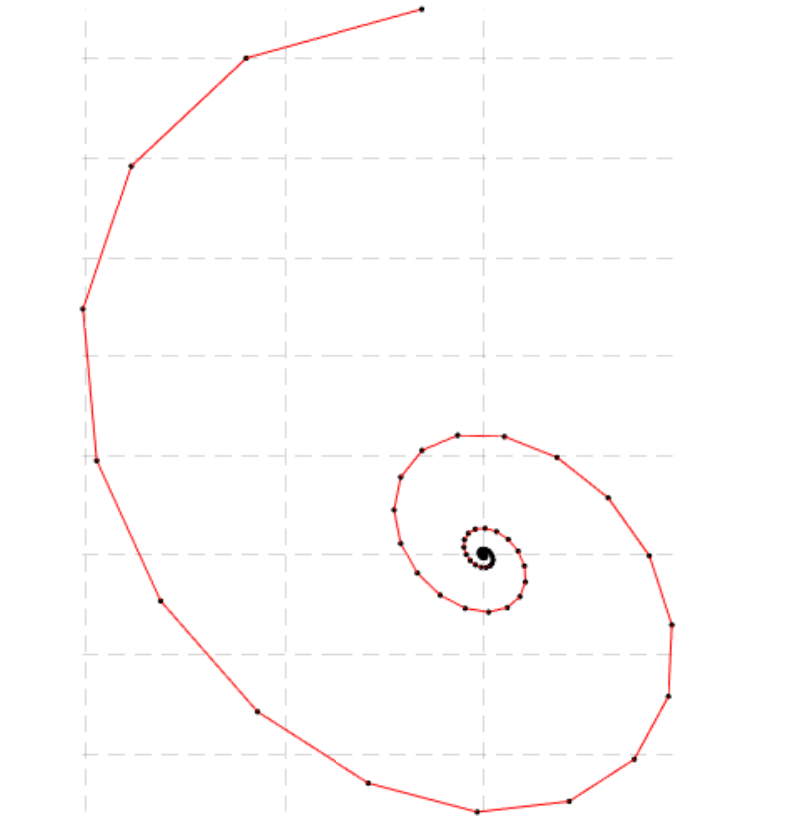} \\
%	\includegraphics[scale=0.32]{figure/tracjectory-type-iii-a-A} \\
%	\hline
	& & \\[-4.5ex]
	&
	\includegraphics[scale=0.4]{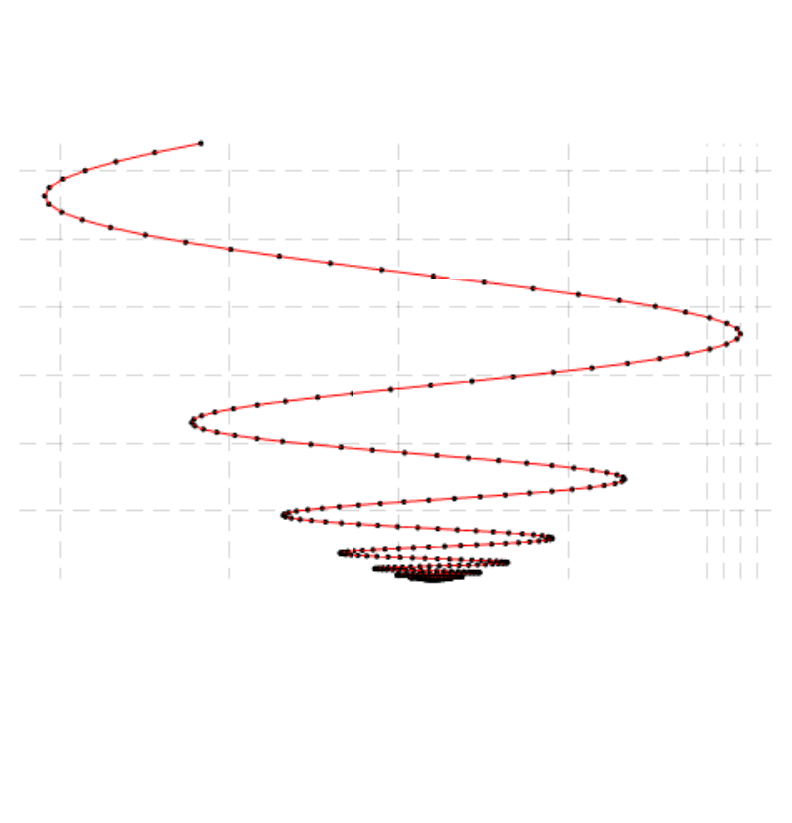} 
	&
	\includegraphics[scale=0.4]{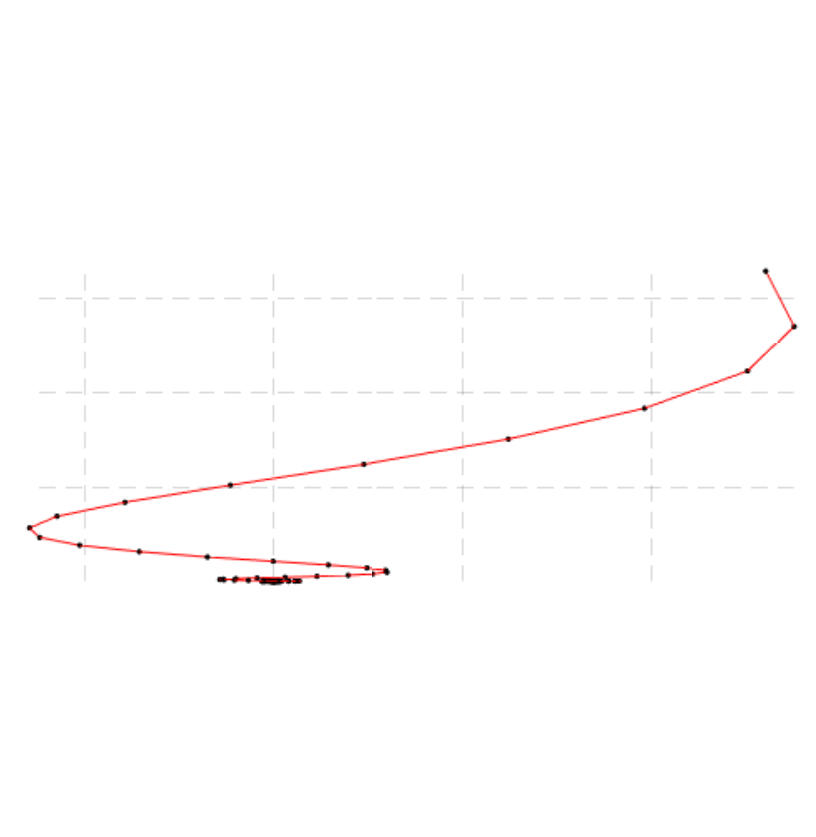}  \\
%	& & \\[-4.5ex] 
	\hline
	% \hline
\end{tabular}
\end{center}
\end{table}

\renewcommand{\arraystretch}{1}

% code: Clarice-Jingwei/Ada-Acceleration/matlab/Geometry-FOM/trajectory_LEq.m

\vspace{-3ex}

%\begin{center}
%% \setlength{\extrarowheight}{15pt}
%\begin{tabular}{ |c|c|c| }
%\hline
%{Type I: straight line} & {Type II: logarithmic spiral} & {Type III: elliptical spiral} \\
%\hline
%{$\theta_k \to 0$} & {$\theta_k \to \theta_{F} \in ]0, \pi/2[$} & {$\theta_k \to [\utheta, \otheta] \subset ]0, \pi/2[$} \\ %, 0< \utheta < \otheta < \pi/2$} \\
%\hline
%% \hline
%& & \\[-2ex]
%\includegraphics[scale=0.4]{../../matlab/Geometry-FOM/trajectory/angle-type-i-B}
%&
%\includegraphics[scale=0.4]{../../matlab/Geometry-FOM/trajectory/angle-type-ii-A}
%&
%\includegraphics[scale=0.4]{../../matlab/Geometry-FOM/trajectory/angle-type-iii-A} \\
%\hline
%& & \\[-2.5ex]
%\includegraphics[scale=0.36]{../../matlab/Geometry-FOM/trajectory/tracjectory-type-i-B}
%&
%\includegraphics[scale=0.345]{../../matlab/Geometry-FOM/trajectory/tracjectory-type-ii-b-A}
%\includegraphics[scale=0.32]{../../matlab/Geometry-FOM/trajectory/tracjectory-type-ii-a-A}
%&
%\includegraphics[scale=0.325]{../../matlab/Geometry-FOM/trajectory/tracjectory-type-iii-b-A} 
%\includegraphics[scale=0.32]{../../matlab/Geometry-FOM/trajectory/tracjectory-type-iii-a-A} \\
%\hline
%% \hline
%\end{tabular}
%\end{center}
%
%\renewcommand{\arraystretch}{1}

% code: Clarice-Jingwei/Ada-Acceleration/matlab/Geometry-FOM/trajectory_LEq.m

%%%% ------------------------------
%\part{Trajectory of first-order methods}

% !TEX root = ../A2FoM.tex

\section{Local trajectory of first-order methods}\label{sec:trajectory-fom}

%As mentioned above, the trajectory of sequence is determined by the leading eigenvalue of the linear system, however the fixed-point operators of first-order methods in general are non-linear, which makes the study of the trajectory of the generated sequence impossible. 
As mentioned above, sequence trajectory can be easily analyed in the case of linear systems. 
When dealing with \emph{non-smooth optimization}, although the fixed point operators are nonlinear, locally around the solution the fixed-point operators can be linearized with respect to some smooth manifolds under the help of ``partial smoothness''. 
In this section, we present an abstract framework for analyzing the local trajectory of first-order methods and apply it to analyze several popular first-order methods. %, the result is build on the trajectory of linear systems which is discussed in Section \ref{sec:trajectory-ls}. 
% 
% In the following, based on partial smoothness, we discuss the linearization of three representative first-order methods: Forward--Backward-type splitting methods, Douglas--Rachford splitting and alternating direction method of multiplier (ADMM), and a Primal--Dual splitting method.
% 
All the proofs for propositions in this section are provided in Section \ref{proof:trajectory-fom} of the appendix.

\subsection{A framework based on partial smoothness}
\label{sec:framework}

%Our analysis framework relies on partial smoothness which has proven to be a powerful tool for analyzing the local convergence behaviors of first-order methods \cite{liang2014local,liang2016thesis,liang2017activity,molinari2018convergence} when applied to non-smooth optimization. 

Recall the fixed-point iteration of first-order methods \eqref{eq:fom}: $\zkp = \calF (\zk)$. 
% \[
% \zkp = \calF (\zk) .
% \]
% 
Define the difference vector $\vk \eqdef \zk - \zkm$ and the angle $\theta_k \eqdef \angle(\vk, \vkm)$ between $\vk, \vkm$ as in \eqref{eq:angle-thetak}. 
% 
%Throughout the section, we shall use $\seq{\theta_k}$ to characterize the trajectory of $\seq{\zk}$. 
% \tcr{For the purpose of readability, in this section we will provide only the qualitative description of the trajectory of first-order methods (\eg which type), and omit the quantitive characterisation (\eg speed of convergence of $\cos(\theta_k)$ or the specific range of $\theta_k$). }
We propose the following framework for analyzing the trajectory of sequence $\seq{\zk}$.

\begin{center}
	\begin{minipage}{0.975\linewidth}
		\begin{algorithm}[H]
			\NoCaptionOfAlgo
			\caption{A framework for analyzing local trajectory of first-order methods} \label{alg:framework}
			\textbf{1. Convergent sequence} The iteration is convergent and ${\zk} \to \zsol \in \fix(\calF)$.
			
			\textbf{2. Manifold identification} Under a proper non-degenerate condition, see \eg \eqref{eq:ndc-fb} and \eqref{eq:ndc-dr}, the sequence(s) generated by $\calF$ has finite manifold identification property.% Under a proper non-degenerate condition which depends on the optimality condition of the optimization problem, see \eg \eqref{eq:ndc-fb} and \eqref{eq:ndc-dr}, the sequence(s) generated by $\calF$ have finite manifold identification property.
			
			\textbf{3. Local linearization}  There exists a linear matrix $M_{\calF}$ such that along the identified smooth manifold(s) the global non-linear iteration locally can be linearized
			\beq\label{eq:linearization-zkp-zk} 
			\zkp-\zk = M_{\calF} (\zk-\zkm) + o(\norm{\zk-\zkm})  . 
			\eeq
			
			\textbf{4. Spectrum of $M_{\calF}$} Owing to the structure of the optimization problem and first-order method, $M_{\calF}$ will have certain spectral properties, \eg real or complex spectrum. 
			%		Moreover, the power of $M_{\calF}$, $M_{\calF}^k$, is convergent, with $M_{\calF}^{\infty} = \lim_{k\to+\infty} M_{\calF}^k$ being bounded with eigenvalues being either $0$ or $1$. 
			
			\textbf{5. Trajectories of $\seq{\zk}$} The leading eigenvalue of $M_{\calF}$ determines the trajectory of $\seq{\zk}$. % behavior of $\seq{\theta_k}$, hence the trajectories of $\seq{\zk}$. 
		\end{algorithm}
	\end{minipage}
\end{center}
\setcounter{algocf}{0}
%%%%%%%%%

\begin{remark}  $~$
	\begin{itemize}
		
		\item ``Steps 1-4'' of the above framework are also the essential steps of the local linear convergence analysis framework for first-order methods \cite{liang2016thesis}. For example, if the spectral radius of $M_{\calF}$ is strictly smaller than $1$, then one can derive the local linear convergence result. 
		
		\item The finite manifold identification is not necessarily for $\seq{\zk}$, as general first-order methods generate several different points along each iteration. Take Douglas--Rachford splitting method (see Eq. \eqref{eq:dr}) for example, $\seq{\zk}$ is the fixed-point sequence of the method, however the identification is for the shadow sequences $\seq{\uk}$ and $\seq{\xk}$; See Section \ref{subsec:trajectory-dr} for details. 
		
		\item The $o$-terms in \eqref{eq:linearization-zkp-zk} are due to the non-linearity of $\calF$ and the curvature of the identified manifold(s). %Since $\norm{\zk-\zkm} \to 0$, we have that $O(\norm{\zk-\zkm}^2) = o(\norm{\zk-\zkm})$. 
		%		It can be further simplified as
		%		\[
		%		\zkp-\zk = M_{\calF} (\zk-\zkm) + o(\norm{\zk-\zkm})  ,
		%		\]
		%		since $M_{\calF}$ is bounded and $\norm{\zkp-\zk} \leq C \norm{\zk-\zkm}$ for some $C>0$. However, we will use \eqref{eq:linearization-zkp-zk} throughout the section to simplify the analysis of $\theta_k$. 
		In a series of work \cite{liang2014local,liang2016thesis,liang2017activity}, the linearization is considered with respect to $\zsol$, that is 
		$ \zkp-\zsol = M_{\calF} (\zk-\zsol) + o(\norm{\zk-\zsol}) $. 
		The main reason of linearization in terms of $\zkp$ and $\zk$ is to better motivate the acceleration scheme in Section \ref{sec:lp}. 
		
		% Moreover, the small $o$-term $o(\norm{\zk-\zkm})$ will vanish when the cases 
		
		% \item Throughout this section, we assume that the non-degeneracy condition holds. Based on a recent work \cite{fadili2018sensitivity}, for a class of functions which are mirror-stratifiable, the non-degeneracy condition can be removed. However, we will not dive into this direction, since it will not affect the conclusion of this section.
		
		% \item Though in \eqref{eq:fom} we have $\zk \in \bbR^n$, it can be also an augmented variable in a product space, see Section \ref{subsec:trajectory-pd} the Primal--Dual splitting for example. 
	\end{itemize}
\end{remark}

% {\noindent}Compare \eqref{eq:linearization-zkp-zk} with \eqref{eq:lineq-vk}, \eqref{eq:linearization-zkp-zk} has a small $o$-terms. It should be noted that such a small $o$-terms prevents us from obtaining result on $\seq{\theta_{k}}$ for the most general scenario, unless we impose extra assumption on $o(\norm{\zk-\zkm})$; we will elaborate more on this later.
% %
% The angle $\theta_k$ remains defined by $$\theta_k \eqdef \arccos \BPa{ \sfrac{\iprod{\vk}{\vkm}}{\norm{\vk} \norm{\vkm}} } .$$
% %\[
% %\theta_k \eqdef \arccos \Ppa{ \sfrac{\iprod{\vk}{\vkm}}{\norm{\vk} \norm{\vkm}} }  .
% %\]
% \tcr{For the purpose of readability, in this section we will provide only the qualitative description of the trajectory of first-order methods (\eg which type), and omit the quantitive characterisation (\eg speed of convergence of $\cos(\theta_k)$ or the specific range of $\theta_k$). }
% All the proofs for propositions in this section are provided in Section \ref{proof:trajectory-fom}. 

% \ref{sec:pre-type-trajectory}

\begin{remark}[What determines the trajectory of $\seq{\zk}$ continued]
%Though the linearization \eqref{eq:linearization-zkp-zk} makes it possible to analyze the trajectory of $\seq{\zk}$, it is still quite complicated, as the linearization \eqref{eq:linearization-zkp-zk} contains a small $o$-term which is different from Remark \ref{rmk:what}.   \todo{rephrase the following sentence} When $M_{\calF}$ contains complex eigenvalues, the trajectory of $\seq{\zk}$ can only be obtained without the small $o$-term, which requires the optimization problem to be \emph{locally polyhedral} around the solution. 

{Although the linearization \eqref{eq:linearization-zkp-zk} makes it possible to analyze the trajectory of $\seq{\zk}$, it is still difficult to give precise characterizations  due to the presence of the small $o$-term. In the case where  $M_{\calF}$ contains only real eigenvalues, we show in Theorem \ref{thm:trajectory-fb} that this small $o$-term can be ignored, while in the case where $M_{\calF}$ contains complex eigenvalues, we give characterizations only when   the optimization problem to be \emph{locally polyhedral} around the solution (see Theorem \ref{thm:trajectory-dr-1} and \ref{thm:trajectory-pd-1}).}

% Under such scenario
% \begin{itemize}
% \item $M_{\calF}$ inherits the non-expansive property of $\calF$, meaning the power of $M_{\calF}$, $M_{\calF}^k$, is convergent, with $M_{\calF}^{\infty} = \lim_{k\to+\infty} M_{\calF}^k$ being bounded with eigenvalues being either $0$ or $1$.
% \item The
% \end{itemize}
\end{remark}

In the following, we apply the above framework to analyze the trajectory of three classical first-order algorithms: Forward--Backward splitting \cite{lions1979splitting}, Douglas--Rachford/ADMM \cite{douglas1956numerical,gabay1976dual} and Primal--Dual splitting \cite{chambolle2011first}. 
{For the purpose of readability, in this section we mainly provide the qualitative description of the trajectory of these methods (\eg which type), and omit the quantitative characterization (\eg speed of convergence of $\cos(\theta_k)$). }
All the proofs for propositions in this section are provided in Section \ref{proof:trajectory-fom}.

\subsection{Forward--Backward splitting}\label{subsec:trajectory-fb}

Forward--Backward splitting \cite{lions1979splitting} is designed to solve the following optimization problem
\beq\label{eq:problem-fb}\tag{$\mathcal{P}_{\mathrm{FB}}$}
\min_{x \in \bbR^n }~ \{\Phi(x) \eqdef R(x) + F(x)\} ,
\eeq
where the following assumptions are imposed
\begin{enumerate}[leftmargin=4em,label= ({${\bf F}$\textbf{.\arabic{*}})},ref= ${\bf F}$\textbf{.\arabic{*}}]
\item \label{FB:R} $R \in \lsc\pa{\bbR^n}$ is proper convex and lower semi-continuous.
\item \label{FB:F} $F \in C^{1,1}(\bbR^n )$ is convex differentiable with gradient $\nabla F$ being $L$-Lipschitz continuous.
\item \label{FB:minimizers-nonempty} $\Argmin(\Phi) \neq \emptyset$, \ie the set of minimizers is non-empty.
\end{enumerate}
The iteration of Forward--Backward splitting method is described in Algorithm \ref{alg:fb}.

%%%%%%%%%%%%%%%
\begin{center}
\begin{minipage}{0.975\linewidth}
\begin{algorithm}[H]
\caption{Forward--Backward splitting} \label{alg:fb}
\KwIn{$\gamma \in ]0, 2/L[$.}
{\noindent{\bf{Initial}}}: $x_{0} \in \bbR^n$.  \\ 
{\noindent{\bf{Repeat:}}} 
\beq\label{eq:fb}
\xkp = \prox_{\gamma R}\Pa{\xk - \gamma \nabla F (\xk)} . % \label{eq:mifb-fb}
%\right.
\eeq
% 
%$k = k + 1$\;
{\noindent{\bf{Until:}}} $\norm{\xkp-\xk} \leq \tol$.
\end{algorithm}
\end{minipage}
\end{center}

The fixed-point formulation of Forward--Backward splitting is quite straightforward, which reads
\beqn%\label{eq:fpi-dr}
\xkp = \fFB(\xk) 
\qwhereq
\fFB \eqdef \prox_{\gamma R}\pa{\Id - \gamma \nabla F} .
\eeqn

\begin{remark}
In the literature, various inertial variants of Forward--Backward splitting are proposed, such as inertial Forward--Backward and FISTA \cite{fista2009,chambolle2015convergence,liang2017activity,liang2018improving}. However, these methods will not be covered in this paper as the fixed-point operators of these schemes are not \emph{non-expansive}, and trajectory of the sequence and acceleration for these schemes are much more complicated. 
\end{remark}

%To analyze the local trajectory of Forward--Backward splitting, 
Let $\xsol \in \Argmin(R+F)$ be a global minimizer, we impose the following non-degeneracy condition
\beq\label{eq:ndc-fb}\tag{$\mathrm{ND}_{_{\mathrm{FB}}}$}
- \nabla F(\xsol) \in \ri\Pa{\partial R(\xsol)} 	.
\eeq
We refer to \cite{liang2017activity} for more detailed discussions about these conditions for the local linear convergence of the general Forward--Backward-type splitting methods.

\begin{remark}
	Throughout this section, we impose the non-degeneracy conditions, also for the Douglas--Rachford and Primal--Dual splitting methods, for our analysis. Based on a recent work \cite{fadili2018sensitivity}, when the function $R$ is so-called ``mirror-stratifiable'', condition \eqref{eq:ndc-fb} can be removed. However, we will not dive into this direction, since it will not affect the conclusion of this section. 
\end{remark}

We have the following result for the trajectory of $\seq{\xk}$. Redefine $\vk = \xk - \xkm$.

% Suppose that $R$ is partly smooth at $\xsol$ relative manifold $\Msol$, denote $\barR(x) = \gamma R(x) - \iprod{x}{-\nabla F(\xsol)}$, and define the following matrices which are all {symmetric},
% 		\beq\label{eq:mtxs-xsol}
% 		H_{F} \eqdef \gamma \PT{T_{\xsol}} \nabla^2 F(\xsol) \PT{T_{\xsol}} ,\enskip
% 		G_{F} \eqdef \Id - H_{F} ,\enskip
% 		H_{\barR} \eqdef \PT{T_{\xsol}}  \nabla^2_{\Msol} {\barR}(\xsol) \PT{T_{\xsol}}
% 		\qandq
% 		M_{\barR} \eqdef (\Id + H_{\barR})^{-1}  ,
% 		\eeq
% where $\nabla^2_{\Msol} {\Phi}$ is the Riemannian Hessian of $\Phi$ on the manifold $\Msol$ (see Definition \ref{def:riemannian-gradhess}).
% %
% Finally, define
% \beq\label{eq:mtx-fb}
% \zk \eqdef \begin{pmatrix} \xk \\ \xkm \end{pmatrix}
% \qandq
% \mFB \eqdef
% \begin{bmatrix}
% (1+a) M_{\barR} G_{F} & -a M_{\barR} G_{F} \\
% \Id & 0
% \end{bmatrix}
% .
% \eeq
% We have the following result for local linearization of Forward--Backward splitting method. Denote $\vk = \zk - \zkm$, and $e$ the largest eigenvalue of $M_{\barR} G_{F}$ which is strictly smaller than $1$.

\begin{theorem}\label{thm:trajectory-fb}
	For problem \eqref{eq:problem-fb} and the Forward--Backward splitting method \eqref{eq:fb}, suppose that assumptions \iref{FB:R}-\iref{FB:minimizers-nonempty} are true, then $\sequence{\xk}$ converges to a global minimizer $\xsol \in \Argmin(\Phi)$. 
If, moreover, $R \in \PSF{\xsol}{\Msol}$, $F$ is locally $C^2$ around $\xsol$ and condition \eqref{eq:ndc-fb} holds, there exists a matrix $\mFB$ such that for all $k$ large enough
\beqn%\label{eq:lin-fb}
\xkp - \xk = \mFB(\xk-\xkm) + o(\norm{\xk-\xkm})  .
\eeqn
Moreover, we have
\begin{enumerate}[label={\rm (\roman{*})}]

	\item All the eigenvalues of $\mFB$ are real and lie in $]-1, 1]$. %, the power $\mFB^k$ is convergent and $\mFB^{\infty} = \lim_{k\to\pinf} \mFB^k$ has eigenvalues either $0$ or $1$.  \todo{where are we using $\mFB^{\infty} $? Only needed for inertial FB.}
	
	\item { Let $\sigma_2$ be the second largest eigenvalue of $\mFB$. If $\norm{x_{k+1} - x_k} \asymp \rho^k$ for some $\rho \in ]\sigma_2, 1[$, then the angle $\theta_k$ is convergent with $\theta_k \to 0$ and $\seq{\xk}$ is a Type I sequence. %for some $\rho \in ]\sigma_2, 1[$, then the angle $\theta_k$ is convergent with $\theta_k \to 0$ and $\seq{\xk}$ is a Type I sequence. 
}

\end{enumerate}
\end{theorem}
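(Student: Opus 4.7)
The plan is to follow the five-step framework laid out in Section~\ref{sec:framework}, specializing each step to the Forward--Backward setting. The global convergence $\xk \to \xsol \in \Argmin(\Phi)$ under \iref{FB:R}--\iref{FB:minimizers-nonempty} is a classical result for $\alpha$-averaged operators, which I would simply invoke from \cite{bauschke2011convex}. Combining partial smoothness of $R$ relative to $\calM_{\xsol}$ with the non-degeneracy condition \eqref{eq:ndc-fb}, a standard finite identification argument (in the style of \cite{liang2017activity}) then yields $\xk \in \calM_{\xsol}$ for all $k$ large enough.

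Once identification is established, I would carry out a Riemannian expansion of $\fFB = \prox_{\gamma R}(\Id - \gamma \nabla F)$ around $\xsol$ on $\calM_{\xsol}$. Restricting $R$ to the manifold makes it $C^2$, and locally $\prox_{\gamma R}$ acts as a smooth retraction composed with a gradient step along the manifold. A Taylor expansion, using the $C^2$ assumption on $F$ and the smoothness of $R|_{\calM_{\xsol}}$, produces a symmetric linear operator on $T \eqdef \tanSp{\calM_{\xsol}}{\xsol}$ of the form
\[
\mFB \;=\; \PT{T}\bigl(\Id - \gamma H\bigr)\PT{T},
\]
where $H$ is the Hessian of $R|_{\calM_{\xsol}} + F$ at $\xsol$, such that $\xkp - \xsol = \mFB(\xk - \xsol) + o(\norm{\xk - \xsol})$. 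Subtracting this expansion at two consecutive iterates, and using that $\norm{\xk - \xsol}$ and $\norm{\xk - \xkm}$ are of the same order (a byproduct of the local linear convergence along the manifold), yields the desired linearization of \eqref{eq:linearization-zkp-zk}.

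For part~(i), the key observation is that $H$ is symmetric positive semidefinite as the Hessian of a convex function on the manifold, and $\PT{T}$ is an orthogonal projection, hence self-adjoint. Thus $\mFB$ is symmetric and its spectrum is real. Non-expansiveness of $\fFB$ for $\gamma \in ]0, 2/L[$ bounds the spectral radius of $\mFB$ by $1$, and an eigenvalue equal to $-1$ would produce oscillations incompatible with the convergence of $\seq{\xk}$ already established, so the spectrum lies in $]-1, 1]$.

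For part~(ii), I would exploit the spectral gap $\rho > \sigma_2$. Setting $\vk \eqdef \xk - \xkm$ and iterating the linearization, $\vkp = \mFB \vk + \epsilon_k$ with $\norm{\epsilon_k} = o(\norm{\vk})$. Diagonalizing $\mFB$ in an orthonormal eigenbasis, every component of $\vk$ along an eigenvalue of modulus at most $\sigma_2$ decays strictly faster than $\rho^k$, while by hypothesis $\norm{\vk} \asymp \rho^k$. Hence the component of $\vk$ along the leading eigendirection (whose eigenvalue must be real and positive, equal to $+\rho$, since otherwise $\theta_k$ would tend to $\pi$) dominates asymptotically, so $\vk/\norm{\vk}$ converges to a fixed unit vector and $\cos\theta_k \to 1$. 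The main technical obstacle, and where the strict separation $\rho > \sigma_2$ is essential, is the discrete Duhamel-type estimate needed to propagate the perturbations $\epsilon_k$ through the dynamics without destroying the asymptotic alignment of the dominant mode; this is what ultimately upgrades the linearization \eqref{eq:linearization-zkp-zk} into a genuine trajectory statement.
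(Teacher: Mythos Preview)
Your overall plan follows the paper's five-step framework, but the linearization step contains a genuine error that propagates through the rest of the argument. The proximal operator $\prox_{\gamma R}$ does \emph{not} linearize on the manifold as $\PT{T}(\Id - \gamma \nabla^2_{\calM_{\xsol}} R)\PT{T}$; being a resolvent, its derivative involves an \emph{inverse}. Concretely, the paper (via Lemma~\ref{lem:lin-generalised-ppa}) shows
\[
\xkp - \xk = M_{\barR}\bigl(\Id - \gamma \nabla^2 F(\xsol)\bigr)(\xk-\xkm) + o(\norm{\xk-\xkm}),
\qquad
M_{\barR} \eqdef \PT{T}(\Id + H_{\barR})^{-1}\PT{T},
\]
where $H_{\barR}$ is the Riemannian Hessian of the smooth perturbation $\barR(x)=\gamma R(x)+\iprod{x}{\nabla F(\xsol)}$. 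The resulting $\mFB = M_{\barR}(\Id - \gamma \nabla^2 F(\xsol))$ is a \emph{product} of two non-commuting symmetric factors and is \emph{not} symmetric in general, so your direct symmetry claim for part~(i) fails. The paper instead uses the similarity
\[
\mFB \;\sim\; M_{\barR}^{1/2}\bigl(\Id - \gamma \nabla^2 F(\xsol)\bigr)M_{\barR}^{1/2},
\]
which \emph{is} symmetric, to conclude the eigenvalues are real and in $]-1,1]$.

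This same issue affects part~(ii): because $\mFB$ is not symmetric, you cannot diagonalize $\vk$ directly in an orthonormal eigenbasis and read off the angle. The paper first passes to $u_k \eqdef M_{\barR}^{-1/2}\vk$, works with the symmetric conjugate, shows via a Duhamel-type estimate (essentially what you sketch) that the component orthogonal to the leading eigenspace is $o(\sigma_1^k)$ and hence $\sin(\beta_{k+1})\to 0$ in these coordinates, and only then transfers the conclusion $\cos\theta_k\to 1$ back to $\vk$. Your perturbation-propagation idea is the right instinct, but it must be run in the symmetrized coordinates, not on $\mFB$ itself.
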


\begin{remark} $~$
	\begin{itemize}
	
		\item The detailed expression of $\mFB$ can be found in Section \ref{proof:trajectory-fom}, and the result holds true for varying but convergent step-size $\gamma_{k} \in [0, 2/L]$.

		\item If there holds $R$ is locally polyhedral around $\xsol$, $F$ is quadratic, then for the linearization we have directly $\xkp-\xk = \mFB(\xk-\xkm)$ without the $o$-terms and a straight-line trajectory is guaranteed.  
		
		\item % The detailed expression of $\mFB$ can be found in Section \ref{proof:trajectory-fom}. %, for Douglas--Rachford and Primal--Dual splitting methods to be discussed below, we also provide the details of their corresponding linearized operator in the appendix for the sake of succinct. 
		Note that the linearization means that locally, $\norm{\xk-\xkm} = O(\rho^k)$ for $\rho \in ]\sigma_1, 1[$ with $\sigma_1$ being the largest eigenvalue of $\mFB$. So, (ii) of Theorem \ref{thm:trajectory-fb} implies that under mild assumption, the eventual trajectory of $\seq{\xk}$ for Forward--Backward is a straight line.
		
		%\item Proposition \ref{thm:trajectory-fb} implies that the eventual trajectory of $\seq{\xk}$ for Forward--Backward is a straight line. % When the inertial term is considered, for relative small value of inertial parameter $a$, the eventual trajectory remains a straight line. Until $a$ is large enough such that the leading eigenvalue of the linearized system is complex, and the trajectory becomes an elliptical spiral. 
		
	\end{itemize}
\end{remark}

\begin{example}\label{eg:lasso}
We consider regularized least square  
\[
\min_{x\in\bbR^n}~ R(x) + \sfrac{1}{2}\norm{Ax-b}^2
\]
to demonstrate the property of $\seq{\theta_k}$. 
Two different cases of $R$ are considered: $\ell_1$-norm which is polyhedral and nuclear norm which is not polyhedral. We have $A \in \bbR^{m\times n}$ and for each cases the settings are
\begin{description}[leftmargin=3.0cm]%[label={\rm (\arabic{*})}]
\item[{$\ell_{1}$-norm}] $(m,n)=(48,128)$, the solution $\xsol$ has $14$ non-zero elements.
\item[{Nuclear norm}] $(m,n)=(868,1024)$, the solution $\xsol$ has rank of $2$. 
\end{description}
For both examples, $A$ is generated from the standard random Gaussian ensemble. 
The numerical results are shown in Figure \ref{fig:trajectory-fb}. 
For $\ell_1$-norm, besides $\theta_k$, we also provide the change of support size of $\xk$, \ie $\abs{\supp(\xk)}$: 
\begin{itemize}
	\item For the support of $\xk$, three phases can be observed: at beginning $\xk$ is almost in the whole space, then the size of supports starts to decrease and eventually becomes stable which is the activity identification.  

	\item The behavior of $\theta_k$ also has three phases: 1) when $\xk$ is in the whole space, $\theta_k$ is equal or very close to $0$; 2) When the support is decreasing, $\theta_k$ oscillates; 3) After identification, $\theta_k$ converges to $0$ linearly. %It should be noted that the eventual increase of $1-\cos(\theta_k)$ is due to machine error. 
	
\end{itemize}
For nuclear norm, the change of rank of $\xk$ is provided
\begin{itemize}
	\item Different form the $\ell_1$-norm, the rank of $\xk$ gradually decreases, results in a staircase observation. 

	\item For $\theta_k$, inside each staircase, it decrease first and then increases. But after identification of the rank, it converges to $0$ linearly. %Again, the eventual increase of $1-\cos(\theta_k)$ is due to machine error.  
	
%	\todo{check the numerics, the nuclear norm figure is not convincing...}
	
\end{itemize}
\end{example}

%%%%%%%%%%%%%%%
\begin{figure}[!ht]
\centering
\subfloat[$\ell_1$-norm]{ \includegraphics[width=0.45\linewidth]{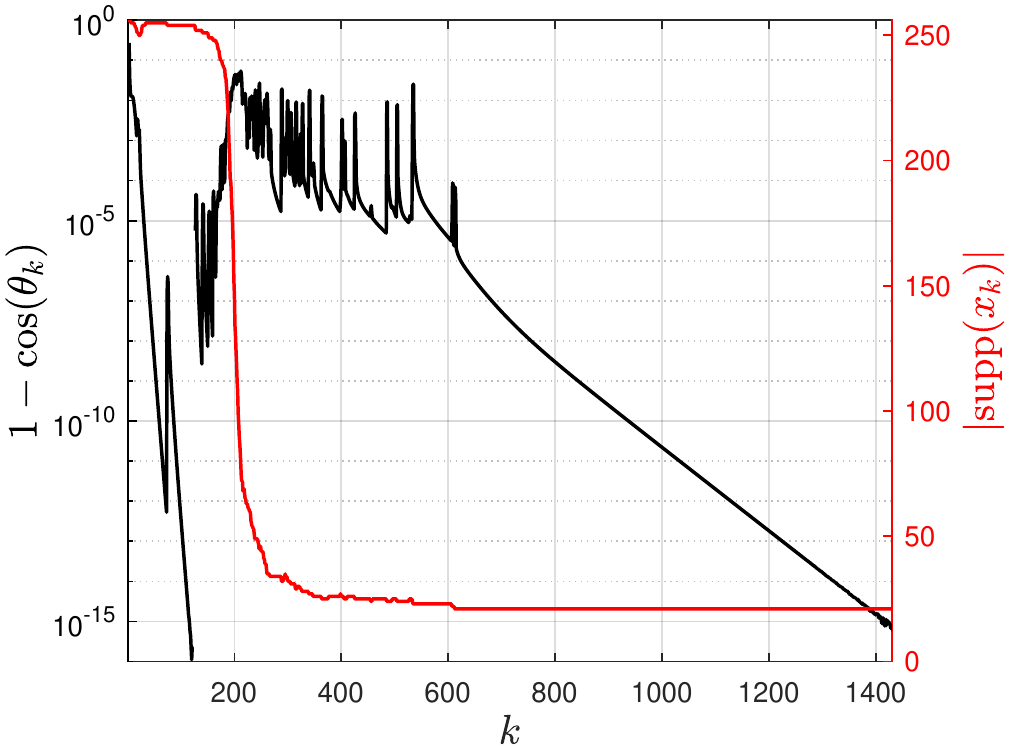}  }     {\hspace{6pt}}
\subfloat[Nuclear norm]{ \includegraphics[width=0.45\linewidth]{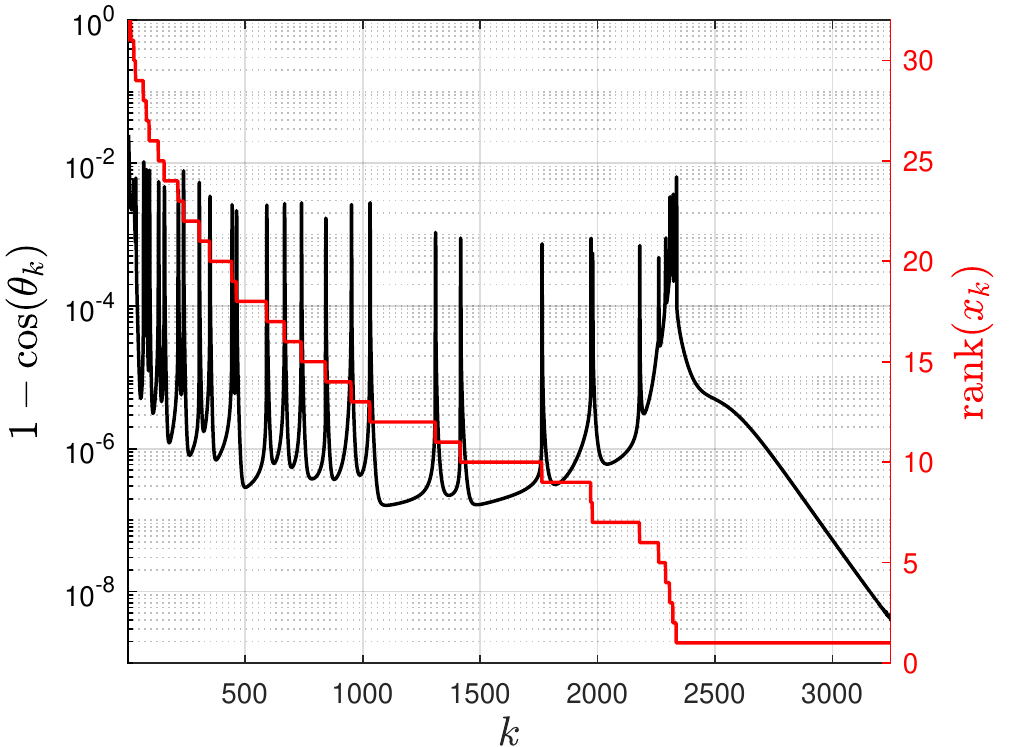}  }     \\
%%%%%%%%%%%%%%%
\caption{Finite activity identification and property of $\theta_k$ for Forward--Backward splitting method. }% (a) $\ell_1$-norm. (b) Nuclear norm.}
\label{fig:trajectory-fb}
\end{figure}

% code: Dropbox/Clarice-Jingwei/Ada-Acceleration/matlab/Geometry-FOM/tra_FB.m

\subsection{Douglas--Rachford splitting and ADMM}\label{subsec:trajectory-dr}

The second example is Douglas--Rachford splitting \cite{douglas1956numerical} for solving the sum of two non-smooth functions
\beq\label{eq:problem-dr}\tag{$\calP_{\mathrm{DR}}$}
\min_{x \in \bbR^n}~ R(x) + J(x) ,
\eeq
where we assume
\begin{enumerate}[leftmargin=4em, label= ({${\bf D}$\textbf{.\arabic{*}})}, ref= ${\bf D}$\textbf{.\arabic{*}}]
\item \label{DR:RJ}
$R, J \in \lsc\pa{\bbR^n}$, the proper convex and lower semi-continuous functions.

\item \label{DR:domain}
$\ri\pa{\dom(R)} \cap \ri\pa{\dom(J)} \neq \emptyset$, \ie the domain qualification condition.

\item \label{DR:minimizers-nonempty}
$\Argmin(R+J) \neq \emptyset$, \ie the set of minimizers is non-empty.
\end{enumerate}
The standard Douglas--Rachford splitting method \cite{douglas1956numerical} is described in Algorithm \ref{alg:dr}.

\begin{center}
\begin{minipage}{0.975\linewidth}
\begin{algorithm}[H]
\caption{Douglas--Rachford splitting} \label{alg:dr}
\KwIn{$\gamma > 0$.}
{\noindent{\bf{Initial}}}: $z_0 \in \bbR^n,~ x_0 = \prox_{\gamma J} (z_0)$\;
{\noindent{\bf{Repeat:}}} 
\beq\label{eq:dr}%\tag{\textrm{DR}}
\begin{aligned}
\ukp &= \prox_{\gamma R}\pa{2\xk - \zk} , \\
\zkp &= \zk + \ukp - \xk , \\
\xkp &= \prox_{\gamma J} (\zkp) ,
\end{aligned}
\eeq
%$k = k + 1$\;
{\noindent{\bf{Until:}}} $\norm{\zkp-\zk} \leq \tol$.
\end{algorithm}
\end{minipage}
\end{center}

The fixed-point formulation of Douglas--Rachford with respect to $\zk$ is
\beqn%\label{eq:fpi-dr}
\zkp = \fDR(\zk) 
\qwhereq
\fDR \eqdef \sfrac{1}{2} \Pa{ (2\prox_{\gamma R} - \Id) (2\prox_{\gamma J} - \Id) + \Id } .
\eeqn

\begin{remark}
It is well known that the alternating direction method of multipliers (ADMM) is closely connected with Douglas--Rachford splitting method, for its local trajectory property of generated sequences, we refer to \cite{PoonLiang2019b} for a detailed discussion. 

\end{remark}

Below we first present the linearization of Douglas--Rachford iteration \eqref{eq:dr} and then discuss the trajectory of $\seq{\zk}$ under two different cases: both $R, J$ in \eqref{eq:problem-dr} are non-smooth as in \iref{DR:RJ}, and one of the functions is smooth. We shall see that two different trajectories are exhibited by the method.

\subsubsection{Linearization of Douglas--Rachford splitting}

Let $\zsol \in \fix(\fDR)$ and $\xsol = \prox_{\gamma J} (\zsol) \in \Argmin(R+J)$ such that $\zk\to\zsol$ and $\xk,\uk\to\xsol$, from \eqref{eq:dr} the corresponding first-order optimality condition reads 
$\xsol - \zsol \in \gamma \partial R(\xsol) ~\textrm{and}~ \zsol - \xsol \in \gamma \partial J(\xsol)$. 
We assume the following non-degeneracy condition
\beq\label{eq:ndc-dr}\tag{$\textrm{ND}_{_{\mathrm{DR}}}$}
\xsol - \zsol \in \gamma \ri\Pa{ \partial R(\xsol) } \qandq
\zsol - \xsol \in \gamma \ri\Pa{ \partial J(\xsol) } 	.
\eeq 
Let $\MmR$ and $\MmJ$ be two $C^2$-smooth manifolds around $\xsol$. 

\begin{theorem}\label{thm:linearization-dr}
For problem \eqref{eq:problem-dr} and the Douglas--Rachford splitting algorithm \eqref{eq:dr}, suppose that the conditions \iref{DR:RJ}-\iref{DR:minimizers-nonempty} are true, then $\seq{\zk}$ converges to a point $\zsol \in \fix\pa{\fDR}$ and $\seq{\xk}, \seq{\uk}$ converge to $\xsol \eqdef \prox_{\gamma R}(\zsol) \in \Argmin(R+J)$. 
If moreover, $R \in \PSF{\xsol}{\MmR}$ and $J \in \PSF{\xsol}{\MmJ}$ are partly smooth and condition \eqref{eq:ndc-dr} holds, then there exists a matrix $\mDR$ such that for all $k$ large enough
\beqn%\label{eq:lin-dr}
 \zkp-\zk = \mDR (\zk-\zkm) + o(\norm{\zk-\zkm}) .
\eeqn
%If, moreover, $R, J$ are locally polyhedral around $\xsol$, then $\zkp-\zk = \mDR (\zk-\zkm)$ with
%\beqn%\label{eq:mtx-dr-poly}
%\mDR = \PR \PJ + (\Id - \PR)(\Id - \PJ )  .
%\eeqn
%
\end{theorem}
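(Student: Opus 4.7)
The plan is to follow the four-step template from Algorithm~1 of the excerpt: convergence, identification, local linearization of the proximal operators along the identified manifolds, and assembly of $M_{DR}$, and finally convert the ``linearization around $z^\star$'' into the stated ``linearization in successive displacements''.

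First, convergence of $\{z_k\}$ to some $z^\star \in \fix(\mathcal{F}_{DR})$ is standard once one recalls that $\mathcal{F}_{DR}$ is $\tfrac12$-averaged nonexpansive under \iref{DR:RJ}--\iref{DR:minimizers-nonempty}; continuity of the proximal mappings then gives $x_k, u_k \to x^\star := \prox_{\gamma J}(z^\star) \in \Argmin(R+J)$. For the identification step, the iterates satisfy the subgradient certificates $(z_k - x_k)/\gamma \in \partial J(x_k)$ and $(2x_k - z_k - u_{k+1})/\gamma \in \partial R(u_{k+1})$, and under \eqref{eq:ndc-dr} these limits lie in the \emph{relative interiors} of the respective subdifferentials at $x^\star$. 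The classical identification theorem for partly smooth functions (used already in \cite{liang2017activity}) then yields, for $k$ large enough, $x_k \in \mathcal{M}_J$ and $u_k \in \mathcal{M}_R$.

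Second, local linearization of the proximal mappings. Along their identified manifolds, $R$ and $J$ are $C^2$, and partial smoothness together with the strict-complementarity form of \eqref{eq:ndc-dr} implies that $\prox_{\gamma R}$ and $\prox_{\gamma J}$ are $C^1$ at $2x^\star - z^\star$ and $z^\star$ respectively, with Jacobians of the form
\[
W_R = (P_{T_R} + \gamma H_R)^{-1} P_{T_R}, \qquad W_J = (P_{T_J} + \gamma H_J)^{-1} P_{T_J},
\]
where $T_R, T_J$ are the tangent spaces at $x^\star$ and $H_R, H_J$ bundle the Riemannian Hessians of $R, J$ with the Weingarten curvature contributions of $\mathcal{M}_R, \mathcal{M}_J$. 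Consequently,
\[
x_k - x^\star = W_J(z_k - z^\star) + o(\|z_k - z^\star\|),
\]
\[
u_{k+1} - x^\star = W_R\bigl(2(x_k - x^\star) - (z_k - z^\star)\bigr) + o(\|z_k - z^\star\|).
\]

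Third, assembly. Substituting into $z_{k+1} = z_k + u_{k+1} - x_k$ gives
\[
z_{k+1} - z^\star = M_{DR}(z_k - z^\star) + o(\|z_k - z^\star\|), \qquad M_{DR} := I + W_R(2W_J - I) - W_J.
\]
Writing the same identity at step $k-1$ and subtracting yields
\[
z_{k+1} - z_k = M_{DR}(z_k - z_{k-1}) + r_k,
\]
with $r_k = o(\|z_k - z^\star\|) + o(\|z_{k-1} - z^\star\|)$. To convert this into $o(\|z_k - z_{k-1}\|)$, one uses that the iteration is locally Q-linear (a byproduct of $\rho(M_{DR}) \le 1$ together with the contraction on $\ran(\mathcal{F}_{DR}-\Id)$), so that $\|z_{k-1}-z^\star\|$ and $\|z_k-z^\star\|$ are of the same order as $\|z_k - z_{k-1}\|$ asymptotically.

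The main obstacle is the second step: the $C^1$ differentiability of $\prox_{\gamma R}$ and $\prox_{\gamma J}$ along the identified manifolds with the stated Jacobian formulas. This requires combining the partial-smoothness characterization with the implicit-function-theorem treatment of the proximal inclusion, and careful bookkeeping of the Riemannian Hessian and Weingarten terms; all other steps are either standard (convergence, identification) or routine algebra (assembly and the displacement-form conversion).
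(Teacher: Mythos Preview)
Your overall architecture (convergence, identification, linearize each prox, assemble) matches the paper, and your formula $M_{DR}=I+W_R(2W_J-I)-W_J$ is exactly the paper's $\mDR=\Id+2M_{\barR}M_{\barJ}-M_{\barR}-M_{\barJ}$. The gap is in your final ``displacement-form conversion''.

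You first obtain $z_{k+1}-z^\star=M_{DR}(z_k-z^\star)+o(\|z_k-z^\star\|)$, subtract consecutive steps, and are left with a remainder $r_k=o(\|z_k-z^\star\|)+o(\|z_{k-1}-z^\star\|)$. To turn this into $o(\|z_k-z_{k-1}\|)$ you invoke ``local Q-linear convergence'', but that is neither assumed in the theorem nor implied by $\rho(M_{DR})\le 1$. In general one only has $\|z_k-z_{k-1}\|\le\|z_{k-1}-z^\star\|$ from firm nonexpansiveness, which is the wrong direction; the bound you need, $\|z_k-z^\star\|=O(\|z_k-z_{k-1}\|)$, is an error-bound/metric-subregularity statement that can fail (e.g.\ when $M_{DR}$ has eigenvalue $1$ and convergence is sublinear). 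So the subtraction route does not close without an extra hypothesis.

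The paper sidesteps this entirely: its key Lemma~\ref{lem:lin-generalised-ppa} linearizes the prox \emph{directly in successive differences}, proving $x_k-x_{k-1}=M_{\barJ}(z_k-z_{k-1})+o(\|z_k-z_{k-1}\|)$ (and similarly for $u_{k+1}-u_k$). The trick is to do the Riemannian Taylor expansion at $x_{k-1}$ rather than at $x^\star$, using the parallel translation $\tau_k:T_{x_k}\to T_{x_{k-1}}$ and then replacing the $k$-dependent Hessian $H_{\barR,k-1}$ by its limit $H_{\barR}$ via continuity. This produces the $o(\|z_k-z_{k-1}\|)$ remainder from the outset, so no comparison between $\|z_k-z^\star\|$ and $\|z_k-z_{k-1}\|$ is ever needed.
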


See Section \ref{proof:trajectory-dr} for the proof and expression of $\mDR$. We refer to \cite{liang2017localDR} for detailed discussions on the local linear convergence of Douglas--Rachford splitting method.

%\jingwei{$C^2$-smoothness of the manifold is already encoded in the definitoin of partial smoothness}. 

\subsubsection{Trajectory of Douglas--Rachford splitting}

We first consider the case that both $R$ and $J$ are non-smooth. 
Let $R \in \PSF{\xsol}{\MmR}, J \in \PSF{\xsol}{\MmJ}$, denote $T_{\xsol}^R, T_{\xsol}^J$ the tangent spaces of $\MmR, \MmJ$ at $\xsol$, respectively. 
And let $\PR, \PJ$ be the projection operators onto $T_{\xsol}^R, T_{\xsol}^J$, respectively. 
Denote $\theta_F$ the Friedrichs angle between $T_{\xsol}^R$ and $T_{\xsol}^J$.

\begin{theorem}\label{thm:trajectory-dr-1}
For problem \eqref{eq:problem-dr} and the Douglas--Rachford splitting algorithm iteration \eqref{eq:dr}, assume that Theorem \ref{thm:linearization-dr} holds. 
If, moreover, $R, J$ are locally polyhedral around $\xsol$, then $\zkp-\zk = \mDR (\zk-\zkm)$ with
\beqn%\label{eq:mtx-dr-poly}
\mDR = \PR \PJ + (\Id - \PR)(\Id - \PJ )  .
\eeqn
If moreover $\dim(T_{\xsol}^R \cap T_{\xsol}^J) < \min\Ba{ \dim(T_{\xsol}^R ) , \dim(T_{\xsol}^J)} $, the angle $\theta_k$ is convergent to $\theta_F \in ]0, \pi/2]$ and $\seq{\zk}$ is a Type II sequence. 

\end{theorem}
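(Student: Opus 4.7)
The plan is to upgrade the linearisation of Theorem~\ref{thm:linearization-dr}, which only holds up to an $o(\norm{\zk-\zkm})$ remainder, to an \emph{exact} affine identity by exploiting local polyhedrality, and then read off the claimed trajectory type from the spectral structure of $\mDR$. The key observation is that when $R$ and $J$ are polyhedral near $\xsol$, their subdifferentials are locally constant on the relative interior of the active face, so the corresponding proximity operators coincide with affine maps on full neighbourhoods of the relevant base points.

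Concretely, the argmin defining $\prox_{\gamma R}(w)$ for $w$ near $2\xsol-\zsol$ lies in the relatively open face $\xsol + T_{\xsol}^R$ (by the finite identification already guaranteed in Theorem~\ref{thm:linearization-dr}), and on this affine piece $R$ is itself affine. Writing the argmin as $\xsol + t$ with $t \in T_{\xsol}^R$, the optimality condition $\frac{1}{\gamma}(w-\xsol-t) \in \partial R(\xsol)$ together with local constancy of the subdifferential and the nondegeneracy $v_R := (\xsol-\zsol)/\gamma \in \ri(\partial R(\xsol))$ forces $\frac{1}{\gamma}(w-\xsol-t) - v_R \in (T_{\xsol}^R)^\perp$, and solving for $t$ yields
\[
\prox_{\gamma R}(w) = \xsol + \PR(w - 2\xsol + \zsol).
\]
An analogous argument at $\zsol$ gives $\prox_{\gamma J}(w) = \xsol + \PJ(w - \zsol)$ for $w$ near $\zsol$. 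Substituting these into the reflection composition that defines $\fDR$ and collapsing the constant terms yields
\[
\fDR(z) = \zsol + \sfrac{1}{2}\bigl((2\PR-\Id)(2\PJ-\Id)+\Id\bigr)(z-\zsol) = \zsol + \mDR(z-\zsol),
\]
with $\mDR = 2\PR\PJ - \PR - \PJ + \Id = \PR\PJ + (\Id-\PR)(\Id-\PJ)$ after expansion. Since $\fDR$ is now exactly affine near $\zsol$, taking consecutive differences gives $\zkp-\zk = \mDR(\zk-\zkm)$ with no remainder for all $k$ past identification.

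For the second claim, a direct $2{\times}2$ computation in any invariant plane spanned by unit vectors $u \in T_{\xsol}^R$ and $v \in T_{\xsol}^J$ realising a principal angle $\theta_i$ shows that $\mDR$ restricts to $\cos(\theta_i)$ times a rotation by $\pm\theta_i$; hence the spectrum of $\mDR$ consists of $\cos(\theta_i)e^{\pm\mathrm{i}\theta_i}$ for the principal angles $\theta_i$ between $T_{\xsol}^R$ and $T_{\xsol}^J$, together with eigenvalue $1$ on $(T_{\xsol}^R \cap T_{\xsol}^J) \oplus ((T_{\xsol}^R)^\perp \cap (T_{\xsol}^J)^\perp)$. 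The dimension hypothesis together with Lemma~\ref{lem:fapa} gives $\theta_F \in ]0,\pi/2]$, and the fact that $\zk \to \zsol$ forces $v_k := \zk-\zkm = \mDR^{k-k_0} v_{k_0}$ to decay, which means $v_{k_0}$ has zero component on the eigenvalue-$1$ eigenspace. By a standard power-iteration argument $v_k$ then aligns with the $2$D plane associated with the dominant pair $\cos(\theta_F)e^{\pm\mathrm{i}\theta_F}$ and is rotated by $\theta_F$ at each step, so $\theta_k \to \theta_F$ and the trajectory is a logarithmic spiral of Type~II in the sense of Section~\ref{sec:trajectory-ls}. The main obstacle is the first step: one must verify that the polyhedral affine forms of $\prox_{\gamma R}$ and $\prox_{\gamma J}$ survive composition in \eqref{eq:dr}, i.e.\ that the identification neighbourhoods propagate so $\fDR$ is genuinely affine on a full neighbourhood of $\zsol$.
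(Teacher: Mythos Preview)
Your approach is correct and takes a more elementary route than the paper. Where you compute the affine form of each proximity operator directly from polyhedrality and nondegeneracy and then compose, the paper instead specialises its general Riemannian linearisation (Lemma~\ref{lem:lin-generalised-ppa}): it first obtains $\zkp-\zk = \mDR(\zk-\zkm)+o(\norm{\zk-\zkm})$ with $\mDR=\Id+2M_{\barR}M_{\barJ}-M_{\barR}-M_{\barJ}$, and then notes that for locally polyhedral $R,J$ the Riemannian Hessians $H_{\barR},H_{\barJ}$ vanish so $M_{\barR}=\PR$, $M_{\barJ}=\PJ$ and the $o$-term disappears because the partial-smoothness manifolds are affine. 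For the spectral and trajectory parts the two arguments agree in substance: the paper cites the block decomposition of $\mDR$ from \cite{bauschke2014rate,demanet2016eventual} and then invokes Proposition~\ref{prop:type-ii}, while you sketch both the $2{\times}2$ principal-angle computation and the power-iteration alignment by hand. Your flagged ``obstacle'' is not a real gap: the finite identification already asserted in Theorem~\ref{thm:linearization-dr} guarantees that for all large $k$ the shadow iterates $\xk,\ukp$ land on the affine faces $\xsol+T_{\xsol}^J$ and $\xsol+T_{\xsol}^R$ with the relevant subgradients in the relative interiors, which is exactly what your affine prox formulas require along the sequence; in fact the nondegeneracy condition \eqref{eq:ndc-dr} yields a full neighbourhood of $\zsol$ on which $\fDR$ is affine. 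The paper's route has the advantage of being a specialisation of a single framework that also handles the non-polyhedral cases; your argument is shorter, self-contained for the polyhedral setting, and avoids the Riemannian machinery entirely.
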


%See Section \ref{proof:trajectory-dr} for the proof. 

%
\begin{remark} $~$
	\begin{itemize}
	\item 
	The spectral properties of $\mDR$ is much more difficult to analyze compared to that of Forward--Backward splitting, and for the case both $R,J$ are non-smooth, we make the additional assumption of  local polyhedrality around the solution. 
	For this setting, $\mDR$ is a normal matrix, hence quasi-diagonalizable \cite[Theorem 2.5.8]{horn1990matrix}, where the leading block of the decomposition reads
	\[
B = \cos(\theta_F) \begin{bmatrix} \cos(\theta_F) & \sin(\theta_F) \\ -\sin(\theta_F) & \cos(\theta_F) \end{bmatrix} .
\]
	The condition $\dim(T_{\xsol}^R \cap T_{\xsol}^J) < \min\Ba{ \dim(T_{\xsol}^R ) , \dim(T_{\xsol}^J)} $ ensures $\theta_F \in ]0, \pi/2]$ which makes $B$ a rotation. Consequently the local trajectory of the sequence $\seq{\zk}$ is a \emph{logarithmic spiral}. Moreover, the choice of $\gamma$ does not affect the local trajectory of $\seq{\zk}$ as $B$ only depends on the Friedrichs angle $\theta_F$.

	\item 
	The analysis of the $\cos(\theta_k)$ depends on the explicit expression of the leading eigenvalues of $\mDR$, which is only available when $R,J$ are locally polyhedral around the solution. For the case that $R, J$ are general partly smooth function, the behavior of $\theta_k$ depends on $\gamma$ due to the non-trivial Riemannian Hessian of $R,J$; See Figure \ref{fig:trajectory-dr} for an illustration. %Also, instead of being a logarithmic spiral, the trajectory will be a elliptical spiral; See Figure \ref{fig:trajectory-dr} for examples. 
    % Further more, the choice of $\gamma$ affects the value of $\theta_{k}$.
		
	\end{itemize}
\end{remark}

\begin{example}\label{eg:exp-dr}
We use the affine constrained problem
\beq\label{eq:R-affine}
\min_{x\in\bbR^n}~ R(x) \enskip \mathrm{such~that} \enskip  Ax = A\cx 
\eeq
to demonstrate the property of $\seq{\theta_k}$. 
Similar to Example \ref{eg:lasso}, $\ell_1$-norm and nuclear norm are considered for $R$, $A \in \bbR^{m\times n}$ is generated from the standard random Gaussian ensemble and
\begin{description}[leftmargin=3.0cm]%[label={\rm (\arabic{*})}]
\item[{$\ell_{1}$-norm}] $(m,n)=(48,128)$, $\cx$ has $8$ non-zero elements.
\item[{Nuclear norm}] $(m,n)=(620,1024)$, $\cx$ has rank of $2$. 
\end{description}
The results are shown in Figure \ref{fig:trajectory-dr}, the observations of $\ell_1$-norm are similar to those in Example \ref{eg:lasso}, except that $\theta_k$ eventually converges to some non-zero values. 
For nuclear norm, two choices of $\gamma$, $\gamma=1, 6$, are considered. Observe that after rank identification, $\theta_k$ oscillates in an interval for $\gamma=1$ and behaves smoothly for $\gamma = 6$. 
%\begin{itemize}
%\item For $\gamma = 1$: $\theta_k = 0$ before identification and then starts to oscillate after identification.
%  
%\item For $\gamma = 6$: the observations are similar to those in Example \ref{eg:lasso}. 
%\end{itemize}
%
%
\end{example}

%%%%%%%%%%%%%%%
\begin{figure}[!ht]
\centering
\subfloat[$\ell_1$-norm]{ \includegraphics[width=0.315\linewidth]{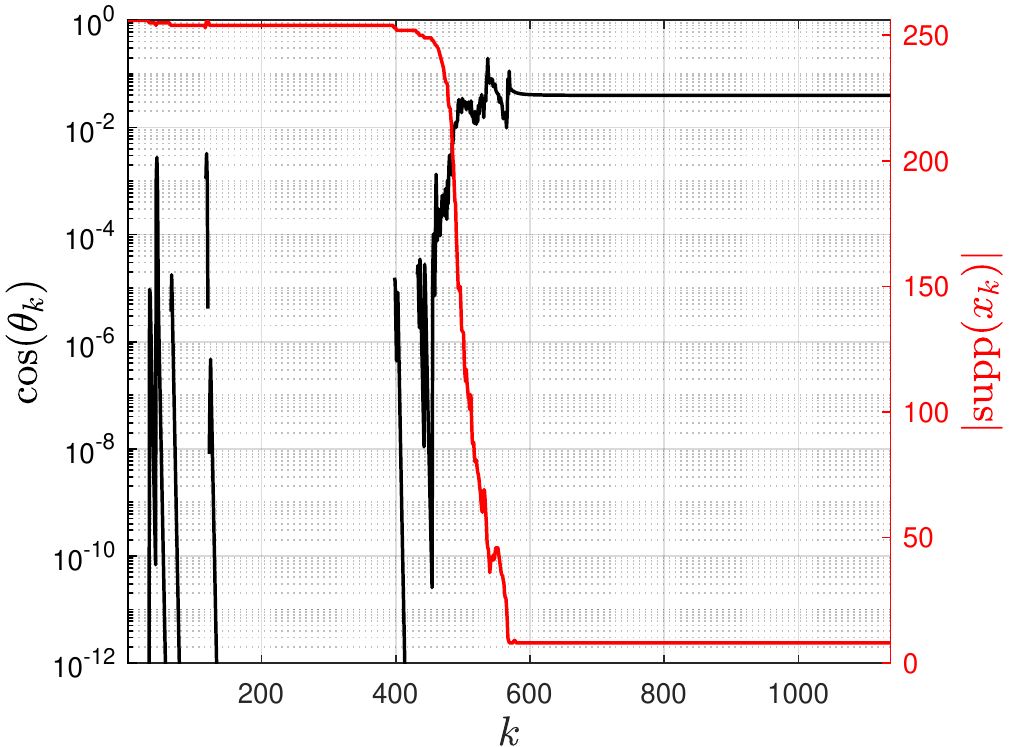}  }     {\hspace{1pt}}
\subfloat[Nuclear norm: $\gamma = 1$]{ \includegraphics[width=0.315\linewidth]{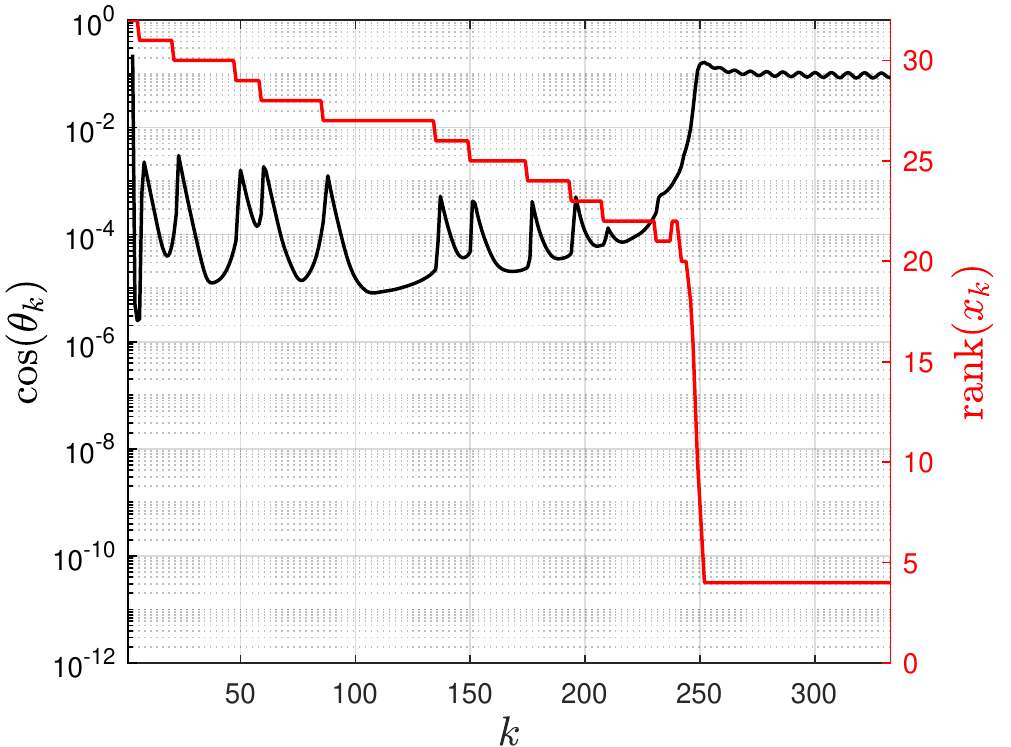}  }     {\hspace{1pt}}
\subfloat[Nuclear norm: $\gamma = 6$]{ \includegraphics[width=0.315\linewidth]{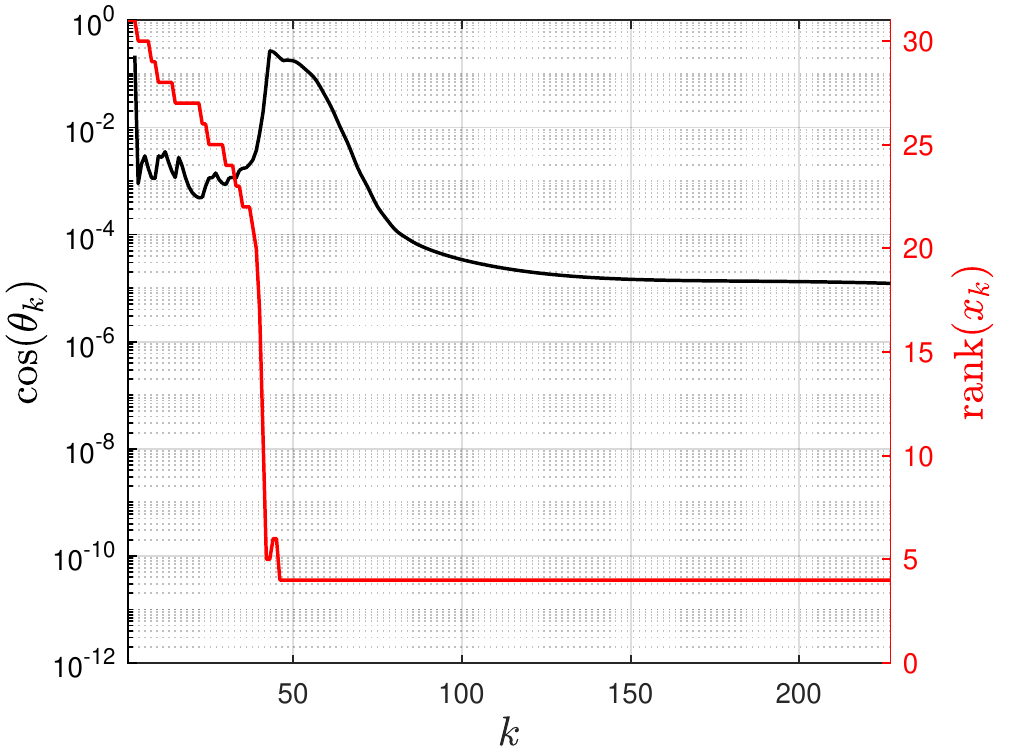}  }     \\
%%%%%%%%%%%%%%%
\caption{Finite activity identification and property of $\theta_k$ for Douglas--Rachford splitting method for solving affine constrained problem. For the plot (b) and (c), the $A$ and $\cx$ in \eqref{eq:R-affine} are the same.} %(a) $\ell_1$-norm. (b) Nuclear norm.}
\label{fig:trajectory-dr}
\end{figure}

% code: Dropbox/Clarice-Jingwei/Ada-Acceleration/matlab/Geometry-FOM/tra_DR.m

\begin{remark}
As the nuclear norm is not polyhedral, it has non-trivial Riemannian Hessian matrix. Therefore, the choices of $\gamma$ affects the eventually behavior of $\seq{\theta_k}$. While for $\ell_1$-norm, the value that $\seq{\theta_k}$ converges to is independent of $\gamma$. 
\end{remark}

Assume now that $R$ is locally $C^{2}$-smooth around the solution, we shall see that different from the above polyhedral case, the choice of $\gamma$ will impact the trajectory of $\seq{\zk}$.

\begin{theorem}\label{thm:trajectory-dr-2}
For problem \eqref{eq:problem-dr} and the Douglas--Rachford splitting algorithm \eqref{eq:dr}, assume conditions \iref{DR:RJ}-\iref{DR:minimizers-nonempty} are true. 
If $R$ is locally $C^2$ around $\xsol$ and $J \in \PSF{\xsol}{\MmJ}$ is partly smooth and condition \eqref{eq:ndc-dr} holds for $J$, then Theorem \ref{thm:linearization-dr} holds. 
If moreover $\gamma$ is chosen such that $\gamma < \frac{1}{\norm{\nabla^2 R(\xsol)}}$, then all the eigenvalues of $\mDR$ are real.
%
%, the angle $\theta_k$ converges to $0$ and $\seq{\zk}$ is a Type I sequence. 
Consequently, provided that $\norm{x_{k+1} - x_k} \asymp \rho^k$ for some $\rho \in ]\sigma_2, 1[$ where   $\sigma_2$ is the second largest eigenvalue of $\mDR$, the angle $\theta_k$ converges to $0$ and $\seq{\zk}$ is a Type I sequence. 

%\begin{enumerate}[label={\rm (\roman{*})}]
%	\item 
%	All the eigenvalues of $\mDR$ are real. 
%	
%	\item For the angle $\theta_k$, we have $\theta_k \to 0$, and $\seq{\zk}$ is a Type I sequence. 
%\end{enumerate}
\end{theorem}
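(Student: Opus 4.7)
The plan is to establish the three assertions of the theorem in sequence: (i) the linearization from Theorem \ref{thm:linearization-dr} still applies; (ii) the matrix $\mDR$ has only real spectrum under the step-size condition; and (iii) the angle convergence follows from the spectral structure, mirroring the argument used in Theorem \ref{thm:trajectory-fb}(ii).

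For (i), I would note that a $C^2$-smooth function $R$ around $\xsol$ is trivially partly smooth relative to the entire space $\bbR^n$, with $\partial R(\xsol) = \{\nabla R(\xsol)\}$, so the non-degeneracy condition on the $R$-side of \eqref{eq:ndc-dr} is automatic. The $J$-side is assumed. Hence the hypotheses of Theorem \ref{thm:linearization-dr} are in force and we obtain the advertised matrix $\mDR$.

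For (ii), the main step is to exploit the reflection representation $\fDR = \sfrac{1}{2}(R_{\gamma R} R_{\gamma J} + \Id)$ with $R_{\gamma R} \eqdef 2\prox_{\gamma R} - \Id$ and similarly for $J$. Linearizing at the limit point gives
\[
\mDR \;=\; \sfrac{1}{2}\bigl(V_R V_J + \Id\bigr),\qquad V_R \eqdef 2 W_R - \Id,\quad V_J \eqdef 2 W_J - \Id,
\]
where $W_R,W_J$ are the Jacobians (in the partly smooth sense) of $\prox_{\gamma R},\prox_{\gamma J}$ at $\xsol$. Since $R$ is locally $C^2$ we have the explicit formula $W_R = (\Id + \gamma \nabla^2 R(\xsol))^{-1}$, which is symmetric. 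The step-size condition $\gamma < 1/\norm{\nabla^2 R(\xsol)}$ forces $\gamma \nabla^2 R(\xsol) \prec \Id$, so $W_R \succ \tfrac{1}{2}\Id$, equivalently $V_R \succ 0$; thus $V_R$ is symmetric positive definite. On the $J$-side, the partial smoothness analysis that produces $\mDR$ in Theorem \ref{thm:linearization-dr} yields a symmetric $V_J$ (this is consistent with $\prox_{\gamma J}$ being the gradient of the Moreau envelope and with the projection-type structure obtained from Definition \ref{dfn:psf}). Given these two facts, the key algebraic observation is that for symmetric positive definite $A$ and symmetric $B$,
\[
AB = A^{1/2}\bigl(A^{1/2} B A^{1/2}\bigr) A^{-1/2}
\]
is similar to the symmetric matrix $A^{1/2} B A^{1/2}$, hence has real spectrum. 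Applying this with $A = V_R$ and $B = V_J$ shows $V_R V_J$ has real eigenvalues, and consequently so does $\mDR$.

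For (iii), once $\mDR$ has real eigenvalues the angle analysis is identical in spirit to the proof of Theorem \ref{thm:trajectory-fb}(ii): writing $v_k = z_k - z_{k-1}$, the linearization gives $v_{k+1} = \mDR v_k + o(\norm{v_k})$, and under the rate assumption $\norm{x_{k+1}-x_k} \asymp \rho^k$ with $\rho \in {]\sigma_2,1[}$, the component of $v_k$ along the leading (real) eigenspace dominates. Thus $v_k/\norm{v_k}$ aligns asymptotically with a real eigenvector of $\mDR$, whence $\cos(\theta_k) \to 1$ and $\seq{\zk}$ is a Type I sequence.

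The main obstacle is step (ii), specifically justifying the symmetry of $V_J$ in the general (non-polyhedral) partly smooth setting; this depends on the precise form of $\mDR$ given in Section \ref{proof:trajectory-dr}, where one must show that the $J$-side Jacobian inherits symmetry from the Moreau envelope structure together with the tangent-space projection $\PJ$ and the Riemannian Hessian of $J$ along $\MmJ$. Once this symmetry is in hand, the positive-definiteness of $V_R$ (which is the actual role of the sharp bound $\gamma < 1/\norm{\nabla^2 R(\xsol)}$) is enough to conclude via the similarity trick above.
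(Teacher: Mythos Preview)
Your proposal is correct and follows essentially the same route as the paper: write $\mDR = \tfrac{1}{2}\Id + \tfrac{1}{2}V_R V_J$ with $V_R = 2(\Id+\gamma\nabla^2 R(\xsol))^{-1}-\Id$ and $V_J = 2\MJ-\Id$, observe that the step-size bound makes $V_R$ symmetric positive definite, and then use the similarity $V_R V_J \sim V_R^{1/2} V_J V_R^{1/2}$ to conclude real spectrum; the angle convergence is then deferred to the Forward--Backward argument exactly as you say. The one point you flag as an obstacle---symmetry of $V_J$---is not actually delicate: from the proof of Theorem~\ref{thm:linearization-dr} one has the explicit form $\MJ = \PTJ{\xsol}(\Id+H_{\barJ})^{-1}\PTJ{\xsol}$, and since $H_{\barJ}$ is symmetric (Lemma~\ref{lem:riemhesspsd}) and $\PTJ{\xsol}$ is an orthogonal projector, $\MJ$ and hence $V_J$ are symmetric.
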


%The proof is also provided at the appendix. 

\begin{remark}$~$
\begin{itemize}
\item The result also holds true for the case when both $R, J$ are smooth. 

\item
When $\gamma \geq  \frac{1}{\norm{\nabla^2 R(\xsol)}}$, $\mDR$ can have complex eigenvalues, however not necessarily for the leading one, as a result the trajectory of $\seq{\zk}$ can be either straight line or spiral. 

% \item The condition on $\gamma$ also coincides with existing work on accelerated DR...

\end{itemize}
\end{remark}

We refer to Figure \ref{fig:success_idr} for example of applying Douglas--Rachford splitting method to solve LASSO problem, on how the choice of $\gamma$ affects the trajectory of $\seq{\zk}$.

\subsection{Primal--Dual splitting} \label{subsec:trajectory-pd}

For problem \eqref{eq:problem-dr}, consider function $J$ is composed with a linear mapping $L$
\beq\label{eq:problem-pd}\tag{\textrm{$\calP_{\mathrm{PD}}$}}
\min_{x \in \bbR^n}~ R(x) + J (Lx) ,
\eeq
where we assume % $\bbR^n$ denotes $n$-dimensional Euclidean space, and
\begin{enumerate}[label = {({${\bf P}$\textbf{.\arabic{*}}})}, ref = {{${\bf P}$\textbf{.\arabic{*}}}}, leftmargin=4em]
\item\label{PD:RJ}
$R \in \lsc\pa{\bbR^n}$ and $J \in \lsc\pa{\bbR^m}$.
\item\label{PD:L}
$L:\bbR^n\rightarrow\bbR^m$ is a linear mapping.
\item\label{PD:minimizers-nonempty}
The inclusion $0 \in \ran \pa{ \partial R + L^T \circ \partial J \circ L }$ holds.
\end{enumerate}
The problem above can be handled efficiently by ADMM, in the literature, another popular approach is the Primal--Dual splitting method. 
The {saddle-point problem} associated to \eqref{eq:problem-pd} reads
\beq\label{eq:saddle-problem}\tag{\textrm{$\calP_{\mathrm{SP}}$}}
\min_{x\in\bbR^n} \max_{w\in\bbR^m}~ R(x) + \iprod{Lx}{w} - J^*(w)  ,
\eeq
where $J^*$ is the Legendre-Fenchel conjugate of $J$. 
If we fully dualize \eqref{eq:problem-pd}, then we obtain its {Fenchel-Rockafellar dual form} 
\beq\label{eq:dual-problem}\tag{\textrm{$\calD_{\mathrm{PD}}$}}
\min_{w\in \bbR^m} R^*(-L^T w) + J^* (w)  .
\eeq
Denote by $\calX$ and $\calW$ the sets of solutions of problem \eqref{eq:problem-pd} and \eqref{eq:dual-problem}, respectively.

Below we describe a Primal--Dual splitting method \cite{chambolle2011first} for solving the saddle point problem.

\begin{center}
\begin{minipage}{0.975\linewidth}
\begin{algorithm}[H]
\caption{A Primal--Dual splitting method} \label{alg:pd}
\KwIn{$\gammaR, \gammaJ > 0$ such that $\gammaR\gammaJ\norm{L}^2 < 1$ and $\tau \in [0,1]$.}
{\noindent{\bf{Initial}}}: $x_0\in\bbR^n$, $w_0\in\bbR^m$\;
{\noindent{\bf{Repeat:}}} 
\beq\label{eq:pd}
\begin{aligned}
\xkp &= \prox_{\gammaR R} \pa{ \xk - \gammaR L^T \wk } , \\
\xbarkp &= \xkp + \tau(\xkp - \xk) , \\
\wkp &= \prox_{\gammaJ J^*} \pa{ \wk  + \gammaJ L \xbarkp } , 
\end{aligned}
\eeq
% $k = k + 1$\;
{\noindent{\bf{Until:}}} $\norm{\xkp-\xk}+\norm{\wkp-\wk} \leq \tol$.
\end{algorithm}
\end{minipage}
\end{center}

%Define the augmented variable $\zk \eqdef \begin{pmatrix} \xk \\ \wk \end{pmatrix} $, and the following operators
Define the following augmented variable $\zk$ and operators
\beq\label{eq:A-B-calV}
\zk \eqdef \begin{pmatrix} \xk \\ \wk \end{pmatrix} ,\enskip
\bmA \eqdef \begin{bmatrix} \partial R & L^* \\ -L & \partial J^* \end{bmatrix}  \qandq
%\bmB \eqdef \begin{bmatrix} B & 0 \\ 0 & D^{-1} \end{bmatrix} ,~~
\bcV \eqdef \begin{bmatrix} \Id_{n}/\gammaR & -L^* \\ -L & \Id_{m}/\gammaJ \end{bmatrix} ,
\eeq
where $\Id_{n}, \Id_{m}$ are the identity operators on $\bbR^n$ and $\bbR^m$, respectively.  
%\todo{definition of $A$ and $C^{-1}$ are missing. I'm a bit confused, shouldn't you have subdifferentials inside $\bcV$ and $A = C^{-1} = 0$?}
We have $\bmA$ is maximal monotone \cite{briceno2011monotone} and $\bcV$ is self-adjoint and $\nu$-positive definite for $\nu = \pa{ 1 - \mathsmaller{\sqrt{\gammaR\gammaJ\norm{L}^2}} } \min\ba{ \frac{1}{\gammaR} , \frac{1}{\gammaJ} }$ \cite{vu2011splitting,combettes2014variable}. 
The fixed-point characterization of \eqref{eq:pd} when $\tau = 1$ reads
\beq\label{eq:fbview-pd}
\zkp
= \pa{\bcV + \bmA}^{-1} {\bcV} (\zk)
= \pa{\bId + \bcV^{-1} \bmA}^{-1} (\zk) ,
\eeq
which is a special case of proximal point algorithm \cite{combettes2014variable,combettes2014forward}. % When $F=0, G=0$, then it reduces to the metric Proximal Point Algorithm discussed in \cite{chambolle2011first,he2012convergence}. 
We also refer to \cite{vu2011splitting,combettes2014forward} for more general form of Primal--Dual splitting methods.

%Denote $\barL \eqdef \PJc L \PR$, and
%%
%\beq\label{eq:mtx-M-pd}
%\zk \eqdef \begin{pmatrix} \xk \\ \wk \end{pmatrix}
%\qandq
%\mPD
%\eqdef
%\begin{bmatrix}
%M_{\barR}  &  - \gammaR M_{\barR} \barL^T  \\
%\gammaJ(1+\tau) M_{\barJc} \barL M_{\barR}  - \tau \gammaJ M_{\barJc} \barL  & M_{\barJc}  - \gammaJ \gammaR(1+\tau) M_{\barJc} \barL M_{\barR} \barL^T
%\end{bmatrix} .
%\eeq
%
%The fixed-point formulation of Primal--Dual splitting with respect to $\zk$ is
%\beqn%\label{eq:fpi-dr}
%\zkp = \fDR(\zk) 
%\qwhereq
%\fDR \eqdef \sfrac{1}{2} \Pa{ (2\prox_{\gamma R} - \Id) (2\prox_{\gamma J} - \Id) + \Id } .
%\eeqn

%\begin{remark}
%In recent years, a class of Primal--Dual splitting methods are proposed in the literature to handle ... by Combettes-Vu...
%\end{remark}

\subsubsection{Linearization of Primal--Dual splitting}

Let $(\xsol, \wsol) \in \calX \times \calW$ be a saddle-point, the first-order optimality condition entails $-L^T \wsol  \in {\partial R(\xsol)} $ and $ L\xsol  \in {\partial J^*(\wsol)}$. We impose the following non-degeneracy condition
\beq\label{eq:ndc-pd}\tag{$\textrm{ND}_{_{\mathrm{PD}}}$}
-L^T \wsol  \in \ri\Pa{\partial R(\xsol)}  \qandq
L\xsol  \in \ri\Pa{\partial J^*(\wsol)} 	.
\eeq
% Suppose $R \in \PSF{\xsol}{\MxR}$ and $J \in \PSF{\wsol}{\MwJc}$ are partly smooth, denote $T_{\xsol}^R, T_{\wsol}^{J^*}$ the tangent spaces of $\MxR, \MwJc$ at $\xsol, \wsol$, respectively.
% Given a saddle point $(\xsol, \wsol) \in \calX \times \calW$, define the following two functions
% \beq\label{eq:barR-barJc}
% \barR(x) \eqdef R(x) - \iprod{x}{-L^T \wsol)}  \qandq
% \barJc(w) \eqdef J^*(w) - \iprod{w}{L \xsol } ,
% \eeq
% we have the following Riemannian Hessian of $\barR$ and $\barJc$,
% \beq\label{eq:barP-barQ-pd}
% H_{\barR} \eqdef \gammaR \PR \nabla^2_{\MxR} \barR \pa{\xsol} \PR \qandq
% H_{\barJc} \eqdef \gammaJ \PJc \nabla^2_{\MvJc} \barJc\pa{\vsol} \PJc ,
% \eeq
% which are symmetric positive semi-definite.
% 
% Define,
% \beq\label{eq:barU-barV-pd}
% M_{\barR} \eqdef \pa{\Idn + H_{\barR}}^{-1} \qandq
% M_{\barJc} \eqdef \pa{\Idm + H_{\barJc}}^{-1}  .
% \eeq
%
% Denote $\barL \eqdef \PJc L \PR$, and
% %
% \beq\label{eq:mtx-M-pd}
% \zk \eqdef \begin{pmatrix} \xk \\ \wk \end{pmatrix}
% \qandq
% \mPD
% \eqdef
% \begin{bmatrix}
% M_{\barR}  &  - \gammaR M_{\barR} \barL^T  \\
% \gammaJ(1+\tau) M_{\barJc} \barL M_{\barR}  - \tau \gammaJ M_{\barJc} \barL  & M_{\barJc}  - \gammaJ \gammaR(1+\tau) M_{\barJc} \barL M_{\barR} \barL^T
% \end{bmatrix} .
% \eeq
%
Let $\MxR$ and $\MwJc$ be $C^2$-smooth manifolds around $\xsol$ and $\wsol$, respectively.

\begin{theorem}\label{thm:linearization-pd}
	For problem \eqref{eq:problem-pd} and the Primal--Dual splitting algorithm \eqref{eq:pd}, suppose assumptions \iref{PD:RJ}-\iref{PD:minimizers-nonempty} are true. If $\tau = 1$ and $\gammaR, \gammaJ$ are chosen such that $\gammaR\gammaJ \norm{L}^2 < 1$, then $(\xk,\wk) \to (\xsol, \wsol) \in \calX \times \calW$.  
	If moreover, $R \in \PSF{\xsol}{\MxR}$ and $J^* \in \PSF{\wsol}{\MwJc}$ are partly smooth and condition \eqref{eq:ndc-pd} holds, then for all $k$ large enough there exists a matrix $\mPD$ such that
	\beqn%\label{eq:lin-pd}
	\zkp-\zk = \mPD (\zk-\zkm) + o(\norm{\zk-\zkm}) .
	\eeqn
\end{theorem}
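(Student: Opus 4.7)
My plan is to apply the four-step scheme of Algorithm~\ref{alg:framework}, following the Douglas--Rachford argument of Theorem~\ref{thm:linearization-dr} as a template. The first step is global convergence of $\zk=(\xk,\wk)^{\top}$. When $\tau=1$ the iteration \eqref{eq:pd} coincides with the proximal-point step \eqref{eq:fbview-pd} for the maximal monotone operator $\bmA$ in the metric $\iprod{\cdot}{\cdot}_{\bcV}$, and the condition $\gammaR\gammaJ\norm{L}^{2}<1$ ensures that $\bcV$ is self-adjoint and $\nu$-positive definite with the explicit modulus $\nu$ recalled after \eqref{eq:A-B-calV}. Consequently $(\bId+\bcV^{-1}\bmA)^{-1}$ is firmly non-expansive in that inner product and, in finite dimension, $\zk\to\zsol$ with $0\in\bmA(\zsol)$, i.e.\ $(\xsol,\wsol)\in\calX\times\calW$ (see \cite{combettes2014variable,vu2011splitting}).

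The second step is finite identification of both shadows. Unfolding \eqref{eq:pd} gives $\xkp=\prox_{\gammaR R}(\xk-\gammaR L^{T}\wk)$ and $\wkp=\prox_{\gammaJ J^{*}}(\wk+\gammaJ L(2\xkp-\xk))$; by continuity of the prox the two arguments converge to $\xsol-\gammaR L^{T}\wsol$ and $\wsol+\gammaJ L\xsol$, and the relative-interior inclusions \eqref{eq:ndc-pd} are exactly the non-degeneracy hypotheses required to invoke the standard proximal identification lemma for partly smooth functions (cf.\ \cite{liang2017activity,liang2014local}). This yields $\xk\in\MxR$ and $\wk\in\MwJc$ for all $k$ sufficiently large.

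Once both shadows lie on their active manifolds, partial smoothness gives the first-order expansions
\[
\prox_{\gammaR R}(\xsol+h)=\xsol+W_{R}h+o(\norm{h}),\qquad \prox_{\gammaJ J^{*}}(\wsol+h)=\wsol+W_{J^{*}}h+o(\norm{h}),
\]
where $W_{R}$ and $W_{J^{*}}$ are explicit linear operators built from the tangent-space projectors $\PT{T_{\xsol}^{R}}$, $\PT{T_{\wsol}^{J^{*}}}$ and the restricted Riemannian Hessians of $R$ and $J^{*}$ at $\xsol$, $\wsol$ (cf.\ \cite[\S5]{liang2016thesis}). Substituting the primal expansion into $\xbarkp=2\xkp-\xk$ and then into the dual step produces a $2\times 2$ block matrix $\mPD$ in the variables $(W_{R},W_{J^{*}},\gammaR,\gammaJ,L)$ such that $\zkp-\zsol=\mPD(\zk-\zsol)+o(\norm{\zk-\zsol})$. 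Subtracting the analogous identity at $k-1$ and using that, after identification, $\norm{\zk-\zsol}=O(\norm{\zk-\zkm})$ --- a standard consequence of local linear convergence of Primal--Dual splitting post-identification, established in \cite{liang2016thesis} in the same spirit as Forward--Backward and Douglas--Rachford --- collapses the two remainders into a single $o(\norm{\zk-\zkm})$, giving the stated identity.

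The main obstacle is the linearization step. Because the natural metric is $\bcV$ rather than Euclidean and the primal and dual components are intertwined by $L$ (and through $\xbarkp=2\xkp-\xk$), one must substitute the two proximal expansions into one another while carefully tracking the $o$-remainders, so that they combine into a single well-defined block operator $\mPD$ without producing uncontrolled $O(1)$ cross-terms. A secondary subtlety is that $\mPD$ need not be strictly contractive on all of $\bbR^{n+m}$; the passage from fixed-point to displacement form must therefore be localized to the subspace parallel to $T_{\xsol}^{R}\times T_{\wsol}^{J^{*}}$, where contraction does hold and the transversal $o$-terms are controlled.
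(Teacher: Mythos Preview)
Your outline (convergence via the $\bcV$-metric proximal-point view, identification via \eqref{eq:ndc-pd}, linearization via partial smoothness) matches the paper's, but the linearization step takes a different route. You expand the proximal operators around $(\xsol,\wsol)$ to obtain $\zkp-\zsol=\mPD(\zk-\zsol)+o(\norm{\zk-\zsol})$, then subtract consecutive instances and invoke $\norm{\zk-\zsol}=O(\norm{\zk-\zkm})$ to collapse the remainder. The paper instead uses a proximal-mapping linearization lemma (Lemma~\ref{lem:lin-generalised-ppa}) that is already stated in \emph{displacement form}: for $\xk=\prox_{\gamma R}^{A}(\wk)$ it gives $A_{T_{\xbar}}(\xk-\xkm)=M_{\barR}(\wk-\wkm)+o(\norm{\wk-\wkm})$ directly, via parallel translation and Riemannian Taylor expansion between $\xk$ and $\xkm$ (not between $\xk$ and $\xsol$). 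Applying this to the primal and dual prox steps and substituting one into the other yields the block expression for $\zkp-\zk$ immediately; the composite $o$-terms are controlled by elementary inequalities of the type $\norm{\xbarkp-\xbark}\leq 5\norm{\xk-\xkm}+4\gammaR\norm{L}\norm{\wk-\wkm}$ and $\norm{\xk-\xkm}+\norm{\wk-\wkm}\leq\sqrt{2}\,\norm{\zk-\zkm}$.

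The practical difference is that your route needs the estimate $\norm{\zk-\zsol}=O(\norm{\zk-\zkm})$, i.e.\ local linear convergence, which is not among the hypotheses of the theorem. You cite \cite{liang2016thesis} for this, but the local-linear-convergence results there are themselves derived from the fixed-point linearization, so there is a mild circularity (or at least an unnecessary dependence on spectral properties of $\mPD$ that the present statement does not assume). The paper's displacement-form lemma sidesteps this entirely: no rate on $\zk\to\zsol$ is used, and your ``secondary subtlety'' about localizing contraction to $T_{\xsol}^{R}\times T_{\wsol}^{J^{*}}$ never arises, because the transversal control happens inside the prox lemma (via $\PT{T_{\xkm}}\to\PT{T_{\xbar}}$ and $H_{\barR,k-1}\to H_{\barR}$), not at the level of $\mPD$. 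Your approach can be made to work, but the paper's is more direct and proves the stronger statement.
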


See Section \ref{proof:trajectory-pd} for the proof and expression of $\mPD$. We refer to \cite{liang2018localPD} for detailed discussions on the local linear convergence of a class of Primal--Dual splitting methods.

\subsubsection{Trajectory of Primal--Dual splitting}

The trajectory of Primal--Dual splitting also depends on the explicit analysis of the spectrum of $\mPD$ which is only available when $R, J^*$ are locally polyhedral around the saddle point. 
Let $R \in \PSF{\xsol}{\MmR}, J \in \PSF{\wsol}{\MwJc}$, and denote $T_{\xsol}^R, T_{\wsol}^{J^*}$ the tangent spaces of $\MmR, \MwJc$ at $\xsol$ and $\wsol$, respectively. Denote $\barL \eqdef \PJc L \PR$. 

\begin{theorem}\label{thm:trajectory-pd-1}
	For problem \eqref{eq:problem-pd} and the Primal--Dual iteration \eqref{eq:pd}, assume Theorem \ref{thm:linearization-pd} holds. 
If, moreover, $R, J^*$ locally are polyhedral around $(\xsol, \wsol)$, then $\zkp-\zk = \mPD (\zk-\zkm) $ with
\[
\mPD = \begin{bmatrix} \Id_{n} & - \gammaR \barL^T \\ \gammaJ \barL  & \Id_{m} - (1+\tau)\gammaJ\gammaR \barL \barL^T \end{bmatrix} .
\] 
Moreover, $\mPD$ is block diagonalizable with the leading block being $2\times 2$ which corresponds to elliptical rotation. 
Then there exist $\utheta, \otheta$ such that eventually $\theta_k \in [\utheta, \otheta]$, and $\seq{\zk}$ is a Type III sequence. 
\end{theorem}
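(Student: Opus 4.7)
\textbf{Proof proposal for Theorem \ref{thm:trajectory-pd-1}.} The plan is to start from the linearization provided by Theorem \ref{thm:linearization-pd} and strengthen it in the polyhedral setting, then diagonalize $\mPD$ block-wise via the SVD of $\barL$ to pin down the spectrum. For the first step, recall that a polyhedral function $R$ has, locally around $\xsol$, a proximal map that is an affine projection: $\prox_{\gammaR R}(x) = \PR(x) + b_R$ for some constant shift $b_R$ on a neighborhood, and analogously $\prox_{\gammaJ J^*}(w) = \PJc(w) + b_{J^*}$. After finite identification (guaranteed by \eqref{eq:ndc-pd}), I would substitute these affine forms directly into \eqref{eq:pd} and subtract two successive iterates; the constant shifts cancel, and the differences $\xkp-\xk$, $\wkp-\wk$ satisfy an exact linear recursion with no residual, so the $o$-term in Theorem \ref{thm:linearization-pd} disappears. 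A routine bookkeeping (using $\PR L^T = (\barL \PJc)^T$ combined with the projection identities and $\tau$-weighted extrapolation on the primal side) then produces the announced block expression for $\mPD$.

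For the spectral analysis, I would introduce the SVD $\barL = U\Sigma V^T$ where $\Sigma = \diag(\sigma_1,\dotsc,\sigma_r,0,\dotsc)$ and change basis via $\diag(V,U)$. This block-diagonalizes $\mPD$ into a direct sum of two trivial identity blocks (from the kernel directions of $\barL$ and $\barL^T$) together with $2\times 2$ blocks
\[
B_i \eqdef \begin{bmatrix} 1 & -\gammaR \sigma_i \\ \gammaJ \sigma_i & 1-(1+\tau)\gammaJ\gammaR \sigma_i^2\end{bmatrix},\qquad i=1,\dotsc,r,
\]
one for each nonzero singular value $\sigma_i$ of $\barL$. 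A direct computation gives $\tr(B_i) = 2-(1+\tau)\gammaJ\gammaR\sigma_i^2$ and $\det(B_i)=1-\tau\gammaJ\gammaR\sigma_i^2$, hence discriminant
\[
\Delta_i = (1+\tau)^2\gammaJ^2\gammaR^2\sigma_i^4 - 4(1+\tau)\gammaJ\gammaR\sigma_i^2 + 4\tau\gammaJ\gammaR\sigma_i^2,
\]
which is strictly negative under the step-size condition $\gammaJ\gammaR\norm{L}^2<1$ (plug in $\tau\in[0,1]$ and use $\sigma_i^2 \le \norm{L}^2$). Thus each $B_i$ has complex-conjugate eigenvalues $\rho_i e^{\pm i\varphi_i}$ with $\rho_i^2=\det(B_i)<1$.

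The last step is to upgrade \emph{complex eigenvalues} to \emph{elliptical rotation}. A $2\times 2$ real matrix with complex spectrum is always similar to $\rho_i R(\varphi_i)$ with $R(\varphi_i)$ a rotation; what distinguishes Type II (logarithmic spiral) from Type III (elliptical spiral) is whether the conjugating change of basis can be taken \emph{orthogonal}. I would show by direct calculation that $B_i^T B_i \neq B_i B_i^T$ as soon as $\gammaR\neq\gammaJ$ or $\tau\neq 1$ (the diagonal entries of $B_iB_i^T$ differ by $(\gammaR^2-\gammaJ^2)\sigma_i^2+((1+\tau)\gammaJ\gammaR\sigma_i^2)^2-2(1+\tau)\gammaJ\gammaR\sigma_i^2$, which is generically nonzero), so $B_i$ is non-normal and the invariant ellipse for $B_i$ is a genuine, non-circular ellipse. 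Combined with the general sequence-trajectory analysis of linear systems in Section~\ref{sec:trajectory-ls} of the appendix, this implies that the leading mode of $\mPD$ is a contractive elliptical rotation with period $2\pi/\varphi_1$, and consequently $\theta_k = \angle(\vk,\vkm)$ oscillates periodically, up to exponentially decaying transients, in a sub-interval $[\utheta,\otheta] \subset ]0,\pi/2[$ determined by the eccentricity of the leading ellipse. This identifies $\seq{\zk}$ as a Type III sequence.

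The main obstacle I anticipate is the passage from spectrum to trajectory: verifying that the leading eigenpair actually dominates (i.e.\ that the smallest-angle block corresponds to the largest modulus $\rho_i$, which needs monotonicity of $\rho_i$ in $\sigma_i$) and that the oscillation of $\theta_k$ is confined to a strict subinterval of $]0,\pi/2[$ rather than degenerating. This requires careful control of the non-normality parameters of $B_1$ and is the content I would import from the appendix.
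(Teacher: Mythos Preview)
Your approach is essentially the paper's: linearize via the locally-affine structure of polyhedral proximal maps (the paper phrases this as Lemma~\ref{lem:lin-generalised-ppa} with vanishing Riemannian Hessians, which collapses to your projection-plus-constant picture), block-diagonalize $\mPD$ through the SVD of $\barL$ into $2\times2$ blocks $D_i$, and compute that each has complex eigenvalues of modulus $\sqrt{1-\tau\gammaJ\gammaR\sigma_i^2}<1$. The only departure is your final step: rather than the non-normality check, the paper simply recognizes $\mPD$ as a Type~III matrix in the sense of Definition~\ref{def:type-iii-a}, verifies Assumption~\ref{assumption-type-iii-a}, and invokes Proposition~\ref{prop:type-iii-a} directly---that proposition already supplies the composite circular--elliptical rotation factorization of the leading block and the explicit interval $[\utheta,\otheta]$, so the non-normality detour (while a correct intuition for why this is not Type~II) is unnecessary.
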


%See Section \ref{proof:trajectory-pd} for the proof. 

\begin{remark}$~$
\begin{itemize}
\item 
Let $\sigma$ be the leading eigenvalue of $\barL \barL^T$, then the leading block of the decomposition of $\mPD$ reads
\[
B = \begin{bmatrix} 1 & - \gammaR \sigma \\ \gammaJ \sigma & 1 - (1+\tau) \gammaR\gammaJ \sigma^2 \end{bmatrix} .
\]
Owing to Proposition \ref{prop:type-iii-a}, there exist some $\psi, \phi \in [0, \pi/2]$ and $l,s >0$ such that
\[
B
= \sfrac{1}{\sqrt{ 1- \tau \gammaJ\gammaR \sigma^2 }}
\begin{bmatrix} \cos(\psi) & - \sin(\psi) \\ \sin(\psi) & \cos(\psi) \end{bmatrix} 
\begin{bmatrix} \cos(\phi) & \tfrac{s}{l} \sin(\phi) \\ - \tfrac{l}{s}\sin(\phi) & \cos(\phi) \end{bmatrix} , 
\]
with 
\[
\begin{aligned}
\psi &= \mathrm{arccot}\bPa{ - \tfrac{ \gammaJ-\gammaR }{ (1+\tau) \gammaR\gammaJ \sigma } }  , \\% ~~ 
\phi &= \arccos\bPa{ \tfrac{(\gammaR+\gammaJ) \sigma \sin(\psi) + (2-(1+\tau) \gammaR\gammaJ \sigma^2)\cos(\psi) }{2{\sigma}} } \\
%\qandq 
\sfrac{s}{l} &= \tfrac{1-(1+\tau) \gammaR\gammaJ \sigma^2}{\sin(\psi)\sin(\phi) {\sigma}} - \cot(\psi)\cot(\phi).
\end{aligned}
\]
This means that $B$ is a composition of circular rotation and elliptical rotation discussed in Proposition \ref{prop:comp-circular-elliptical}.  We can furthermore show (by  invoking  Proposition \ref{prop:elliptical-rotation} and \ref{prop:comp-circular-elliptical}) that $\utheta = \psi - \chiM, \otheta = \psi - \chim $ with 
$$
\textstyle \cos(\chiM)
= \frac{ \pa{ \frac{s}{2l} + \frac{l}{2s} } \cos(\phi) - \abs{ \frac{s}{2l} - \frac{l}{2s} } }{ \ssqrt{ \sin^2(\phi)  + \pa{\pa{ \frac{s}{2l} + \frac{l}{2s} } \cos(\phi) - \abs{ \frac{s}{2l} - \frac{l}{2s} } }^2 } }
\qandq
\cos(\chim)
= \frac{ \pa{ \frac{s}{2l} + \frac{l}{2s} } \cos(\phi) + \abs{ \frac{s}{2l} - \frac{l}{2s} } }{ \ssqrt{ \sin^2(\phi)  + \pa{\pa{ \frac{s}{2l} + \frac{l}{2s} } \cos(\phi) + \abs{ \frac{s}{2l} - \frac{l}{2s} } }^2 } }   .
$$

%The expression of the angles $\utheta, \otheta$ can be found in Section \ref{proof:trajectory-pd} the proofs of the theorem.  \todo{the statement with  $\utheta, \otheta$  is vacuous, unless we can say something more interesting about the values $\utheta, \otheta$ }

\item 
Similar to the case of Douglas--Rachford splitting, the trajectory of Primal--Dual, when both $R, J^{*}$ are locally polyhedral, is obtained via the explicit analysis of the spectrum of $\mPD$ which is available when $R, J^{*}$ are general partly smooth functions. 
% 
%Moreover, unlike the case of Douglas--Rachford, when one of $R, J^*$ is locally $C^2$-smooth, the trajectory of Primal--Dual splitting remains a spiral.  \todo{this last statement is an observation only}
\end{itemize}
\end{remark}

\begin{example}
We continue using the problem \eqref{eq:R-affine} in Example \ref{eg:exp-dr} to demonstrate the trajectories of Primal--Dual splitting method. 
The observations are shown in Figure \ref{fig:trajectory-pd}, 
\begin{itemize}
\item For $\ell_1$-norm, $\theta_k$ eventually oscillates in an interval which complies with our result in Theorem \ref{thm:trajectory-pd-1}. 
\item For nuclear norm, though it is not covered by our result as nuclear norm is not polyhedral, locally the value of $\theta_k$ also oscillates. 
\end{itemize}
\end{example}

\vspace{-4mm}

%%%%%%%%%%%%%%%
\begin{figure}[!ht]
\centering
\subfloat[$\ell_1$-norm]{ \includegraphics[width=0.425\linewidth]{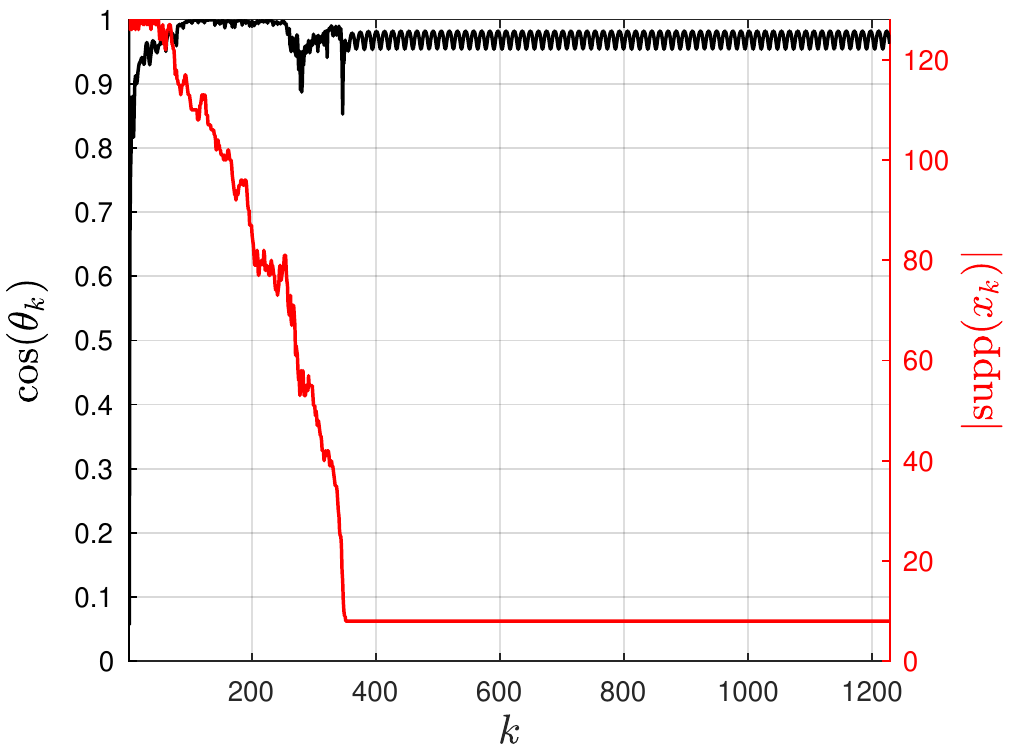}  }     {\hspace{6pt}}
\subfloat[Nuclear norm]{ \includegraphics[width=0.425\linewidth]{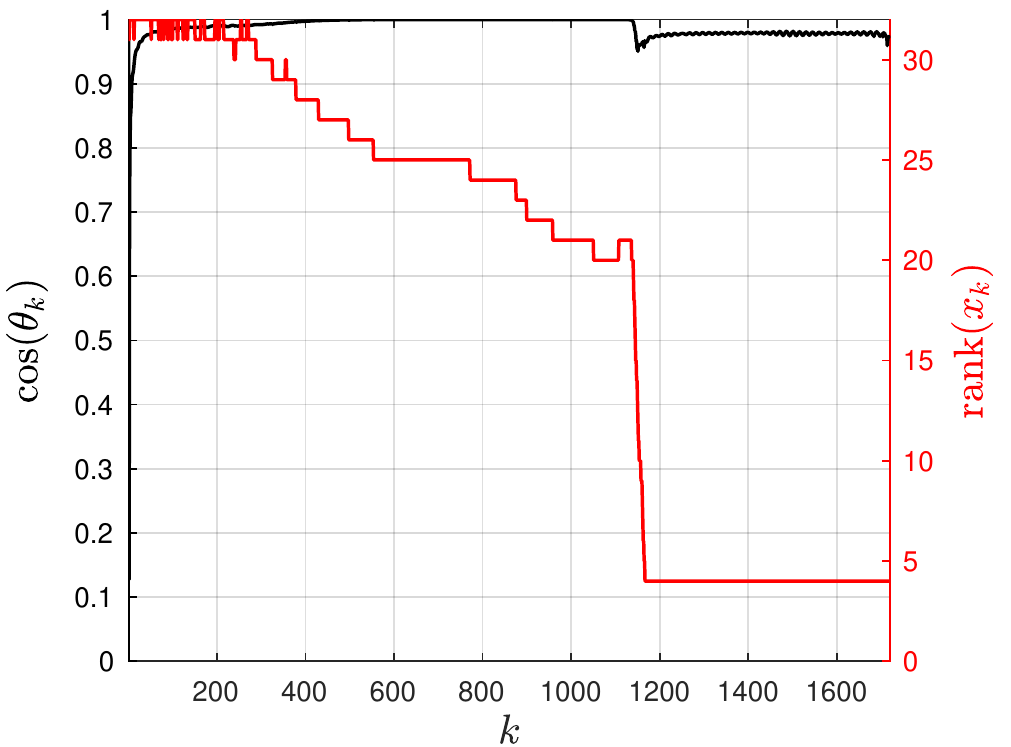}  }     \\
%%%%%%%%%%%%%%%
\caption{Finite activity identification and property of $\theta_k$ for Primal--Dual splitting method for solving affine constrained problem. }% (a) $\ell_1$-norm. (b) Nuclear norm.}
\label{fig:trajectory-pd}
\end{figure}

% code: Dropbox/Clarice-Jingwei/Ada-Acceleration/matlab/Geometry-FOM/tra_PD.m

% \begin{remark}
% Trajectory of other first-order methods, such as GFB, TOS...
% \end{remark}

%Now we consider the case one of the two functions, say $R$, is smooth. 
%Suppose $R$ is globally smooth differentiable and locally $C^2$-smooth around $\xsol$, define $M_{R} = (\Id + \gammaR \nabla^2 R \pa{\xsol})^{-1}$ and $\barL \eqdef \PJc L$, and
%%
%\beq\label{eq:mtx-M-pd}
%\mPD
%\eqdef 
%\begin{bmatrix}
%M_{R}  &  - \gammaR M_{R} \barL^T  \\
%\gammaJ(1+\tau) M_{\barJc} \barL M_{R}  - \tau \gammaJ M_{\barJc} \barL  & M_{\barJc}  - \gammaJ \gammaR(1+\tau) M_{\barJc} \barL M_{R} \barL^T
%\end{bmatrix} .
%\eeq
%%
%
%
%\begin{proposition}\label{prop:trajectory-pd}
%	For the Primal--Dual iteration \eqref{eq:pd}, suppose assumptions \iref{PD:RJ}-\iref{PD:minimizers-nonempty} are true. If $\tau = 1$ and $\gammaR, \gammaJ$ are chosen such that $\gammaR\gammaJ \norm{L}^2 < 1$, then $(\xk,\wk) \to (\xsol, \wsol) \in \calX \times \calW$.  
%	%
%	If moreover, $R$ is locally $C^2$-smooth around $\xsol$, $J^* \in \PSF{\wsol}{\MwJc}$ are partly smooth and condition \eqref{eq:ndc-pd} holds for $J^*$, then 
%	\begin{enumerate}[label={\rm (\roman{*})}]
%	\item 
%	All the eigenvalues of $\mPD$ are real if we choose...
%	
%	\item 
%	For the angle $\theta_k$, we have $\theta_k \to 0$. 
%	
%	\end{enumerate}
%%
%\end{proposition}

%%%%% ------------------------------
%%\part{Adaptive acceleration for first-order methods}

% !TEX root = ../A2FoM.tex

\section{The failure of inertial technique}\label{sec:failure}

The trajectory results from previous section provide a geometric explanation why inertial acceleration works for (proximal) gradient descent methods but not the others. For (proximal) gradient descent, as the trajectory of the generated sequence eventually approximates a straight line, the direction of $\xk-\xkm$ points towards the solution, hence moving certain distance along the inertial direction provides acceleration. However, when the trajectory of the generated sequence is a spiral, as for the cases of Douglas--Rachford and Primal--Dual splitting, the direction of $\zk-\zkm$ does not point toward the solution, hence fail to provide acceleration.

In this section, we consider Douglas--Rachford and two different problems to demonstrate the outcomes of inertial acceleration. 
We show that the performance of inertial Douglas--Rachford is both problem and parameter dependent. 
Specializing the inertial scheme \eqref{eq:ifom} to the case of Douglas--Rachford splitting, we obtain an inertial Douglas--Rachford splitting scheme described in Algorithm \ref{alg:idr}.

\begin{center}
	\begin{minipage}{0.975\linewidth}
		\begin{algorithm}[H]
			\caption{An inertial Douglas--Rachford splitting} \label{alg:idr}
			\KwIn{$\gamma > 0$.}
			{\noindent{\bf{Initial:}}} $z_0 \in \bbR^n,~ \bar{z}_{0} = z_{0},~ x_0 = \prox_{\gamma J} (\bar{z}_0)$\;
			{\noindent{\bf{Repeat:}}} 
				\beq\label{eq:idr}%\tag{\textrm{DR}}
				\begin{aligned}
					% \xk &= \prox_{\gamma J} (\zbark) , \\
					\ukp &= \prox_{\gamma R}\pa{2\xk - \zbark} , \\
					\zkp &= \zbark + \ukp - \xk , \\
					\zbarkp &= \zkp + \ak (\zkp - \zk) , \\
					\xkp &= \prox_{\gamma J} (\zbarkp) ,
				\end{aligned}
				\eeq
				%$k = k + 1$\;
			{\noindent{\bf{Until:}}} $\norm{\zkp-\zk} \leq \tol$.
		\end{algorithm}
	\end{minipage}
\end{center}

%Over the years, the success of Nesterov's acceleration schemes and FISTA \cite{nesterov83,fista2009} has motivated people to combine inertial technique with other first-order methods, one simple combination of the two is through the fixed-pint iteration as introduced in \eqref{eq:ifom}. Specialize \eqref{eq:ifom} to the two-point case, we obtain the following inertial FoM scheme
%\beq\label{eq:ifom-1}
%\begin{aligned}
%\zbark &= \zk + \ak(\zk - \zkm) , \\
%\zkp &= \calF(\zbark) .
%\end{aligned}
%\eeq
%However, different from \cite{nesterov83,fista2009}, it is very difficult to obtain acceleration guarantee for the above iteration for general first-order methods. More important, \eqref{eq:ifom-1} may fail to provide acceleration. 
%In what follows, we discuss two examples, on Primal--Dual splitting and Douglas--Rachford splitting respectively, to demonstrate the failure of \eqref{eq:ifom-1}. 
%To conclude the section, we also discuss the relaxation...

%For more discussions on the inertial versions of Douglas--Rachford and Primal--Dual splitting methods, we refer to \cite[Chapter 4]{liang2016thesis}. 

\vspace{-3mm}

\subsection{Feasibility problem}

We first consider a feasibility problem of two subspaces.  
For simplicity, consider the problem in $\bbR^2$: let $T_1, T_2 \subset \bbR^2$ be two intersecting lines. The problem of finding the common point of $T_1, T_2$ can be written as 
\beq\label{eq:feasibility}
%\find ~~ x \in \bbR^n ~\enskip \textrm{such that} \enskip x \in \T_1 \cap T_2  .
\min_{x \in \bbR^2}~ \iota_{T_1}(x) + \iota_{T_2} (x) . 
\eeq
%Denote $\proj_{T_1}, \proj_{T_2}$ the projection operators onto $T_1, T_2$, respectively. 
As the proximal mapping of indicator functions is projection, the above problem can be easily handle by  Douglas--Rachford splitting method. 
%Simplifying \eqref{eq:dr} to the problem we get 
%\beq\label{eq:dr-feas}
%% \left\{
%\begin{aligned}
%\ukp &= \proj_{T_2} \pa{2\xk - \zk}  , \\
%\zkp &= \zk + \ukp - \xk    , \\
%\xkp &= \proj_{T_1} (\zkp)  ,
%\end{aligned}
%% \right.
%\eeq
%whose fixed-point iteration reads
%\[
%\zkp = \sfrac{1}{2} \Pa{ \Id + \pa{ 2\proj_{T_2} - \Id }\pa{ 2\proj_{T_1} - \Id }} (\zk) . 
%\]
%%
%Then specializing \eqref{eq:ifom} to the above equation we obtain an inertial Douglas--Rachford splitting scheme
%\beq\label{eq:its-midr}
%\begin{aligned}
%	% \zbark &= \zk + \msum_{i=0}^{q-1} a_{i,k}(z_{k-i} - z_{k-i-1}) ,	  \\
%	\zbark &= \zk + \ak (\zk - \zkm) , \\
%	\zkp &= \sfrac{1}{2} \Pa{ \Id + \pa{ 2\proj_{T_2} - \Id }\pa{ 2\proj_{T_1} - \Id }} (\zbark)  .
%\end{aligned}
%\eeq
%% The above scheme recovers the unrelaxed inertial Douglas--Rachford splitting of \cite{boct2015inertialDR} when $s = 1$.

For the inertial Douglas--Rachford \eqref{eq:idr}, we consider $\ak \equiv 0.3$ and compare it with the standard Douglas--Rachford splitting scheme \eqref{eq:dr}. The comparison is provided in Figure \ref{fig:failure_idr}, with the left figure showing the convergence speed of $\norm{\zk-\zkm}$ and right figure the trajectory of sequence $\seq{\zk}$. 
We observe that
\begin{itemize}% [label={\rm (\roman{*})}]
	\item The inertial Douglas--Rachford with $\ak\equiv0.3$ (\emph{gray} line) is slower than the standard scheme (\emph{black} line). Moreover, it can be shown that for this feasibility example, the inertial Douglas--Rachford is slower as long as $\ak > 0$; See Section A of \cite{PoonLiang2019b}. 
	\item The slow performance of inertial Douglas--Rachford can also be visualized by the trajectory of the sequence $\seq{\zk}$. For inertial Douglas--Rachford, it increases the length of the trajectory of $\seq{\zk}$, see the difference between \emph{gray} and \emph{black} spirals. 
\end{itemize}

%\vgap

%%%%%%%%%%%%%%%
\begin{figure}[!ht]
\centering
\subfloat[Convergence of $\norm{\zk-\zkm}$]{ \includegraphics[width=0.45\linewidth]{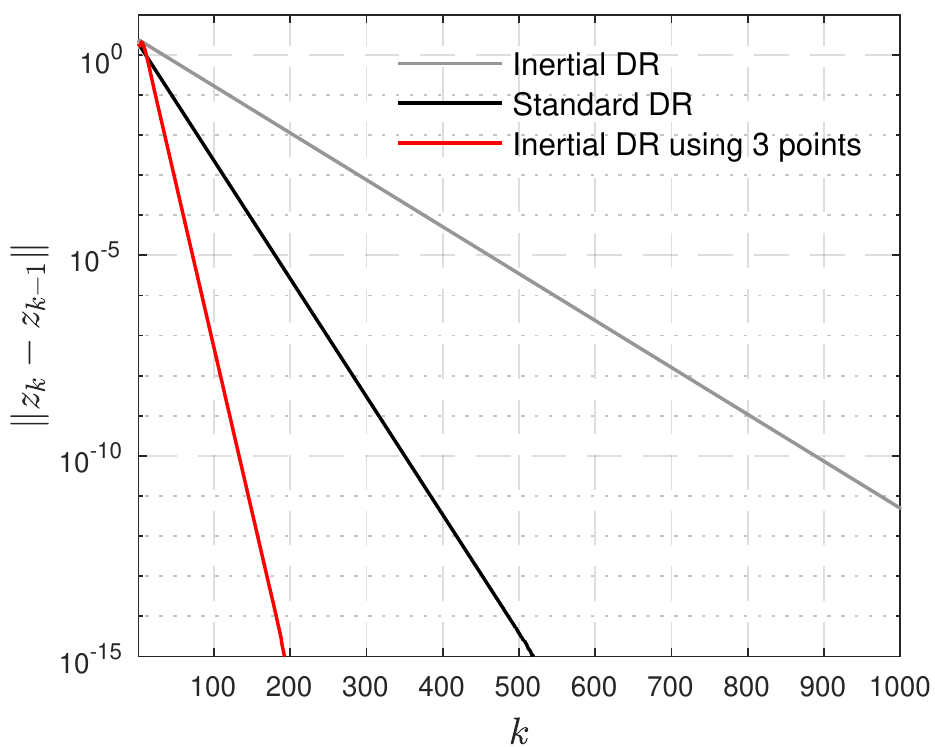} }     {\hspace{6pt}}
\subfloat[Trajectory of $\seq{\zk}$]{ \includegraphics[width=0.45\linewidth]{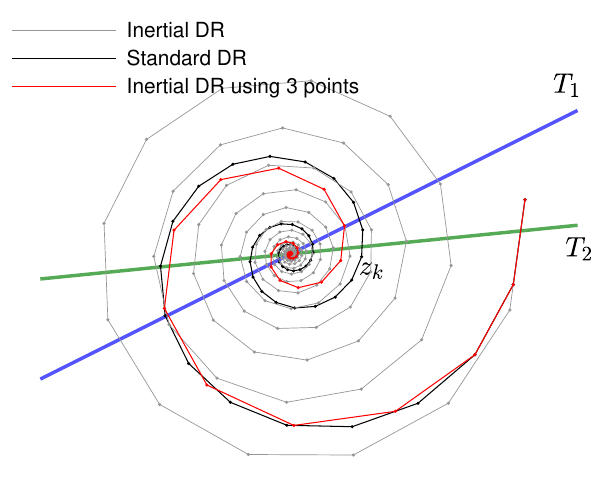} }    \\
%%%%%%%%%%%%%%%
\caption{Comparison between standard Douglas--Rachford and inertial Douglas--Rachford. (a) convergence of $\norm{\zk-\zkm}$; (b) trajectory of sequence $\seq{\zk}$.}
\label{fig:failure_idr}
\end{figure}

% code: Dropbox/Clarice-Jingwei/Ada-Acceleration/matlab/Geometry-FOM/failure_of_inertial_DR.m

The above comparisons, is quite different from the improvement of \eg heavy-ball method over gradient descent, which is an evidence that the trajectory of the sequence affects the outcome of inertial acceleration.
We also remark that in \cite[Chapter 4]{liang2016thesis}, the author suggested to use more than two points for computing $\zbark$ in \eqref{eq:idr}, for example the following three-point approach
\beq\label{eq:three_point}
\zbark = \zk + \ak (\zk - \zkm) + \bk (\zkm - \zkmm)   .
\eeq
Particularly, with $\ak > 0$ and $\bk < 0$, the above choice can provide acceleration. 
In Figure \ref{fig:failure_idr}, we also provide such a test with $(\ak,\bk) = (0.6, -0.3)$, the corresponding convergence observation of $\norm{\zk-\zkm}$ is shown in \emph{red} line which is faster than the standard Douglas--Rachford splitting scheme. Trajectory-wise, the length is also shorter than that of the standard Douglas--Rachford. 

\vspace{-1mm}

\begin{remark}$~$
\begin{itemize}
\item 
The observation is not limited to the simple feasibility case, but rather a class of problems. For example, as long as the problem is (locally) polyhedral around the solution, the above observation can be obtained. 

\item 
The difference between the inertial scheme \eqref{eq:idr} and \eqref{eq:three_point} implies that when the trajectory is a spiral, only $\zk-\zkm$ is not enough to estimate or fit the direction that $\zk$ travels, while the three-point scheme can solve the problem. 
However, the problem with the three-point scheme is that, the value of $\bk$ needs to be negative, and in general there is no good way to determine the choices of $\ak,\bk$, let alone theoretical acceleration guarantees. This is one motivation behind the adaptive acceleration in Section \ref{sec:lp}. 
\end{itemize}
\end{remark}

\subsection{LASSO problem}

The second problem we consider is the LASSO problem
\beq\label{eq:lasso}
\min_{x\in\bbR^n}~ \mu\norm{x}_{1} + \sfrac{1}{2} \norm{A x - f}^2  , 
\eeq
where $A \in \bbR^{m\times n}$ is random Gaussian matrix with $m<n$.

Since $\frac{1}{2} \norm{A x - b}^2$ is $C^2$ smooth, we know from Theorem \ref{thm:trajectory-dr-2} that the trajectory of $\seq{\zk}$ is determined by the choice of $\gamma$. As a result, two different choices of $\gamma$, $\gamma \in \ba{ \frac{0.9}{\norm{A}^2}, \frac{10}{\norm{A}^2} }$, are considered. For each $\gamma$, four different choices of $\ak$ are chosen: $\ak \equiv 0, \ak \equiv 0.3, \ak \equiv 0.7$ and $\ak = \frac{k-1}{k+3}$. Note that the last choice of $\ak$ corresponds to the Nesterov's scheme of \cite{su2014differential} and the FISTA scheme of \cite{chambolle2015convergence}.

%%%%%%%%%%%%%%%
\begin{figure}[!ht]
\centering
\subfloat[$\gamma=\frac{10}{\norm{A}^{2}}$: $\cos(\theta_k)$ and $|\supp(\xk)|$]{ \includegraphics[width=0.45\linewidth]{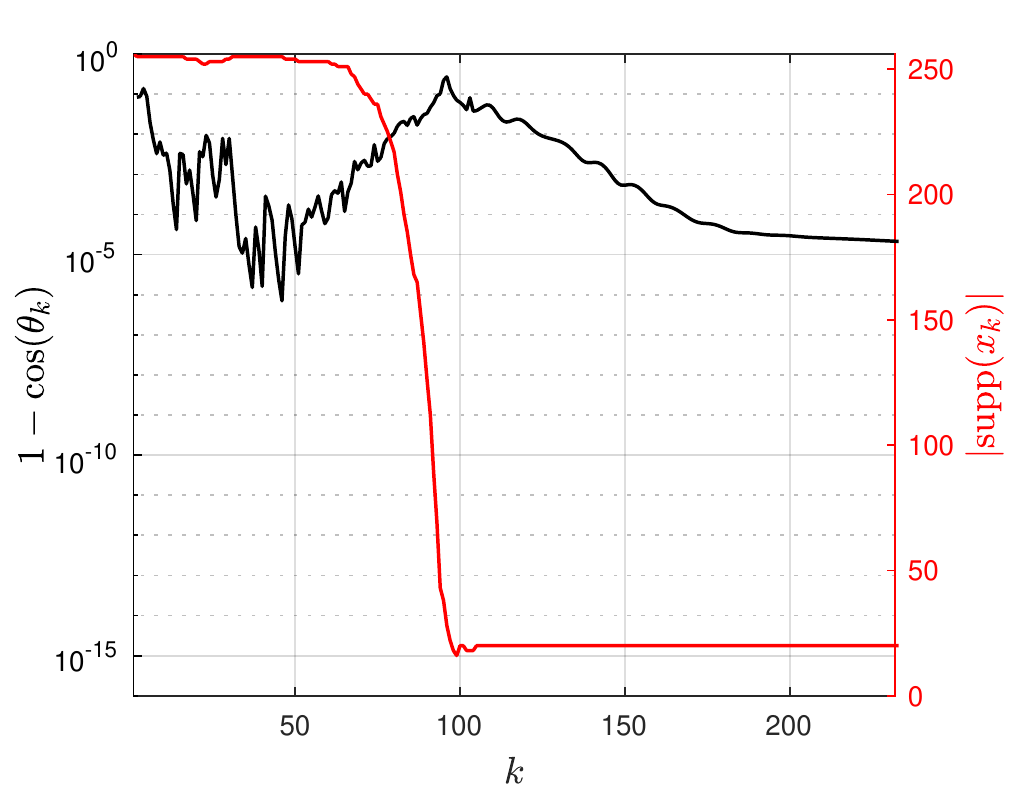} }     {\hspace{6pt}}
\subfloat[$\gamma=\frac{0.9}{\norm{A}^{2}}$: $\cos(\theta_k)$ and $|\supp(\xk)|$]{ \includegraphics[width=0.45\linewidth]{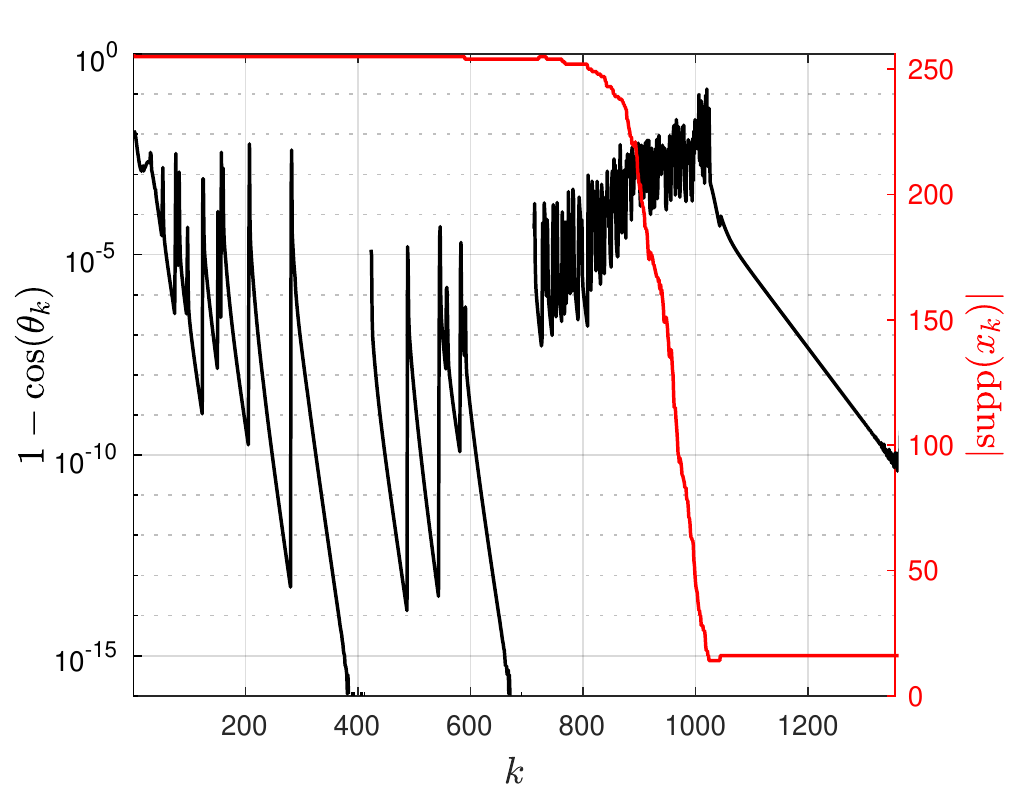} }    \\[-2mm]
\subfloat[$\gamma=\frac{10}{\norm{A}^{2}}$: Convergence of $\norm{\zk-\zkm}$]{ \includegraphics[width=0.45\linewidth]{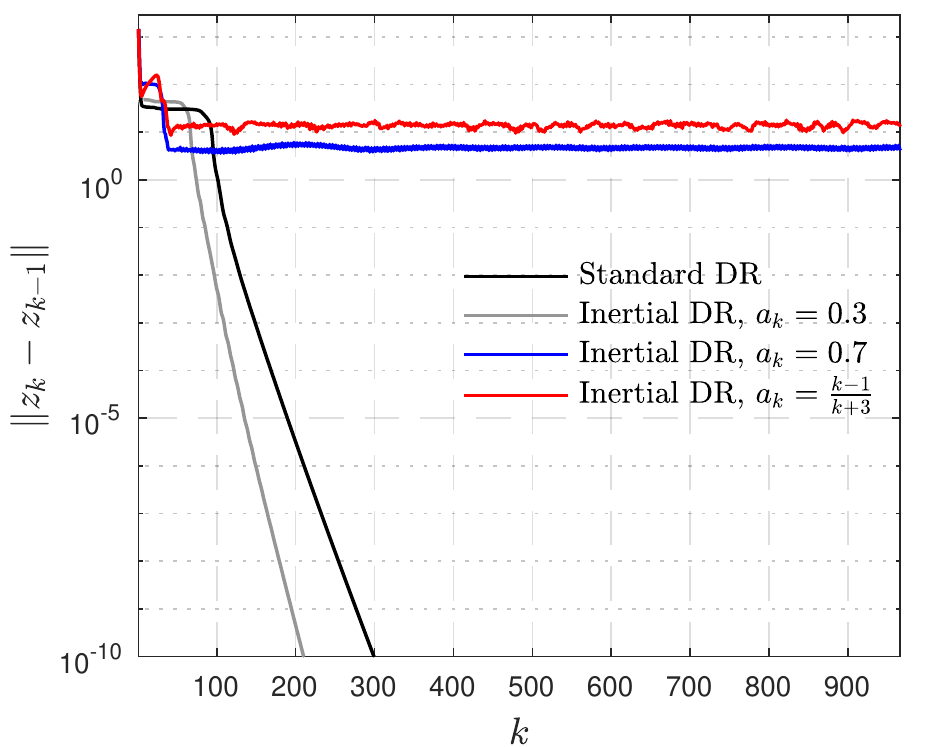} }     {\hspace{6pt}}
\subfloat[$\gamma=\frac{0.9}{\norm{A}^{2}}$: Convergence of $\norm{\zk-\zkm}$]{ \includegraphics[width=0.45\linewidth]{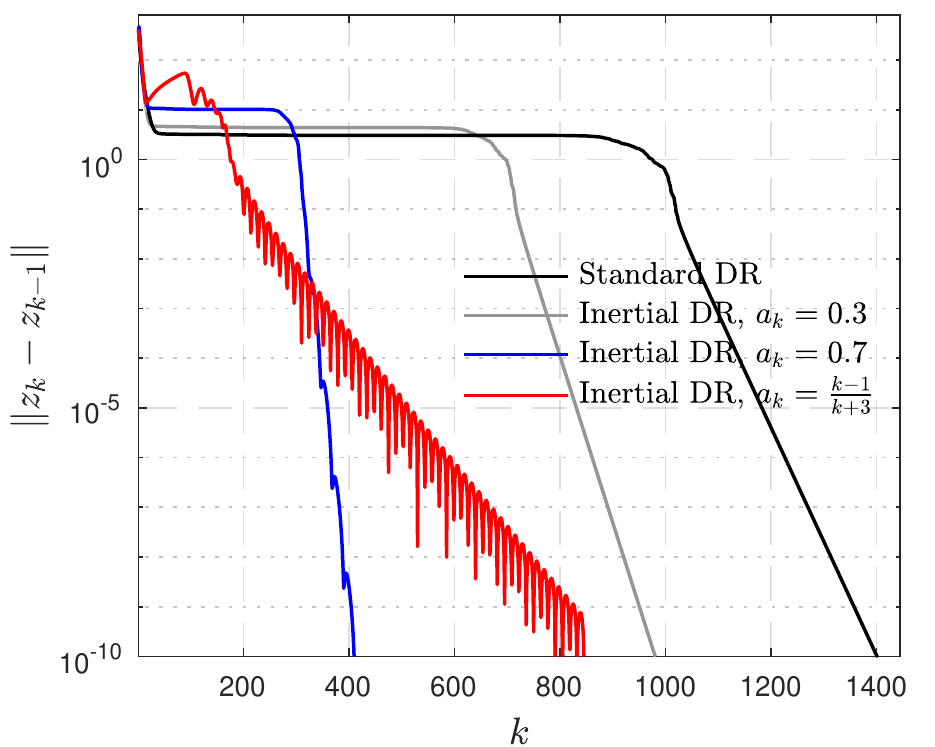} }    \\
%%%%%%%%%%%%%%%
\caption{Comparison between standard Douglas--Rachford and inertial Douglas--Rachford. (a) $\cos(\theta_k)$ and $\supp(\xk)$ for $\gamma = \frac{10}{\norm{K}^2}$; (b) $\cos(\theta_k)$ and $\supp(\xk)$ for $\gamma = \frac{0.9}{\norm{K}^2}$; (c) convergence of $\norm{\zk-\zkm}$ for $\gamma = \frac{10}{\norm{K}^2}$; (b) trajectory of sequence $\seq{\zk}$ for $\gamma = \frac{0.9}{\norm{K}^2}$.}\vspace{-6mm}
\label{fig:success_idr}
\end{figure}
% code: Dropbox/Clarice-Jingwei/Ada-Acceleration/matlab/Geometry-FOM/success_of_inertial_DR.m

For the numerical example, we consider $K \in \bbR^{64\times 256}$ and $\mu = 2$, $f$ is the measurement of an $\xob$ which is $8$-sparse under small additive white Gaussian noise. The results are shown in Figure \ref{fig:success_idr}, % we first compare the property of the angle $\theta_k$: 
\begin{itemize}[leftmargin=2em]
	\item Case $\gamma=\frac{10}{\norm{A}^{2}}$: in Figure \ref{fig:success_idr}~(a), the \emph{red} line shows the support identification of the iterates $\xk$, and after support identification which is about $k=150$, the angle $\theta_k$ is \emph{not} converging to $0$. % which implies that the leading eigenvalue of $\mDR$ is complex.
	\item Case $\gamma=\frac{0.9}{\norm{A}^{2}}$: %For this case, the leading eigenvalue of $\mDR$ is real, hence the trajectory of $\zk$ is straight line. 
	In Figure \ref{fig:success_idr}~(b), from about $k=1600$, $\theta_k$ is converging to $0$. %, the eventual jump up of $\theta$ is due to machine errors. 
%	All choices of $\ak$ work since $\seq{\zk}$ eventually forms a straight line. %; See Figure \ref{fig:trajectory-admm}(b). 
%	Among these four choices of $\ak$, $\ak\equiv 0.7$ is the fastest, while $\ak=\frac{k-1}{k+3}$ eventually is the slowest. % Such observation is the same as that of FISTA, and we refer to \cite{liang2018improving} for more details. 
\end{itemize}
Then in terms of the performances of the inertial schemes, 
\begin{itemize}[leftmargin=2em]
	\item Case $\gamma=\frac{10}{\norm{A}^{2}}$: in Figure \ref{fig:success_idr}~(c), out of three inertial schemes, only the one with $\ak\equiv 0.3$ is convergent. This is due to that fact that the trajectory of $\seq{\zk}$ is a spiral for $\gamma=\frac{10}{\norm{A}^{2}}$. %As a result, the direction $\zk-\zkm$ is not pointing towards $\zsol$, hence unable to provide satisfactory acceleration. % See Figure \ref{fig:trajectory-admm}(a). 
	\item Case $\gamma=\frac{0.9}{\norm{A}^{2}}$: All choices of $\ak$ work since $\seq{\zk}$ eventually forms a straight line. %; See Figure \ref{fig:trajectory-admm}(b). 
	Among these four choices of $\ak$, $\ak=0.7$ is the fastest, with the FISTA choice a bit slower. % eventually is the slowest. % Such observation is the same as that of FISTA, and we refer to \cite{liang2018improving} for more details. 
\end{itemize}
It can be observed that, under the two choices of $\gamma$, the standard Douglas--Rachford is faster for $\gamma=\frac{10}{\norm{A}^{2}}$ than $\gamma = \frac{0.9}{\norm{A}^2}$. However, we remark that our main focus here is to demonstrate how the trajectory of $\seq{\zk}$ affects the outcome of inertial acceleration. %Moreover, such a difference can be much smaller for other problems. 

\subsection{A geometric interpretation on the failure of inertial}\label{sec:geo_inertial}

In this part, we provide a geometric explanation on how the sequence trajectory  affects the outcome of inertial acceleration, similar analysis can also be obtained for over-relaxation. 
%
%Given two points $\zk, \zkm$ from the trajectory of $\seq{\zk}$ with limit $\zsol$, a significant difference between straight line trajectory and spiral trajectory is the direction of $\zk-\zkm$. 
To this end, consider the angle $\vartheta_k$ defined by $\vartheta_k \eqdef \angle(\zk-\zkm, \zsol-\zk) = \arccos\Pa{ \frac{\iprod{\zk-\zkm}{\zsol-\zk}}{\norm{\zk-\zkm}\norm{\zsol-\zk}} }$. 
%$$\vartheta_k \eqdef \angle(\zk-\zkm, \zsol-\zk) = { \sfrac{\iprod{\zk-\zkm}{\zsol-\zk}}{\norm{\zk-\zkm}\norm{\zsol-\zk}} } . $$ 
The motivation of considering this angle is shown below in Figure \ref{fig:inertial_triangle}: let $\zbark = \zk + \ak(\zk-\zkm)$ with $\ak > 0$
\begin{itemize}
\item If $\vartheta_k$ is \emph{acute}, then it can be shown that $\norm{\zsol-\zbark} < \norm{\zsol-\zk}$ as long as $$ \ak < \sfrac{ 2\cos(\vartheta_k)\norm{\zsol-\zk} }{\norm{\zk-\zkm}}.$$
When $ \ak = \sfrac{ \cos(\vartheta_k)\norm{\zsol-\zk} }{\norm{\zk-\zkm}} $, $\zbark$ is the closest point to $\zsol$ in the radial line $a \mapsto \zk + a(\zk-\zkm)$ with $a\geq 0$.  %\todo{you mean $\bar z$ is the closest point to $\zsol$ along the line $a \mapsto \zk + a(\zk-\zkm)$ }
\item When $\cos(\vartheta_k)<0$, % $\vartheta_k$ is \emph{obtuse} \todo{what do you mean by 'right'? just say obtuse? i.e. $\cos(\vartheta_k)<0$}, 
from the above equation we get that $\norm{\zsol-\zbark} < \norm{\zsol-\zk}$ holds only for non-positive $\zk$, which means extrapolate only slows down the convergence.  % $\norm{\zsol-\zbark} > \norm{\zsol-\zk}$ holds true as long as $\ak > 0$. 

\end{itemize}

%\vspace{-5mm}

%%%%%%%%%%%%%%%
\begin{figure}[!ht]
	\centering
	\includegraphics[width=0.425\linewidth]{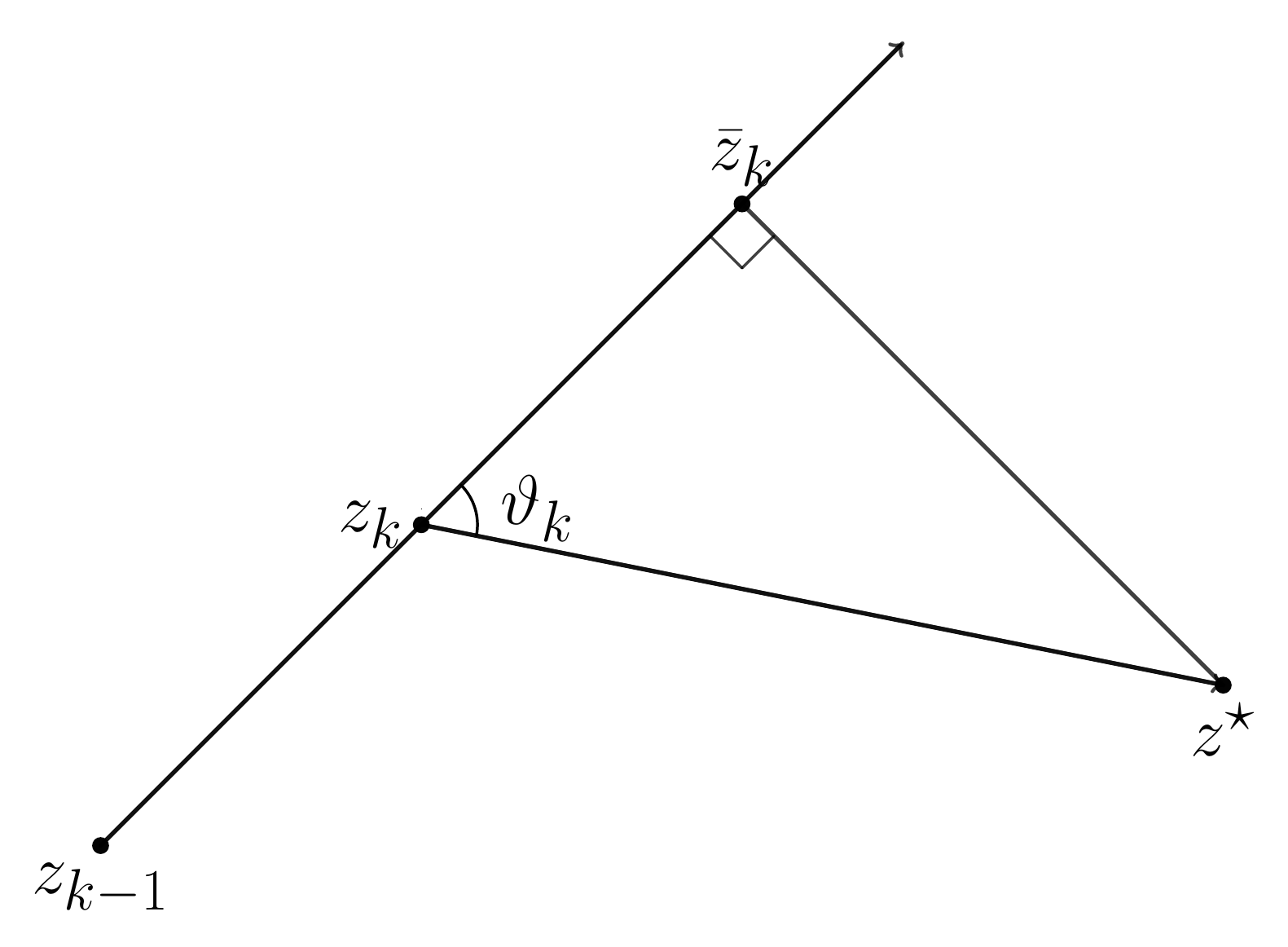}      \\
	%%%%%%%%%%%%%%%
	\caption{Illustration between inertial direction and the direction to the limiting point in $\bbR^2$.} 
	\label{fig:inertial_triangle}  %\\[-5mm] % \vspace*{-5mm}
\end{figure}
% code: Dropbox/Clarice-Jingwei/Ada-Acceleration/Paper/arXiv/tikz/

In the following, we consider the following linear system in $\bbR^2$:
\[
\zk = M \zkm 
\]
with $\zk$ converging to some $\zsol$, and study the property of $\vartheta_k$, under different $M$ that corresponds to the three types of trajectories. 
%In what follows, we discuss the property of $\vartheta_k$ for the three different types of trajectories: straight line, logarithmic spiral and elliptical spiral. 

%%%%%%%%%%%%%%%
\begin{figure}[!ht]
	\centering
	\subfloat[Eventual straight line: $(\sigma_1, \sigma_2) = 0.9(1, 0.6)$]{ \includegraphics[width=0.425\linewidth]{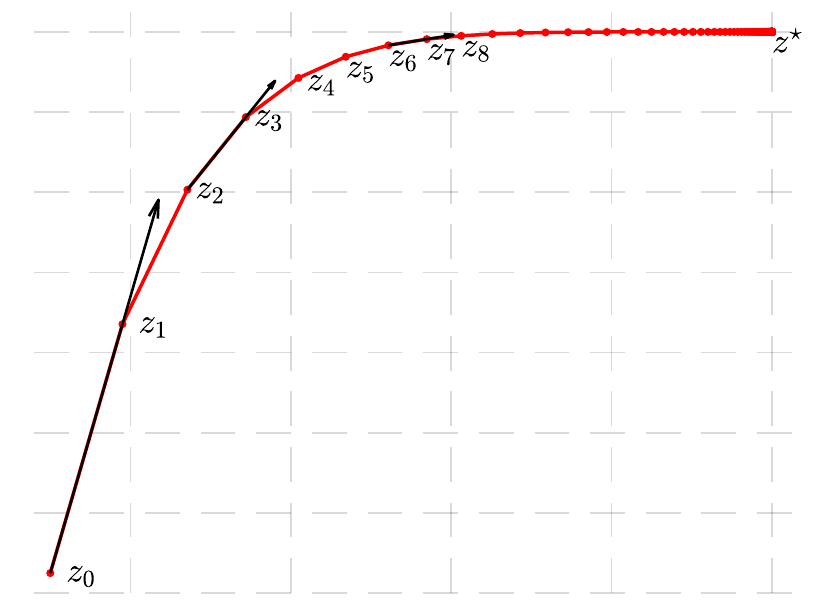} }     {\hspace{6pt}}
	\subfloat[Eventual straight line: $(\sigma_1, \sigma_2) = 0.9(1, - 0.6)$]{ \includegraphics[width=0.425\linewidth]{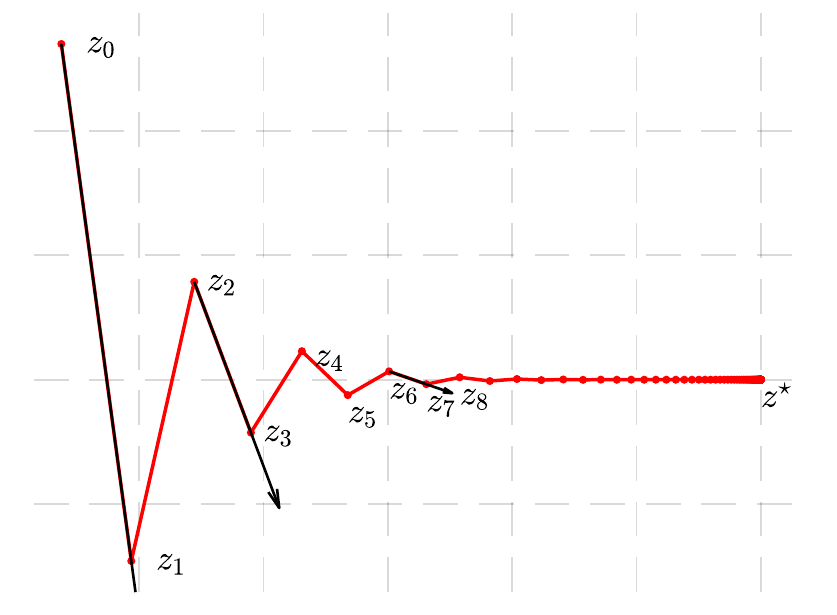} }     \\[-1mm]
	\subfloat[Logarithmic spiral]{ \includegraphics[width=0.425\linewidth]{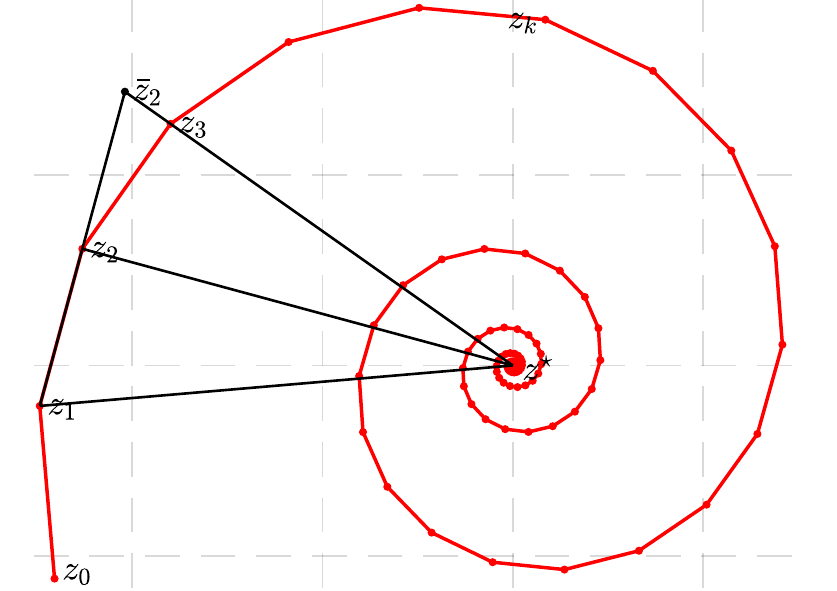} }  {\hspace{6pt}}
	\subfloat[Elliptical spiral]{ \includegraphics[width=0.425\linewidth]{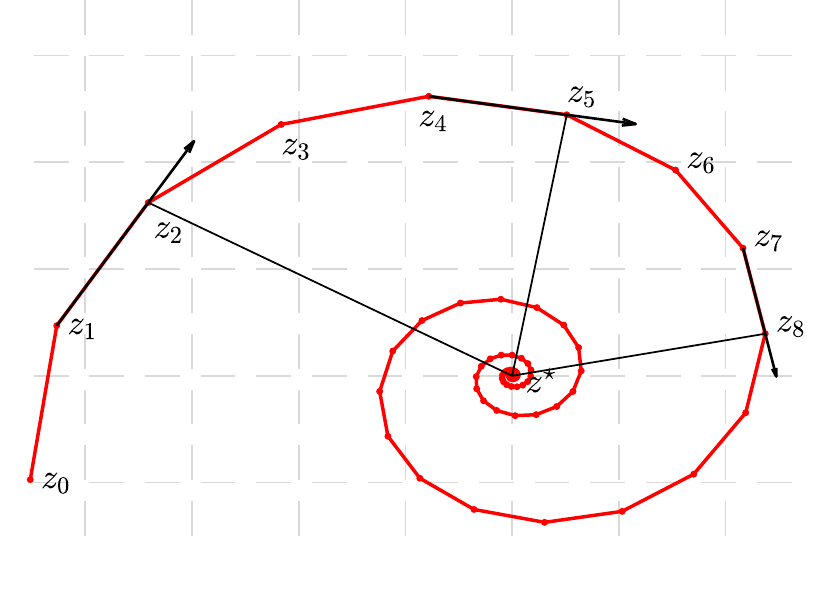} }      \\
	%%%%%%%%%%%%%%%
	\caption{Graphical illustration of the direction of $\zk-\zkm$ for different types of sequence trajectory.}
	\label{fig:illustration_of_trajectory_and_inertial_type}
\end{figure}

% code: Dropbox/Clarice-Jingwei/Ada-Acceleration/matlab/Geometry-FOM/illustration_of_trajectory_and_inertial.m

\paragraph{Straight line trajectory} 
Let $U$ be a unitary $2\times 2$ matrix and $\sigma_1,\sigma_2$ be such that $0<\abs{\sigma_2} < \sigma_1 <1$, and let $M$ of the form
\[
M = 
U \begin{bmatrix} \sigma_1 & 0 \\ 0 & \sigma_2 \end{bmatrix} U^T . 
\]
It is immediate that $\zsol = 0$. 
Denote $\yk = U^{T} \zk$, we have
\[
\begin{aligned}
\yk 
= \begin{bmatrix} \sigma_1 & 0 \\ 0 & \sigma_2 \end{bmatrix} \ykm 
= \begin{bmatrix} \sigma_1 & 0 \\ 0 & \sigma_2 \end{bmatrix}^{k} y_{0} 
= \begin{bmatrix} \sigma_1 & 0 \\ 0 & 0 \end{bmatrix}^{k} y_{0} + \begin{bmatrix} 0 & 0 \\ 0 & \sigma_2 \end{bmatrix}^{k} y_{0}
= \sigma_{1}^{k} \begin{pmatrix} a \\ \eta^k b \end{pmatrix} ,
\end{aligned}
\] 
where $\eta = \sigma_2 /\sigma_1 < 1$. Assume that $y_0 = \begin{pmatrix} a \\ b \end{pmatrix}$, then
\[
\begin{aligned}
\cos( \vartheta_k )
= { \sfrac{\iprod{\zk-\zkm}{-\zk}}{\norm{\zk-\zkm}\norm{-\zk}} }
= { \sfrac{\iprod{\yk-\ykm}{-\yk}}{\norm{\yk-\ykm}\norm{-\yk}} }  
\textstyle = \sfrac{(1 - \sigma_1) a^2 + (1 - \sigma_1\eta) \sigma_1\eta^{2k-1} b^2}{\sqrt{(\sigma_1-1)^2a^2+(\sigma_1\eta-1)^2\eta^{2(k-1)}} \sqrt{a^2 + \sigma_1^2\eta^{2k}}} . 
\end{aligned}
\]
Let $k \to +\infty$ we get $\cos( \vartheta_k ) \to 1$ which means $\vartheta_{k} \to 0$.

The above result implies that, eventually $\zk-\zkm$ points towards the limiting point $\zsol$. Therefore, moving certain distance along $\zk-\zkm$ is useful to improve the convergence speed. 
To demonstrate the above result, we consider two different choices of $(\sigma_1, \sigma_2)$ that $(\sigma_1, \sigma_2) = 0.9(1, \pm 0.6)$. 
The trajectory of $\seq{\zk}$ and the property of $\vartheta_k$ are shown in Figure \ref{fig:illustration_of_trajectory_and_inertial_type} (a) and (b). It can be observed from both figures that: from the beginning, $\zk-\zkm$ is not pointing towards $\zsol$ but eventually almost directly to $\zsol$.

% For trajectories in Figure \ref{fig:illustration_of_trajectory_and_inertial_type_i}, two different choices of $M$ are considered, for Figure \ref{fig:illustration_of_trajectory_and_inertial_type_i} (a), both two eigenvalues of $M$ are positive, while for Figure \ref{fig:illustration_of_trajectory_and_inertial_type_i} (b), the 2nd eigenvalue of $M$ is negative. Therefore, in the beginning the trajectory of $\seq{\zk}$ is zig-zag, and soon becomes a straight line.
% For both cases, when $k$ is large enough, the angle $\vartheta_k$ is acute and inertial or SOR can provide acceleration.

% \begin{remark}
% For Forward--Backward splitting, as the eventually trajectory of the sequence is a straight line, inertial and SOR provide acceleration.
% \end{remark}

% %%%%%%%%%%%%%%%
% \begin{figure}[!ht]
% \centering
% \subfloat[Both eigenvalues]{ \includegraphics[width=0.375\linewidth]{../../matlab/Geometry-FOM/illustration_of_trajectory_and_inertial_type_i.pdf} }     {\hspace{6pt}}
% \subfloat[Trajectory of $\seq{\zk}$ for DR]{ \includegraphics[width=0.375\linewidth]{../../matlab/Geometry-FOM/illustration_of_trajectory_and_inertial_type_i_angle.pdf} }      \\
% %%%%%%%%%%%%%%%
% \caption{To be updated}
% \label{fig:illustration_of_trajectory_and_inertial_type_i}
% \end{figure}

\paragraph{Logarithmic spiral trajectory}

For logarithmic spiral, we can show that inertial always slows down the convergence. For this case, we have $M$ of the form
\[
M = \cos(\theta) \begin{bmatrix} \cos(\theta) & \sin(\theta) \\ -\sin(\theta) & \cos(\theta) \end{bmatrix} ,
\]
for some $\theta \in ]0, \pi/2[$. 
For the sequence $\seq{\zk}$, we also have $\zk \to 0$ as $\rho(M)=\cos(\theta) < 1$. 
Given any $k$, we have $\norm{\zk} = \norm{M\zkm} = \norm{\zkm} \cos(\theta)$, then consider the inner product
\[
\begin{aligned}
\iprod{\zk-\zkm}{\zk}
= \norm{\zk}^2 - \iprod{\zkm}{\zk}
&= \norm{\zkm}^2 \cos^2(\theta) - \norm{\zkm}\norm{\zk} \cos(\theta) \\
&= \norm{\zkm}^2 \cos^2(\theta) - \norm{\zkm}^2 \cos^2(\theta) = 0 ,
\end{aligned}
\]
which means $\cos(\vartheta_k) \equiv 0$ and $\vartheta_k \equiv \pi/2$. 

A graphic illustration is provided in Figure \ref{fig:illustration_of_trajectory_and_inertial_type} (c). Let $k = 2$, and $\zbar_{2} = 2z_{2} - z_{1}$. It can be proved that the three points $z_{1}, \zbar_{2}$ and $\zsol$ form an isosceles triangle with $\norm{z_1-\zsol}=\norm{\zbar_2-\zsol}$. Moreover, $\zbar_2,z_3$ and $\zsol$ are in the same line. 
This in turn indicates that for all the point $z$ in the segment of $z_{2}$ and $\zbar_2$, we have $$\norm{z_2-\zsol} < \norm{z-\zsol} . $$
As a result, applying inertial will slows down the performance. 

%%%%%%%%%%%%%%%
% \begin{figure}[!ht]
% \centering
% \subfloat[Convergence of DR and inertial DR]{ \includegraphics[width=0.375\linewidth]{../../matlab/Geometry-FOM/illustration_of_trajectory_and_inertial_type_ii.pdf} }     {\hspace{6pt}}
% \subfloat[Trajectory of $\seq{\zk}$ for DR]{ \includegraphics[width=0.375\linewidth]{../../matlab/Geometry-FOM/illustration_of_trajectory_and_inertial_type_ii_angle.pdf} }      \\
% %%%%%%%%%%%%%%%
% \caption{To be updated}
% \label{fig:illustration_of_trajectory_and_inertial_type_ii}
% \end{figure}

% In Figure \ref{fig:illustration_of_trajectory_and_inertial_type_ii} we demonstrate the

\paragraph{Elliptical spiral trajectory}
For elliptical spiral, we consider the following form of $M$
\[
M = \cos(\theta) \begin{bmatrix} \cos(\theta) & \tfrac{l}{s}\sin(\theta) \\ -\tfrac{s}{l}\sin(\theta) & \cos(\theta) \end{bmatrix} ,
\]
for some $\theta \in ]0, \pi/2[$ and $l,s > 0$. 
The property of $\vartheta_k$ becomes more complicated for the elliptical spiral, as $\vartheta_k$ varies in an interval $[\underline{\vartheta}, \overline{\vartheta}] \subset ]0, \pi[$. Though the expressions of $\underline{\vartheta}, \overline{\vartheta}$ can be obtained explicitly based on the result of Section \ref{subsec:type-iii}, here we only provide descriptive explanation. 

As we can observe from Figure \ref{fig:illustration_of_trajectory_and_inertial_type} (d), that the angle $\vartheta_k$ varies in an interval $[\underline{\vartheta}, \overline{\vartheta}]$ where $\underline{\vartheta} < \pi/2$ and $\overline{\vartheta} > \pi/2$. This means that the direction $\zk-\zkm$ only points towards $\zsol$ for \emph{acute} $\vartheta_k$. 
In turn, inertial provides acceleration when $k$ are such that $\vartheta_k$ is acute and does not for the others. As a result, the overall performance of inertial is not clear in general.

% %%%%%%%%%%%%%%%
% \begin{figure}[!ht]
% \centering
% \subfloat[Convergence of DR and inertial DR]{ \includegraphics[width=0.375\linewidth]{../../matlab/Geometry-FOM/illustration_of_trajectory_and_inertial_type_iii.pdf} }     {\hspace{6pt}}
% \subfloat[Trajectory of $\seq{\zk}$ for DR]{ \includegraphics[width=0.375\linewidth]{../../matlab/Geometry-FOM/illustration_of_trajectory_and_inertial_type_iii_angle.pdf} }      \\
% %%%%%%%%%%%%%%%
% \caption{To be updated}
% \label{fig:illustration_of_trajectory_and_inertial_type_iii}
% \end{figure}

%\vspace{-2mm}

\begin{remark}%$~$
%\begin{itemize}
%\item
%It should be noted that the above discussion is in $\bbR^2$, the conclusion obtained for the spiral trajectories cannot be directly extended to higher dimension. For example, for logarithmic spiral, instead of being equal to $\pi/2$ for all $k$, $\vartheta_k$ is a varying sequence that converges to $\pi/2$. But still, as the trajectory of the sequence eventually settles on $\bbR^2$ (if the leading eigenvalue is unique), the above discussions hold true.
%\todo{As written, this remark does not make sense, extension to higher dimension in what setting?}

 {%As we show in the appendix, these three matrices are  precisely the form of the linearisation matrices corresponding to FB-splitting, DR splitting when dealing with two polyhedral terms and PD splitting when dealing with two polyhedral terms.
Although the above discussion is in $\RR^2$, the behavior describes the asymptotic behavior of the fixed point sequences generated by  Forward--Backward-splitting and Douglas--Rachford splitting for 2 polyhedral terms and Primal--Dual splitting for 2 polyhedral terms are  (up to orthogonal transformation) block diagonal matrices. Indeed, as we show in the appendix,  the corresponding  linearization matrices in these three cases are  (up to orthogonal transformation) block diagonal matrices, where each block is a $2\times 2$ matrix. Asymptotically, the behavior of the associated fixed point sequences will be driven by the leading block if the leading eigenvalue is unique. 
}

%\item
%Here we only focused on how the trajectories of sequence affect the outcome of inertial schemes, similar analysis can also be obtained for the over-relaxation. 
%\end{itemize}
\end{remark}

\section{A$\!^2$FoM: adaptive acceleration for first-order methods}\label{sec:lp}

The trajectory property implies that, the sequence generated by first-order method eventually settles onto a regular path, \ie straight line or spiral. In turn, we can use such regularity to design adaptive acceleration for first-order methods, which is called ``A$^2$FoM'' and described in Algorithm \ref{alg:a2fom}.

\subsection{Trajectory following adaptive acceleration}

We describe how to use the regularity of the trajectory to design a linear prediction scheme for acceleration. 
Recall the general inertial scheme \eqref{eq:efom} for first-order method
\beqn
\begin{aligned}
	\zbark &= \calE(\zbarkm, \zk, \zkm, ...) , \\
	\zkp &= \calF(\zbark, \zk, \zkm, ...)  .
\end{aligned}
\eeqn
From our discussion in the last section, to provide acceleration, the extrapolation operator $\calE$ should be able to adapt itself to the trajectory of the sequence $\seq{\zk}$. 
To this end, we propose a \emph{trajectory following linear prediction strategy}, which locally fits the trajectory of the sequence $\seq{\zk}$ and predict the future points. 
The basic idea of linear prediction is: let $q \in \bbN_+$ be a positive integer, given $\{z_{k-j}\}_{j=0}^{q+1}$ and $v_j\eqdef z_j - z_{j-1}$, forecast the future iterates by considering how the past directions $v_{k-1},\ldots, v_{k-q}$ approximate the latest direction $\vk$. % instead of inertial scheme which considers the combination of momentum $\zk-\zkm, \zkm-\zkmm, \dotsm$ with coefficients that are difficult to determine in general, based on the regularity of the trajectory, we use the generated points to predict the future points. 
More precisely, 
\begin{itemize}
	\item First use $\{v_{k-j}\}_{j=1}^{q}$ to represent $\vk$, which is a least square problem. Denote $V_{k-1} \eqdef \begin{bmatrix} v_{k-1} , \dotsm , v_{k-q}  \end{bmatrix} \in \bbR^{n \times q}$, and let 
\[
c_k \in \Argmin_{c\in \RR^q}\norm{V_{k-1} c - v_k}^2 = \norm{ \msum_{j=1}^q c_j v_{k-j} - v_k}^2  . 
\]
Then we have $\vk \approx V_{k-1} c_k$, and the representation if perfect if $v_k \in \ran\pa{V_{k-1}}$. 

	\item Suppose we know $\zkp$, then follow the first step we have $\vkp \approx V_{k} c_{k+1}$.  Since the trajectory locally is regular, we have $c_{k+1} \approx c_{k}$, this means we have the approximation $\vkp \approx V_k c_k$. As a result, we obtain an approximation of $\zkp$ which is $$ z_{k+1} \approx \bar z_{k,1} \eqdef z_k + V_k c_{k} . $$ 
    
    \item By iterating the second step $s$ times, we obtain an approximation of $z_{k+s}$ which is $\bar z_{k,s}\approx z_{k+s}$.
%    By iterating the second step $s$ times where $s$ is any positive integer, we obtain an approximation of $z_{k+s}$ which is $\bar z_{k,s}\approx z_{k+s}$.
	
\end{itemize}

%
%%%%%%%%%%%%%%%
\begin{figure}[!ht]
	\centering
	\includegraphics[width=0.375\linewidth]{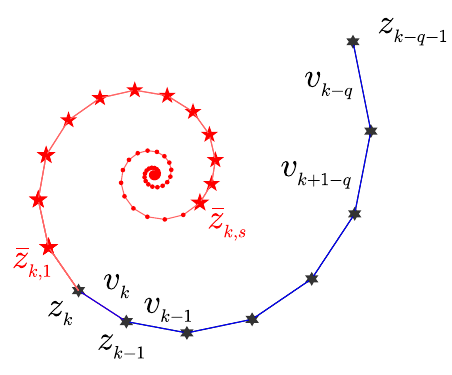} \\
	%	 \subfloat[Finite step]{ \includegraphics[width=0.4\linewidth]{../../matlab/Geometry-FOM/derivation-LPfinite-paper.pdf} }     {\hspace{6pt}}
	%	 \subfloat[Infinite step]{ \includegraphics[width=0.4\linewidth]{../../matlab/Geometry-FOM/derivation-LPinf-paper.pdf} }      \\
	%%%%%%%%%%%%%%%
	\caption{Illustration of linear prediction.}% (a) Finite step prediction; (b) Infinite step prediction.}
	\label{fig:illustration_lp}
\end{figure}
% code: Dropbox/Clarice-Jingwei/Ada-Acceleration/matlab/Geometry-FOM/failure_of_inertial_DR.m
%

\vgap

In Figure \ref{fig:illustration_lp}, we provide a graphical illustration of linear prediction: black dots are the given $q+2$ points, and red star points are the outputs of linear prediction from $1$-step prediction to $s$-step prediction. If we run the prediction until $s=+\infty$, we obtain a complete spiral.

It can be observed that the above procedure is totally linear, therefore we can derive a simple matrix representation for linear prediction. 
Given a vector $c \in \bbR^{q}$, define the mapping $H$ by
\begin{equation}\label{eq:coeff_mtx}
H(c) 
= 
%\begin{bmatrix}
%c_1 & 1 & 0 & 0 & 0 \\
%c_2 & 0 & 1 & 0 & 0  \\
%\vdots &  &  & \ddots &   \\
%c_{q-1} & 0 & 0 & 0 & 1  \\
%c_q &  0 & 0 & \dotsm & 0  \\
%\end{bmatrix}
\left[
\begin{array}{c|c}
c_{1:q-1}  & \Id_{q-1} \\
%\cline{2}
c_{q} & 0_{1,q-1}
\end{array}
\right]
\in \bbR^{q \times q} .
\end{equation}
Let $C_k = H(c_k)$, note that $V_{k} = V_{k-1} C_k$. Denote $\bar V_{k,0} \eqdef V_k$ and for $s \geq 1$,  define
\[
\bar{V}_{k,s} \eqdef \bar{V}_{k,s-1} C_k \eqdef V_{k} C_k^s ,
\]
where $C_k^s$ is the power of $C_k$. 
Let $(C)_{(:, 1)}$ be the first column of matrix $C$, then
\beq\label{eq:zk_s}
\begin{aligned}
\zbar_{k,s} 
= \zk + \msum_{i=1}^{s} (\bar{V}_{k,i})_{(:, 1)} 
= \zk + \msum_{i=1}^{s} V_{k} (C_{k}^{i})_{(:, 1)}  
&= \zk + V_{k} \bPa{ \msum_{i=1}^{s} C_{k}^i }_{(:, 1)}  ,
\end{aligned}
\eeq
which is the desired trajectory following extrapolation scheme. Now define the extrapolation parameterized by $s, q$ as 
\[
\calE_{s, q}(\zk, \dotsm, z_{k-q-1}) 
\eqdef  V_{k} \bPa{ \msum_{i=1}^{s} C_{k}^i }_{(:, 1)}  ,
\]
we obtain the following trajectory following adaptive acceleration for first-order method. 

\vspace{-1mm}

\begin{center}
\begin{minipage}{0.975\linewidth}
\begin{algorithm}[H]
\caption{A$\!^2$FoM: Adaptive Acceleration for First-order Methods} \label{alg:a2fom}
\KwIn{Let $s\geq1, q \geq 1$ be integers.}
{\noindent{\bf{Initial:}}} Let $\zbar_0 = z_0 \in \bbR^n$ and $V_{0} = 0 \in \bbR^{n \times q}$. \\ 
{\noindent{\bf{Repeat:}}} 
\begin{itemize}[leftmargin=2em]
\item \textrm{If $\textrm{mod}(k, q+2)=0$: } \textrm{Compute $C_k$ as described above, if $\rho(C_k)<1$:}  %\vspace{-11pt}
			$$\zbark = \zk + \ak \calE_{s, q}(\zk, \dotsm, z_{k-q-1}).$$
%			\begin{itemize}[leftmargin=7.7em, label={}]
%			\item \textrm{If $\rho(C_k)<1$: }  $\zbark = \zk + \ak \calE_{s, q}(\zk, \dotsm, z_{k-q-1})$. 
%			\item \textrm{If $\rho(C_k)\geq1$: } $ \zbark = \zk + \ak(\zk-\zkm) $. 
%			\end{itemize}
\item \textrm{If $\textrm{mod}(k, q+2)\neq0$: } $ \zbark = \zk $. %$ \zbark = \zk + \ak(\zk-\zkm) $ 

\item For $k \geq 1$: %  \vspace{-11pt}
\[
\zkp = \calF(\zbark) , \enskip \vkp = \zkp - \zk \qandq V_{k+1} = [\vkp| \vk | \dotsm | v_{k-q+2}]  .
\]

\end{itemize}
{\noindent{\bf{Until:}}} $\norm{\vk} \leq \tol$.
\end{algorithm}
\end{minipage}
\end{center}

\vspace{-3mm}

\begin{remark} $~$
\begin{itemize}[leftmargin=2em, topsep=-4pt, itemsep=2pt]
\item When $\textrm{mod}(k, q+2)\neq0$, one can also consider $\zbark = \zk + \ak(\zk-\zkm)$ with properly chosen $\ak$. Instead of every $q+2$ steps, one can also consider $q+i$ with $i\geq 2$.  

%\item The extra computational cost of A$^{3}$DMM is very small, which is about $n q^{2}$ for computing the pseudo-inverse of $V_{k-1}$. And the value of $q$ usually is taken very small, \eg $q \leq 10$.

%\item The reason we change the order of updates in Algorithm \ref{alg:a3dmm} is that the update of $\yk$ requires only $\zbark$, doing so we only need to extrapolate $\zk$ which requires the minimal computational overhead. Moreover, the extrapolation can also be applied to $\xk, \yk, \psik$ under proper adaptation. 
%\item When $\textrm{mod}(k, p)\neq0$, one can also consider $\zbark = \zk + \ak(\zk-\zkm)$ with properly chosen $\ak$. 

\item A$\!^2$FoM carries out $q+2$ standard FoM iterations to set up the extrapolation step $\calE_{s, q}$. As $\calE_{s, q}$ contains the sum of the powers of $C_k$, it is guaranteed to be convergent if $\rho(C_k) < 1$. Therefore, we only apply $\calE_{s, q}$ when the spectral radius $\rho(C_k) < 1$ is true. In this case, there is a closed form expression for $\calE_{s, q}$ when $s=+\infty$; See Eq. \eqref{eq:lp-mpe}. 

\item The purpose of adding $\ak$ in front of $\calE_{s, q}(\zk, \dotsm, z_{k-q-1})$ is so that we can control the value of $\ak$ to ensure the convergence of the algorithm; See Section \ref{sec:convergence_a2fom}. 
%\item As $\calE_{s, q}$ contains the sum of the powers of $C_k$ which is guaranteed to be convergent when $\rho(C_k) < 1$. Therefore, we only apply the $\calE_{s, q}$ when $\rho(C_k) < 1$ is true. In this case, there is a closed form expression for $\calE_{s, q}$ when $s=+\infty$; See Eq. \eqref{eq:lp-mpe}. 

\item Though in this paper, we restricted ourselves with finite dimensional Euclidean space, our Algorithm \ref{alg:a2fom} and its global convergence (Theorem \ref{thm:convergence-lp}) are readily extended to general Hilbert space. 
\end{itemize}

\end{remark}

% \todo{Specify the algorithm to different first-order methods? like the algorithm in our ADMM paper...}
% \todo{Between logarithmic and elliptical spirals, what difference do they make?}

\begin{remark}
In \eqref{eq:zk_s} we need to consider the sum of the power of $C_k$, 
 \[
 S_{s} = \msum_{i=1}^{s}  C_{k}^i .
 \]
 Suppose that $\Id - C_{k}$ is invertible, recall the Neumann series $\pa{\Id - C_{k}}^{-1} = \msum_{i=0}^{\pinf}  C_{k}^i$.
 % \[
% \pa{\Id - C_{k}}^{-1} = \msum_{i=0}^{\pinf}  C_{k}^i  .
% \]
 Therefore, for $s = \pinf$, 
 \beq\label{eq:Ssinf}
 S_{\pinf}
 = \pa{\Id - C_{k}}^{-1} - \Id
 = C_{k} \pa{\Id - C_{k}}^{-1} .
 \eeq
 In turn, for the finite $s$, we have $ S_{s} = (C_{k} - C_{k}^{s+1}) \pa{\Id - C_{k}}^{-1} $. 
% \[
% \begin{aligned}
% S_{s}
%% = \msum_{i=1}^{s}  C^i
%% = \msum_{i=1}^{\pinf}  C^i - \msum_{i=s+1}^{\pinf}  C^i
%% &= C \msum_{i=0}^{\pinf}  C^i - C^{s+1}\msum_{i=0}^{\pinf}  C^i         \\
% &= (C - C^{s+1}) \pa{\Id - C}^{-1} .
% \end{aligned}
% \]

\end{remark}

\subsection{Convergence of \aafom}\label{sec:convergence_a2fom}

In this part we study the global convergence property of \aafom. We first show that the \aafom can be treated as a perturbation of the original fixed-point iteration, and then discuss its convergence properties. Let $\varepsilon_k \in \bbR^n$ whose value takes
% \[
% \varepsilon_k =
% \left\{
% \begin{aligned}
% 0 &: \mathrm{mod}(k-1, q+1) \neq 0 , \\
% V_{k} p_k  &: \mathrm{mod}(k-1, q+1) = 0  .
% \end{aligned}
% \right.
% \]
\[
\varepsilon_k =
\left\{
\begin{aligned}
0 &: \mathrm{mod}(k, q+2) \neq 0  \enskip\textrm{or}\enskip  \mathrm{mod}(k, q+2) = 0 ~~\&~~ \rho(C_k)\geq1, \\
\ak\calE_{s, q}(\zk, \dotsm, z_{k-q-1})  &: \mathrm{mod}(k, q+2) = 0  \enskip\&\enskip \rho(C_k) < 1  .
\end{aligned}
\right. 
\] 
Then the Algorithm \ref{alg:a2fom} can be written as
\beq\label{eq:perturbation-km}
\zkp = \calF(\zk + \varepsilon_k) .
\eeq
Based on the above reformulation, we have the following convergence result for Algorithm \ref{alg:a2fom} which is based on the classic convergence result of inexact \KM fixed-point iteration \cite[Proposition 5.34]{bauschke2011convex}.

\begin{theorem}\label{thm:convergence-lp}
	For Algorithm \ref{alg:a2fom}, suppose that the fixed-point operator $\calF$ is averaged non-expansive whose set of fixed-points is non-empty. If the perturbation error is absolutely summable, \ie $\sum_{k} \norm{\varepsilon_k} < \pinf$, then there exists a $\zsol \in \fix(\calF)$ such that $\zk \to \zsol$. 
 
\end{theorem}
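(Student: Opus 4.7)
The plan is to reduce \aafom~to an inexact Krasnosel'ski{\u\i}--Mann iteration with summable perturbation and then invoke \cite[Proposition 5.34]{bauschke2011convex}, which is already cited in the text. The reformulation \eqref{eq:perturbation-km} already tells us that the algorithm can be written as $\zkp = \calF(\zk + \varepsilon_k)$; the work is entirely in rearranging this into the canonical inexact KM form and verifying the summability hypothesis of that reference.

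First, since $\calF$ is $\alpha$-averaged for some $\alpha \in {]0,1[}$, Definition \ref{dfn:averaged-operator} yields a non-expansive operator $\calF'$ with $\calF = (1-\alpha)\Id + \alpha \calF'$. Substituting this into \eqref{eq:perturbation-km} and expanding gives
\begin{equation*}
\zkp = \zk + \alpha\bigl(\calF'(\zk) - \zk\bigr) + e_k, \qquad e_k \eqdef \alpha\bigl(\calF'(\zk + \varepsilon_k) - \calF'(\zk)\bigr) + (1-\alpha)\varepsilon_k.
\end{equation*}
Non-expansivity of $\calF'$ immediately gives $\|e_k\| \leq \|\varepsilon_k\|$, so the hypothesis $\sum_k \|\varepsilon_k\| < \pinf$ propagates to $\sum_k \|e_k\| < \pinf$.

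Next, I would observe that this is exactly the inexact KM iterate associated with the non-expansive operator $\calF'$ and the constant relaxation sequence $\lambda_k \equiv \alpha$, which obviously satisfies $\sum_k \lambda_k(1-\lambda_k) = \pinf$. Moreover $\fix(\calF') = \fix(\calF)$ is non-empty by assumption. All hypotheses of \cite[Proposition 5.34]{bauschke2011convex} are therefore met, and that proposition directly concludes that $\seq{\zk}$ converges to a point in $\fix(\calF') = \fix(\calF)$, giving the desired $\zsol$.

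The only real subtlety is the passage from the perturbation $\varepsilon_k$ appearing \emph{inside} $\calF$ in \eqref{eq:perturbation-km} to the additive error $e_k$ appearing \emph{outside} the averaged decomposition; this is handled cleanly by the triangle inequality and the non-expansivity of $\calF'$, with no loss in summability. Once this reduction is in place, the conclusion is immediate from the cited result, so there is no substantial technical obstacle to surmount.
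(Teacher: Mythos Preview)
Your argument is correct and lands on exactly the same cited result as the paper, \cite[Proposition 5.34]{bauschke2011convex}. The only difference is cosmetic: the paper skips the averaged decomposition and instead uses non-expansiveness of $\calF$ directly to obtain the quasi-Fej\'er inequality $\norm{\zkp-\zsol}\leq\norm{\zk-\zsol}+\norm{\varepsilon_k}$ before invoking the proposition, whereas you unpack $\calF=(1-\alpha)\Id+\alpha\calF'$ to match the inexact KM form explicitly; both routes are equally valid and equally short.
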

\begin{proof}
From \eqref{eq:perturbation-km}, we have that
\[
\begin{aligned}
\zkp 
= \calF(\zk + \varepsilon_k) 
&= \calF(\zk) + \Pa{ \calF(\zk + \varepsilon_k) - \calF(\zk) } .
\end{aligned}
\]
Given any $\zsol \in \fix(\calF)$, there holds
\[
\begin{aligned}
\norm{\zkp - \zsol}
\leq \norm{ \calF(\zk) - \calF(\zsol)} + \norm{ \calF(\zk + \varepsilon_k) - \calF(\zk) } 
&\leq \norm{\zk - \zsol} + \norm{ \varepsilon_k }  ,
%\leq \norm{z_0 - \zsol} + \msum_{i=0}^{k} \norm{ \varepsilon_i }  .
\end{aligned}
\]
which means that $\seq{\zk}$ is quasi-Fej\'er monotone with respect to $\fix(\calF)$. Then invoking \cite[Proposition 5.34]{bauschke2011convex} we obtain the convergence of the sequence $\seq{\zk}$.  \qedhere

%\tcr{
%If $\calF$ is $\beta$-Lipschitz, and $\zkp = \calF(\zk)$ is convergent. For the perturbation, we will get
%\[
%\norm{\zkp - \zsol}
%\leq \norm{ \calF(\zk) - \calF(\zsol)} + \norm{ \calF(\zk + \varepsilon_k) - \calF(\zk) } 
%\leq \beta \norm{\zk - \zsol} + \beta \norm{ \varepsilon_k }  ,
%\]
%and 
%}
\end{proof}

\begin{remark}
	The perturbation perspective of \aafom implies that we can incorporate other errors in the iteration, as long as the error is absolutely summable. One such case is where $\calF(\zbark)$ is computed approximately, and the accuracy is increasing along the iteration. 
\end{remark}

The above convergence result indicates that we need a proper strategy to ensure that $\ak\calE_{s, q}(\zk, \dotsm, z_{k-q-1})$ is absolutely summable. This can be achieved via a safeguarded version of \aafom, which is inspired by \cite{alvarez2001inertial}.

\begin{center}
	\begin{minipage}{0.975\linewidth}
		\begin{algorithm}[H]
			\caption{\aafom with safeguard} \label{alg:safeLP}
			\KwIn{Let $a,b,\delta > 0$ and $s\geq1, q \geq 1$ be integers.}
			{\noindent{\bf{Initial:}}} Let $z_0 \in \bbR^n$ and $\zbar_0 = z_0$, set $V_{0} = 0 \in \bbR^{n \times q}$; \\% and let $z_{0} \in \bbR^n$; 	\\
			{\noindent{\bf{Repeat:}}} 
			\begin{itemize}
				\item \textrm{If $\textrm{mod}(k, q+2)=0$: }\textrm{Compute $C_k$ as described above, if $\rho(C_k)<1$:}
				\[
				\begin{aligned}
				% \ak &= \min\bBa{ a, \sfrac{b}{ k^{1+\delta} \norm{ \calE_{s, q}(\zk, \dotsm, z_{k-q-1}) }  } }	,	\\
				\ak &= \min\Ba{ a,~ \sfrac{b}{ k^{1+\delta} \norm{ \calE_{s, q}(\zk, \dotsm, z_{k-q-1}) }  } }	,	\\
				\zbark &= z_k + \ak \calE_{s, q}(\zk, \dotsm, z_{k-q-1})  .
				\end{aligned}
				\]
				\item \textrm{If $\textrm{mod}(k, q+2)\neq0$: } $ \zbark = \zk $.
				
				\item For $k \geq 1$:  
				\[
				\begin{aligned}
				\zkp &= \calF(\zbark) , \enskip
				\vkp &= \zkp - \zk \qandq
				V_{k+1} = [\vkp| \vk | \dotsm | v_{k-q+2}] . % V_{k-1} &= [\vkm, V_{k-2}(:, 1:q-1)]  .
				\end{aligned}
				\]
				
			\end{itemize}
			{\noindent{\bf{Until:}}} $\norm{\vk} \leq \tol$.
		\end{algorithm}
	\end{minipage}
\end{center}

\section{Acceleration guarantees of \aafom}
\label{sec:acc-gua}

As our \aafom is motivated by the local trajectory of the sequence $\seq{\zk}$, in this part we turn to the local perspective and study the local acceleration guarantees of \aafom. 
We first recall several well established vector extrapolation methods in the field of numerical analysis, build connection with our linear prediction and then discuss the acceleration guarantees.

\subsection{Vector extrapolation techniques}

Vector extrapolation techniques provide a generic recipe for the acceleration of sequences, without  specific knowledge of how the sequence is generated.

In the following, we describe two popular techniques for vector extrapolation of a sequence $\ens{\xk}_k$. Let $\uk \eqdef \xkp - \xk$ and define the matrix
\begin{equation}
U_j \eqdef \begin{bmatrix}
\uk|\ukp|\cdots |u_{k+j}
\end{bmatrix}.
\end{equation}

\paragraph{Idea of vector extrapolation methods}
Suppose we are observing a sequence $\{\xk\}_k$ generated by
\begin{equation}\label{eq:T}
\xkp = T\xk +d
\end{equation}
where $T$ is a matrix and $d$ is a vector, which  are possibly unknown. Assume that $\rho(T)<1$ so that $\lim_{k\to\pinf} \xk \eqdef \xst$ exists. We say that $P$ is a minimal polynomial of $T$ with respect to a vector $v$ if it is the monic polynomial of least degree such that $P(T)v = 0$.

It is known \cite{sidi2017vector} that if $P(\la) = \sum_{i=0}^r c_i \la^i$ is  a minimal polynomial  of $T$ with respect to $\xk- \xst$, it is also minimal with respect to $\xkp-\xk$. Moreover, if $\sum_{i=0}^r c_i \neq 0$, then $\xst = \frac{1}{\sum_i c_i }\sum_{i=0}^r c_i u^{k+i}$ where $\uk\eqdef \xkp-\xk$. Therefore, we can compute $\xst$ from finitely many values of this sequence provided that the minimal polynomial coefficients are known. 
The key observation is that these coefficients can be computed without knowledge of $\xst$, since
\begin{align*}
0 = P(T) \uk = \msum_{i=0}^r c_i T^i \uk =  \msum_{i=0}^r c_i u^{k+i}.
\end{align*}
As $c_r = 1$, we can write this equation as $U_r c = 0$ and $U_{r-1} c' = -u^{k+r}$, where $c' = (c_0,\ldots\, c_{r-1})^\top$. Note that this is an overdetermined system if $r\leq d$, and is consistent and has a unique solution. Finally, setting $\gamma_i = \frac{c_i}{\sum c_i}$, we have $\xst = \sum_i \gamma_i x_{k+i}$.
This process of computing the coefficients is known as minimal polynomial extrapolation, and is summarized below.

% \paragraph{Minimal polynomial extrapolation (MPE)}
% \begin{enumerate}
% \item Choose  integers $r$ and $k$ and input the vectors $\xk, \xkp, \ldots,x_{k+r+1}$.
% \item Compute the vectors $\uk,\ukp,\ldots, u_{k+r}$ and for the matrix $U_{r-1}$.
% \item Find $c'\eqdef \begin{bmatrix}
% c_0,\ldots, c_{r-1}
% \end{bmatrix}^\top$ as the least squares solution to  $U_{r-1}c' =  - u^{k+r}$. Set $c_r \eqdef 1$ and $\gamma_i = c_i/\sum_{j=0}^{r} c_j$ for $i=0,\ldots, r$.
% \item Compute $s_{k,r}\eqdef \sum_{i=0}^r \gamma_i x_{k+i}$ as an approximation to $\lim_{k\to\infty}\xk = \xst$.
% \end{enumerate}

\begin{center}
	\begin{minipage}{0.975\linewidth}
	\begin{algorithm}[H]
	\caption{Minimal polynomial extrapolation (MPE)} \label{alg:mpe}
    % {\noindent{\bf{Initial}}}: Let $a,b,\delta > 0$ and $s\geq1, q \geq 1$ be integers. Let $z_0 \in \bbR^n$ and $\zbar_0 = z_0$, set $V_{0} = 0 \in \bbR^{n \times q}$; \\% and let $z_{0} \in \bbR^n$;     \\
	%
%	{\noindent{\bf{Repeat}}}: 
            \begin{enumerate}
            \item Choose  integers $r$ and $k$ and input the vectors $\xk, \xkp, \ldots,x_{k+r+1}$.
            \item Compute the vectors $\uk,\ukp,\ldots, u_{k+r}$ and the matrix $U_{r-1}=\begin{bmatrix}
            \uk|\ukp|\cdots |u_{k+r-1}
            \end{bmatrix}$.
            \item Find $c'\eqdef \begin{bmatrix}
            c_0,\ldots, c_{r-1}
            \end{bmatrix}^\top$ as the least squares solution to  $U_{r-1}c' =  - u^{k+r}$. Set $c_r \eqdef 1$ and $\gamma_i = c_i/\sum_{j=0}^{r} c_j$ for $i=0,\ldots, r$.
            \item Compute $s_{k,r}\eqdef \sum_{i=0}^r \gamma_i x_{k+i}$ as an approximation to $\lim_{k\to+\infty}\xk = \xst$.
            \end{enumerate}
	%
    % {\noindent{\bf{Until}}}: $\norm{\vk} \leq \tol$.
	\end{algorithm}
	\end{minipage}
\end{center}

In general, if $r$ is chosen too small, $\sum_i c_i$ might be zero and MPE will fail. To circumvent this, reduced rank extrapolation was introduced, where step 3 is replaced with a constrained minimization problem.

% \paragraph{Reduced rank extrapolation (RRE)}
% \begin{enumerate}
% \item Choose  integers $r$ and $k$ and input the vectors $\xk, \xkp, \ldots,x_{k+r+1}$.
% \item Compute the vectors $\uk,\ukp,\ldots, u_{k+r}$ and for the matrix $U_{r}$.
% \item Let
% $$
% \gamma \in \argmin_\gamma \norm{U_r \gamma} \text{ subject to } \sum_{i=0}^r \gamma_i = 1.
% $$
% \item Compute $s_{k,r}\eqdef \sum_{i=0}^r \gamma_i x_{k+i}$ as an approximation to $\lim_{k\to\infty}\xk = \xst$.
% \end{enumerate}

\begin{center}
	\begin{minipage}{0.975\linewidth}
	\begin{algorithm}[H]
	\caption{Reduced rank extrapolation (RRE)} \label{alg:rre}
    % {\noindent{\bf{Initial}}}: Let $a,b,\delta > 0$ and $s\geq1, q \geq 1$ be integers. Let $z_0 \in \bbR^n$ and $\zbar_0 = z_0$, set $V_{0} = 0 \in \bbR^{n \times q}$; \\% and let $z_{0} \in \bbR^n$;     \\
	%
%	{\noindent{\bf{Repeat}}}: 
            \begin{enumerate}
            \item Choose  integers $r$ and $k$ and input the vectors $\xk, \xkp, \ldots,x_{k+r+1}$.
            \item Compute the vectors $\uk,\ukp,\ldots, u_{k+r}$ and for the matrix $U_{r}$.
            \item Let $\gamma \in \argmin_\gamma~ \norm{U_r \gamma} \text{ subject to } \msum_{i=0}^r \gamma_i = 1$. 
%            $$
%            \gamma \in \argmin_\gamma~ \norm{U_r \gamma} \text{ subject to } \msum_{i=0}^r \gamma_i = 1.
%            $$
            \item Compute $s_{k,r}\eqdef \sum_{i=0}^r \gamma_i x_{k+i}$ as an approximation to $\lim_{k\to+\infty}\xk = \xst$.
            \end{enumerate}
	%
    % {\noindent{\bf{Until}}}: $\norm{\vk} \leq \tol$.
	\end{algorithm}
	\end{minipage}
\end{center}

Another form of convergence acceleration technique is Anderson  acceleration \cite{Anderson}, whose formulation is similar to that of RRE (and is equivalent in the linear setting). Further details about its relation to vector extrapolation technique can be found in \cite{brezinski2018shanks}. There has been recent work on applying such extrapolation techniques to accelerate first order algorithms \cite{scieur2016regularized,zhang2018globally,peng2018anderson}. One of the challenges of applying these methods is that while $s_{k,r}\to s$ as $r\to \infty$, choosing large  values for $r$ could lead to   $U_r$  being ill-conditioned, \cite{scieur2016regularized} suggested to circumvent this issue using regularization techniques when solving step 3. However, naive regularization could actually slow down convergence, and an adaptive choice of the regularization parameter may lead to many evaluations of the objective function which may be costly. 
%Finally, it is unclear how to extend the work of \cite{scieur2016regularized} in the case  to the acceleration of more general first order schemes such as Douglas--Rachford, where there may be no clear objective function.

\subsection{Equivalence between \aafom and MPE}

%
%Letting $I\to \infty$, we have $v^{k+1} = \pa{(\Id - C)^{-1} D_{k+1} - D_{k+1}}_1$ and hence the extrapolated point is $\bar x^\infty = \xkp + \pa{(\Id - C)^{-1} D_{k+1} - D_{k+1}}_1$ and $\norm{\bar x^\infty - \xst} \lesssim \norm{E_k} +\norm{F_k}+ \norm{\sum_{j=1}^\infty F_{k+j}}$. 
%% In certain cases (e.g. polyhedral regularizers for DR), $F_k = 0$ and hence, the choice of $I=+\infty$ leads to \textbf{finite termination.}
% Note that if $I$ is the degree of the minimal polynomial of $M$ at $\uk$, then $E_k=0$ and we have finite termination when $F_k=0$ (e.g. polyhedral regularizers for DR).

%\begin{remark}
%Note that \eqref{eq:lin} implies that
%\begin{align*}
%& \sum_{j=0}^N u^{k+j} = M \sum_{j=0}^N u^{k+j-1} + f^{k+j-1}\\
%&\implies 
%\quad x^k - \xst = M(x^{k-1} - \xst) + \sum_{j\geq k-1} f^j,
%\end{align*}
%and since for [...] \todo{fill in}, $\norm{M(x^{k-1} - \xst)  - (x^k - \xst )} = o(\norm{x^{k-1} - \xst})$, this implies  that $\norm{\sum_{j\geq k-1} f_j} = o(\norm{x^{k-1} - \xst})$. Therefore, Theorem \ref{prop:extrap-err} implies that for these cases,
%$$
%\norm{\bar x^\infty - \xst} = o(\norm{x^k - \xst}).
%$$
%\end{remark}
%

%In fact, when $I=+\infty$, LP  of order $+\infty$ is equivalent to MPE:
%
%Note that $$\bar x^\infty = \xkp +  \pa{(\Id - C)^{-1} D_{k+1} - D_{k+1}}_1 = \xk + \pa{(\Id-C)^{-1} D_{k+1}}_1.
%$$
%and

We now build a connection between our \aafom with MPE/RRE for the case of $s = \pinf$, that is when the linear prediction is taken for infinite steps.

Owing to \eqref{eq:Ssinf}, when $s=\pinf$, from \eqref{eq:zk_s} we get
\beq\label{eq:lp-mpe}
\begin{aligned}
\zbar_{k,\infty} 
%= \zk + V_{k} \bPa{ \msum_{i=1}^{s} C_{k}^i }_{(:, 1)}  
\eqdef \zk + V_{k} \Pa{ (\Id - C_{k})^{-1} - \Id }_{(:, 1)}  
= \zk - \vk + V_{k} \Pa{ (\Id - C_{k})^{-1} }_{(:, 1)}   
&= \zkm + V_{k} \Pa{ (\Id - C_{k})^{-1} }_{(:, 1)}   \\
&= \sfrac{1}{1- \sum_{i=1}^s c_{k,i}} \Pa{ z_k - \msum_{j=1}^{q-1} c_{k,j} z_{k-j}} ,
\end{aligned}
\eeq
which turns out to be MPE, with the slight difference of taking the weighted sum of $\ba{z_j}_{j=k-q+1}^k$ as opposed to the weighted sum of $\ba{z_j}_{j=k-q}^{k-1}$.  
Let $b \in \bbR^{q+1}$ be such that
\[
b\in\argmin_{b\in\RR^{q+1}, \sum_j b_j =1}\norm{ \msum_{j=0}^{q} b_j v_{k-j} }
\]
and $b_0 \neq 0$. Define $c_j\eqdef - b_j/b_0$ for $j=1,\ldots, q$, then we have
$$
\Pa{ 1-\msum_{i=1}^{q} c_i }^{-1} =\sfrac{b_0}{ b_0 + \sum_{j=1}^{q} b_j} = b_0,
$$
and 
$ \zbar_{k,\infty} = \msum_{j=0}^{q-1} b_j z_{k-j} $ 
which is precisely the RRE update (again with the slight difference of summing over iterates shifted by one iteration).

\begin{remark}
Based on the structure of $C$ in \eqref{eq:coeff_mtx}, simple calculation yields 
\begin{align*}
\small
\Id - C = \begin{bmatrix}
(1-c_1) &  -1& 0& \cdots &0\\
- c_2 & 1&-1 & \ddots&\vdots\\
 - c_3 &0 & \ddots&\ddots &0\\
 \vdots &\vdots & \ddots&\ddots&-1 \\
 -c_q&0 & \cdots &0&1\\
\end{bmatrix}_{q\times q} 
~\textrm{and}\enskip
(\Id - C)^{-1} = \sfrac{1}{1- \sum_{i=1}^s c_i} \begin{bmatrix}
1&1&1&\cdots&\cdots&1&1\\
b_2 & \bar b_2 & \bar b_2&\cdots&\cdots& \bar b_2& \bar b_2\\
b_3 & b_3 & \bar b_3&\cdots&\cdots& \bar b_3& \bar b_3\\
b_4 & b_4 & b_4& \bar b_4&\cdots& \bar b_4& \bar b_4\\
\vdots &&&&&&\vdots\\
b_q&b_q & \cdots&\cdots&\cdots&b_q& \bar b_q
\end{bmatrix}_{q\times q} 
\end{align*}
where $b_j \eqdef \sum_{i\geq j} c_i$ and $\bar b_j = 1- \sum_{i<j}c_i$ such that $\bar b_j-b_j = 1-\sum_j c_j$. 
%By letting $b_j \eqdef \sum_{i\geq j} c_i$ and $\bar b_j = 1- \sum_{i<j}c_i$, such that $\bar b_j-b_j = 1-\sum_j c_j$, we have the following closed form expression of $(\Id-C)^{-1}$, 
%\begin{align*}
%(\Id - C)^{-1} = \sfrac{1}{1- \sum_{i=1}^s c_i} \begin{bmatrix}
%1&1&1&\cdots&\cdots&1&1\\
%b_2 & \bar b_2 & \bar b_2&\cdots&\cdots& \bar b_2& \bar b_2\\
%b_3 & b_3 & \bar b_3&\cdots&\cdots& \bar b_3& \bar b_3\\
%b_4 & b_4 & b_4& \bar b_4&\cdots& \bar b_4& \bar b_4\\
%\vdots &&&&&&\vdots\\
%b_q&b_q & \cdots&\cdots&\cdots&b_q& \bar b_q
%\end{bmatrix}_{q\times q} . 
%\end{align*}
\end{remark}

%\todo{We should  write down the exact form of $(I-C)^{-1}$ either here or in the appendix:
%We have
%\begin{align*}
%\Id - C = \begin{pmatrix}
%(1-c_1) &  -1& 0& \cdots &0\\
%- c_2 & 1&-1 & \ddots&\vdots\\
% - c_3 &0 & \ddots&\ddots &0\\
% \vdots &\vdots & \ddots&\ddots&-1 \\
% -c_s&0 & \cdots &0&1\\
%\end{pmatrix}
%\end{align*}
%and letting $b_j \eqdef \sum_{i\geq j} c_i$, let $\bar b_j = 1- \sum_{i<j}c_i$, so $\bar b_j-b_j = 1-\sum_j c_j$.
%\begin{align*}
%(I - C)^{-1} = \frac{1}{1- \sum_{i=1}^s c_i} \begin{pmatrix}
%1&1&1&\cdots&\cdots&1&1\\
%b_2 & \bar b_2 & \bar b_2&\cdots&\cdots& \bar b_2& \bar b_2\\
%b_3 & b_3 & \bar b_3&\cdots&\cdots& \bar b_3& \bar b_3\\
%b_4 & b_4 & b_4& \bar b_4&\cdots& \bar b_4& \bar b_4\\
%\vdots &&&&&&\vdots\\
%b_s&b_s & \cdots&\cdots&\cdots&b_s& \bar b_s
%\end{pmatrix}
%\end{align*}
%}

\subsection{Acceleration guarantees of \aafom}

We are now ready to discuss the acceleration guarantees of \aafom. We first characterize the prediction error of our proposed \aafom, and then discuss its acceleration guarantees based on the relation with MPE/RRE.

\subsubsection{Prediction error of \aafom}

To discuss the prediction error of \aafom, we need to rewrite \eqref{eq:linearization-zkp-zk} first. Denote $f_{k} = o(\norm{\zk-\zkm})$
and 
\[
F_{k} = [f_{k}| f_{k-1} | \dotsm | f_{k-q+1}] \in \RR^{n\times q}.
\]
Recall that $\vk \eqdef \zk-\zkm$ and $V_{k} = [\vk| \vkm | \dotsm | v_{k-q+1}] \in \RR^{n\times q}$, from \eqref{eq:linearization-zkp-zk} we have $v_k = \mF (v_{k-1}) + f_{k-1}$ and 
\begin{equation}\label{eq:linear_rel}
V_{k} = \mF V_{k-1} + F_{k-1} .
\end{equation}
By virtue the definition of the coefficients matrix $C_k\eqdef H(c_k)$ of \eqref{eq:coeff_mtx},
define $E_{k,j} \eqdef V_k C_k^j - V_{k+j}$ for $j\geq 1$ and
\begin{equation}\label{eq:e0}
E_{k,0}\eqdef V_{k-1} C_k - V_{k} = \begin{bmatrix}
(V_{k-1} c_k - v_{k}) & 0 & \cdots & 0
\end{bmatrix}.
\end{equation}
We arrive at the following relation between the extrapolated point $\zbar_{k,s}$ and the $(k+s)$'th point of $\seq{\zk}$
$$
\zbar_{k,s} = z_{k} + \msum_{j=1}^s \pa{v_{j+k} + (E_{k,j})_{(:,1)}}
= z_{k+s} + \msum_{j=1}^s (E_{k,j})_{(:,1)} .
$$
Here given a matrix $E$, $E_{(:,j)}$ denotes its $j$'th column. 
As a result, we derive the following proposition on the prediction error $\bar z_{k,s} - \zsol$.

%Given $z_0,z_1,\ldots, z_k$, let $r\eqdef k-1$ and for $\ell\geq 0$, define
%$$
%V_{\ell} \eqdef \begin{bmatrix}
%v_{\ell+ k-1}|\cdots| v_{\ell+2}| v_{\ell+ 1}
%\end{bmatrix} \in \RR^{n\times r}
%$$
%and
%$$
%F_{\ell}  \eqdef \begin{bmatrix}
%f_{\ell+ k-1}|\cdots| f_{\ell+2}| f_{\ell+ 1}
%\end{bmatrix} \in \RR^{n\times r}.
%$$
%Note that 
%\begin{equation}\label{eq:linear_rel}
%V_{\ell} = M V_{\ell-1}+ F_{\ell-1}.
%\end{equation}
%Let $c \in \argmin_{c\in \RR^r}~ \norm{V_0 c - v_k}$
%and $C = H(c)$. 
%% \begin{equation}
%% C \eqdef \begin{bmatrix}
%% c_1 & 1 & 0 &\cdots\\
%% c_2 & 0 & 1 & 0 \\
%% \vdots&\vdots& & \ddots\\
%% c_{k-2} & 0 &\cdots & & 1\\
%% c_{k-1}& 0 &\cdots &&0
%% \end{bmatrix}
%% \end{equation}
%The extrapolated point is 
%$$
%\bar z_s \eqdef z_k + \Pa{\msum_{j=1}^s V_1 C^j}_{(:,1)}
%$$

\begin{theorem}[Prediction error]\label{thm:extrap-error}
Given a first-order method of the form \eqref{eq:fom}, let \eqref{eq:linearization-zkp-zk} be its local linearization. 
For Algorithm \ref{alg:a2fom}, when the linear prediction is applied, we have the following error bounds: 
Let 
$$
B_{k,s} \eqdef \max_{i\in \{0,1\},t\leq s}\Ba{\norm{\msum_{\ell=i}^{t} \mF^\ell}, \norm{\msum_{\ell=i}^t C_{k}^\ell} }
$$
and define the coefficients fitting error as $\epsilon_k \eqdef \norm{\sum_{i=1}^{q} c_{k,i} v_{k-i} - v_{k}}$. 
%$$
%\epsilon_{k} \eqdef \min_{c\in \RR^{q}}~ \norm{\msum_{j=1}^{q} c_j v_{k-j} - v_{k}}.
%$$
Then, the prediction error $\bar z_{k,s} - \zsol$ satisfies
\begin{align*}
&\norm{\bar z_{k,s} - \zsol} 
\leq  \norm{\mF^s \pa{z_k - \zsol}} + \norm{\msum_{\ell=0}^{s-1} \mF^\ell} \norm{ \hat f_k} + B_{k,s}  \pa{ \epsilon_k + \norm{F_{k-1}}},
\end{align*}
where $\hat f_k \eqdef \mF (z_{k-1}-\zsol) - (z_k-\zsol)$.

In the case of $s=+\infty$, there holds
\[
\begin{aligned}
&\norm{\bar z_{k,+\infty} - \zsol} 
\leq  \norm{(\Id - \mF)^{-1}} \norm{\hat f_k} 
+ \epsilon_k \; \frac{ \sum_{\ell=1}^{+\infty} \norm{\mF^\ell} }{ {1-\sum_i c_{k,i}} } + \sum_{\ell=0}^{+\infty} \norm{\mF^\ell} \norm{F_{k-1} ((\Id - C_{k})^{-1} - \Id)_{(:,1)}}  .
\end{aligned}
\]
\end{theorem}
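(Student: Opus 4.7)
The plan is to split the prediction error into a part measuring how far $z_{k+s}$ is from $\zsol$ and a part measuring how well $V_k C_k^{j}$ approximates $V_{k+j}$, then bound each piece using the local linearization $v_{k+1}=\mF v_{k}+f_{k}$. From the identity $\bar z_{k,s} = z_{k+s} + \sum_{j=1}^{s}(E_{k,j})_{(:,1)}$ already derived just above the statement, the triangle inequality gives
$$
\|\bar z_{k,s}-\zsol\| \le \|z_{k+s}-\zsol\| + \Bigl\|\msum_{j=1}^{s}(E_{k,j})_{(:,1)}\Bigr\| ,
$$
so the theorem reduces to bounding these two pieces in terms of $\mF^{s}(z_k-\zsol)$, $\hat f_k$, $\epsilon_k=\|E_{k,0}\|$, and $\|F_{k-1}\|$.

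For the first piece, I would set $a_k=z_k-\zsol$ and use the definition $\hat f_k=\mF a_{k-1}-a_k$ to obtain the one-step recurrence $a_{k}=\mF a_{k-1}-\hat f_{k}$, which unrolls into $a_{k+s}=\mF^{s}a_{k}-\sum_{\ell=1}^{s}\mF^{s-\ell}\hat f_{k+\ell}$. Combining $v_{k+1}=\mF v_k+f_k$ with $a_k=a_{k-1}+v_k$ produces the telescoping identity $\hat f_{k+\ell}=\hat f_k-\sum_{j=0}^{\ell-1}f_{k+j}$; substituting this into the unrolled recurrence and reindexing yields
$$
a_{k+s} = \mF^{s}a_{k} - \Bigl(\msum_{m=0}^{s-1}\mF^{m}\Bigr)\hat f_{k} + \msum_{\ell=1}^{s}\mF^{s-\ell}\msum_{j=0}^{\ell-1}f_{k+j},
$$
and the last remainder is absorbed into $B_{k,s}\|F_{k-1}\|$ after bounding each $\|f_{k+j}\|\le\|F_{k-1}\|$ (equivalent $o(\|v_{\cdot}\|)$ magnitude) and using the definition of $B_{k,s}$.

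For the second piece, I would start from $E_{k,j}=V_k C_k^{j}-V_{k+j}$ and the column-wise iterate of \eqref{eq:linear_rel}, which gives $V_{k+j}=\mF^{j}V_k+\sum_{\ell=1}^{j}\mF^{j-\ell}F_{k+\ell-1}$. The ``mixed'' difference telescopes as $V_k C_k^{j}-\mF^{j}V_k=\sum_{\ell=0}^{j-1}\mF^{\ell}(V_k C_k-\mF V_k)C_k^{j-\ell-1}$, and a direct computation using $V_k=\mF V_{k-1}+F_{k-1}$ together with $V_{k-1}C_k=V_k+E_{k,0}$ gives the key one-step identity $V_k C_k-\mF V_k=\mF E_{k,0}+F_{k-1}C_k$. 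Summing over $j=1,\dots,s$, swapping the order of summation and bounding every partial sum of powers of $\mF$ and $C_k$ by $B_{k,s}$ converts the result into $B_{k,s}(\epsilon_k+\|F_{k-1}\|)$, modulo the same future-$F$ absorption used above. For $s=+\infty$, the hypotheses $\rho(\mF),\rho(C_k)<1$ make the Neumann series $\sum_{\ell=0}^{\infty}\mF^{\ell}=(\Id-\mF)^{-1}$ and $\sum_{\ell=0}^{\infty}C_k^{\ell}=(\Id-C_k)^{-1}$ convergent, so passing to the limit in both bounds and using \eqref{eq:Ssinf} recovers the stated expression with $(\Id-\mF)^{-1}\hat f_k$ and $F_{k-1}((\Id-C_k)^{-1}-\Id)_{(:,1)}$. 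The main obstacle is Step~3: the recursion for $E_{k,j}$ couples left-multiplication by $\mF$ with right-multiplication by $C_k$, so no single spectral decomposition trivialises it; the telescoping identity for $V_k C_k^{j}-\mF^{j}V_k$ is what isolates the coupling term $\mF E_{k,0}+F_{k-1}C_k$ and makes a uniform $B_{k,s}$ bound possible.
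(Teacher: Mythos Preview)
Your algebraic core is sound and matches the paper: the telescoping identity $V_kC_k^{j}-\mF^{j}V_k=\sum_{\ell=0}^{j-1}\mF^{\ell}(V_kC_k-\mF V_k)C_k^{j-1-\ell}$ together with $V_kC_k-\mF V_k=\mF E_{k,0}+F_{k-1}C_k$ is exactly the content of the paper's inductive formula for $E_{k,j}$. The unrolling $a_{k+s}=\mF^{s}a_k-(\sum_{m=0}^{s-1}\mF^m)\hat f_k+R_1$ with $R_1=\sum_{j=0}^{s-1}\bigl(\sum_{\ell=0}^{s-1-j}\mF^\ell\bigr)f_{k+j}$ is also correct.

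The gap is in the step ``the last remainder is absorbed into $B_{k,s}\|F_{k-1}\|$ after bounding each $\|f_{k+j}\|\le\|F_{k-1}\|$''. This inequality is not available: $F_{k-1}=[f_{k-1}|\cdots|f_{k-q}]$ contains only \emph{past} residuals, and there is no a priori control of the \emph{future} residuals $f_{k},f_{k+1},\ldots,f_{k+s-1}$ in terms of $\|F_{k-1}\|$. Even if one granted a uniform bound heuristically, your double sum $R_1$ (and the symmetric future-$F$ remainder coming from $\sum_j E_{k,j}$) carries an extra factor of order $s$, so the bound you obtain would be $s\,B_{k,s}\|F_{k-1}\|$ rather than $B_{k,s}\|F_{k-1}\|$, and would diverge when you pass to $s=+\infty$.

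What the paper does differently is \emph{not} to split via the triangle inequality at the outset. It keeps the equality $\bar z_{k,s}-\zsol=(z_{k+s}-\zsol)+\sum_{m=1}^{s}(E_{k,m})_{(:,1)}$ and computes the first column of the future-$F$ part of $\sum_m E_{k,m}$ exactly, obtaining
\[
-\Bigl(\msum_{\ell=0}^{s-1}\mF^{\ell}\msum_{j=0}^{s-1-\ell}F_{k+j}\Bigr)_{(:,1)}
= -\bigl(z_{k+s}-\zsol\bigr)+\mF^{s}(z_k-\zsol)-\Bigl(\msum_{\ell=0}^{s-1}\mF^{\ell}\Bigr)\hat f_k .
\]
The $-(z_{k+s}-\zsol)$ cancels the $z_{k+s}-\zsol$ coming from the first piece, so \emph{all} future residuals disappear identically and only the terms $\mF^{s}(z_k-\zsol)$, $\hat f_k$, $E_{k,0}$ and $F_{k-1}$ remain. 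To fix your argument, do not take norms of $z_{k+s}-\zsol$ and $\sum_m(E_{k,m})_{(:,1)}$ separately; instead add your expression for $a_{k+s}$ to your expression for $\sum_m(E_{k,m})_{(:,1)}$ first, observe that your $R_1$ equals precisely the first column of $\sum_{\ell=0}^{s-1}\mF^{\ell}\sum_{j=0}^{s-1-\ell}F_{k+j}$ and hence cancels the future-$F$ block in $\sum_m E_{k,m}$, and only then apply the triangle inequality.
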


The proof of the theorem can be found in Section \ref{sec:proof_acceleration} of the appendix.

\begin{remark}$~$
	\begin{itemize}
		\item The fact that $B_{k,s}$ is uniformly bounded in $s$ if $\rho(\mF)<1$, and $\rho(C_{k})<1$ follows because this implies that $\sum_{\ell=1}^{+\infty} \norm{\mF^\ell}<+\infty$ thanks to the Gelfand formula, and $\sum_{i=0}^{+\infty} C_{k}^i = (\Id - C_{k})^{-1}$ and its $(1,1)^{th}$ entry is precisely $\frac{1}{1-\sum_i c_{k,i}}$.
		
		\item In Theorem \ref{thm:extrap-error}, the prediction error consists of two main sources: coefficient fitting error of $\epsilon_k$ and linearization error of $F_{k-1}$ which corresponds to the small $o$-terms. 
		When the small $o$-term in \eqref{eq:linearization-zkp-zk} vanishes,  that is $F_{k-1} = 0$, then it follows from the proof that
		$$
		\norm{\bar z_{k,s} - \zsol} \leq \norm{z_{k+s}-\zsol} + B_{k,s} \epsilon_k
		$$
		and if the spectral radius $\rho(\mF)<1$ and $\rho(C_{k})<1$, then
		$$
		\norm{\bar z_{k,+\infty} - \zsol} \leq \msum_\ell \norm{\mF^\ell}    \sfrac{ \epsilon_k }{ 1-\sum_i c_{k,i} }  .
		$$
	\end{itemize}

\end{remark}

%\begin{remark}
%
%\end{remark}

\subsubsection{Acceleration guarantees}

As shown in Theorem \ref{thm:extrap-error}, a key quantity governing the amount of acceleration is the coefficient fitting error $\epsilon_k$. For the case that the small $o$-terms vanish, this error can be bounded using existing results of vector extrapolation. In the following, we assume that \eqref{eq:fom} can be linearized without small $o$-term and derive acceleration guarantees for Algorithm \ref{alg:a2fom}. 

\begin{theorem}[Acceleration guarantees]\label{thm:acc-guarantee}
Given a first-order method of the form \eqref{eq:fom}, suppose there exists a linear matrix $\mF$ such that it can be linearized of the form $$\zkp-\zk = \mF (\zk-\zkm).$$ 
%
%Define the coefficient fitting error by $\epsilon_k \eqdef \min_{c\in \RR^q} \norm{V_{k-1} c - v_{k}}$.
%
%\begin{enumerate}[label={\rm (\roman{*})}, leftmargin=2em]
%\item%[(i)]
%For $s\in \NN$,
%we have
%\begin{equation}\label{eq:err-finite}
%\norm{\zbar_{k,s} - \zsol} \leq \norm{z_{k+s} - \zsol} + B_s \epsilon_k ,  
%\end{equation}
%where
%$B_s \eqdef \sum_{\ell=1}^s \norm{M^\ell} \abs{ \sum_{i=0}^{s-\ell} (C_k^i)_{(1,1)}}$. If $\rho(M)<1$ and $\rho(C_k)<1$, then  $\sum_i c_{k,i}\neq 1$ and $B_s$ is uniformly bounded in $s$. For $s=+\infty$, $B_\infty\eqdef \abs{1-\sum_i c_{k,i}}^{-1} \sum_{\ell=1}^\infty \norm{M}^\ell $
%If $\rho(C)<1$, then $\lim_{s\to\infty} \sum_{i=0}^{s} C^i_{(1,1)} = \frac{1}{1-\sum_i c_i}$, and
%in the case of $s=+\infty$, we have
%\begin{equation}\label{eq:err-inf}
%\norm{\zbar_{k,\infty} - \zsol} \leq  \frac{\epsilon_k}{\abs{1-\sum_i c_i}}\sum_{\ell=1}^\infty \norm{M^\ell}  .
%\end{equation}
%\item%[(ii)]
Suppose that $\mF$ is diagonalizable. Let $\{\lambda_j\}_j$ denote its distinct eigenvalues ordered such that $\abs{\lambda_j}\geq \abs{\lambda_{j+1}}$ and $\abs{\lambda_1}= \rho(\mF)<1$. Suppose that $\abs{\lambda_{q}}>\abs{\lambda_{q+1}}$.  Then we have the following bounds on $\epsilon_k$
\begin{itemize}[leftmargin=2.25em]
\item
Asymptotic bound (fixed $q$ and as $k\to +\infty$): $\epsilon_k =O \pa{\abs{\lambda_{q+1}}^k}$. 
%\begin{align*}
%&\epsilon_k =O \pa{\abs{\lambda_{q+1}}^k}.
%\end{align*}
\item
Non-asymptotic bound (fixed $q$ and $k$):
Suppose that $\lambda(\mF)$ is real-valued and contained in the interval  $[\alpha,\beta]$ with $-1<\alpha<\beta<1$.
Then,
\begin{equation}\label{eq:opt-err}
\qfrac{\epsilon_k}{1-\sum_i c_{k,i}} \leq K \beta^{k-q} \Pa{\tfrac{\sqrt{\eta}-1}{\sqrt{\eta}+1}}^q
\end{equation}
where
% $\eta = \frac{1-\alpha}{1-\beta}$, where $K\eqdef {2   \norm{z_1-z_0}}$ in the case of RRE coefficient update, and
  $K\eqdef {2   \norm{z_0-\zsol}}\norm{(\Id-M)^{\frac12}}$ and $\eta = \frac{1-\alpha}{1-\beta}$. 
  %in the case of MPE coefficient update under the additional assumption that $(\Id - M)_H$ is positive definite.
\end{itemize}
%\end{enumerate}
%
\end{theorem}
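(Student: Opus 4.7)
The strategy is to translate the coefficient-fitting problem into a polynomial-approximation problem on the spectrum of $\mF$, and then invoke classical spectral and Chebyshev estimates.

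\emph{Step 1 (Polynomial reformulation).} Since the theorem assumes the linearization $z_{k+1}-z_k = \mF(z_k-z_{k-1})$ holds exactly (i.e.\ the small-$o$ term vanishes), the displacements satisfy $v_{k-i} = \mF^{\,q-i} v_{k-q}$ for $i=0,\dots,q$. Consequently,
\begin{equation*}
\epsilon_k
= \min_{c\in\RR^q} \Bnorm{\msum_{i=1}^{q} c_i v_{k-i} - v_k}
= \min_{p\in\mathcal{P}_q^{\mathrm{mon}}} \norm{p(\mF) v_{k-q}},
\end{equation*}
where $\mathcal{P}_q^{\mathrm{mon}}$ denotes monic polynomials of degree $q$, and $p^{\star}(\lambda) = \lambda^q - \sum_{i=1}^q c_{k,i}\lambda^{q-i}$. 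In particular, $p^{\star}(1) = 1-\sum_i c_{k,i}$, so the normalized ratio reads
\begin{equation*}
\frac{\epsilon_k}{1-\msum_i c_{k,i}} \;=\; \frac{\norm{p^{\star}(\mF) v_{k-q}}}{p^{\star}(1)} \;=\; \min_{\substack{\tilde p:\,\tilde p(1)=1\\ \deg \tilde p\le q}} \norm{\tilde p(\mF)\,v_{k-q}},
\end{equation*}
after the change of variables $\tilde p = p/p(1)$, which identifies our scheme with the Chebyshev-type problem solved by MPE/RRE.

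\emph{Step 2 (Asymptotic bound).} Using diagonalizability, $\mF = U\Lambda U^{-1}$ and $v_{k-q} = \mF^{k-q-1} v_1 = \sum_j \lambda_j^{k-q-1}\,(P_j v_1)$, where $P_j$ is the spectral projector onto the eigenspace of $\lambda_j$. Test the monic polynomial $p(\lambda) \eqdef \prod_{j=1}^{q}(\lambda-\lambda_j)$, which annihilates the top $q$ eigencomponents. Then
\begin{equation*}
p(\mF)\,v_{k-q} \;=\; \msum_{j>q} p(\lambda_j)\,\lambda_j^{k-q-1}\,P_j v_1,
\end{equation*}
whose norm is $O\!\bpa{|\lambda_{q+1}|^{k-q-1}}$ because $|\lambda_{q+1}|>|\lambda_j|$ for $j>q+1$. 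Since $p^{\star}$ is the minimizer, $\epsilon_k \le \norm{p(\mF) v_{k-q}} = O(|\lambda_{q+1}|^{k-q})$, and absorbing the fixed $q$-dependent factor $|\lambda_{q+1}|^{-q}$ gives $\epsilon_k = O(|\lambda_{q+1}|^k)$ for fixed $q$ as $k\to\pinf$.

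\emph{Step 3 (Non-asymptotic Chebyshev bound).} With the spectrum real and contained in $[\alpha,\beta]\subset(-1,1)$, apply the classical minimax result: over $\tilde p$ with $\tilde p(1)=1$ and $\deg \tilde p\le q$,
\begin{equation*}
\min_{\tilde p} \max_{\lambda\in[\alpha,\beta]} |\tilde p(\lambda)| \;\le\; 2\Ppa{\tfrac{\sqrt{\eta}-1}{\sqrt{\eta}+1}}^{q},\quad \eta = \tfrac{1-\alpha}{1-\beta},
\end{equation*}
realized by a shifted/rescaled Chebyshev polynomial. For diagonalizable $\mF$ with real spectrum in $[\alpha,\beta]$, this yields $\norm{\tilde p(\mF)\,v_{k-q}} \le \max_{\lambda\in[\alpha,\beta]}|\tilde p(\lambda)|\cdot\norm{v_{k-q}}$. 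It remains to control $\norm{v_{k-q}}$. Using $v_{k-q} = \mF^{k-q-1}(\mF-\Id)(z_0-\zsol)$ (from $v_1 = (\mF-\Id)(z_0-\zsol)$ via the linearization) together with an energy-norm argument, we bound
\begin{equation*}
\norm{v_{k-q}} \;\le\; \beta^{k-q}\,\norm{(\Id-\mF)^{\frac12}}\,\norm{z_0-\zsol},
\end{equation*}
where the $(\Id-\mF)^{1/2}$ factor arises from a Krylov/CG-style splitting of the dependence on the initial residual. Combining the Chebyshev estimate with this bound gives the claimed non-asymptotic inequality with $K = 2\norm{z_0-\zsol}\norm{(\Id-\mF)^{1/2}}$.

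\emph{Main obstacle.} The polynomial reformulation and the asymptotic bound are relatively clean once the equivalence with MPE/RRE is exploited. The difficult part is the non-asymptotic bound: one must justify the sharpening of the operator norm $\norm{\tilde p(\mF)}$ to $\max_{\lambda\in[\alpha,\beta]}|\tilde p(\lambda)|$ (which requires either symmetry of $\mF$ or absorbing the eigenbasis condition number into a weighted norm), and correctly extract the $(\Id-\mF)^{1/2}$ factor so that the initial-error dependence matches $\|z_0-\zsol\|$ rather than $\|v_1\|$. This is exactly the point where the argument mirrors the convergence analysis of conjugate gradient in the $A$-norm.
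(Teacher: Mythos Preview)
Your asymptotic bound (Step~2) is correct and is exactly what the paper does: plug in the monic polynomial $p(\lambda)=\prod_{j=1}^q(\lambda-\lambda_j)$ to annihilate the top $q$ eigencomponents.

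The non-asymptotic bound, however, has a genuine gap at the last equality of Step~1. You write
\[
\frac{\epsilon_k}{1-\sum_i c_{k,i}} \;=\; \frac{\norm{p^\star(\mF)v_{k-q}}}{p^\star(1)} \;=\; \min_{\tilde p(1)=1,\ \deg\tilde p\le q}\norm{\tilde p(\mF)v_{k-q}},
\]
but the second equality is false: $p^\star$ minimizes over \emph{monic} polynomials (MPE), and $\tilde p^\star=p^\star/p^\star(1)$ is \emph{not} in general the minimizer over the constraint $\tilde p(1)=1$ (RRE). Since the Chebyshev estimate bounds only the RRE minimum, and $\norm{\tilde p^\star(\mF)v_{k-q}}\ge \min_{\tilde p(1)=1}\norm{\tilde p(\mF)v_{k-q}}$, the inequality runs the wrong way and you cannot conclude.

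The paper closes this gap precisely via the CG argument you mention in your ``Main obstacle'' paragraph but do not execute. Writing $A=\Id-\mF$ and $y=\sum_{i=0}^q\gamma_i z_{k-i-1}$ with $\gamma_i$ the MPE weights, one has $\frac{\epsilon_k}{1-\sum c_{k,i}}=\norm{A(y-\zsol)}$. The key step uses the MPE \emph{normal equations} $\langle v_k-\sum_i c_{k,i}v_{k-i},\,v_{k-j}\rangle=0$ to insert, for free, an arbitrary $w\in\mathrm{span}(v_{k-1},\dots,v_{k-q})$ into the inner product $\langle A(y-\zsol),\,(y-\zsol)+w\rangle$. This freedom is exactly what turns $(y-\zsol)+w$ into $h(\mF)(z_{k-q-1}-\zsol)$ for an \emph{arbitrary} $h$ of degree $q$ with $h(1)=1$, after which Cauchy--Schwarz in the $A^{1/2}$-inner product and the Chebyshev minimax bound finish the job. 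Your change-of-variables shortcut bypasses this orthogonality step and therefore cannot produce the bound. A secondary point: the $\norm{(\Id-\mF)^{1/2}}$ factor and the dependence on $\norm{z_{k-q-1}-\zsol}$ (rather than $\norm{v_{k-q}}$) emerge naturally from this $A$-norm splitting; your direct estimate of $\norm{v_{k-q}}$ would at best give a factor $\norm{\Id-\mF}$, not $\norm{(\Id-\mF)^{1/2}}$.
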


\begin{remark}$~$
\begin{itemize}[leftmargin=2em]
%\item 
%As we have seen in Section \ref{sec:trajectory-fom}, when the optimization problem is locally polyhedral around the solution, the small $o$-term vanishes in the linearization. 
\item As we have seen in Section \ref{sec:trajectory-fom}, when $R$ and $J$ are both polyhedral,  when the optimization problem is locally polyhedral around the solution, the small $o$-term vanishes and we have a perfect local linearization. Hence, the conditions of Theorem \ref{thm:acc-guarantee} holds for all $k$ large enough. 

	\item Combined with Theorem  \ref{thm:extrap-error}, this result shows that the extrapolated point $\zbar_{k,s}$ moves along the true trajectory as $s$ increases, up to the fitting error $\epsilon_k$. Note that the same error bounds also holds for MPE, see for instance \cite{sidi2017vector}, and as discussed previously, our update $\zbar_{k,+\infty}$ is essentially an MPE update. However, this theorem offers a further geometric interpretation of these extrapolation methods in terms of following the ``sequence trajectory'', and combined with our local analysis of \fom, provides justification of these methods for the acceleration of non-smooth optimization problems.

%\item Theorem \ref{thm:acc-guarantee} (ii) shows that extrapolation improves the convergence rate from $O (\abs{\lambda_1}^k)$ to $O (\abs{\lambda_{q+1}}^k)$, and the non-asymptotic bound shows that the improvement of extrapolation is optimal in the sense of Nesterov \cite{nesterov83}. 
%Take Douglas--Rachford splitting for example, in the case of two non-smooth polyhedral terms,
%we must have $\abs{\lambda_{2j-1}} = \abs{\lambda_{2j}} > \abs{\lambda_{2j+1}}$ for all $j\geq 1$. Hence, no acceleration can be guaranteed or observed when $q=1$, while the choice of $q=2$ provides guaranteed acceleration. 
%% the  eigenvalues are complex and occur in conjugate pairs, hence, $\abs{\lambda_1} = \abs{\lambda_2}$ and no acceleration can be guaranteed or observed when $q=1$. On the other hand, since there does not exist eigenvalues such that $\abs{\lambda_j} = \abs{\lambda_\ell}$ with $\lambda_j\neq \lambda_\ell$ unless $\lambda_j = \overline{\lambda_\ell}$, we must have $\abs{\lambda_3}<\abs{\lambda_2}$ and the choice of $q=2$ provides guaranteed acceleration. 
\end{itemize}

\end{remark}

\begin{remark}[Acceleration guarantee and the choice of $q$]$~$
\begin{itemize}
	\item For Forward--Backward splitting method, as the angle $\theta_k$ converges to $0$. For the coefficient fitting error we have $\epsilon_k = o(\rho(\mF))$, which indicate that \aafom can provide acceleration with $q=1$. Since $q=1$ corresponds to the inertial scheme, our result complies with the current literature on inertial Forward--Backward splitting methods. 
	
	\item Theorem \ref{thm:acc-guarantee} (ii) shows that extrapolation improves the convergence rate from $O (\abs{\lambda_1}^k)$ to $O (\abs{\lambda_{q+1}}^k)$, and the non-asymptotic bound shows that the improvement of extrapolation is optimal in the sense of Nesterov \cite{nesterov83}. 
	Take Douglas--Rachford splitting for example, in the case of two non-smooth polyhedral terms,
	we must have $\abs{\lambda_{2j-1}} = \abs{\lambda_{2j}} > \abs{\lambda_{2j+1}}$ for all $j\geq 1$. Hence, no acceleration can be guaranteed or observed when $q=1$, while the choice of $q=2$ provides guaranteed acceleration. 
\end{itemize}
\end{remark}

%\todo{Write a remark to comment that one coefficient is enough for acceleration in the case of FB, point back to the previous result about the angle going to zero, which actually follows from the coefficients fitting error being small-o of the spectral radius}

\begin{remark}[Dealing with small $o$-terms]  We now consider the coefficients fitting error of the perturbed problem $v_k = \mF (v_{k-1}) + f_{k-1}$. Let $v_k^0 = \mF(v_{k-1}^0)$ with $v_{k-q}^0 = v_{k-q}$ and let $c^0_k\in\RR^q$ and $C^0_k = H(c_k^0)$ be the associated coefficients and coefficients matrix. Let $\epsilon^0$ be the coefficients fitting error for this unperturbed problem, then 
\begin{align*}
\epsilon 
= \min_{c\in\RR^q} \norm{\msum_{j=1}^q c_j v_{k-j} - v_k} 
&\leq \epsilon^0 +  \norm{\msum_{j=1}^q c_{k,j}^0 (v_{k-j} - v_{k-j}^0 )- v_k-v_k^0}\\
&\leq \epsilon^0 + \norm{  \msum_{i=1}^q c_{k,i}^0 \Pa{
\msum_{\ell=1}^{q-i} \mF^{\ell-1} f_{k-i-\ell} } - \Pa{
\msum_{\ell=1}^{q} \mF^{\ell-1} f_{k-\ell} }}\\
&\leq \epsilon^0 + (1+\norm{c^0}_1) \max_{i=1}^q \norm{f_{k-i}} \msum_{\ell=1}^q \norm{\mF^\ell}
\end{align*}
where we have used
\begin{align*}
v_{k-i} - v_{k-i}^0 = \mF^{q-i}(v_{k-q} - v_{k-q}^0) + 
\sum_{\ell=1}^{q-i} \mF^{\ell-1} f_{k-i-\ell}  = \sum_{\ell=1}^{q-i} \mF^{\ell-1} f_{k-i-\ell} .
\end{align*}
Therefore, even with the presence of small $o$-terms, the coefficients fitting error can be bounded in terms of the small $o$-terms and  the coefficients fitting error under exact linearization.
\end{remark}

%\todo{We cannot really say anything concrete about $\epsilon^0$ unless $M$ is diagonalisable. But are there  concrete cases where $M$ is diagonalisable and there are small o terms? (my guess is no...)}

%%\input{tex/sec-discussion}
%
%%\input{tex/sec-examples}

% !TEX root = ../A2FoM.tex

\section{Implementation and numerical experiments}\label{sec:exp}

Our proposed adaptive acceleration scheme Algorithm \ref{alg:a2fom} is quite abstract in the sense it is only presented for fixed-point iteration. While for first-order methods, as we have seen in Section \ref{sec:trajectory-fom}, each method has a unique fixed-point characterization. Therefore, in section we discuss how to implement \aafom for different algorithms and provide numerical tests to demonstrate the performance of our acceleration scheme.

\subsection{Gradient descent}
We first consider the comparisons of gradient descent method on least square problems. Since gradient descent is a special case of Forward--Backward splitting with $R$ being $0$ (hence $\prox_{\gamma R} = \Id$), we refer to Algorithm \ref{alg:a2fb} for the specialization of \aafom to gradient descent algorithm. 

For least square problem, gradient descent results in the linear system of \eqref{eq:T}. Therefore, in this example we compare the performance of the following methods: 
\begin{itemize}
	\item Gradient descent, FISTA \cite{fista2009}, restarting FISTA \cite{o2012adaptive}.
	%\item Our proposed scheme with $(q,s) = (2,100)$ (``LP finite'') and $(2, +\infty)$ (``LP infty'').
	\item Our proposed scheme (LP) with $(q,s) = (6, +\infty)$.
	\item MPE \cite{cabay1976polynomial}, RRE\cite{eddy1979extrapolating} and regularized non-linear acceleration (RNA) \cite{scieur2016regularized}. %, with line search on step-size and grid search on regularization parameter (``RNA adaptive''), and ``RNA basic'' without line search and grid search.
\end{itemize}
%For each of the four extrapolation methods, we compare the basic scheme without line search (denoted as ``basic'') and adaptive one with line search (denoted as ``LS''). For ``RNA adaptive'', not only line search, but also grid search on the regularization parameter is applied. 
%
The following least square problem is considered
\[
\min_{x\in\bbR^{50}}~ \sfrac{1}{2}\norm{Ax - f}^2 ,
\]
where three different choices of $A$ are implemented
\begin{itemize}
	\item Tridiagonal matrix with main diagonal elements equal to $2$, and the elements of the first diagonal below and above main diagonal equal to $-1$;
	\item $A = \texttt{rand}(51,50)$ is generated from uniform distribution in $[0, 1]$.
	\item $A = \texttt{randn}(51,50)$ is generated from normal Gaussian distribution. 
\end{itemize}
The performance comparison of different methods are shown in Figure \ref{fig:cmp_lse}, from which we observe that
\begin{itemize}
	\item Among all the algorithms, gradient descent (gray line) is the slowest, while ``FISTA'' is the 2nd slowest for the {\tt rand} and {\tt randn} cases of $A$. 
	\item ``Restarting FISTA'' shows the best overall performance, especially for the case of tridiagonal $A$ as it is significantly faster than all the the other algorithms. 
	\item For the vector extrapolation based algorithms (MPE/RRE, RNA and our proposed algorithm), except for {\tt rand} $A$ where linear prediction is slower than the others, their performances are quite close. 
\end{itemize}
%We remark that the {\it fastest} performance of ``restarting FISTA'' is rather impressive given its simplicity and easy implementation. 
In light of our analysis, since gradient descent has an eventual straight-line trajectory, inertial is expected to perform well. Indeed, we observe here is restarted FISTA is the fastest and is rather impressive given its simplicity and easy implementation.

%\todo{I looked into the code, and actually 'para.straight=1' was not set in the code, so the straight line was not enforced (I added this line now). The behaviour of RRE/MPE can be erratic if we don't do this. Also, do you think it would be better to move this least-squares section to before presenting FB, because then we can make the point that understanding the trajectory can guide our approach to acceleration. We can then even show the comparison between doing straight line enforcement for MPE/RRE and without and show that it matches the RNA performance. I am also a little confused as to why the performance of RNA is so bad when there is no line search, I do not observe this (it is on par with RRE). Also, as the code is written, comparison with RNA against other extrapolations is unfair because the number of points seem to be different (8 for RNA vs 6 for others)}

%%%%%%%%%%%%%%%
\begin{figure}[!ht]
	\centering
	\subfloat[$A$ is a tridiagonal matrix]{ \includegraphics[width=0.315\linewidth]{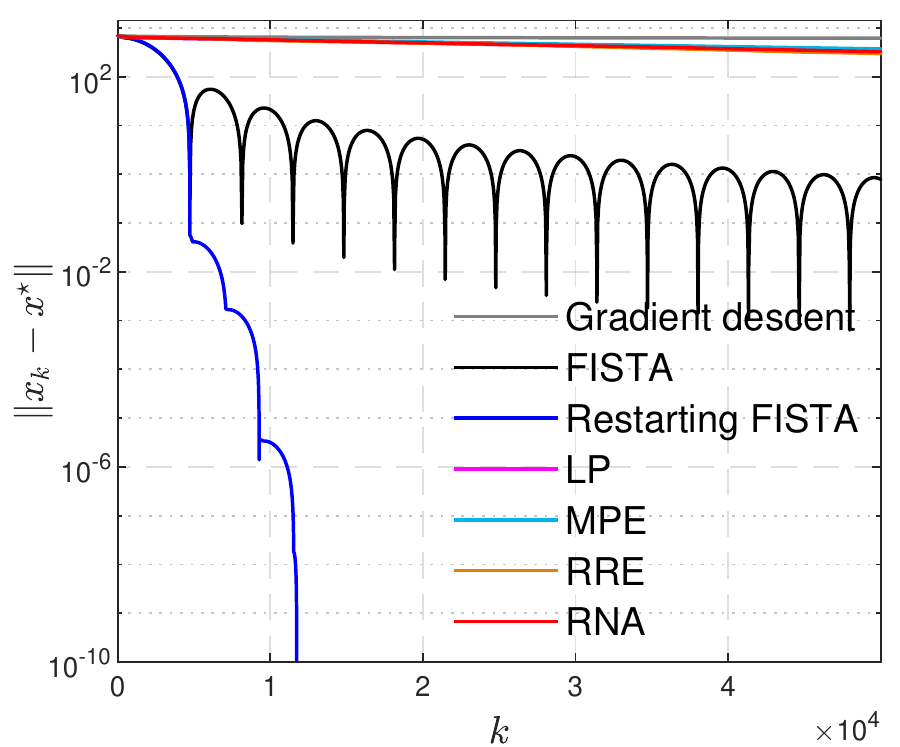} } \hspace{0pt}
	\subfloat[$A = \texttt{rand}(50, 50)$]{ \includegraphics[width=0.315\linewidth]{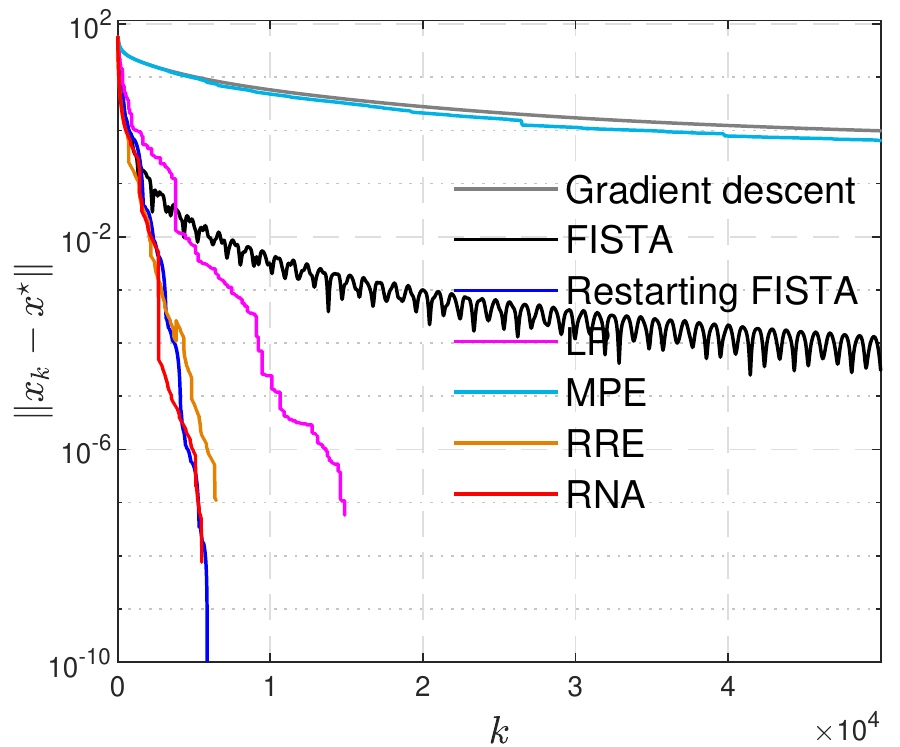} } \hspace{0pt}
	%%%%%%%%%%%%%%%
	\subfloat[A = \texttt{randn}(50, 50)]{ \includegraphics[width=0.315\linewidth]{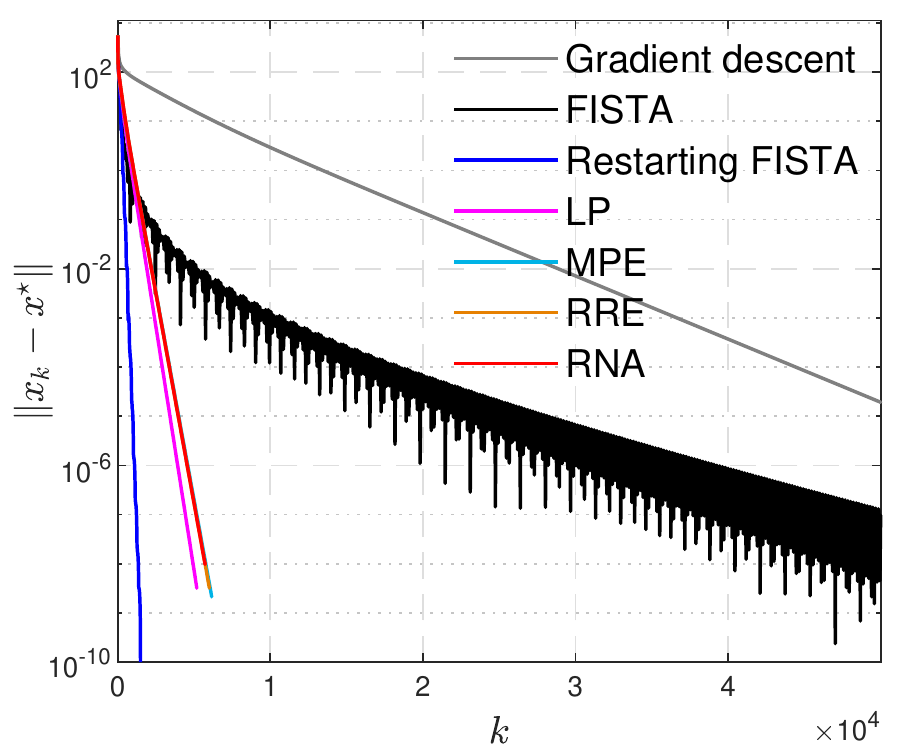} } \\
	%%%%%%%%%%%%%%%
	\caption{Performance comparison among different schemes under different choices of $A$.} 
	\label{fig:cmp_lse}
\end{figure}

% code: Clarice-Jingwei/Ada-Acceleration/matlab/RNA/cmp_LSE.m

\begin{remark}
	Note that for gradient descent, also the Forward--Backward splitting method to be discussed below, line search can be applied to all the methods compared above. However, we decided not to provide the comparison here since line search can provide acceleration for all these algorithms. Moreover, line search needs the evaluation of objection function values which will increase the overall wall clock time and make it not practical for large scale problems. 
\end{remark}

%We also compare the CPU running times of these schemes, which are shown in the Table below. For each method, the CPU time is the average of 10 runnings until either stopping criterion is met or the maximum number of iteration is reached. It can be observed that, despite the similar performance in terms number of iteration between ``restarting FISTA'' and ``RNA adaptive'', the former has a clear advantage on CPU time. 

%\renewcommand{\arraystretch}{1.125}
%
%\vspace{1ex}
%
%\begin{table}[!ht]
%\begin{small}
%\begin{center}
%\caption{Averaged CPU time (s) comparison of the methods.}
%\begin{tabular}{|c|c|c|c|c|c|c|c|c|}
%\hline
%& {GD} & restarting FISTA & {RNA adaptive} & {RNA basic} & {LP finite} & {LP infinity} & {MPE} & {RRE}  \\ \hline
%\texttt{rand} & 0.0876 & {\bf 0.0519} & 0.4843 & 1.0890 & \tcb{3.3629} & 1.4940 & 1.2843 & 1.1561 \\ \hline
%\texttt{randn} & 0.0901 & {\bf 0.0124} & 0.0711 & 0.8338 & \tcb{3.4618} & 1.7926 & 1.3771 & 1.0363 \\ \hline
%Tridiagonal & 0.0906 & {\bf 0.0771} & 0.6976 & 0.9884 & \tcb{3.4349} & 1.5067 & 1.2591 & 1.0169  \\ \hline
%\end{tabular}
%%\caption{Averaged CPU time (s) comparison of the methods.}
%\end{center}
%\end{small}
%\end{table}
%
%\renewcommand{\arraystretch}{1}
%
%
%
%% 0.0876 & 0.0519 & 0.4843 & 1.0890 & 3.3629 & 1.4940 & 1.2843 & 1.1561  \\ \hline
%% 0.0901 & 0.0124 & 0.0711 & 0.8338 & 3.4618 & 1.7926 & 1.3771 & 1.0363  \\ \hline
%% 0.0906 & 0.0771 & 0.6976 & 0.9884 & 3.4349 & 1.5067 & 1.2591 & 1.0169  \\ \hline

\subsection{Forward--Backward splitting}

We start with the Forward--Backward splitting algorithm, adapt \aafom to this method we obtain the following adaptive accelerated Forward--Backward splitting scheme. 

%%%%%%%%%%%%%%%
\begin{center}
\begin{minipage}{0.975\linewidth}
\begin{algorithm}[H]
\caption{\aafb: Adaptive Acceleration for Forward--Backward splitting} \label{alg:a2fb}
\KwIn{$\gamma \in ]0, 2/L[$. Let $s\geq1, q \geq 1$ be integers. }% Let $\theta > 0$.}
{\noindent{\bf{Initial}}}:$\xbar_{0} = x_0 \in \bbR^n$. Set $V_{0} = 0 \in \bbR^{n \times q}$. \\ 
{\noindent{\bf{Repeat}}}: 
\begin{itemize}[leftmargin=2em]
\item \textrm{If $\textrm{mod}(k, q+2)=0$: } \textrm{Compute $C_k$ via \eqref{eq:coeff_mtx}, if $\rho(C_k)<1$ and $\tcb{\angle(\vk, \calE_{s, q}(\xk, \dotsm, x_{k-q-1})) \leq \frac{\pi}{2}}$: }  $$ \xbark = \xk + \ak \calE_{s, q}(\xk, \dotsm, x_{k-q-1}). $$
			
\item \textrm{If $\textrm{mod}(k, q+2)\neq0$: } $ \xbark = \xk $. 

\item For $k \geq 1$: 
\[
\begin{aligned}
\xkp &= \prox_{\gamma R}\Pa{\xbark - \gamma \nabla F (\xbark)} ,\\
\vkp &= \xkp - \xk \qandq V_{k+1} = [\vkp| \vk | \dotsm | v_{k-q+2}]  .
\end{aligned}
\]

\end{itemize}
{\noindent{\bf{Until}}}: $\norm{\vk} \leq \tol$. 
\end{algorithm}
\end{minipage}
\end{center}

\begin{remark}%$~$
%\begin{itemize}
%\item 
Note that for the above scheme, we have an extra check on the angle between $\vk=\xk-\xkm$ and the extrapolated direction $\calE_{s, q}(\xk, \dotsm, x_{k-q-1})$, and the value of $\theta$ is chosen close to $0$. This is due to the fact that the trajectory of $\seq{\xk}$ is eventually a straight-line, so we only accept $\calE_{s, q}(\xk, \dotsm, x_{k-q-1})$ if the angle $\angle(\vk, \calE_{s, q}(\xk, \dotsm, x_{k-q-1})) < \frac{\pi}{2}$. % is small enough. 
%\todo{if fact, we just check if the inner product is positive, so we could write explicitly $\theta = \pi/2$.}

%\end{itemize}
\end{remark}

\subsubsection{LASSO-type problem}

Next we consider regularized least square problem of the form
\beq\label{eq:R-quadF}
\min_{x \in \bbR^{n}}~  R(x) + \qfrac{1}{2}\norm{\calK x - f}^2 ,
\eeq
where $R$ is regularization term, and $\calK \in \bbR^{m \times n}$ is drawn from random Gaussian ensemble. $f$ is the observation of some $\xob$ under $\calK$ contaminated by noise $w$,
\beq\label{eq:observation}
f = \calK \xob + w  .
\eeq
In this experiment, three different cases of $R$ are considered: sparsity promoting $\ell_{1}$-norm, group sparsity promoting $\ell_{1,2}$-norm and low-rank promoting nuclear norm.
The detailed settings of each example are
\begin{description}[leftmargin=3.0cm]%[label={\bbR^{m} (\arabic{*})}]
\item[{$\ell_{1}$-norm}] $(m,n)=(768,2048)$, $\xob$ has $176$ non-zero elements.
\item[{$\ell_{1,2}$-norm}] $(m,n)=(640,2048)$, $\xob$ has $35$ non-zero blocks of size $4$.
%\item[{$\ell_{\infty}$-norm}] $(m,n)=(63,64)$, $\abs{\calI_{\xob}} = 8$ where $\calI_{\xob} \eqdef \ba{i: \abs{x_i} = \norm{\xob}_{\infty}}$.
\item[{Nuclear norm}] $(m,n)=(640,1024)$, $\xob \in \bbR^{32\times 32}$ and $\rank(\xob) = 4$.
\end{description}
%\todo{did you really run with those settings above? the computation time for such small problems will look quite bad for LP}
% 
The following scheme are compared
\begin{itemize}
\item Forward--Backward splitting, FISTA and restarting FISTA.
\item Our proposed scheme (LP) with $(q,s) = (4,+\infty)$.
%\item MPE, RRE and RNA.
\end{itemize}
The finite activity identification of $\seq{\xk}$ and the angle $\cos(\theta_k)$ of Forward--Backward splitting is provided in the first row of Figure \ref{fig:cmp_fb}, the observations are quite close to those of Example \ref{eg:lasso}.

The comparison of the above methods is presented in the second row of Figure \ref{fig:cmp_fb}, and we observe that
\begin{itemize}
	\item Similar to the least square example, Forward--Backward splitting method is the slowest one. However, note that in terms of local linear convergence rate, FISTA is the slowest one --- see the local slope of the gray and black line. The problem of Forward--Backward splitting method is that it needs much longer time to identified the underlying manifold. 
\item Restarting FISTA (blue line) is the fastest among all methods, our proposed linear prediction is as fast as restarting FISTA for the first two examples and slightly slower for the last example. 
\end{itemize}
%\todo{We should say somewhere what parameters we chose for one step and 2 step inertial schemes}

%%%%%%%%%%%%%%%
\begin{figure}[!ht]
	\centering
	\subfloat[$\ell_{1}$-norm]{ \includegraphics[width=0.31\linewidth]{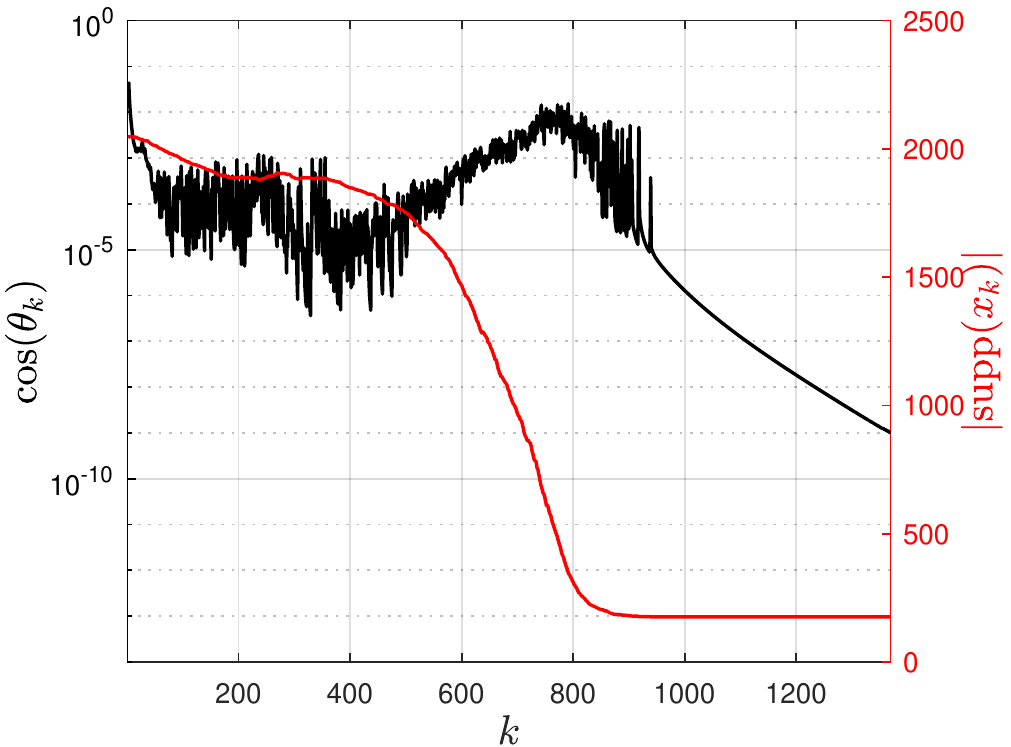} } \hspace{2pt}
	\subfloat[$\ell_{1,2}$-norm]{ \includegraphics[width=0.31\linewidth]{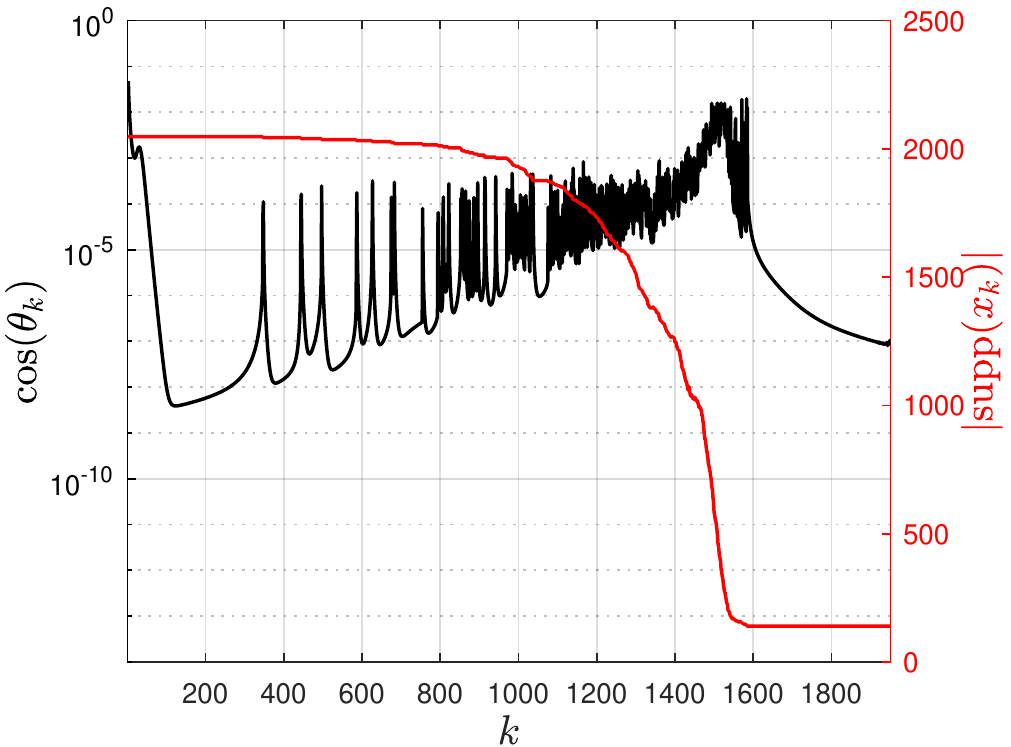} } \hspace{2pt}
	%%%%%%%%%%%%%%%
	\subfloat[Nuclear norm]{ \includegraphics[width=0.31\linewidth]{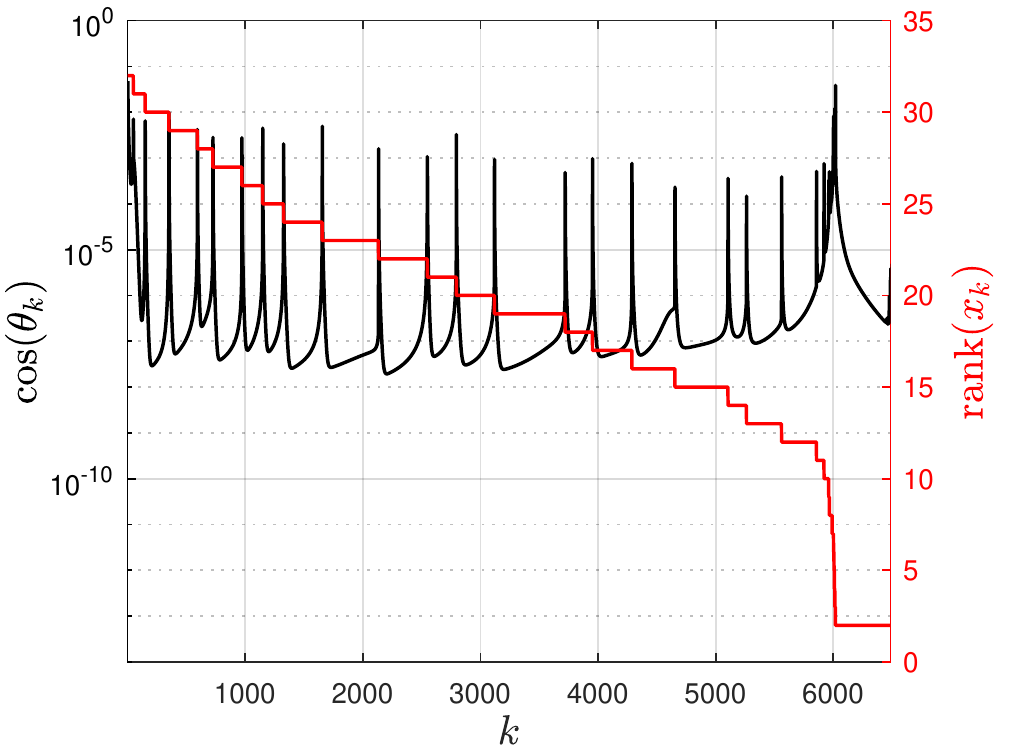} } \\ 
	%%%%%%%%%%%%%%%
	\subfloat[$\ell_{1}$-norm]{ \includegraphics[width=0.31\linewidth]{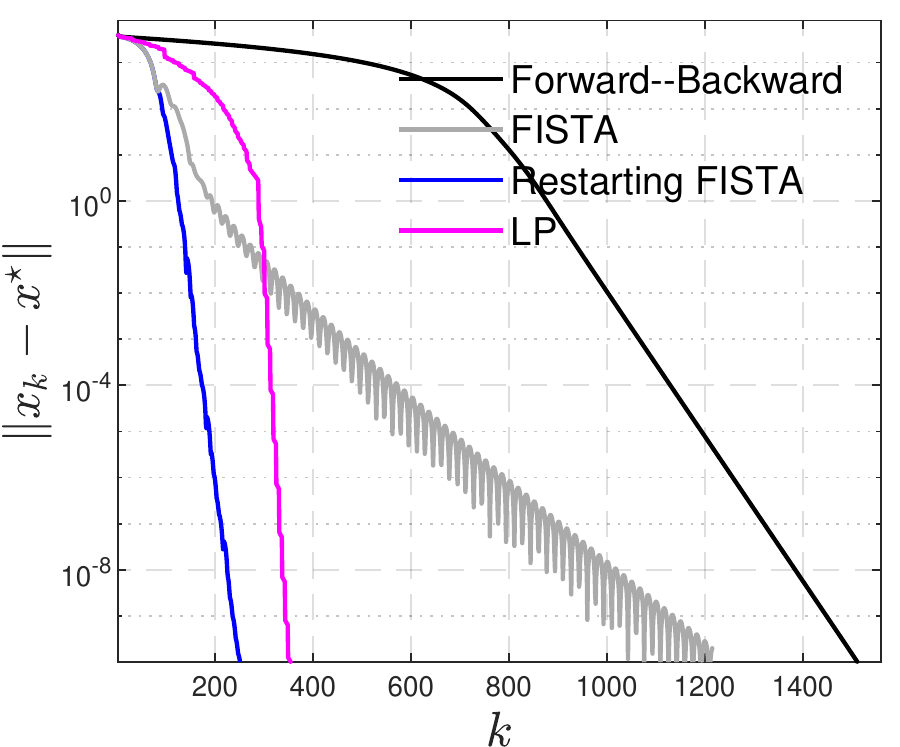} } \hspace{2pt}
	\subfloat[$\ell_{1,2}$-norm]{ \includegraphics[width=0.31\linewidth]{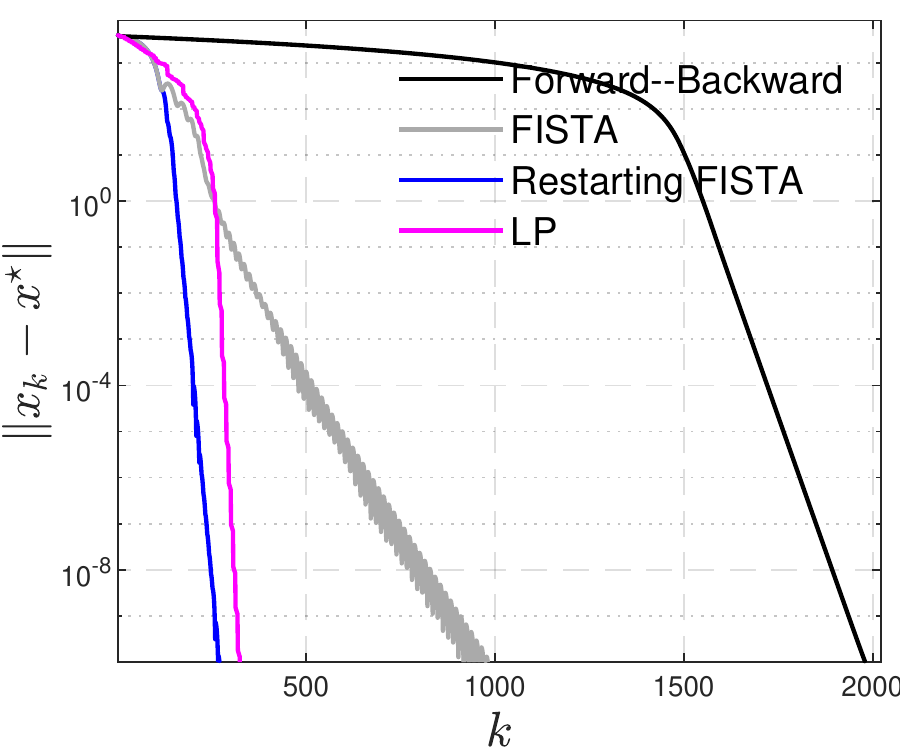} } \hspace{2pt}
	%%%%%%%%%%%%%%%
	\subfloat[Nuclear norm]{ \includegraphics[width=0.31\linewidth]{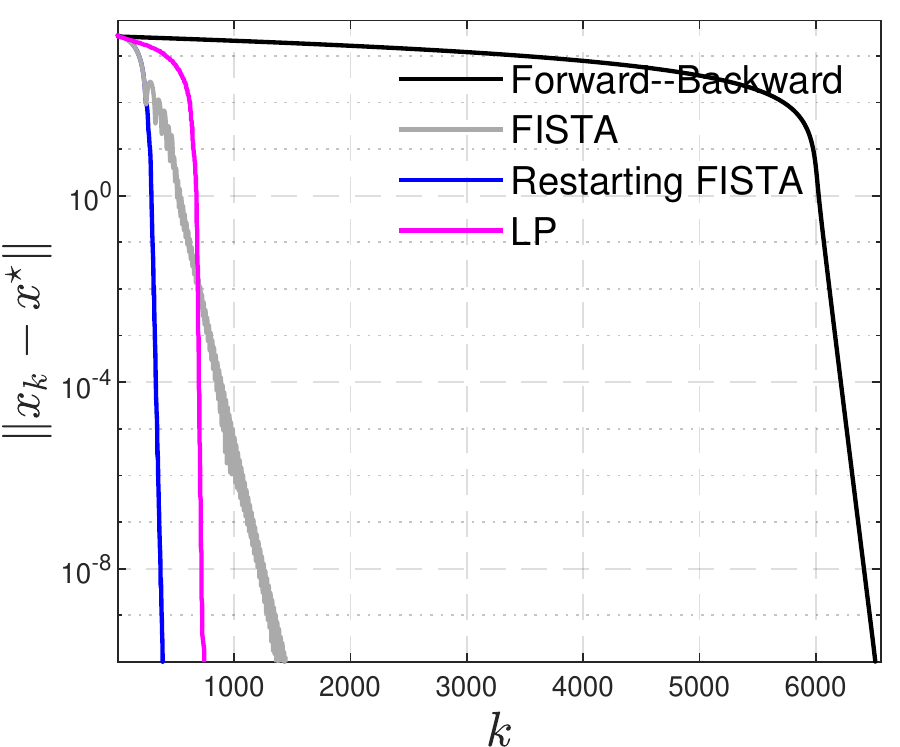} } \\
	%%%%%%%%%%%%%%%
	\caption{Comparison between methods for solving regularized least square. } 
	\label{fig:cmp_fb}
\end{figure}

\subsection{Douglas--Rachford splitting}

Now we turn to the Douglas--Rachford splitting method, for which we obtain an adaptive acceleration scheme described in Algorithm \ref{alg:a2dr}. %One difference between the following iteration and that of \eqref{eq:dr} is that we start the iteration with $\xk$.  \todo{why do you mention this difference? If we had no extrapolation, then this is still just DR as normal}

%%%%%%%%%%%%%%%%%
\begin{center}
	\begin{minipage}{0.975\linewidth}
		\begin{algorithm}[H]
			\caption{\aadr: Adaptive Acceleration for Douglas--Rachford splitting} \label{alg:a2dr}
			\KwIn{$\gamma > 0$. Let $s\geq1, q \geq 1$ be integers.}
			{\noindent{\bf{Initial}}}: $\zbar_{0} = z_0 \in \bbR^n ,~ x_0 = \prox_{\gamma J} (\zbar_0)$. Let $V_{0} = 0 \in \bbR^{n \times q}$. \\ 
			{\noindent{\bf{Repeat}}}: 
			\begin{itemize}[leftmargin=2em]
				\item \textrm{If $\textrm{mod}(k, q+2)=0$: } \textrm{Compute $C_k$ via \eqref{eq:coeff_mtx}, if $\rho(C_k)<1$: $\zbark = \zk + \ak \calE_{s, q}(\zk, \dotsm, z_{k-q-1})$.}  
				
				\item \textrm{If $\textrm{mod}(k, q+2)\neq0$: } $ \zbark = \zk $. 
				
				\item For $k \geq 1$: 
				\[
				\begin{aligned}
				\xk &= \prox_{\gamma J} (\zbark) , \\
				\ukp &= \prox_{\gamma R}\pa{2\xk - \zbark} , \\
				\zkp &= \zbark + \ukp - \xk , \\
				%\xkp &= \prox_{\gamma J} (\zbark) , \\
				\vkp &= \zkp - \zk \qandq V_{k+1} = [\vkp| \vk | \dotsm | v_{k-q+2}]  .
				\end{aligned}
				\]
				
			\end{itemize}
			{\noindent{\bf{Until}}}: $\norm{\vk} \leq \tol$. 
		\end{algorithm}
	\end{minipage}
\end{center}

%%%%%%%%%%%%%%%
\begin{figure}[!ht]
	\centering
	\subfloat[$\ell_{1}$-norm]{ \includegraphics[width=0.315\linewidth]{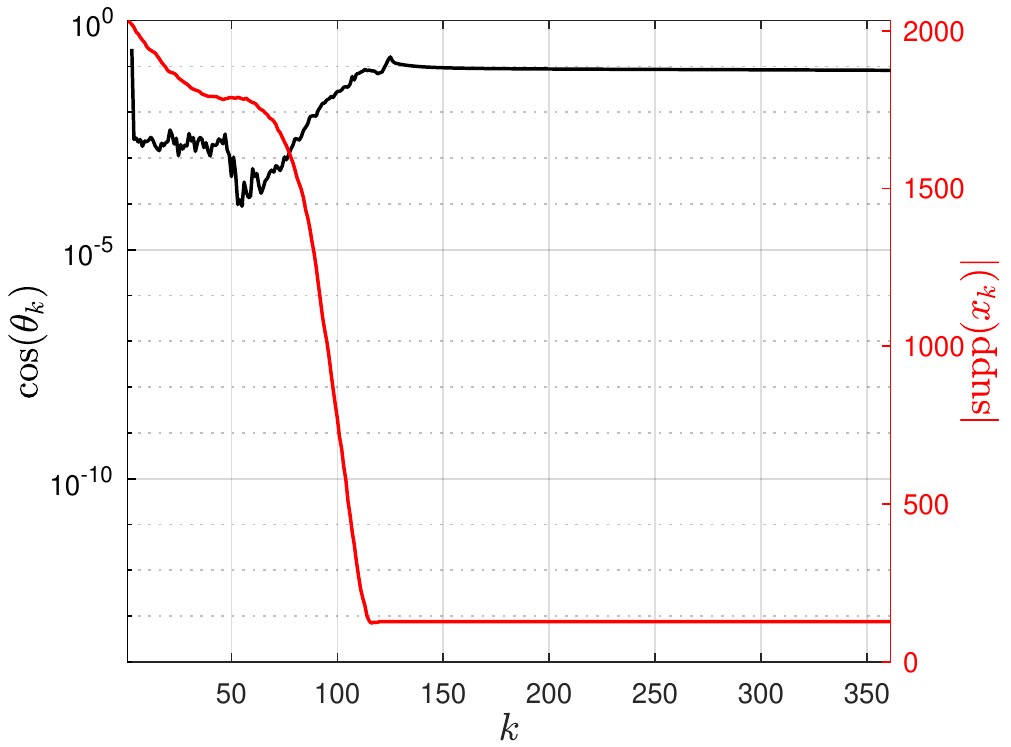} } \hspace{0pt}
	\subfloat[$\ell_{1,2}$-norm]{ \includegraphics[width=0.315\linewidth]{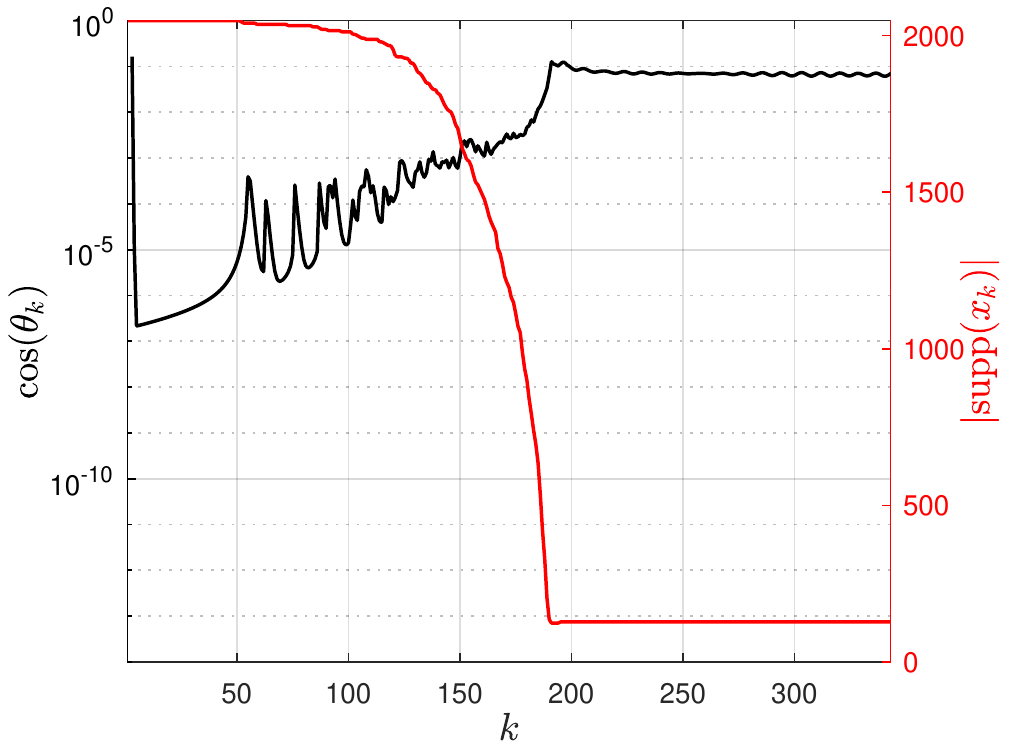} } \hspace{0pt}
	%%%%%%%%%%%%%%%
	\subfloat[Nuclear norm]{ \includegraphics[width=0.315\linewidth]{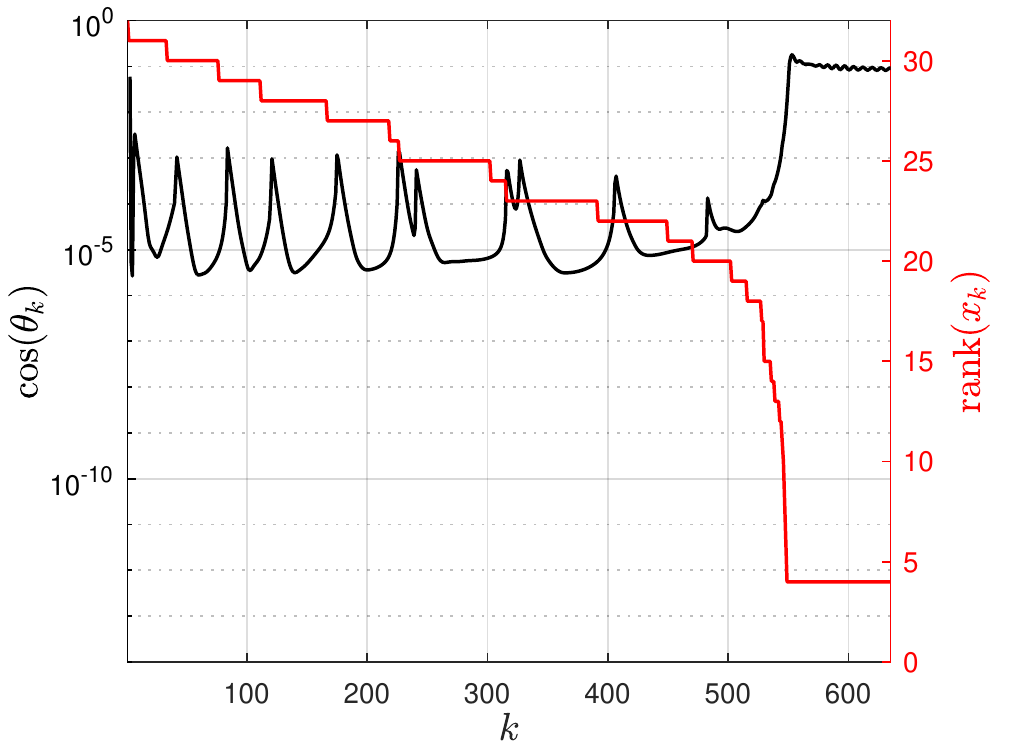} } \\
	%%%%%%%%%%%%%%%
	\subfloat[$\ell_{1}$-norm]{ \includegraphics[width=0.315\linewidth]{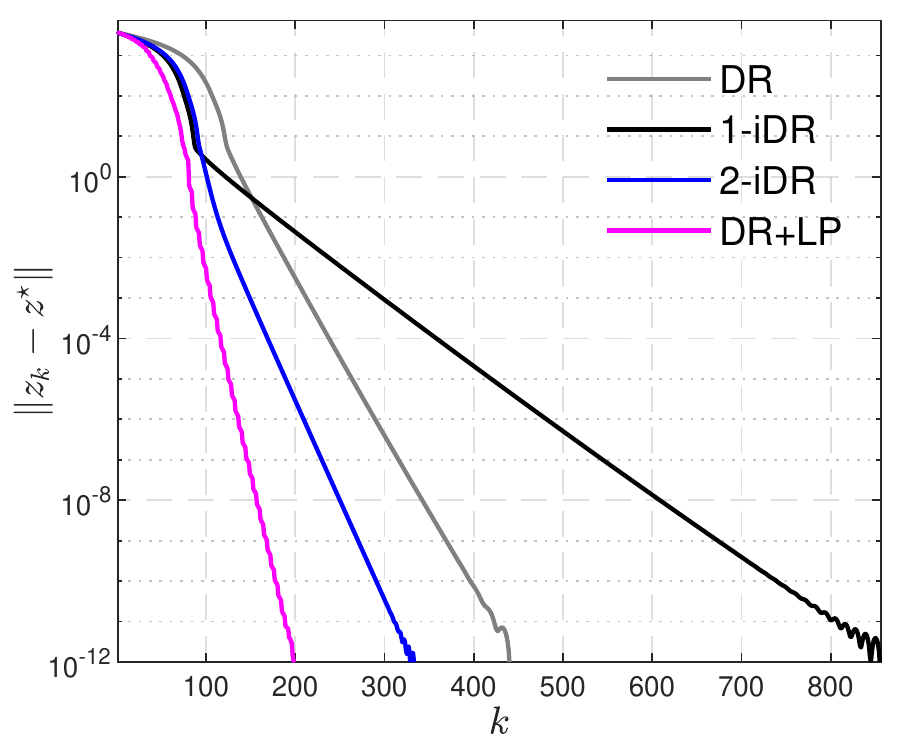} } \hspace{0pt}
	\subfloat[$\ell_{1,2}$-norm]{ \includegraphics[width=0.315\linewidth]{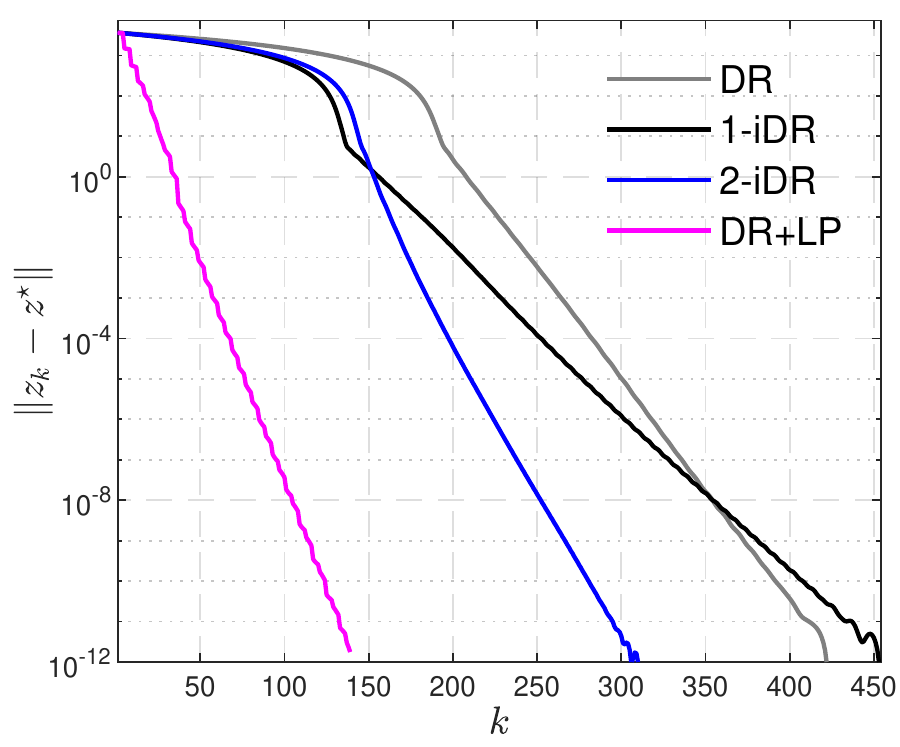} } \hspace{0pt}
	%%%%%%%%%%%%%%%
	\subfloat[Nuclear norm]{ \includegraphics[width=0.315\linewidth]{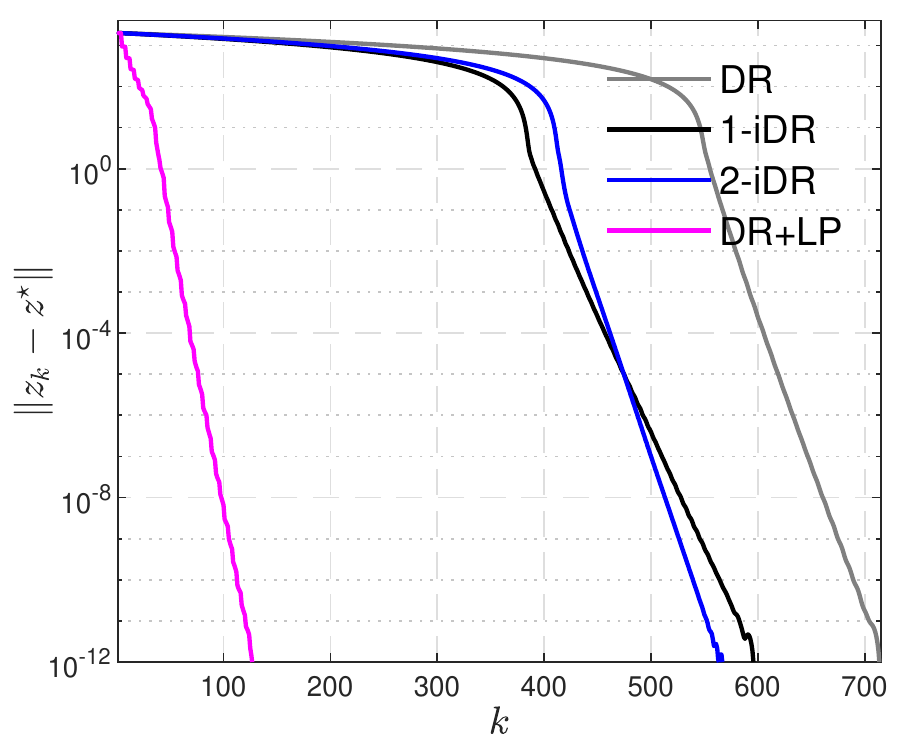} } \\
	%%%%%%%%%%%%%%%
	\caption{Comparison between methods for solving basis pursuit type problem. } 
	\label{fig:cmp_dr}
\end{figure}

%%% code Clarice-Jingwei/Ada-Acceleration/matlab/AcceleratedFOM-Basic2/cmp_DR.m

\subsubsection{Basis pursuit type problems}
Now suppose that there is no noise in the observation model \eqref{eq:observation}, \ie $w = 0$. 
Then instead of solving \eqref{eq:R-quadF}, the following equality constrained problem should be considered
\[
\min_{x \in \bbR^{n}}~ R(x)  \qsubjq \calK x = \calK \xob  .
\]
Furthermore, the above constrained problem can  be formulated as 
\beq \label{eq:problem-noise-free}% \tag{$\mathcal{P}_{0}$}
\min_{x \in \bbR^{n}}~ R(x) + J(x) ,
\eeq
where $J = \iota_{\Omega}(\cdot)$ is the indicator function of the constraint $\Omega \eqdef \ba{x \in \bbR^{n}: \calK \xob  = \calK  x } = \xob+\ker(\calK)$. 
As both functions $R$ and $J$ are non-smooth, a proper choice to solve \eqref{eq:problem-noise-free} is the Douglas--Rachford splitting.
The proximity operator of $J$ is the projection operator onto $\Omega$, which reads $\prox_{\gamma J}(x) = x + \calK^{+}(f - \calK x)$ where $\calK^{+} = \calK^T (\calK\calK^T)^{-1}$ is the Moore-Penrose pseudo-inverse of $\calK$.
%\[
%\prox_{\gamma J}(x) = x + \calK^{+}(f - \calK x) ,
%\]
%where $\calK^{+} = \calK^T (\calK\calK^T)^{-1}$ is the Moore-Penrose pseudo-inverse of $\calK$.
% 
For $R$, again three examples are considered: $\ell_{1}, \ell_{1,2}$ and nuclear norm, and the settings of each example are
The detailed settings of each example are
\begin{description}[leftmargin=3.0cm]%[label={\bbR^{m} (\arabic{*})}]
	\item[{$\ell_{1}$-norm}] $(m,n)=(768,2048)$, $\xob$ has $128$ non-zero elements.
	\item[{$\ell_{1,2}$-norm}] $(m,n)=(640,2048)$, $\xob$ has $32$ non-zero blocks of size $4$.
	\item[{Nuclear norm}] $(m,n)=(640,1024)$, $\xob \in \bbR^{32\times 32}$ and $\rank(\xob) = 4$.
\end{description}
%\todo{did you reall
% 
The following schemes are compared
\begin{itemize}
\item Douglas--Rachford splitting (DR), the standard two-point inertial DR \eqref{eq:idr} (1-iDR) with $\ak = 0.3$, the three-point inertial DR \eqref{eq:three_point} (2-iDR) with $(\ak,\bk)=(0.5,-0.25)$.
\item Our proposed scheme (LP) with $(q,s) = (4, 100), (4,+\infty)$. %, and MPE, RRE.
\end{itemize}
The finite activity identification of $\xk$ and the angle $\cos(\theta_k)$ of Douglas--Rachford splitting is provided in the first row of Figure \ref{fig:cmp_dr}, the observations are quite close to those of Example \ref{eg:exp-dr}. 
The performance comparison of the above methods is presented in the second row of Figure \ref{fig:cmp_dr}, and we observe that
\begin{itemize}
\item For the two inertial schemes: 1-iDR and 2-iDR. Only ``2-iDR'' shows constant better performance than DR. Locally, the convergence speed of ``1-iDR'' is the slowest among all schemes. This observation comply with our discussion in Section \ref{sec:geo_inertial}.  
\item Linear prediction is the fastest among all the schemes, especially for $\ell_{1,2}$-norm and nuclear norm. The main advantage of LP is that it needs much shorter time to identify the manifolds. 
\end{itemize}

\subsubsection{LASSO problem}

We also consider the LASSO problem \eqref{eq:lasso} to demonstrate the performance. 
Three data sets from LIBSVM\footnote{\url{https://www.csie.ntu.edu.tw/~cjlin/libsvmtools/datasets/}} are considered: \texttt{australian}, \texttt{mushrooms} and \texttt{covtype}.  
The observation are shown in Figure~\ref{fig:cmp_dr_lasso}, we can see that linear prediction shows clear advantages over the compared ones. 
%\begin{itemize}
%	\item In terms iteration number, the three extrapolation methods show a significant advantage over the standard Douglas--Rachford and its two inertial versions. 
%	\item In terms of wall clock time, the advantages of extrapolation methods become smaller, but still faster than the others. 
%\end{itemize}

Note that for the two inertial schemes, they are better than the standard Douglas--Rachford splitting method for all the three examples, which is different from the affine constrained problem considered above. 

%%%%%%%%%%%
\begin{figure}[!ht]
	\centering
	\subfloat[\texttt{australian}]{ \includegraphics[width=0.315\linewidth]{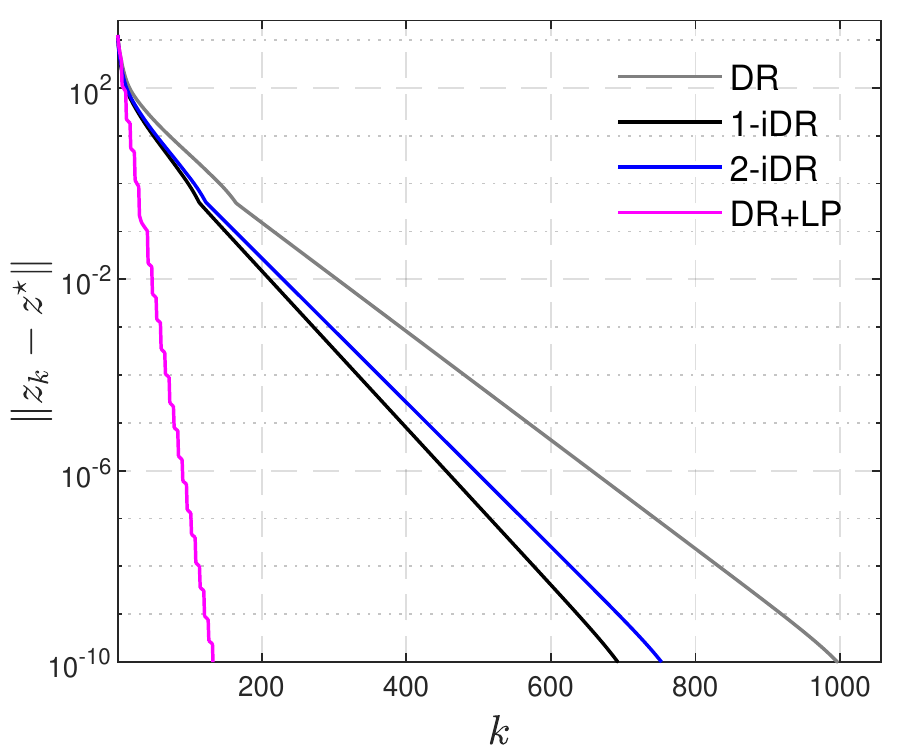} }  \hspace{0pt}
	\subfloat[\texttt{mushrooms}]{ \includegraphics[width=0.315\linewidth]{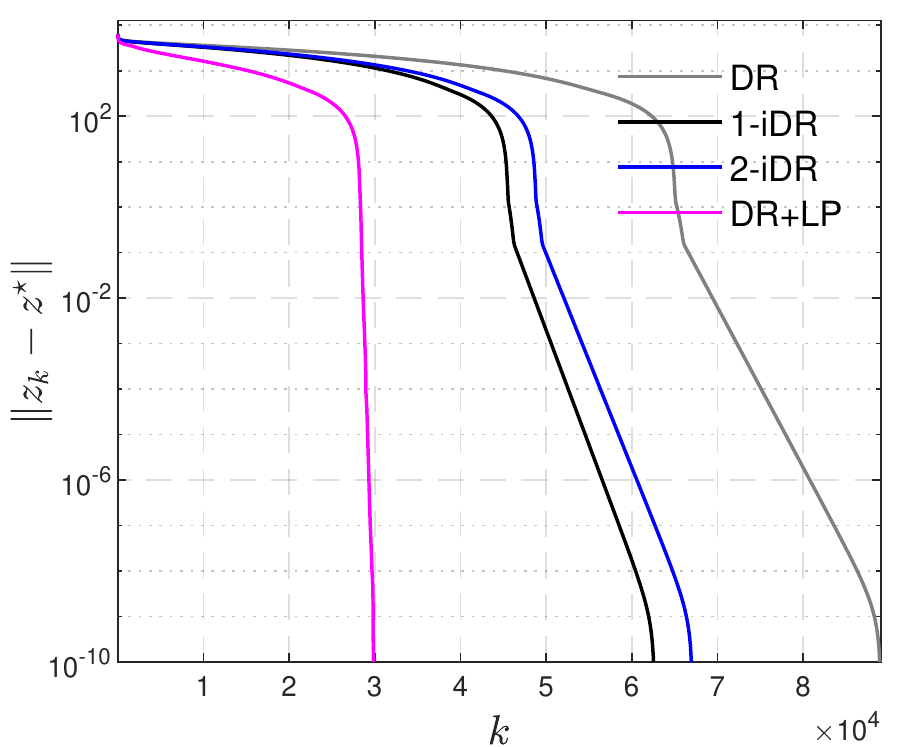} }  \hspace{0pt}
	\subfloat[\texttt{covtype}]{ \includegraphics[width=0.315\linewidth]{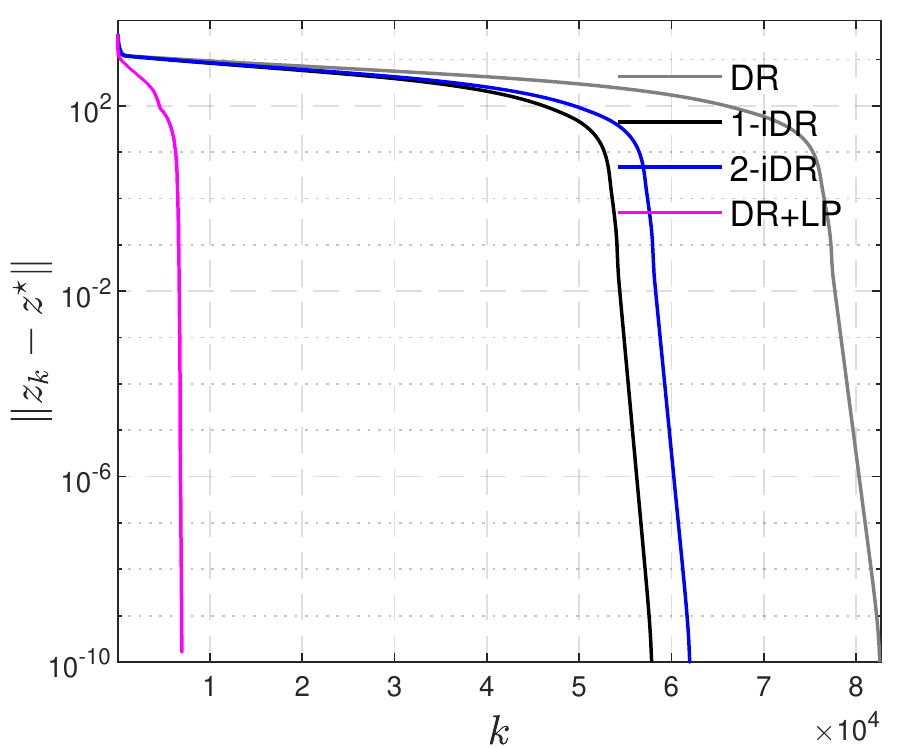} }  \\
	%%%%%%%%%%%%%%
%	\subfloat[\texttt{australian}: wall clock time]{ \includegraphics[width=0.3\linewidth]{../../matlab/AcceleratedFOM-Basic2/results/cmp_dr_tk_australian.pdf} }  \hspace{2pt}
%	\subfloat[\texttt{mushrooms}: wall clock time]{ \includegraphics[width=0.3\linewidth]{../../matlab/AcceleratedFOM-Basic2/results/cmp_dr_tk_mushrooms.pdf} }  \hspace{2pt}
%	\subfloat[\texttt{covtype}: wall clock time]{ \includegraphics[width=0.3\linewidth]{../../matlab/AcceleratedFOM-Basic2/results/cmp_dr_tk_covtype.pdf} }  \\
	%%%%%%%%
	\caption{Comparison of Douglas--Rachford schemes for solving LASSO problem.}% First column: \texttt{australian}; Second column: \texttt{mushrooms}; Third column \texttt{covtype}. }
	\label{fig:cmp_dr_lasso}
\end{figure}

% code: Clarice-Jingwei/Ada-Acceleration/matlab/AcceleratedFOM-Basic2/cmp_DR_lasso.m

\subsection{Primal--Dual splitting}

The third example we consider is the Primal--Dual splitting method. Adapt \aafom to the method, we obtain an adaptive acceleration scheme for Primal--Dual splitting method which is described in Algorithm \ref{alg:a2pd}. Note that the fixed-point sequence of Primal--Dual splitting method is the augmented variable $\zk$ defined in \eqref{eq:A-B-calV}.

To demonstrate the performance of Algorithm \ref{alg:a2pd}, a medical imaging problem of the following form is considered
\[
\min_{x\in\bbR^n}~ \norm{\calW x}_1+ \sfrac{\lambda}{2}\norm{\calK x - f}^2 ,
\]
where $\calK$ is a subsampled Fourier transform operator, $f$ is the measurement and $\calW$ a redundant wavelet frame. 
We compare the standard Primal--Dual splitting, inertial Primal--Dual splitting and our proposed accelerated one with $(q,s) = (1, \pinf) , (2, \pinf)$, 
The numerical result is shown in Figure \ref{fig:cmp_pd_mri}. 
\begin{itemize}
	\item Image quality wise, LP provides much better reconstruction than the plain Primal--Dual splitting and its inertial version, especially for $q=2$.
	\item In terms of PSNR in Figure \ref{fig:cmp_pd_mri} (d), LP also yields better PSNR value than the (inertial) Primal--Dual splitting methods. 
\end{itemize}

%%%%%%%%%%%%%%%%%
\begin{center}
	\begin{minipage}{0.975\linewidth}
		\begin{algorithm}[H]
			\caption{\aapd: Adaptive Acceleration for Primal--Dual splitting} \label{alg:a2pd}
			\KwIn{$\gammaR, \gammaJ > 0$ such that $\gammaR\gammaJ\norm{L}^2 < 1$ and $\tau \in [0,1]$. Let $s\geq1, q \geq 1$ be integers.}
			{\noindent{\bf{Initial}}}: $\xbar_{0} = x_0 \in \bbR^n ,~ \wbar_0 = w_0 \in \bbR^m$. Let $\textstyle z_{0} = \begin{pmatrix} x_{0} \\ v_{0} \end{pmatrix}$ and $V_{0} \in \bbR^{(m+n)\times q}$. \\ %Let $V^x_{0} = 0 \in \bbR^{n \times q}$ and $V^w_{0} = 0 \in \bbR^{m \times q}$. \\ 
			{\noindent{\bf{Repeat}}}: 
			\begin{itemize}[leftmargin=2em]
				\item \textrm{If $\textrm{mod}(k, q+2)=0$: Compute $C_k$ via \eqref{eq:coeff_mtx}, if $\rho(C_k)<1$: $\ek = \calE_{s, q}(\zk, \dotsm, z_{k-q-1})$.} $$ \begin{aligned} \xbark = \xk + \ak e_{k,(1:n)} \qandq  \wbark = \wk + \ak e_{k,(n+1:m+n)} . \end{aligned} $$ 
				%	\begin{itemize} \item \textrm{Compute $C_k^{x}$ via \eqref{eq:coeff_mtx}, if $\rho(C_k^{x})<1$: $\xbark = \xk + \ak^{x} \calE_{s, q}(\xk, \dotsm, x_{k-q-1})$.} 
				%	\item \textrm{Compute $C_k^{w}$ via \eqref{eq:coeff_mtx}, if $\rho(C_k^{w})<1$: $\wbark = \wk + \ak^{w} \calE_{s, q}(\wk, \dotsm, w_{k-q-1})$.}
				%	\end{itemize}
				
				\item \textrm{If $\textrm{mod}(k, q+2)\neq0$: } $ \xbark = \xk $ and $\wbark = \wk$. 
				
				\item For $k \geq 1$: 
				\[
				\begin{aligned}
				\xkp &= \prox_{\gammaR R} \pa{ \xbark - \gammaR L^T \wbark } , \\
				\tilde{x}_{k+1} &= \xkp + \tau(\xkp - \xbark) , \\
				\wkp &= \prox_{\gammaJ J^*} \pa{ \wbark  + \gammaJ L \tilde{x}_{k+1} } , \\
				\zkp &= \begin{pmatrix} \xkp \\ \wkp \end{pmatrix} ,\quad \vkp = \zkp - \zk \qandq V_{k+1} = [\vkp| \vk | \dotsm | v_{k-q+2}] .
				%\vkp^{x} &= \xkp - \xk \qandq V^{x}_{k+1} = [\vkp^{x}| \vk^{x} | \dotsm | v_{k-q+2}^{x}]  , \\
				%\vkp^{w} &= \vkp - \vk \qandq V^{w}_{k+1} = [\vkp^{w}| \vk^{w} | \dotsm | v_{k-q+2}^{w}]  .
				\end{aligned}
				\]
				
			\end{itemize}
			{\noindent{\bf{Until}}}: $\norm{\vk} \leq \tol$. 
		\end{algorithm}
	\end{minipage}
\end{center}

%%%%%%%%%%%
\begin{figure}[!ht]
	\centering
	\subfloat[Original phantom]{ \includegraphics[width=0.3\linewidth]{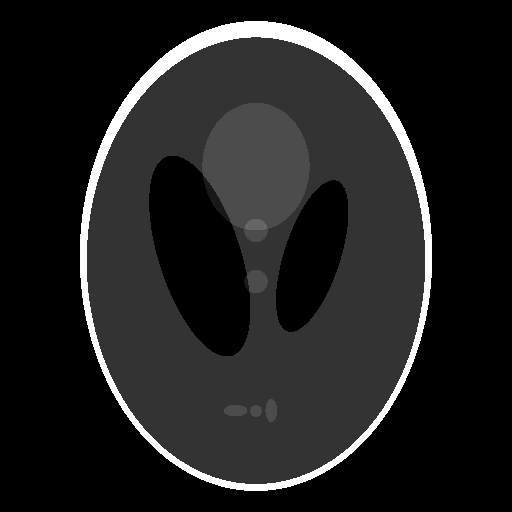} }  \hspace{2pt}
	\subfloat[Primal--Dual splitting]{ \includegraphics[width=0.3\linewidth]{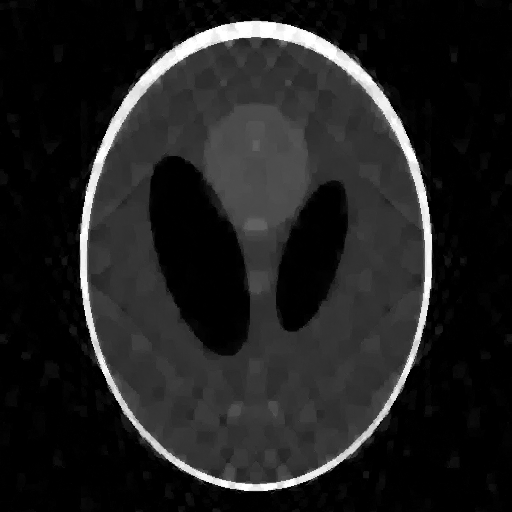} }  \hspace{2pt}
	\subfloat[Inertial Primal--Dual]{ \includegraphics[width=0.3\linewidth]{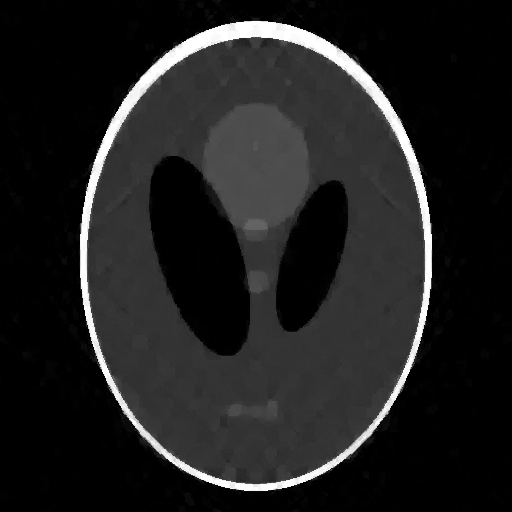} }  \\
	%%%%%%%%%%%%%%%%%%%%%%
	\subfloat[PSNR]{ \includegraphics[width=0.325\linewidth]{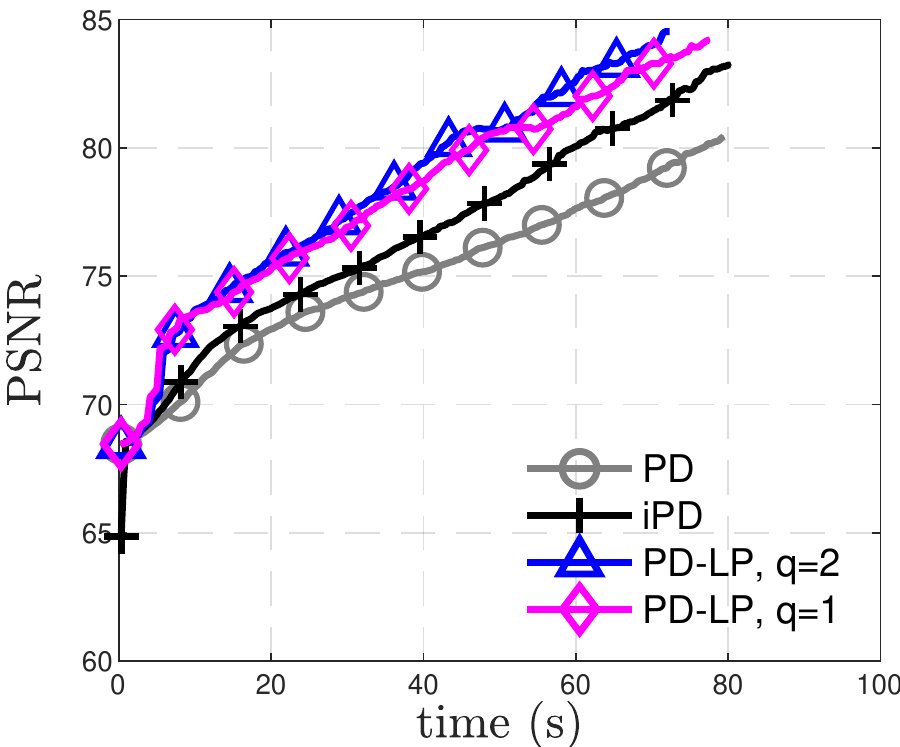} }  \hspace{2pt}
	\subfloat[LP, $q = 1$]{ \includegraphics[width=0.3\linewidth]{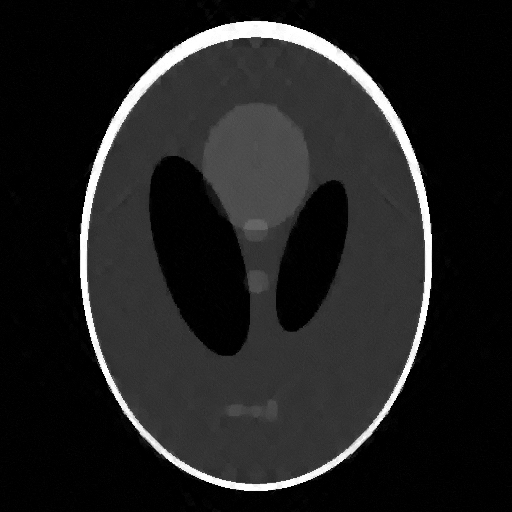} }  \hspace{2pt}
	\subfloat[LP, $q = 2$]{ \includegraphics[width=0.3\linewidth]{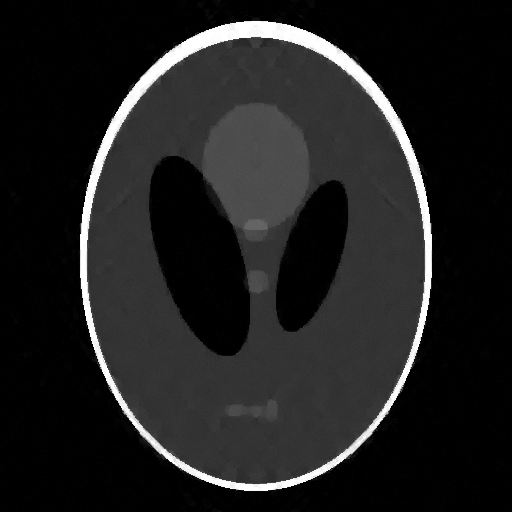} }  \\
	%%%%%%%%
	\caption{Comparison of Primal--Dual schemes for MRI reconstruction. (d) Original Shepp–Logan phantom; (b) Output of Primal--Dual splitting; (c) Output of inertial Primal--Dual splitting; (d) PSNR comparison; (e) Output of Algorithm \ref{alg:a2pd} with $(q,s) = (1,+\infty)$; (c) Output of Algorithm \ref{alg:a2pd} with $(q,s) = (2,+\infty)$. }
	\label{fig:cmp_pd_mri}
\end{figure}

% code: Clarice-Jingwei/Ada-Acceleration/matlab/MRI/CS_primal_dual/RunMe2.m

%\todo{In fact, this example is not so satisfactory, the trajectory should be a spiral here, but inertial works really well, actually, it will be even better than LP if I chose a larger inertial parameter (e.g. 0.9). Numerically, it's hard to see if the angle is converging to zero or not,  $1-\cos(\theta)$ settles to 1e-5 in a few iterations, then it looks like it is decaying  slowly. }

%%%%%%%%%%%%%%%%
\subsection{Generalized Forward--Backward splitting}

For the problem \eqref{eq:problem-fb}, suppose now there are more than $1$ non-smooth functionals: let $\mm$ be a positive integer and consider
\beq\label{eq:problem-gfb}\tag{$\mathcal{P}_{\mathrm{GFB}}$}
\min_{x\in\bbR^n} \Ba{ \Phi_{\mm}(x)  \eqdef  F(x) + \msum_{\ii=1}^{\mm} R_{\ii}(x) }  ,
\eeq
where $F$ is continuous differentiable with $\nabla F$ being $L$-Lipschitz and $R_{\ii} \in \lsc(\bbR^n)$ for each $\ii=1,...,\mm$.

Forward--Backward splitting is no longer feasible for this problem, as in general there is no close form solution for the proximity mapping of $\sum_{\ii=1}^{\mm} R_{\ii}(x)$ even if each $R_{\ii}$ is simple. In \cite{gfb2011}, the authors proposed a generalized Forward--Backward splitting algorithm (GFB) to overcome the challenge. GFB achieves the full splitting of the evaluation of the proximity operator of each $R_{\ii}$. % Details of the GFB algorithm is given below. 
Let $(\omega_{\ii})_{\ii}\in ]0,1[^{\mm}$ such that $\sum_{\ii=1}^{\mm}\omega_{\ii}=1$, choose $\gamma \in ]0, 2\beta[$: % and $\lamk  \in ]0,\frac{4\beta - \gamma}{2\beta}[$:
\beq\label{eq:gfb}
%\left\{
\begin{aligned}
& \textrm{from $\ii={1}$ to $\mm$:} \\
&\left\lfloor
\begin{aligned}
u_{\ii,k+1} &= \prox_{\frac{\gamma}{\omega_{\ii}} R_{\ii}} \Pa{ 2\xk-z_{\ii,k}-\gamma \nabla F (\xk)  }  \\
z_{\ii,k+1} &= z_{\ii,k} +  \pa{ u_{\ii,k+1}  - \xk }  
\end{aligned}
\right. \\
&\xkp = \msum_{\ii=1}^{\mm} \omega_{\ii} z_{\ii,k+1}  .
\end{aligned}
%\right.
\eeq
Under a properly defined product space $\calH$, there exists a non-expansive operator $\fGFB: \calH \to \calH$ such that
\[
\bmz_{k+1} = \fGFB (\bmz_{k}) 
\]
with $\bmz_{k} = \begin{pmatrix} z_{1,k} \\ \vdots \\ z_{\mm,k} \end{pmatrix}$. 
We refer to \cite{gfb2011} for more details of the GFB algorithm. 
Specializing \aafom to the case of GFB, we obtain the accelerated GFB scheme described in Algorithm \ref{alg:a2gfb}. 

\begin{center}
\begin{minipage}{0.975\linewidth}
\begin{algorithm}[H]
\caption{\aagfb: Adaptive Acceleration for generalized Forward--Backward splitting} \label{alg:a2gfb}
\KwIn{$(\omega_{j})_{j}\in ]0,1[^{\mm}~\mathrm{such~that}~\sum_{j=1}^{\mm}\omega_{j}=1$, $\gamma\in]0,2/L[$. Let $s\geq1, q \geq 1$ be integers.}
{\noindent{\bf{Initial}}}: for $i=1,...,r$, $\zbar_{i,0} = z_{0} \in \bbR^n$ and $x_{0} = \msum_{\ii=1}^{\mm} \omega_{\ii} \zbar_{\ii,0}$, $V_{i,0} = 0 \in \bbR^{n\times q}$\;
{\noindent{\bf{Repeat}}}: 
\begin{itemize}[leftmargin=2em]
\item \textrm{If $\textrm{mod}(k, q+2)=0$: for $\ii={1},...,\mm$, compute $C_k^{i}$ via \eqref{eq:coeff_mtx}, if $\rho(C_k^{i})<1$: } $$\zbar_{\ii,k} = z_{\ii,k} + \ak^{i} \calE_{s, q}(z_{\ii,k}, \dotsm, z_{\ii, k-q-1}).$$
	
\item \textrm{If $\textrm{mod}(k, q+2)\neq0$: } $ \zbar_{\ii,k} = z_{\ii,k}$. 

\item For $k \geq 1$: 
\[
\begin{aligned}
&\xk = \msum_{\ii=1}^{\mm} \omega_{\ii} \zbar_{\ii,k}  , \\
& \textrm{from $\ii={1}$ to $\mm$:} \\
&\left\lfloor
\begin{aligned}
u_{\ii,k+1} &= \prox_{\frac{\gamma}{\omega_{\ii}} R_{\ii}} \Pa{ 2\xk-\zbar_{\ii,k}-\gamma \nabla F (\xk)  } ,   \\
z_{\ii,k+1} &= \zbar_{\ii,k} +  \pa{ u_{\ii,k+1}  - \xk }  , \\
v_{\ii,k+1} &= z_{\ii,k+1} - z_{\ii,k} \qandq V_{i,k+1} = [v_{\ii,k+1}| v_{\ii,k} | \dotsm | v_{\ii,k-q+2}] . 
\end{aligned}
\right. \\
\end{aligned}
\]

\end{itemize}
{\noindent{\bf{Until}}}: $\sum_{i}\norm{\vk^{i}} \leq \tol$.
\end{algorithm}
%
%\vspace{-1.25em}
%
\end{minipage}
\end{center}
%
%\end{figure}

We consider the Principal Component Pursuit (PCP) problem \cite{candes2011robust} to demonstrate the performance comparison. %
Different from \eqref{eq:observation}, the forward observation model of PCP problem reads, %Assume that a real matrix $b \in \Rnn$ can be written as
\[
b = \cx_{{_L}} + \cx_{{_S}} + \omega,
\]
where $\cx_{{_L}}$ is low-rank, $\cx_{{_S}}$ is sparse, and $b, \omega$ are the observation and noise respectively.
The PCP proposed in \cite{candes2011robust} attempts to provably recover $(\cx_{{_L}},\cx_{{_S}})$ up to a good approximation, by solving a convex optimization. 
%Here, toward an application to video decomposition, we also add a non-negativity constraint to the low-rank component, which leads to the following convex problem
Here, we also add a non-negativity constraint to the low-rank component, which leads to the following convex problem
\beq
\label{eq:rpca}
\min_{x_{_L}, x_{_S} \in \bbR^{n\times n}}~ \qfrac{1}{2}\norm{b-x_{_L}-x_{_S}}^2+\mu_1\norm{x_{_S}}_1 + \mu_2\norm{x_{_L}}_*+\iota_{P_+}(x_{_L}) .
\eeq
% where $\norm{\cdot}_F$ is the Frobenius norm.%, $\iota_{P_+}(\cdot)$ is the indicator function of the positive orthant.
%
Observe that for given an $x_{_L}$, the minimizer of \eqref{eq:rpca} is $x_{_S}^\star=\prox_{\mu_1{\norm{\cdot}_1}}(b - x_{_L})$. Thus, \eqref{eq:rpca} is equivalent to
\beq
\label{eq:rpcame}
\min_{x_{_L}\in\bbR^{n\times n}}~^1\Pa{\mu_1\norm{\cdot}_1}(b-x_{_L}) + \mu_2\norm{x_{_L}}_*+\iota_{P_+}(x_{_L}),
\eeq
where $^1\Pa{{\mu_1\norm{\cdot}_1}}(b-x_{_L})$ is the Moreau Envelope of $\mu_1{\norm{\cdot}_1}$.

The numerical comparison on a synthetic example is shown below in Figure \ref{fig:cmp_gfb}, and the observations are very similar to those of the previous examples, that linear prediction shows clear advantages over the standard method and its inertial version. 

%%%%%%%%%%%%%%%
\begin{figure}[!ht]
\centering
\subfloat[Iteration comparison]{ \includegraphics[width=0.45\linewidth]{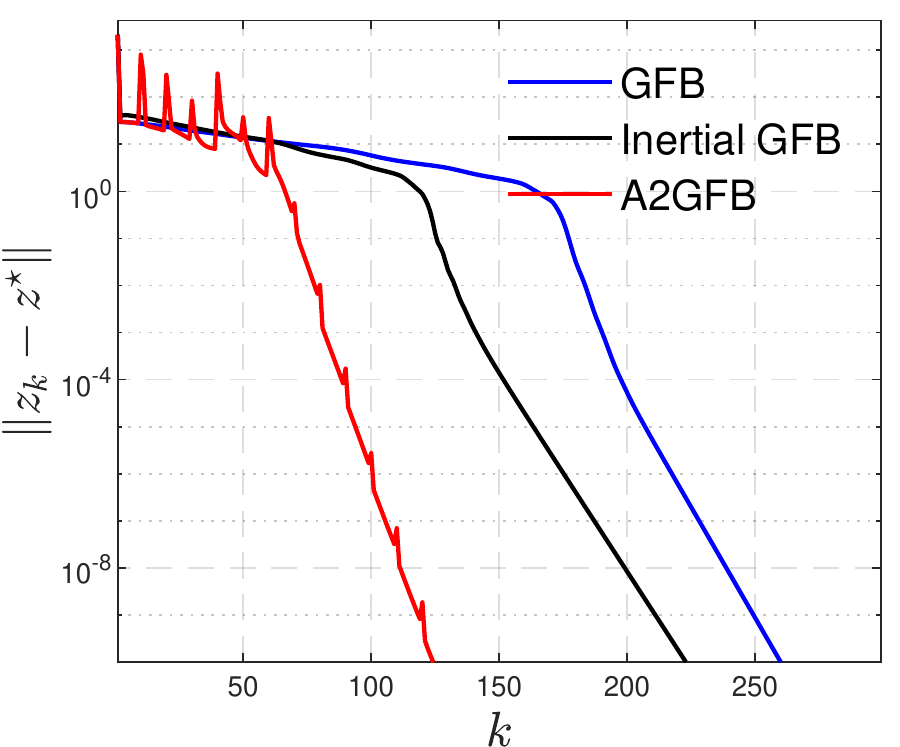} } \hspace{2pt}
\subfloat[CPU time]{ \includegraphics[width=0.45\linewidth]{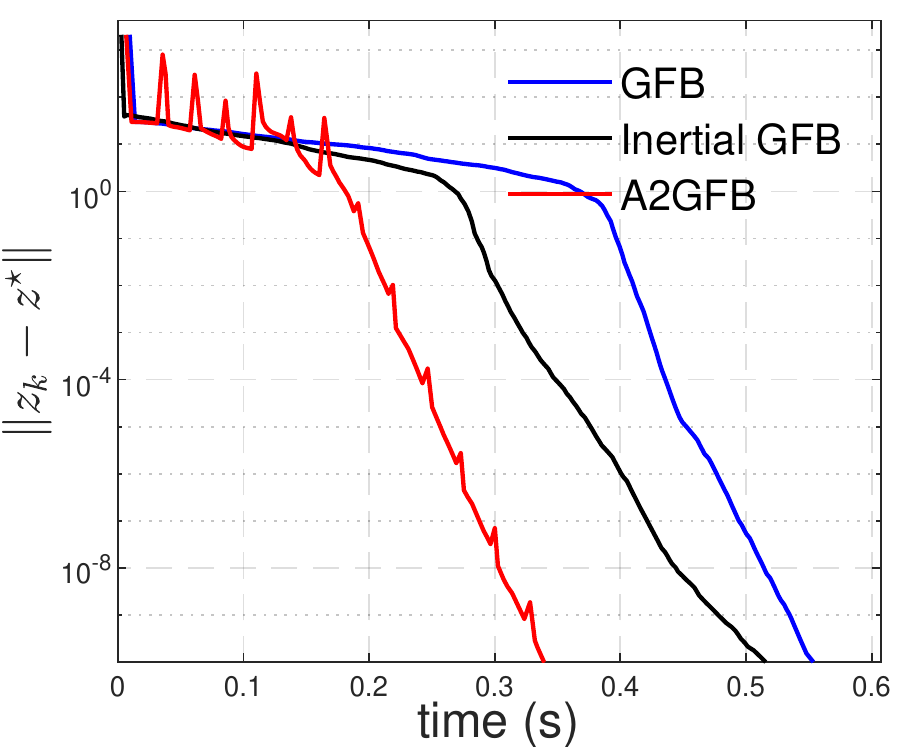} } \hspace{2pt}
%%%%%%%%%%%%%%%
\caption{Comparison of GFB and adaptive accelerated GFB on synthetic data.} 
\label{fig:cmp_gfb}
\end{figure}

\section{Conclusions} 
%In this article, by analyzing the trajectory of the fixed point sequences associated to first-order methods and extrapolating along the trajectory, we provide an alternative derivation of these methods.  Furthermore, our local linear analysis allows for the application of previous results on extrapolation methods, and hence provides guaranteed (local) acceleration. 

In this article, we studied the local geometry of first order methods for nonsmooth optimization. Our analysis provides insight as to when inertial techniques can be applied, in particular, the outcome of inertial depends not only on the structure of the optimization problem to solve but also the geometry of the first order method itself. Based on our trajectory analysis, we propose a trajectory-following acceleration scheme, which provides an alternative perspective on  classical vector extrapolation techniques.

\subsection*{Acknowledgement}

We would like to thank Arieh Iserles for pointing out the connection between linear prediction and vector extrapolation. 
% We also like to thank the reviewers whose comments helped to improve the paper. 
Jingwei Liang was partly supported by Leverhulme trust, Newton trust, the EPSRC centre ``EP/N014588/1'' and the Cantab Capital Institute for the Mathematics of Information (CCIMI).

%%%%%%%%%%%%%%%%%%%%%%%%%%%%%%%%%%%

%} % End of select font size

%%%%%%%%%%%%%%%%%%%%%%%%%%%%%%%%%%%
\begin{small}
\bibliographystyle{plain}
\bibliography{bib}

\begin{thebibliography}{10}

\bibitem{absil2013extrinsic}
P-A. Absil, R.~Mahony, and J.~Trumpf.
\newblock An extrinsic look at the {R}iemannian {H}essian.
\newblock In {\em Geometric Science of Information}, pages 361--368. Springer,
  2013.

\bibitem{aitken1927xxv}
A.~C. Aitken.
\newblock Xxv.--on {B}ernoulli's numerical solution of algebraic equations.
\newblock {\em Proceedings of the Royal Society of Edinburgh}, 46:289--305,
  1927.

\bibitem{alvarez2000minimizing}
F.~Alvarez.
\newblock On the minimizing property of a second order dissipative system in
  {H}ilbert spaces.
\newblock {\em SIAM Journal on Control and Optimization}, 38(4):1102--1119,
  2000.

\bibitem{alvarez2001inertial}
F.~Alvarez and H.~Attouch.
\newblock An inertial proximal method for maximal monotone operators via
  discretization of a nonlinear oscillator with damping.
\newblock {\em Set-Valued Analysis}, 9(1-2):3--11, 2001.

\bibitem{Anderson}
D.~G. Anderson.
\newblock Iterative procedures for nonlinear integral equations.
\newblock {\em J. ACM}, 12(4):547--560, October 1965.

\bibitem{attouch2013convergence}
H.~Attouch, J.~Bolte, and B.~F. Svaiter.
\newblock Convergence of descent methods for semi-algebraic and tame problems:
  proximal algorithms, forward--backward splitting, and regularized
  gauss--seidel methods.
\newblock {\em Mathematical Programming}, 137(1-2):91--129, 2013.

\bibitem{bauschke2011convex}
H.~Bauschke and P.~L. Combettes.
\newblock {\em Convex Analysis and Monotone Operator Theory in Hilbert Spaces}.
\newblock Springer, 2011.

\bibitem{Bauschke14}
H.~H. Bauschke, J.~Y. Bello~Cruz, T.~T.~A. Nghia, H.~M. Pha, and X.~Wang.
\newblock Optimal rates of linear convergence of relaxed alternating
  projections and generalized {D}ouglas--{R}achford methods for two subspaces.
\newblock {\em Numerical Algorithms}, 73(1):33--76, 2016.

\bibitem{bauschke2016optimal}
H.~H. Bauschke, J.~Y.~Bello Cruz, T.~T. Nghia, H.~M. Pha, and X.~Wang.
\newblock Optimal rates of linear convergence of relaxed alternating
  projections and generalized douglas-rachford methods for two subspaces.
\newblock {\em Numerical Algorithms}, 73(1):33--76, 2016.

\bibitem{bauschke2014rate}
H.~H. Bauschke, JY~B. Cruz, T.~TA Nghia, H.~M. Phan, and X.~Wang.
\newblock The rate of linear convergence of the douglas--rachford algorithm for
  subspaces is the cosine of the friedrichs angle.
\newblock {\em Journal of Approximation Theory}, 185:63--79, 2014.

\bibitem{fista2009}
A.~Beck and M.~Teboulle.
\newblock A fast iterative shrinkage-thresholding algorithm for linear inverse
  problems.
\newblock {\em SIAM Journal on Imaging Sciences}, 2(1):183--202, 2009.

\bibitem{bollapragada2018nonlinear}
R.~Bollapragada, D.~Scieur, and A.~d'Aspremont.
\newblock Nonlinear acceleration of momentum and primal-dual algorithms.
\newblock {\em arXiv preprint arXiv:1810.04539}, 2018.

\bibitem{borwein2003monic}
P.~Borwein, C.~Pinner, and I.~Pritsker.
\newblock Monic integer chebyshev problem.
\newblock {\em Mathematics of computation}, 72(244):1901--1916, 2003.

\bibitem{boct2016inertial}
R.~I. Bo{\c{t}} and E.~Csetnek.
\newblock An inertial forward-backward-forward primal-dual splitting algorithm
  for solving monotone inclusion problems.
\newblock {\em Numerical Algorithms}, 71(3):519--540, 2016.

\bibitem{bot2014inertial}
R.~I. Bot and E.~R. Csetnek.
\newblock An inertial alternating direction method of multipliers.
\newblock {\em arXiv preprint arXiv:1404.4582}, 2014.

\bibitem{boct2015inertialDR}
R.~I. Bo{\c{t}}, E.~R. Csetnek, and C.~Hendrich.
\newblock Inertial {D}ouglas--{R}achford splitting for monotone inclusion
  problems.
\newblock {\em Applied Mathematics and Computation}, 256:472--487, 2015.

\bibitem{brezinski2001convergence}
C.~Brezinski.
\newblock Convergence acceleration during the 20th century.
\newblock {\em Numerical Analysis: Historical Developments in the 20th
  Century}, page 113, 2001.

\bibitem{brezinski2018shanks}
C.~Brezinski, M.~Redivo-Zaglia, and Y.~Saad.
\newblock Shanks sequence transformations and anderson acceleration.
\newblock {\em SIAM Review}, 60(3):646--669, 2018.

\bibitem{brezinski2013extrapolation}
C.~Brezinski and M.~R. Zaglia.
\newblock {\em Extrapolation methods: theory and practice}, volume~2.
\newblock Elsevier, 2013.

\bibitem{briceno2011monotone}
L.~M. Briceno-Arias and P.~L. Combettes.
\newblock A monotone+ skew splitting model for composite monotone inclusions in
  duality.
\newblock {\em SIAM Journal on Optimization}, 21(4):1230--1250, 2011.

\bibitem{cabay1976polynomial}
S.~Cabay and L.~W. Jackson.
\newblock A polynomial extrapolation method for finding limits and antilimits
  of vector sequences.
\newblock {\em SIAM Journal on Numerical Analysis}, 13(5):734--752, 1976.

\bibitem{candes2011robust}
E.~J. Cand{\`e}s, X.~Li, Y.~Ma, and J.~Wright.
\newblock Robust principal component analysis?
\newblock {\em Journal of the ACM (JACM)}, 58(3):11, 2011.

\bibitem{chambolle2015convergence}
A.~Chambolle and C.~Dossal.
\newblock On the convergence of the iterates of the ``fast iterative
  shrinkage/thresholding algorithm''.
\newblock {\em Journal of Optimization Theory and Applications},
  166(3):968--982, 2015.

\bibitem{chambolle2011first}
A.~Chambolle and T.~Pock.
\newblock A first-order primal-dual algorithm for convex problems with
  applications to imaging.
\newblock {\em Journal of Mathematical Imaging and Vision}, 40(1):120--145,
  2011.

\bibitem{chan2014inertial}
R.~H. Chan, S.~Ma, and J.~Yang.
\newblock Inertial primal-dual algorithms for structured convex optimization.
\newblock {\em arXiv preprint arXiv:1409.2992}, 2014.

\bibitem{chavel2006riemannian}
I.~Chavel.
\newblock {\em Riemannian geometry: a modern introduction}, volume~98.
\newblock Cambridge University Press, 2006.

\bibitem{combettes2014forward}
P.~L. Combettes, L.~Condat, J.-C. Pesquet, and B.~C. V{\~u}.
\newblock A {F}orward--{B}ackward view of some {P}rimal--{D}ual optimization
  methods in image recovery.
\newblock In {\em Image Processing (ICIP), 2014 IEEE International Conference
  on}, pages 4141--4145. IEEE, 2014.

\bibitem{combettes2014variable}
P.~L. Combettes and B.~C. V{\~u}.
\newblock Variable metric {F}orward--{B}ackward splitting with applications to
  monotone inclusions in duality.
\newblock {\em Optimization}, 63(9):1289--1318, 2014.

\bibitem{Daniilidis06}
A.~Daniilidis, W.~Hare, and J.~Malick.
\newblock Geometrical interpretation of the predictor-corrector type algorithms
  in structured optimization problems.
\newblock {\em Optimization: A Journal of Mathematical Programming \&
  Operations Research}, 55(5-6):482--503, 2009.

\bibitem{demanet2016eventual}
L.~Demanet and X.~Zhang.
\newblock Eventual linear convergence of the douglas-rachford iteration for
  basis pursuit.
\newblock {\em Mathematics of Computation}, 85(297):209--238, 2016.

\bibitem{dong2018general}
Q.~Dong, Yeol~J. Cho, and T.~M. Rassias.
\newblock General inertial mann algorithms and their convergence analysis for
  nonexpansive mappings.
\newblock In {\em Applications of Nonlinear Analysis}, pages 175--191.
  Springer, 2018.

\bibitem{dong2019mikm}
Q.~Dong, J.~Huang, X.~Li, Y.~Cho, and T.~M. Rassias.
\newblock Mikm: multi-step inertial krasnosel'ski{\v \i}--mann algorithm and
  its applications.
\newblock {\em Journal of Global Optimization}, 73(4):801--824, 2019.

\bibitem{dong2018modified}
Q.~Dong, H.~Yuan, Y.~Cho, and T.~M. Rassias.
\newblock Modified inertial mann algorithm and inertial cq-algorithm for
  nonexpansive mappings.
\newblock {\em Optimization Letters}, 12(1):87--102, 2018.

\bibitem{douglas1956numerical}
J.~Douglas and H.~H. Rachford.
\newblock On the numerical solution of heat conduction problems in two and
  three space variables.
\newblock {\em Transactions of the American mathematical Society},
  82(2):421--439, 1956.

\bibitem{eddy1979extrapolating}
R.~P. Eddy.
\newblock Extrapolating to the limit of a vector sequence.
\newblock In {\em Information linkage between applied mathematics and
  industry}, pages 387--396. Elsevier, 1979.

\bibitem{fadili2018sensitivity}
J.~Fadili, J.~Malick, and G.~Peyr{\'e}.
\newblock Sensitivity analysis for mirror-stratifiable convex functions.
\newblock {\em SIAM Journal on Optimization}, 28(4):2975--3000, 2018.

\bibitem{francca2018admm}
G.~Fran{\c{c}}a, D.~P. Robinson, and R.~Vidal.
\newblock Admm and accelerated admm as continuous dynamical systems.
\newblock {\em arXiv preprint arXiv:1805.06579}, 2018.

\bibitem{fu2019anderson}
A.~Fu, J.~Zhang, and S.~Boyd.
\newblock Anderson accelerated douglas-rachford splitting.
\newblock {\em arXiv preprint arXiv:1908.11482}, 2019.

\bibitem{gabay1976dual}
D.~Gabay and B.~Mercier.
\newblock A dual algorithm for the solution of nonlinear variational problems
  via finite element approximation.
\newblock {\em Computers \& Mathematics with Applications}, 2(1):17--40, 1976.

\bibitem{gubin1967method}
L.~G. Gubin, B.~T. Polyak, and E.~V. Raik.
\newblock The method of projections for finding the common point of convex
  sets.
\newblock {\em USSR Computational Mathematics and Mathematical Physics},
  7(6):1--24, 1967.

\bibitem{hare2004identifying}
W.~L. Hare and A.~S. Lewis.
\newblock Identifying active constraints via partial smoothness and
  prox-regularity.
\newblock {\em Journal of Convex Analysis}, 11(2):251--266, 2004.

\bibitem{horn1990matrix}
R.~A. Horn and C.~R. Johnson.
\newblock {\em Matrix analysis}.
\newblock Cambridge university press, 1990.

\bibitem{kadkhodaie2015accelerated}
M.~Kadkhodaie, K.~Christakopoulou, M.~Sanjabi, and A.~Banerjee.
\newblock Accelerated alternating direction method of multipliers.
\newblock In {\em Proceedings of the 21th ACM SIGKDD international conference
  on knowledge discovery and data mining}, pages 497--506. ACM, 2015.

\bibitem{krasnosel1955two}
M.~A. Krasnosel'skii.
\newblock Two remarks on the method of successive approximations.
\newblock {\em Uspekhi Matematicheskikh Nauk}, 10(1):123--127, 1955.

\bibitem{lee2003smooth}
J.~M. Lee.
\newblock {\em Smooth manifolds}.
\newblock Springer, 2003.

\bibitem{LewisPartlySmooth}
A.~S. Lewis.
\newblock Active sets, nonsmoothness, and sensitivity.
\newblock {\em SIAM Journal on Optimization}, 13(3):702--725, 2003.

\bibitem{liang2016thesis}
J.~Liang.
\newblock {\em Convergence rates of first-order operator splitting methods}.
\newblock PhD thesis, Normandie Universit{\'e}; GREYC CNRS UMR 6072, 2016.

\bibitem{liang2014local}
J.~Liang, J.~Fadili, and G.~Peyr{\'e}.
\newblock Local linear convergence of {F}orward--{B}ackward under partial
  smoothness.
\newblock In {\em Advances in Neural Information Processing Systems}, pages
  1970--1978, 2014.

\bibitem{liang2017activity}
J.~Liang, J.~Fadili, and G.~Peyr{\'e}.
\newblock Activity identification and local linear convergence of
  {F}orward--{B}ackward-type methods.
\newblock {\em SIAM Journal on Optimization}, 27(1):408--437, 2017.

\bibitem{liang2017localDR}
J.~Liang, J.~Fadili, and G.~Peyr{\'e}.
\newblock Local convergence properties of {D}ouglas--{R}achford and alternating
  direction method of multipliers.
\newblock {\em Journal of Optimization Theory and Applications},
  172(3):874--913, 2017.

\bibitem{liang2018localPD}
J.~Liang, J.~Fadili, and G.~Peyr{\'e}.
\newblock Local linear convergence analysis of primal--dual splitting methods.
\newblock {\em Optimization}, 67(6):821--853, 2018.

\bibitem{liang2018improving}
J.~Liang, T.~Luo, and C.~Sch{\"o}nlieb.
\newblock Improving ``fast iterative shrinkage-thresholding algorithm'':
  Faster, smarter and greedier.
\newblock {\em arXiv preprint arXiv:1811.01430}, 2018.

\bibitem{lions1979splitting}
P.~L. Lions and B.~Mercier.
\newblock Splitting algorithms for the sum of two nonlinear operators.
\newblock {\em SIAM Journal on Numerical Analysis}, 16(6):964--979, 1979.

\bibitem{lorenz2015inertial}
D.~A. Lorenz and T.~Pock.
\newblock An inertial forward-backward algorithm for monotone inclusions.
\newblock {\em Journal of Mathematical Imaging and Vision}, 51(2):311--325,
  2015.

\bibitem{mainge2008convergence}
P.-E. Maing{\'e}.
\newblock Convergence theorems for inertial km-type algorithms.
\newblock {\em Journal of Computational and Applied Mathematics},
  219(1):223--236, 2008.

\bibitem{mann1953mean}
W.~R. Mann.
\newblock Mean value methods in iteration.
\newblock {\em Proceedings of the American Mathematical Society},
  4(3):506--510, 1953.

\bibitem{mevsina1977convergence}
M.~Me{\v{s}}ina.
\newblock Convergence acceleration for the iterative solution of the equations
  x= ax+ f.
\newblock {\em Computer Methods in Applied Mechanics and Engineering},
  10(2):165--173, 1977.

\bibitem{miller2005newton}
S.~A. Miller and J.~Malick.
\newblock Newton methods for nonsmooth convex minimization: connections
  among-{L}agrangian, {R}iemannian {N}ewton and {SQP} methods.
\newblock {\em Mathematical programming}, 104(2-3):609--633, 2005.

\bibitem{molinari2018convergence}
C.~Molinari, J.~Liang, and J.~Fadili.
\newblock Convergence rates of {F}orward--{D}ouglas--{R}achford splitting
  method.
\newblock {\em Journal of Optimization Theory and Applications}, pages 1--34,
  2019.

\bibitem{moudafi2003convergence}
A.~Moudafi and M.~Oliny.
\newblock Convergence of a splitting inertial proximal method for monotone
  operators.
\newblock {\em Journal of Computational and Applied Mathematics},
  155(2):447--454, 2003.

\bibitem{nesterov83}
Y.~Nesterov.
\newblock A method for solving the convex programming problem with convergence
  rate {$O(1/k^2)$}.
\newblock {\em Dokl. Akad. Nauk SSSR}, 269(3):543--547, 1983.

\bibitem{o2012adaptive}
B.~O'Donoghue and E.~Candes.
\newblock Adaptive restart for accelerated gradient schemes.
\newblock {\em Foundations of computational mathematics}, 15(3):715--732, 2015.

\bibitem{ozdemir2016alternative}
M.~{\"O}zdemir.
\newblock An alternative approach to elliptical motion.
\newblock {\em Advances in Applied Clifford Algebras}, 26(1):279--304, 2016.

\bibitem{pejcic2016accelerated}
I.~Pejcic and C.~N. Jones.
\newblock Accelerated admm based on accelerated douglas-rachford splitting.
\newblock In {\em 2016 European Control Conference (ECC)}, pages 1952--1957.
  Ieee, 2016.

\bibitem{peng2018anderson}
Y.~Peng, B.~Deng, J.~Zhang, F.~Geng, W.~Qin, and L.~Liu.
\newblock Anderson acceleration for geometry optimization and physics
  simulation.
\newblock {\em ACM Transactions on Graphics (TOG)}, 37(4):1--14, 2018.

\bibitem{polyak1964some}
B.~T. Polyak.
\newblock Some methods of speeding up the convergence of iteration methods.
\newblock {\em USSR Computational Mathematics and Mathematical Physics},
  4(5):1--17, 1964.

\bibitem{PoonLiang2019b}
C.~Poon and J.~Liang.
\newblock Trajectory of alternating direction method of multipliers and
  adaptive acceleration.
\newblock In {\em Advances In Neural Information Processing Systems}, 2019.

\bibitem{gfb2011}
H.~Raguet, M.~J. Fadili, and G.~Peyr{\'e}.
\newblock Generalized forward-backward splitting.
\newblock {\em SIAM Journal on Imaging Sciences}, 6(3):1199--1226, 2013.

\bibitem{richardson1927viii}
L.~F. Richardson and J.~A. Gaunt.
\newblock Viii. the deferred approach to the limit.
\newblock {\em Philosophical Transactions of the Royal Society of London.
  Series A, containing papers of a mathematical or physical character},
  226(636-646):299--361, 1927.

\bibitem{rockafellar1997convex}
R.~T. Rockafellar.
\newblock {\em Convex analysis}, volume~28.
\newblock Princeton university press, 1997.

\bibitem{scieur2017nonlinear}
D.~Scieur, F.~Bach, and A.~d'Aspremont.
\newblock Nonlinear acceleration of stochastic algorithms.
\newblock In {\em Advances in Neural Information Processing Systems}, pages
  3982--3991, 2017.

\bibitem{scieur2016regularized}
D.~Scieur, A.~d'Aspremont, and F.~Bach.
\newblock Regularized nonlinear acceleration.
\newblock In {\em Advances In Neural Information Processing Systems}, pages
  712--720, 2016.

\bibitem{shanks1955non}
D.~Shanks.
\newblock Non-linear transformations of divergent and slowly convergent
  sequences.
\newblock {\em Journal of Mathematics and Physics}, 34(1-4):1--42, 1955.

\bibitem{sidi2003practical}
A.~Sidi.
\newblock {\em Practical extrapolation methods: Theory and applications},
  volume~10.
\newblock Cambridge University Press, 2003.

\bibitem{sidi2008vector}
A.~Sidi.
\newblock Vector extrapolation methods with applications to solution of large
  systems of equations and to pagerank computations.
\newblock {\em Computers \& Mathematics with Applications}, 56(1):1--24, 2008.

\bibitem{sidi2017vector}
A.~Sidi.
\newblock {\em Vector extrapolation methods with applications}, volume~17.
\newblock SIAM, 2017.

\bibitem{su2014differential}
W.~Su, S.~Boyd, and E.~Candes.
\newblock A differential equation for modeling nesterov's accelerated gradient
  method: Theory and insights.
\newblock In {\em Advances in Neural Information Processing Systems}, pages
  2510--2518, 2014.

\bibitem{vaiter2018model}
S.~Vaiter, G.~Peyr{\'e}, and J.~Fadili.
\newblock Model consistency of partly smooth regularizers.
\newblock {\em IEEE Transactions on Information Theory}, 64(3):1725--1737,
  2018.

\bibitem{vu2011splitting}
B.~C. V{\~u}.
\newblock A splitting algorithm for dual monotone inclusions involving
  cocoercive operators.
\newblock {\em Advances in Computational Mathematics}, 38(3):667--681, 2013.

\bibitem{wynn1962acceleration}
P.~Wynn.
\newblock Acceleration techniques for iterated vector and matrix problems.
\newblock {\em Mathematics of Computation}, 16(79):301--322, 1962.

\bibitem{zhang2018globally}
J.~Zhang, B.~O'Donoghue, and S.~Boyd.
\newblock Globally convergent type-i anderson acceleration for non-smooth
  fixed-point iterations.
\newblock {\em arXiv preprint arXiv:1808.03971}, 2018.

\end{thebibliography}
\end{small}

%%%%%%%%%%%%%%%%%%%%%%%%%%%%%%%%%%%

%\fontsize{10}{12.8}\selectfont{
\begin{small}

\appendix 
\addtocontents{toc}{\protect\setcounter{tocdepth}{0}} 

% !TEX root = ../A2FoM.tex

\section{Trajectory of linear systems}\label{sec:trajectory-ls}
{In this section, we study the trajectories of 3 different types of matrices. These matrices correspond exactly to the  linearisation matrices in the cases of Forward--Backward splitting, Douglas--Rachford splitting and Primal--Dual splitting and hence, our analysis here can be used as a guide  for  the geometry of these methods.}

Let $M \in \bbR^{n\times n}$ be a bounded real matrix and consider the following linear system 
\beq\label{eq:lineq-xk}
\xkp = M \xk  ,
\eeq
which generates a train of sequence $\seq{\xk}$. Assumed $\seq{\xk}$ is convergent, \ie there exists an $\xsol \in \bbR^n$ such that $\xk \to \xsol$. The goal of this section is to investigate the properties of the trajectory formed by $\seq{\xk}$. 
To this end, define ${\vk} = \xk - \xkm$, it is immediate that \eqref{eq:lineq-xk} leads to the following iteration in terms of $\vk$,
\beq\label{eq:lineq-vk}
\vkp = M \vk ,
\eeq
and $\lim_{k\to+\infty} \vk = 0$ since $\xk \to \xsol$. 
%
%
% \begin{example}[Least square and gradient descent]
% Let $F = \frac{1}{2} \norm{Ax - f}^2$ be a quadratic function, choose $\gamma = \frac{1}{\norm{A}^2}$ and consider the following gradient descent,
% \[
% \xkp
% = \xk - \gamma \nabla F(\xk)
% = (\Id - \gamma A^TA) \xk + \gamma A^T f .
% \]
% Obviously, we have from above that
% \[
% \xkp - \xk = \Pa{\Id - \gamma A^TA} (\xk-\xkm)   .
% \]
% By letting $\vk = \xk - \xkm$ we obtain a special case of \eqref{eq:lineq-vk}, and clearly $\vk \to 0$ as $\xk$ is convergent to some $\xsol \in \Argmin(F)$ which is a global minimizer of $F$.
% \end{example}
%
%
% More examples of \eqref{eq:lineq-vk} can be found in Section \ref{sec:trajectory-fom}.
%As the trajectory of $\xk$ is formed by the union of line segments $\vk$, 
To characterize the trajectory, we choose to use the angle between each two adjacent vectors $\vk$ and $\vkm$, which is denoted by $\theta_k$ and defined by
\beqn %\label{eq:angle-thetak}
\theta_k 
\eqdef \angle(\vk, \vkm) 
= \arccos\Ppa{ \sfrac{\iprod{\vk}{\vkm}}{\norm{\vk}\norm{\vkm}} } .
\eeqn
%We also denote $\vartheta_{k}$ the angle between $\xk-\xkm$ and $\xk-\xsol$. 
For the rest of this section, we discuss the property of $\seq{\theta_k}$ under three different choices of matrix $M$. 
% All the proofs for the propositions in this section are collected in Section \ref{proof:trajectory-ls}.

%\todo{Change the definition of $\vartheta_k$ to the angle between $\xk-\xkm$ and $\xsol-\xk$...}

\subsection{Type I linear system}

We start with the simplest case, that $M \in \bbR^{n\times n}$ is symmetric. Let $(\sigma_{i})_{i=1,...,n} \in \bbR^n$ be the eigenvalues of $M$, which are all real owing to the symmetry of $M$.

\begin{definition}[Type I matrix]\label{def:type-i}
$M \in \bbR^{n\times n}$ is symmetric with all its eigenvalues in $]-1, 1]$, moreover $1 \geq \sigma_1 \geq \sigma_2 \geq \dotsm \geq \sigma_{n}$ and $\sigma_1 > \abs{\sigma_{n}} > 0$.
%\beq\label{eq:sig-type-i}
%1 \geq \sigma_1 \geq \sigma_2 \geq \dotsm \geq \sigma_{n} \qandq \sigma_1 > \abs{\sigma_{n}} > 0 . 
%\eeq
\end{definition}

Denote $\eta$ the ratio between the second largest eigenvalue in magnitude and $\sigma_1$, \ie $\eta = \frac{\max\ba{\sigma_2, \abs{\sigma_n}}}{\sigma_1}$. %For instance if $\sigma_1$ is strictly larger than $\sigma_2 \geq \abs{\sigma_{n}} > 0 $, then $\eta \eqdef \sigma_2 / \sigma_1$. 

\begin{proposition}%[Type I trajectory]
\label{prop:type-i}
Consider the linear system \eqref{eq:lineq-vk} where $M$ is a Type I matrix defined in Definition \ref{def:type-i}, then there holds 
$ 1 -  \cos(\theta_k) = O(\eta^{2k}) $. % and $\vartheta_{k} \to \pi$. 
\end{proposition}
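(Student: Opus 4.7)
The plan is to exploit the symmetry of $M$: diagonalize it in an orthonormal eigenbasis, express $\norm{v_k}$ and $\iprod{v_k}{v_{k-1}}$ as explicit sums over eigenvalues, and reduce the estimate on $1-\cos(\theta_k)$ to a Lagrange-type identity.

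First, since $M$ is symmetric, I would fix an orthonormal basis $(u_i)_{i=1}^n$ of eigenvectors and write $v_0 = \sum_i a_i u_i$, so that $v_k = \sum_i \sigma_i^k a_i u_i$. Orthonormality then yields the closed forms $\norm{v_k}^2 = \sum_i \sigma_i^{2k} a_i^2$ and $\iprod{v_k}{v_{k-1}} = \sum_i \sigma_i^{2k-1} a_i^2$. Since $1-\cos(\theta_k) \leq \sin^2(\theta_k)$ once $\cos(\theta_k)\geq 0$ (which will hold asymptotically, as a consequence of the bound itself), it suffices to estimate $\sin^2(\theta_k) = 1-\cos^2(\theta_k)$, whose numerator after clearing common denominators equals $\norm{v_k}^2\norm{v_{k-1}}^2 - \iprod{v_k}{v_{k-1}}^2$.

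The key step is a Lagrange-type identity, obtained by symmetrizing the resulting double sum in $i,j$ and using $\sigma_i^2 - 2\sigma_i\sigma_j + \sigma_j^2 = (\sigma_i-\sigma_j)^2$:
\[
\norm{v_k}^2 \norm{v_{k-1}}^2 - \iprod{v_k}{v_{k-1}}^2 = \sum_{i<j} a_i^2 a_j^2 (\sigma_i-\sigma_j)^2 (\sigma_i\sigma_j)^{2k-2}.
\]
The hypotheses give $|\sigma_\ell|\leq \eta\sigma_1$ for every $\ell\geq 2$, so each summand is bounded by a constant multiple of $\sigma_1^{4k-4}\eta^{2k-2}$, with the dominant contributions coming from the pairs $(1,j)$ with $j\geq 2$. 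Under the generic condition $a_1\neq 0$, the denominator $\norm{v_k}^2\norm{v_{k-1}}^2$ is bounded below by $a_1^4\sigma_1^{4k-2}$, and we conclude $\sin^2(\theta_k) = O(\sigma_1^{-2}\eta^{2k-2}) = O(\eta^{2k})$, since $\sigma_1$ and $\eta$ are fixed constants independent of $k$.

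The main (and essentially only) obstacle is the degenerate case $a_1=0$: if the initial vector lies in the orthogonal complement of the top eigenspace of $M$, the effective leading eigenvalue drops and the rate $\eta$ has to be replaced by a ratio of further-inner eigenvalues. For the displacement sequences arising in Section \ref{sec:trajectory-fom} this degeneracy does not occur generically, so the proposition holds as stated.
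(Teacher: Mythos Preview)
Your proof is correct and takes a genuinely different route from the paper's. The paper also diagonalizes $M$, but instead of your Lagrange identity it normalizes by $\sigma_1^k$ and splits $u_k/\sigma_1^k$ orthogonally as $s_k + t_k$, where $s_k$ is the (constant) component in the top eigenspace and $t_k = O(\eta^k)$ is the remainder; it then expands $\cos(\theta_k) = \frac{\iprod{s_k}{s_{k-1}}}{\norm{s_k+t_k}\norm{s_{k-1}+t_{k-1}}} + O(\eta^{2k-1})$ and uses $\frac{\norm{s_k}^2}{\norm{s_k}^2+\norm{t_k}^2} = 1 + O(\eta^{2k})$. Your Lagrange/Cauchy--Schwarz identity is cleaner and yields an exact closed form for $\sin^2(\theta_k)$, which is a nice gain; the paper's $s_k+t_k$ decomposition, on the other hand, is the template it reuses verbatim for the Type~II and Type~III propositions (where the leading block is a rotation rather than a scalar), so their argument generalizes more readily. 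Two small remarks: (i) your step $1-\cos(\theta_k)\leq \sin^2(\theta_k)$ requires $\cos(\theta_k)\geq 0$, which you should justify directly from $\iprod{v_k}{v_{k-1}} = a_1^2\sigma_1^{2k-1} + o(\sigma_1^{2k-1}) > 0$ rather than circularly from the bound; (ii) the genericity assumption $a_1\neq 0$ (or more precisely, that $v_0$ has a nonzero component in the top eigenspace) is implicit in the paper's proof as well, so you are not losing anything there.
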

\begin{remark} 
Proposition \ref{prop:type-i} implies eventually the trajectory of $\seq{\xk}$ is a straight line. If $\sigma_1 < \abs{\sigma_n} < 1$, then it can be shown that $\lim_{k\to\pinf} \theta_k = \pi$. % and $\vartheta_{k} \to 0$.   \todo{what is $\vartheta_{k} $?}
%$~$
%\begin{itemize}
%\item Proposition \ref{prop:type-i} implies eventually the trajectory of $\seq{\xk}$ is a straight line. If $\sigma_1 < \abs{\sigma_n} < 1$, then it can be shown that $\lim_{k\to\pinf} \theta_k = \pi$.  
%\item Gradient descent, Forward--Backward splitting and their corresponding inertial versions (with relative small inertial parameters) have Type I trajectory. %; see Section \ref{subsec:trajectory-fb}. 
%Moreover, Douglas--Rachford/ADMM and Primal--Dual splitting may also have straight line trajectory, see our discussions in the main paper. 
%\end{itemize}
\end{remark}

\begin{example}
Let $U$ be an orthogonal matrix in $\bbR^{3\times 3}$, and consider 
$M = U
\begin{bmatrix} a &  &  \\  & b &  \\  &  & c \end{bmatrix}
U^T $ 
where $-1 < c \leq b \leq a \leq 1$ are the eigenvalues. 
Two different choices of $(a,b,c)$ are considered
\[
(a,b,c) \in \Ba{ 0.99\times(1,0.98, 0.9), 0.99\times(1, 0.98,-0.75) }  .
\]
For both cases, we have $\eta = 0.98$, hence same convergence rates of $\cos(\theta_k)$ to $1$, see Figure \ref{fig:type-i} (a). 
The trajectories of $\seq{\xk}$ are shown in the other two figures of Figure \ref{fig:type-i}: for figure (b) all the three eigenvalues of $M$ are in $[0, 1]$, for figure (c) the smallest eigenvalue of $M$ is negative.

%%%%%%%%%%%%%%%
\begin{figure}[H]
\centering
\subfloat[${1-\cos(\theta_k)}$]{ \includegraphics[width=0.3\linewidth]{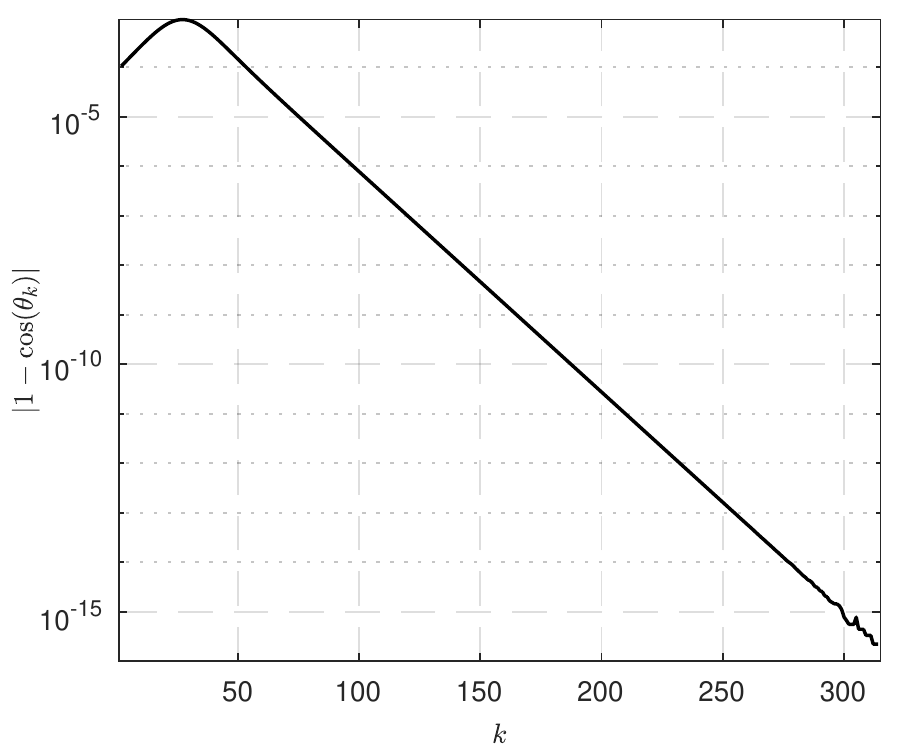} } {\hspace{2pt}}
\subfloat[{Eigenvalues in $[0, 1]$}]{ \includegraphics[width=0.3\linewidth]{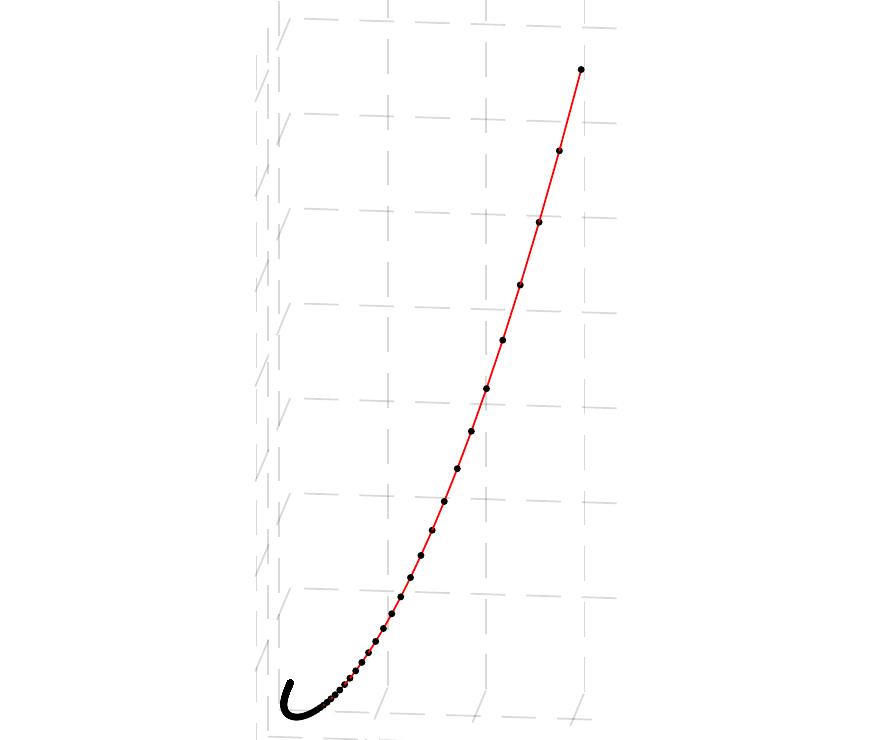} } {\hspace{2pt}}
\subfloat[{Eigenvalues in $]-1, 1]$}]{ \includegraphics[width=0.3\linewidth]{figure/tracjectory-type-i-B.pdf} }\\
%%%%%%%%%%%%%%%
\caption{{\small Convergence of $\cos(\theta_k)$ and trajectories of $\seq{\xk}$. (a): Convergence of $\cos(\theta_k)$ to $1$; (b) Trajectory of $\seq{\xk}$ when all the eigenvalue of $M$ are real; (c) Trajectory of $\seq{\xk}$ when $M$ has negative eigenvalues.}}
\label{fig:type-i}
\end{figure}

\end{example}

\begin{proof}[Proof of Proposition \ref{prop:type-i}]

Since $M$ is symmetric, there exists a real orthogonal matrix $U$ such that
$ M = U \Sigma U^T $ 
where $\Sigma = \diag((\sigma_i)_{i=1,...,n})$ is a diagonal matrix, and
$
\vk 
= M \vkm 
= M^k v_0 
= U \Sigma^k U^T v_0  
$. 
Let $\uk = U^T\vk$, then $\uk = \Sigma^k u_0$. 
Suppose there exists $d \in [2, n[$ such that $\sigma = \sigma_1=\sigma_2 = \dotsm = \sigma_d > \sigma_{d+1}$, we can consider the following decomposition of $\Sigma$
\beq\label{eq:Sigma-decom}
\Sigma_1 
\eqdef
\begin{bmatrix}
\diag((\sigma_i)_{i=1,...,d}) & 0_{d\times (n-d)} \\
0_{(n-d)\times d} & 0_{n-d}
\end{bmatrix}
\qandq
\Sigma_2
\eqdef
\begin{bmatrix}
0_d & 0_{d\times (n-d)} \\
0_{(n-d)\times d} & \diag((\sigma_i)_{i=d+1,...,n})
\end{bmatrix}  .
\eeq
It is immediate that $\uk = \Sigma_1^k u_0 + \Sigma_2^k u_0$, and that 
\[
\sfrac{1}{\sigma} \Sigma_1
= \begin{bmatrix}
\Id_{d} & 0_{d\times (n-d)} \\
0_{(n-d)\times d} & 0_{n-d}
\end{bmatrix}
\qandq
\sfrac{1}{\sigma}\Sigma_2
= \eta
\begin{bmatrix}
0_d & 0_{d\times (n-d)} \\
0_{(n-d)\times d} & \diag((\tfrac{\sigma_i}{\sigma_{d+1}})_{i=d+1,...,n})
\end{bmatrix}  .
\]
Moreover, there holds $\frac{1}{\sigma^k} \Sigma_1^k = \frac{1}{\sigma} \Sigma_1$ and $\frac{1}{\sigma^k}\Sigma_2^k = O(\eta^k)$. 
Consider the following orthogonal decomposition of $\frac{\uk}{\sigma^k}$, 
\[
s_k = \sfrac{1}{\sigma^k} \Sigma_1^k u_0  = \sfrac{1}{\sigma} \Sigma_1 u_0 
\qandq
t_k = \sfrac{\uk}{\sigma^k} - s_k = O(\eta^k) .
\] 
We get
\[
\iprod{\vk}{\vkm}
= \iprod{ U^T\vk  }{ U^T \vkm }
= \sigma^{2k-1} \iprod{ \tfrac{\uk}{\sigma^k}  }{ \tfrac{\ukm}{\sigma^{k-1}} }
= \sigma^{2k-1} \iprod{ s_k + \tk  }{ s_{k-1} + \tkm } ,
\]
and $\norm{\vk} = \norm{\uk} = \sigma^k (\norm{s_k + t_k}) = \sigma^k (\norm{s_k} + \norm{t_k})$. Consequently the value of $\cos(\theta_k)$ is, note that $s_k = s_{k-1}$
\beq\label{eq:cos-thetak-i}
\begin{aligned}
\cos(\theta_k)
= \sfrac{ \iprod{ \vk }{ \vkm } }{\norm{ \vk }\norm{ \vkm }}
= \sfrac{ \iprod{ s_k + t_k }{ s_{k-1} + t_{k-1} } }{\norm{ s_k + t_k }\norm{ s_{k-1} + t_{k-1} }}
&= \sfrac{ \iprod{ s_k }{ s_{k-1} } }{\norm{ s_k + t_k }\norm{ s_{k-1} + t_{k-1} }} + \sfrac{ \iprod{ t_k }{ t_{k-1} } }{\norm{ s_k + t_k }\norm{ s_{k-1} + t_{k-1} }}\\
&= \sfrac{ \norm{ s_k }^2 }{\norm{ s_k + t_k }\norm{ s_{k-1} + t_{k-1} }} + O(\eta^{2k-1})\\
&= \sfrac{ \norm{s_{k}}^2 }{\norm{ s_{k} }^2 + \norm{t_k }^2}  \times \sfrac{\norm{s_{k} + t_k}}{\norm{ s_{k} + t_{k-1} }} + O(\eta^{2k-1}) .
\end{aligned}
\eeq
Since we have
\[
\sfrac{ \norm{s_{k}}^2 }{\norm{ s_{k} }^2 + \norm{t_k }^2} = 1 - \norm{t_k }^2  + O(\norm{t_k }^4) = 1 + O(\eta^{2k}) 
\qandq
\sfrac{\norm{s_{k} + t_k}}{\norm{ s_{k} + t_{k-1} }} \to 1 .
\]
Combining with \eqref{eq:cos-thetak-i} leads to the claimed result.  \qedhere

\end{proof}

\subsection{Type II linear system}

From this part, we turn to linear systems which result in spiral trajectories. The first of this kind is the normal matrix.

\begin{definition}[Type II matrix]\label{def:type-ii}
$M \in \bbR^{n\times n}$ is normal matrix with all its eigenvalues lying in the complex unit disc. % Let $0 \leq \sigma_1 \leq \dotsm \leq \sigma_{n/2} \leq 1$, and for each $i=1,...,n/2$, a pair of eigenvalue of $M$ reads $ \sigma_i^2 \pm \mathrm{i} \sigma_i \sqrt{1-\sigma_i^2} $.
\end{definition}

According to \cite[Theorem 2.5.8]{horn1990matrix}, a normal matrix $M \in \bbR^{n\times n}$ is quasi-diagonalizable, that is there exists a real orthogonal matrix $U \in \bbR^{n\times n}$ such that
\[
M = 
U 
\begin{bmatrix}
B_1 & & \\
& \ddots &  \\
&  &   B_{m}\\
\end{bmatrix}
U^T .
\]
For each $i=1,...,m$, $B_i$ is either real valued scalar or $2 \times 2$ matrix of the form $\begin{bmatrix} a_i & b_i \\ -b_i & a_i \end{bmatrix}$ in which $b_{i} > 0$ and has eigenvalues $a_{i} \pm \mathrm{i} b_{i}$. 
We impose the following assumptions on $M$.

\begin{assumption}\label{assumption-type-ii}
For each $i=1,...,m$,
\begin{enumerate}[label={\rm {(\roman{*})}},leftmargin=3.5em]
\item if $B_i$ is scalar, then $B_i \in \ba{0, 1}$; % $B_i \in [0, 1]$;
\item if $B_i = \begin{bmatrix} a_i & b_i \\ -b_i & a_i  \end{bmatrix}$ with $b_i > 0$, then $a_i^2+b_i^2 < 1$. Moreover, there exists $1\leq q \leq d \leq m$ such that $B_1=\dotsm=B_q$ and $1 > a_1^2+b_1^2 = a_2^2+b_2^2 = \dotsm = a_q^2+b_q^2 > a_{q+1}^2+b_{q+1}^2 \geq \dotsm \geq a_d^2+b_d^2 > 0 $. 
\end{enumerate}
\end{assumption}

% \begin{remark}
% When $B_i$'s are $2\times2$ matrices, similar to the multiple leading eigenvalue $\sigma$ for Type I matrix, we can have $B_1=B_2=...=B_d$ for some $d \in ]2, m]$. The assumption that $a_1^2+b_1^2 > a_2^2+b_2^2$ is mainly for the sake of simplicity.
% \end{remark}
 
% Denote $\sigma_1 \eqdef a_1 + \mathrm{i} b_1$ and $\sigma_2 \eqdef a_2 + \mathrm{i} b_2$ the eigenvalues of $B_1, B_2$ respectively, define $\eta = \abs{\sigma_2} /\abs{\sigma_{1}}$.
Let $\psi$ be the argument of $a_1+\mathrm{i} b_1$ and $\eta = \frac{\sqrt{a_{q+1}^2+b_{q+1}^2}}{\sqrt{a_q^2+b_q^2}}$. 
Under Assumption \ref{assumption-type-ii}, the power of $M$, \ie $M^k$, is convergent when $k$ goes to $+\infty$. Denote $\Minf \eqdef \lim_{k\to\pinf} M^k $.

\begin{proposition}%[Type II trajectory]
\label{prop:type-ii}
Consider the linear system \eqref{eq:lineq-vk} whose $M$ is a Type II matrix define in Definition \ref{def:type-ii}, suppose that Assumption \ref{assumption-type-ii} holds. Then 
\begin{enumerate}[label={\rm {(\roman{*})}}]
\item $\Minf$ is a symmetric matrix with eigenvalues being either $0$ or $1$, and $v_0 \in \ker(\Minf)$.  
\item $ \cos(\theta_k) - \cos\pa{\psi} = O(\eta^{2k}) $ for $\psi$ and $\eta$ defined above. 
%\item $\vartheta_{k} \to \pi/2$.
\end{enumerate}
\end{proposition}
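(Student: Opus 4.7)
The proof plan rests on the real quasi-diagonal decomposition from \cite[Theorem 2.5.8]{horn1990matrix}: there is a real orthogonal $U$ with $M = U\,\diag(B_1,\dots,B_m)\,U^T$. I will work in the rotated coordinates $w_k \eqdef U^T \vk$, which decouples the iteration block-by-block into $w_k^{(i)} = B_i^k w_0^{(i)}$, where $w_0^{(i)}$ denotes the $i$-th block of $w_0 = U^T v_0$. For part (i), each scalar block $B_i \in \ba{0,1}$ is idempotent, and each $2\times 2$ block has spectral radius $r_i \eqdef \sqrt{a_i^2+b_i^2}<1$ by Assumption \ref{assumption-type-ii}(ii), so $B_i^k\to 0$. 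Consequently $M^k$ converges to $\Minf = U\,\diag(\widetilde B_1,\dots,\widetilde B_m)\,U^T$ with $\widetilde B_i = 1$ on the scalar-$1$ blocks and $\widetilde B_i = 0$ elsewhere; this is manifestly symmetric with eigenvalues in $\ba{0,1}$. Since $\xk \to \xsol$, we have $\vk = M^k v_0 \to 0$, and passing to the limit yields $\Minf v_0 = 0$.

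For part (ii), the key algebraic fact driving the rate is the factorization $B_i = r_i R_{\psi_i}$ of each $2\times 2$ block, where $R_{\psi_i}$ is the planar rotation of angle $\psi_i \eqdef \arg(a_i + \mathrm{i}\, b_i)$, together with the elementary identity $\iprod{R_{\psi_i} u}{u} = \cos(\psi_i)\norm{u}^2$ for all $u\in\bbR^2$. Combined with the isometry of rotations, this yields $\iprod{B_i^k w_0^{(i)}}{B_i^{k-1} w_0^{(i)}} = r_i^{2k-1}\cos(\psi_i)\norm{w_0^{(i)}}^2$ and $\norm{B_i^k w_0^{(i)}}^2 = r_i^{2k}\norm{w_0^{(i)}}^2$. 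Part (i) kills the scalar-$1$ components of $w_0$, while scalar-$0$ blocks contribute nothing to $w_k$ for $k\geq 1$; hence $\iprod{w_k}{w_{k-1}}$ and $\norm{w_k}^2$ each reduce to a sum over the $2\times 2$ blocks $i=1,\dots,d$.

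To conclude, I separate the dominant blocks $i\leq q$ (which share the common modulus $r$ and angle $\psi$ by Assumption \ref{assumption-type-ii}(ii)) from the tail $i\geq q+1$, and bound each tail coefficient by $(r_i/r)^{2k}\leq \eta^{2k}$. Factoring $r^{2k-1}$ out of the inner product and $r^{2k}$ out of the squared norms gives $\iprod{w_k}{w_{k-1}} = r^{2k-1}\bPa{A_q \cos\psi + O(\eta^{2k-1})}$ and $\norm{w_k}\norm{w_{k-1}} = r^{2k-1}\bPa{A_q + O(\eta^{2(k-1)})}$, where $A_q \eqdef \sum_{i\leq q}\norm{w_0^{(i)}}^2$ is assumed positive (otherwise one simply relabels with a smaller leading modulus, exactly as in the proof of Proposition \ref{prop:type-i}). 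Taking the quotient and expanding $1/(1+\epsilon)$ in Taylor series produces $\cos(\theta_k)-\cos\psi = O(\eta^{2k})$, after absorbing the constant $\eta^{-2}$ into the $O$-symbol. The main obstacle is simply careful bookkeeping of the error terms together with the implicit non-degeneracy $A_q>0$.
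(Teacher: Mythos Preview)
Your proposal is correct and follows essentially the same approach as the paper's proof: both rely on the real quasi-diagonal decomposition $M=U\,\diag(B_1,\dots,B_m)\,U^T$, use the rotation identity $\iprod{R_\psi u}{u}=\cos(\psi)\norm{u}^2$ on each $2\times 2$ block, separate the leading blocks (modulus $r$) from the tail, and expand the quotient defining $\cos(\theta_k)$ to extract the $O(\eta^{2k})$ rate. The only cosmetic difference is that the paper first passes to $\tM=M-\Minf$ and organizes the computation via the orthogonal splitting $s_k=\sigma^{-k}\Gamma_1^k u_0$, $t_k=\sigma^{-k}\Gamma_2^k u_0$, whereas you work directly with block sums; both amount to the same bookkeeping, and both leave the non-degeneracy $A_q>0$ as an implicit genericity assumption.
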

\begin{remark} $~$
\begin{itemize}
\item Proposition \ref{prop:type-ii} indicates that eventually $M$ performs circular rotation. %For first-order methods, Douglas--Rachford splitting and ADMM with (locally) polyhedral functions have Type II trajectory. %; see Section \ref{subsec:trajectory-dr} for detailed discussions.
\item If we have only $a_1^2+b_1^2 = a_2^2+b_2^2 = \dotsm = a_q^2+b_q^2$, and $B_i \neq B_j, 1\leq i,j \leq q, i\neq j$, then $\cos(\theta_k)$ will converge to some $\psi$ which depends on $(\psi_i)_{i=1,...,q}$ where $\psi_i$ is the argument of $a_i+\mathrm{i} b_i$. 

\end{itemize}
\end{remark}

\begin{figure}[H]
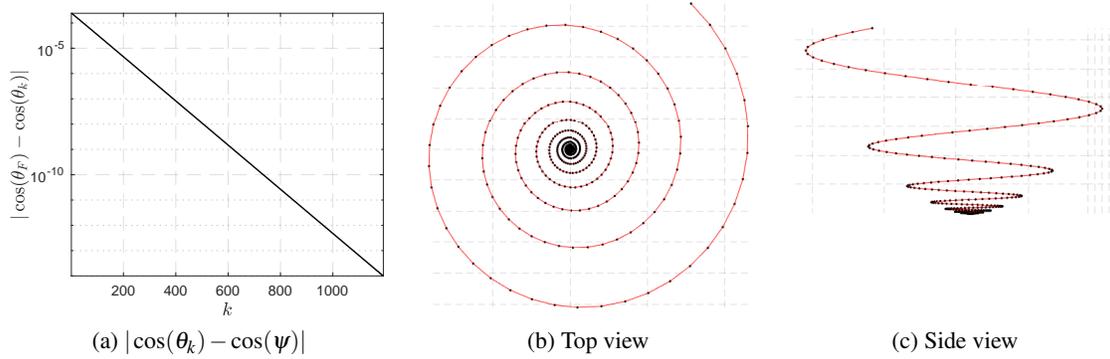

	\centering
	\subfloat[$\abs{\cos(\theta_k)-\cos(\psi)}$]{ \includegraphics[width=0.3\linewidth]{figure/angle-type-ii-A.pdf} } {\hspace{2pt}}
	\subfloat[{Top view}]{ \includegraphics[width=0.27\linewidth]{figure/tracjectory-type-ii-b-A.pdf} } {\hspace{2pt}}
	\subfloat[{Side view}]{ \includegraphics[width=0.265\linewidth]{figure/tracjectory-type-ii-a-A.pdf} }\\
	%%%%%%%%%%%%%%%
	\caption{\small Convergence of $\cos(\theta_k)$ and trajectory of $\seq{\xk}$. (a): Convergence of $\cos(\theta_k)$ to $\cos(\psi)$; (b) Top view of trajectory of $\seq{\xk}$; (c) Side view of trajectory of $\seq{\xk}$. }
	\label{fig:type-ii}
\end{figure}

\begin{example}
Let $\alpha \in ]0, \pi/2]$ and define $a,b,c$ by
\[
a = 0.99\cos(\alpha) ,\enskip b = 0.99\sin(\alpha) \qandq c = \eta \sqrt{a^2+b^2} 
\]
for some $\eta \in ]0, 1[$.
Let $U$ be an orthogonal matrix in $\bbR^{3\times 3}$, and let 
$
M = U
\begin{bmatrix} a & b &  \\ -b & a &  \\  &  & c \end{bmatrix}
U^T 
$.
The matrix $M$ has three eigenvalues: $a+\mathrm{i} b , a-\mathrm{i} b$ and $c  $. 
Let $\psi$ be the argument of $a+\mathrm{i} b$ %, then $ \cos(\theta_k) - \cos(\psi) = O(\eta^{2k}) $. 
and $(\alpha, \eta) = (0.05, 0.96)$, 
%\[
%(\alpha, \eta) \in \Ba{ (0.05, 0.96), (0.8, 0.98) }  ,
%\]
The convergence of $\cos(\theta_k)$ and trajectories of $\seq{\xk}$ are provided in Figure \ref{fig:type-ii}. 
The first plot shows the convergence of $\cos(\theta_k)$ to $\cos(\phi)$, and the other two are different views of the trajectory of $\seq{\xk}$.

%\vspace{-2em}

\end{example}

\begin{proof}[Proof of Proposition \ref{prop:type-ii}]
Owing to \cite[Theorem 2.5.8]{horn1990matrix} and Assumption \ref{assumption-type-ii}, we have the decomposition of $M$
\[
M = 
U 
\Sigma
U^T 
\enskip\textrm{with}\quad
\Sigma \eqdef
\begin{bmatrix}
\Id_r & & & &\\
& B_{1} & & &\\
& & \ddots & &  \\
& &  & B_{d} &\\
& &  & & 0_{r}\\
\end{bmatrix} 
\]
where $r$ denotes the multiplicities of eigenvalue $1$ in \ref{assumption-type-ii} (i), and $d$ denotes the number of $2\times 2$ blocks. For each $i=1,...,d$, we have $B_i = \begin{bmatrix} a_i & b_i \\ -b_i & a_i  \end{bmatrix}$ with $1 > a_1^2+b_1^2 \geq a_2^2+b_2^2 \geq \dotsm \geq a_d^2+b_d^2 > 0$. It is easy to show that %for each $i=1,...,d$, there holds
$\lim_{k\to\pinf} B_i^k = 0 ,~ i=1,...,d $ 
since the spectral radius of each $B_i$, $\rho(B_i) = \sqrt{a_i^2+b_i^2} < 1$, is strictly smaller than $1$. 
This further implies that
\[
\Minf \eqdef
\lim_{k\to\pinf} M^k 
=
U 
\begin{bmatrix}
\Id_r & \\
& 0_{n-r}  
\end{bmatrix}
U^T  ,
\]
which verifies the first claim of the proposition. %The second claim follows naturally since we have $\vk = M^k v_0$ and $$ v_{\infty} = \lim_{k\to\pinf} \vk = \lim_{k\to\pinf} M^k v_0 = \Minf v_0 . $$

\vgap

Since $\vk \to 0$, we have from $\vk = M\vkm = M^k v_{0}$ that
\[
0 = \lim_{k\to\pinf} M^k v_0 = \Minf v_0 ,
\]
which means $v_0 \in \ker(\Minf)$ and moreover $\vk \in \ker(\Minf),~ k \in \bbN$. Consequently, we have
\[
\vk = M\vkm = (M-\Minf) \vkm
\]
Define $\tM \eqdef M-\Minf$, then there exists a real orthogonal matrix $V$ (actually a permutation of $U$) such that
\[
\tM = 
V 
\Gamma
V^T 
\enskip\textrm{with}\enskip
\Gamma \eqdef
\begin{bmatrix}
B_{1} & & &\\
& \ddots &  & \\
&  & B_{d} &\\
& & & 0_{2r}
\end{bmatrix}  
\]
and $B_i = \begin{bmatrix} a_i & b_i \\ -b_i & a_i \end{bmatrix}, i=1,...,d$. 
Suppose for some $1\leq q < d$, there holds
\[
a_1^2+b_1^2 = a_2^2+b_2^2 = \dotsm = a_q^2+b_q^2 > a_{q+1}^2+b_{q+1}^2 \geq \dotsm \geq a_d^2+b_d^2  .
\]
Consider the decomposition of $\Gamma$,
\[
\Gamma_1
= 
\begin{bmatrix}
B_{1} & & &\\
& \ddots &  & \\
&  & B_{q} &\\
& & & 0_{n-2q}
\end{bmatrix}
\qandq
\Gamma_2
= \Gamma - \Gamma_1 .
\]
Let $\sigma = \sqrt{a_1^2+b_1^2}$ and $\eta = \frac{\sqrt{a_{q+1}^2+b_{q+1}^2}}{\sigma}$, then $\frac{1}{{\sigma}^k} \Gamma_2^k = O(\eta^k) \to 0$. 
Let $\psi = \arccos(\frac{a_1}{\sigma})$, then for each $i=1,...,q$
\[
\sfrac{1}{{\sigma}} B_i = 
\begin{bmatrix} \cos(\psi) & \sin(\psi) \\ - \sin(\psi) & \cos(\psi) \end{bmatrix} 
\]
which is a circular rotation. Therefore, $\frac{1}{\sigma} \Gamma_1$ is a rotation with respect to the first $2q$ elements. 
Denote $\uk = V^T\vk$, then from $\vk = \tM \vkm$, we get $\uk = \Gamma \ukm = \Gamma^k u_0$. Consider the orthogonal decomposition of $\frac{\uk}{{\sigma}^k}$, 
\[
s_k = \sfrac{1}{{\sigma}^k} \Gamma_1^k u_0
\qandq
t_k = \sfrac{1}{{\sigma}^k} \Gamma_2^k u_0  .
\]
We have that $\norm{s_k} = \norm{s_{k-1}}$ and $\iprod{s_k}{s_{k-1}} = \norm{s_k}^2 \cos(\psi)$. 
As a result, for $\cos(\theta_k)$ we have
\beq\label{eq:cos-thetak-ii}
\cos(\theta_k)
= \sfrac{\iprod{ s_k  }{ s_{k-1} }}{\norm{ s_k + t_k }\norm{ s_{k-1} + t_{k-1} }} + \sfrac{\iprod{ t_k }{ t_{k-1} }}{\norm{ s_k + t_k }\norm{ s_{k-1} + t_{k-1} }} 
= \sfrac{ \norm{s_k}^2\cos(\psi) }{\norm{ s_k }^2 + \norm{t_k }^2} \times \sfrac{\norm{s_k + t_k}}{\norm{ s_{k-1} + t_{k-1} }} + O(\eta^{2k-1})  .
\eeq
Using the fact that 
$
\frac{ \norm{s_k}^2 \cos(\psi) }{\norm{ s_k }^2 + \norm{t_k }^2} = \cos(\psi) \Pa{1 - \norm{t_k }^2 + O(\norm{t_k }^4) } = \cos(\psi) + O(\eta^{2k}) 
$ and $
\frac{\norm{s_k+t_k}}{\norm{ s_{k-1} + t_{k-1} }} \to 1 
$
we conclude the convergence of $\theta_{k}$.  \qedhere

\end{proof}

%\[
%\begin{aligned} 
%\sfrac{\iprod{\yk-\ykm}{\yk-\ysol}}{\norm{\yk-\ykm}\norm{\yk-\ysol}} 
%&= \sfrac{\iprod{\sigma^{k} \pa{ \ybar_{k} + \tk } - \sigma^{k-1} \pa{ \ybar_{k-1} + \tkm } }{\sigma^{k} \pa{ \ybar_{k} + \tk } - \sigma^{\infty} { \ybar_{\infty} } }}{\norm{\sigma^{k} \pa{ \ybar_{k} + \tk } - \sigma^{k-1} \pa{ \ybar_{k-1} + \tkm }}\norm{\sigma^{k} \pa{ \ybar_{k} + \tk } - \sigma^{\infty} { \ybar_{\infty} }}}  \\
%&= \sfrac{\iprod{ \pa{ \sigma\ybar_{k} - \ybar_{k-1} } + \pa{ \sigma\tk - \tkm } }{\pa{ \ybar_{k} + \tk } - \sigma^{\infty} { \ybar_{\infty} } }}{\norm{ \pa{ \sigma\ybar_{k} - \ybar_{k-1} } + \pa{ \sigma\tk - \tkm } }\norm{\pa{ \ybar_{k} + \tk } - \sigma^{\infty} { \ybar_{\infty} }}}  \\
%&= \sfrac{\iprod{ \sigma^{k} (\ybar_{0} + \tk) - \sigma^{k-1} (\ybar_{0} + \tkm) }{ \sigma^{k} (\ybar_{0} + \tk) - \sigma^{\infty}\ybar_{0} }}{\norm{ \sigma^{k} (\ybar_{0} + \tk) - \sigma^{k-1} (\ybar_{0} + \tkm) }\norm{ \sigma^{k} (\ybar_{0} + \tk) - \sigma^{\infty}\ybar_{0} }} \\
%% &= \sfrac{\iprod{ \sigma (\ybar_{0} + \tk) - (\ybar_{0} + \tkm) }{ \ybar_{0} + \tk }}{\norm{ \sigma (\ybar_{0} + \tk) - (\ybar_{0} + \tkm) }\norm{ \ybar_{0} + \tk }} \\ 
%&= \sfrac{\iprod{ (\sigma-1)\ybar_{0} }{ (1-\sigma^{\infty}) \ybar_{0} }}{\norm{ (\sigma-1)\ybar_{0} + (\sigma\tk-\tkm) }\norm{ (1-\sigma^{\infty})\ybar_{0} + \tk }} + \sfrac{\iprod{  (\sigma\tk-\tkm) }{ \tk }}{\norm{ (\sigma-1)\ybar_{0} + (\sigma\tk-\tkm) }\norm{ (1-\sigma^{\infty})\ybar_{0} + \tk }} .
%\end{aligned}
%\]

\subsection{Type III linear system}\label{subsec:type-iii}

The last trajectory we discuss is the elliptical spiral which is more complicated. 
We first discuss the definition and properties of elliptical rotation, then discuss one type of matrix that leads to elliptical rotation.

\subsubsection{Elliptical rotation}

\begin{definition}[{\cite[Theorem~1]{ozdemir2016alternative}}]\label{lem:elliptical-rotation}
Let $\phi > 0$ and $l > s > 0$, then the following matrix
\[
\scrR_{l,s,\phi} = 
\begin{bmatrix} \cos(\phi) & \sfrac{{s}}{{l}} \sin(\phi) \\ - \sfrac{{l}}{{s}} \sin(\phi) & \cos(\phi) \end{bmatrix}
\]
is an elliptical rotation along the ellipse $\frac{x^2}{s^2} + \frac{y^2}{l^2} =  d$ 
% \[
% \sfrac{x^2}{s^2} + \sfrac{y^2}{l^2} =  d  ,
% \]
with $d > 0$. 
\end{definition}

\begin{remark}\label{rmk:ellipses}
The definition is adopted from \cite{ozdemir2016alternative}.  
All similar ellipses have identical elliptical rotation matrices \cite{ozdemir2016alternative}. 
When $s=l$, then $\scrR_{l,s,\phi}$ simply becomes circular rotation. 
\end{remark}

Different from circular rotation which is isometry, elliptical rotation does not preserves angle and distance. Given any $x\in\bbR^n$ and its rotated point $x_{+} = \scrR_{l,s,\phi} x$, the angle between $x_{+}, x$ and the ratio $\frac{\norm{x_{+}}}{\norm{x}}$ depend on $x$. 
In the following, let $e = (\frac{s^2}{l^2}-1) / (\frac{s^2}{l^2}+1)$ and $\zeta = \arccos\pa{ - e\cos(\phi) } $.

\begin{proposition}\label{prop:elliptical-rotation}
Let $\scrR_{l,s,\phi}$ be an elliptical rotation for some $\phi \in ]0, \pi[$ and $l , s > 0$. Given an arbitrary point $x \neq 0$ and its rotated point $ x_{+} = \scrR_{l,s,\phi} x $, there holds
\begin{itemize}
\item The ratio $\frac{\norm{x_{+}}^2}{\norm{x}^2}
\in \big[ \frac{ e \cos(\zeta-\phi) + 1 }{ e \cos(\zeta+\phi) + 1 } , \frac{ e \cos(\zeta+\phi) + 1 }{ e \cos(\zeta-\phi) + 1  } \big]$ .  %The ratio $\frac{\norm{x_{+}}}{\norm{x}}$ is bounded and 
%\[
%\sfrac{\norm{x_{+}}^2}{\norm{x}^2}
%\in \left[ \sfrac{ e \cos(\zeta-\phi) + 1 }{ e \cos(\zeta+\phi) + 1 }, \sfrac{ e \cos(\zeta+\phi) + 1 }{ e \cos(\zeta-\phi) + 1  } \right]  . %\left[ \sfrac{(\frac{s^2}{l^2}-1) \cos(\zeta-\phi) + (\frac{s^2}{l^2}+1)}{(\frac{s^2}{l^2}-1)\cos(\zeta+\phi) + (\frac{s^2}{l^2}+1) }, \sfrac{(\frac{s^2}{l^2}-1) \cos(\zeta+\phi) + (\frac{s^2}{l^2}+1) }{(\frac{s^2}{l^2}-1)\cos(\zeta-\phi) + (\frac{s^2}{l^2}+1)  } \right]  .
%\]

\item Let $\chi$ be the angle between $x$ and $x_{+}$, we have $\chi \in [\chim, \chiM]$ 
%\[
%\chi \in [\chim, \chiM] ,
%\]
with
$
\cos(\chiM)
= \frac{ a \cos(\phi) - b }{ \ssqrt{ \sin^2(\phi)  + \pa{a \cos(\phi) - b }^2 } }
$ and $
\cos(\chim)
= \frac{ a \cos(\phi) + b }{ \ssqrt{ \sin^2(\phi)  + \pa{a \cos(\phi) + b }^2 } }  
$ where $a = \frac{s}{2l} + \frac{l}{2s} , b = \abs{ \frac{s}{2l} - \frac{l}{2s} }$.

%\item Let $\vartheta$ be the angle between $x_{+}$ and $x_{+}-x$, then there holds $\vartheta \in [\underline{\vartheta}, \bar{\vartheta}]$ with $\underline{\vartheta} = \frac{ a \pa{1-\cos(\phi)} + 2b \sin(\frac{\phi}{2}) }{ \ssqrt{ \sin^2(\phi) + \pa{ a \pa{1-\cos(\phi)} + 2b \sin(\frac{\phi}{2}) }^2 } }$ and $\bar{\vartheta} = \frac{ a \pa{1-\cos(\phi)} - 2b \sin(\frac{\phi}{2}) }{ \ssqrt{ \sin^2(\phi) + \pa{ a \pa{1-\cos(\phi)} - 2b \sin(\frac{\phi}{2}) }^2 } }$, where $a, b$ are the same as above. 
\end{itemize}
\end{proposition}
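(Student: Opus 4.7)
\emph{Strategy.} My plan is to parametrize $x$ using the natural angular parameter of the ellipse $\{(s\cos\psi, l\sin\psi) : \psi \in [0,2\pi[\}$, which will turn the elliptical rotation into a simple shift $\psi \mapsto \psi - \phi$, thereby reducing both statements to one-variable optimization problems. Indeed, a direct expansion shows $\scrR_{l,s,\phi}(s\cos\psi, l\sin\psi)^\top = (s\cos(\psi-\phi), l\sin(\psi-\phi))^\top$. Since the ratio $\norm{x_+}^2/\norm{x}^2$ and the angle $\chi$ depend only on the direction of $x$ (not its magnitude), and since every direction in $\RR^2$ is realized by some ray through a point on this ellipse, the parametrization loses no generality.

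\emph{Key computations.} Setting $A = (s^2+l^2)/2$ and $B = (s^2-l^2)/2$, a short calculation using the product-to-sum identities gives
\begin{equation*}
\norm{x}^2 = A + B\cos(2\psi), \quad \norm{x_+}^2 = A + B\cos(2\psi - 2\phi), \quad \iprod{x}{x_+} = A\cos\phi + B\cos(2\psi - \phi).
\end{equation*}
The crucial observation for the angle estimate is the Cauchy--Schwarz remainder identity
\begin{equation*}
\norm{x}^2\norm{x_+}^2 - \iprod{x}{x_+}^2 = \det([x\, ,\, x_+])^2 = s^2 l^2 \sin^2(\phi),
\end{equation*}
which is independent of $\psi$ (and reflects the fact that $\det(\scrR_{l,s,\phi}) = 1$ so the affine area is preserved). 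This will yield the clean formula $\cos\chi = \bPa{\iprod{x}{x_+}/(sl)} \big/ \ssqrt{\sin^2(\phi) + (\iprod{x}{x_+}/(sl))^2}$.

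\emph{Extremization.} For part (i), I will differentiate $\norm{x_+}^2/\norm{x}^2$ with respect to $\psi$. Using $\sin A - \sin B = 2\cos(\tfrac{A+B}{2})\sin(\tfrac{A-B}{2})$, the vanishing of the numerator of the derivative reduces to $\sin(\phi)[\cos(2\psi - \phi) + e\cos(\phi)] = 0$, whose solutions are $2\psi - \phi = \pm\zeta$ with $\zeta = \arccos(-e\cos\phi)$. Substituting each back into the explicit expressions above, dividing through by $A$, and using $e = B/A$, produces the two reciprocal endpoints $\tfrac{e\cos(\zeta-\phi)+1}{e\cos(\zeta+\phi)+1}$ and its reciprocal, which by monotonicity between the critical points are indeed the minimum and maximum. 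For part (ii), dividing $\iprod{x}{x_+}$ by $sl$ rewrites it as $a\cos(\phi) + \epsilon\, b\cos(2\psi - \phi)$ with $\epsilon = \mathrm{sign}(s^2-l^2) \in \{-1,+1\}$. As $\psi$ varies, this quantity covers the full interval $[a\cos(\phi) - b,\, a\cos(\phi) + b]$, and the map $d \mapsto d/\ssqrt{\sin^2(\phi) + d^2}$ is strictly increasing, so the extremal values of $\cos\chi$ are attained precisely at the two endpoints, yielding the claimed formulas for $\cos(\chi_M)$ (at $d = a\cos\phi - b$) and $\cos(\chi_m)$ (at $d = a\cos\phi + b$).

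\emph{Main obstacle.} The main technical obstacle lies in part (i): after locating the critical points in $\psi$, converting the resulting expressions back into the requested form $e\cos(\zeta\pm\phi)+1$ requires careful use of the defining relation $e\cos\phi = -\cos\zeta$ together with the angle-addition formulas, and keeping track of which critical point yields which endpoint of the interval. One must also check second-order conditions (or, more easily, monotonicity on each arc between critical points) to confirm that these are a genuine minimum and maximum rather than saddle points, and handle signs carefully in the regime $s < l$ where $B$ and $e$ are negative. The angle bound in part (ii) is, by comparison, much cleaner once the determinant identity is in place, which is what makes that identity the real key to the proof.
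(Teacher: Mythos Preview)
Your proposal is correct, and for part~(i) it is essentially the paper's argument: the same elliptic parametrization (the paper writes $x=(S\cos\beta,L\sin\beta)$ with $S/L=s/l$), the same derivative computation yielding $\cos(2\psi-\phi)=-e\cos\phi$, and the same reciprocal pair of extreme values.

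Where you diverge from the paper is part~(ii). The paper reaches the identity $\norm{x}^2\norm{x_+}^2=\sin^2\phi+\bigl((\tfrac{s}{2l}+\tfrac{l}{2s})\cos\phi+(\tfrac{s}{2l}-\tfrac{l}{2s})\cos(\phi-2\beta)\bigr)^2$ by a direct line-by-line algebraic expansion of the product $(\sin^2(\phi-\beta)+\tfrac{s^2}{l^2}\cos^2(\phi-\beta))(\tfrac{l^2}{s^2}\sin^2\beta+\cos^2\beta)$, completing the square over several steps. You instead invoke the 2D Lagrange identity $\norm{x}^2\norm{x_+}^2-\iprod{x}{x_+}^2=\det[x,x_+]^2$ together with $\det\scrR_{l,s,\phi}=1$, which gives $\det[x,x_+]^2=s^2l^2\sin^2\phi$ in one line and delivers the same expression for $\cos\chi$ immediately. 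Both proofs then finish with the same monotonicity argument for $d\mapsto d/\sqrt{\sin^2\phi+d^2}$. Your route is shorter and more conceptual (it explains \emph{why} the $\sin^2\phi$ term is $\psi$-independent: area preservation under the unimodular map $\scrR_{l,s,\phi}$), whereas the paper's expansion, though longer, is entirely self-contained and requires no outside identity.
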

\begin{remark} %$~$
%\begin{itemize}
%\item
When $s/l=1$, then $\scrR_{l,s,\phi}$ becomes circular rotation, consequently we get $\frac{\norm{x_{+}}^2}{\norm{x}^2} = 1$ and $\chi = \phi$. 

\end{remark}
\begin{example}\label{exp:elliptical_rotation}
In this example, we consider an elliptical rotation parameterized by $l = 2, s = 1$ and $\phi = \frac{\pi}{30.01}$. Consider the sequence $\seq{\xk}$ generated by the rotation $\xk = \scrR_{l,s,\phi} \xkm$ with $x_0$ chosen arbitrarily, we study the ratio $\frac{\norm{\xk}^2}{\norm{\xkm}^2}$ and the angle $\chi_k = \angle(\xk,\xkm)$ 
\begin{itemize}
   \item We have $e = (\frac{s^2}{l^2}-1) / (\frac{s^2}{l^2}+1) = \frac35$ and $\zeta = \arccos\pa{ - \frac{3\cos(\phi)}{5} } $, consequently $\frac{\norm{\xk}^2}{\norm{\xkm}^2} \in [0.5679 , 1.7608]$. 
   \item For the angle $\chi_k$, we have that $\chi_k \in [0.1980 ,  0.7564]$. 
%   \item For the angle $\vartheta_k$, we have that $\vartheta_k \in [0.7790 , 2.0474]$. 
\end{itemize}
The values of $\frac{\norm{\xk}^2}{\norm{\xkm}^2}$ and $\chi_k$ along $k$ are shown below in Figure \ref{fig:elliptical_rotation}. 

%%%%%%%%%%%%%%%
\begin{figure}[H]
\centering
\subfloat[${\norm{\xk}^2}/{\norm{\xkm}^2}$]{ \includegraphics[width=0.425\linewidth]{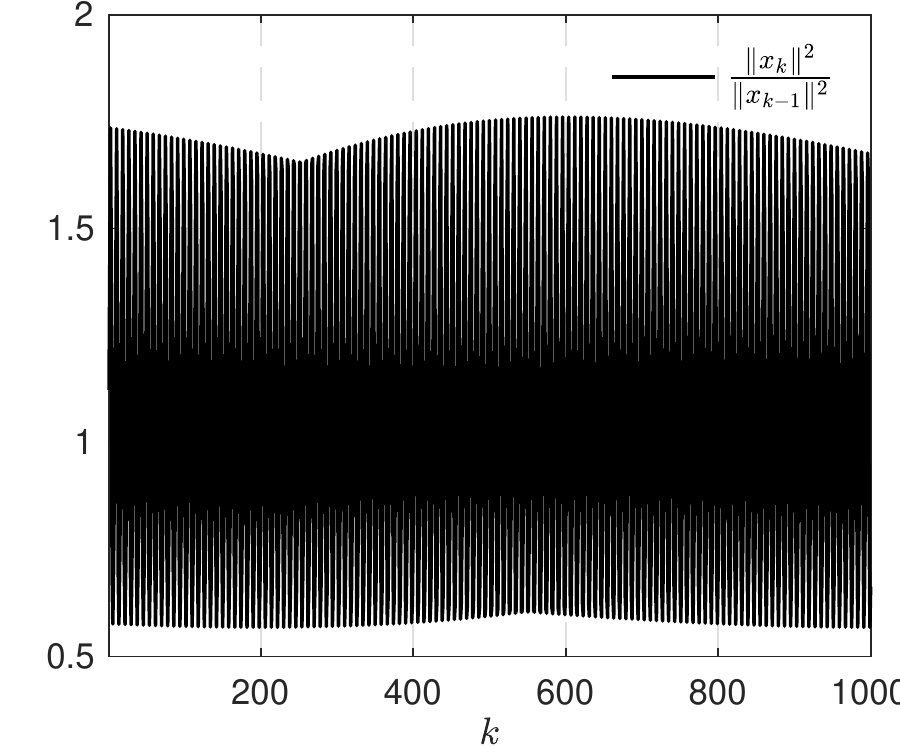} } {\hspace{2pt}}
\subfloat[${\cos(\chi_k)}$]{ \includegraphics[width=0.425\linewidth]{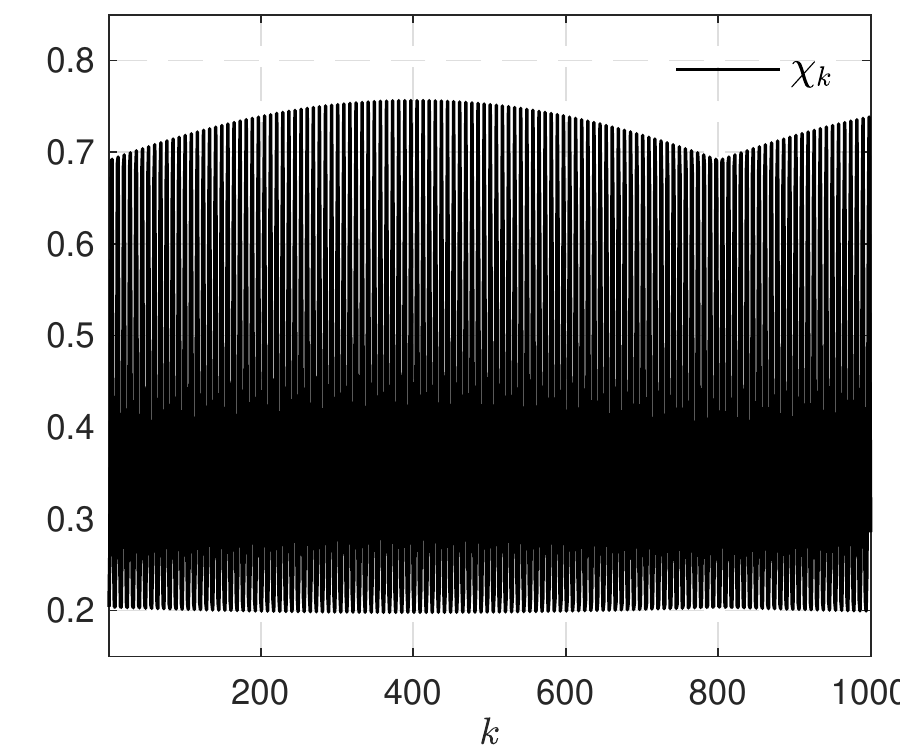} } \\ %{\hspace{2pt}}
%\subfloat[${\cos(\vartheta_k)}$]{ \includegraphics[width=0.31\linewidth]{../../matlab/Geometry-FOM/elliptical_rotation_angle_vartheta.pdf} }\\
%%%%%%%%%%%%%%%
\caption{\small Range of $\frac{\norm{\xk}^2}{\norm{\xkm}^2}$ and $\cos(\chi_k)$ for elliptical rotation.}
\label{fig:elliptical_rotation}
\end{figure}

\end{example}
\begin{proof}[Proof of Proposition \ref{prop:elliptical-rotation}]
Since $x \neq 0$, there exit $\beta \geq 0$ and $L , S > 0$ such that $S/L = s/l$ and $x =\begin{pmatrix} S \cos(\beta) \\ L \sin(\beta) \end{pmatrix}$. 
Then %$x_{+} = \scrR_{l,s,\phi} x$ reads
\beq\label{eq:x_prime}
\begin{aligned}
x_{+} = \scrR_{l,s,\phi} x
= 
\begin{bmatrix} \cos(\phi) & \tfrac{s}{l} \sin( \phi )  \\ - \tfrac{l}{s} \sin( \phi ) & \cos(\phi) \end{bmatrix}
\begin{pmatrix} S \cos(\beta) \\ L \sin(\beta) \end{pmatrix}
=
\begin{pmatrix} S \cos(\phi) \cos(\beta) + S \sin( \phi ) \sin(\beta) \\ L \cos(\phi) \sin(\beta) - L \sin( \phi ) \cos(\beta)  \end{pmatrix}
=
\begin{pmatrix} S \cos(\phi - \beta)  \\ - L \sin(\phi-\beta)   \end{pmatrix}.
\end{aligned}
\eeq
We first prove the range of $\frac{\norm{x_{+}}}{\norm{x}}$,
\[
\begin{aligned}
q(\beta) = \sfrac{\norm{x_{+}}^2}{\norm{x}^2}
= \sfrac{ S^2 \cos^2(\beta) + L^2 \sin^2(\beta) }{ S^2 \cos^2(\phi - \beta) + L^2 \sin^2(\phi-\beta) } 
= \sfrac{ (\frac{s^2}{l^2}-1) \cos^2(\beta) + 1 }{ (\frac{s^2}{l^2}-1) \cos^2(\phi - \beta) + 1 } 
%&= \sfrac{ (\frac{s^2}{l^2}-1) \frac{\cos(2\beta)+1}{2} + 1 }{ (\frac{s^2}{l^2}-1) \frac{\cos(2\phi-2\beta)+1}{2} + 1 } \\
&= \sfrac{ (\frac{s^2}{l^2}-1) \cos(2\beta) + \frac{s^2}{l^2}+1 }{ (\frac{s^2}{l^2}-1) \cos(2\phi-2\beta) + \frac{s^2}{l^2}+1 }.
\end{aligned}
\]
Denote $e = (\frac{s^2}{l^2}-1) / (\frac{s^2}{l^2}+1)$, then we get from above that $q(\beta) = \frac{ e \cos(2\beta) + 1 }{ e \cos(2\beta-2\phi) + 1 }$
whose derivative with respective to $\beta$ reads
\[
\begin{aligned}
q'(\beta) 
%&= \sfrac{ -2e\sin(2\beta)(e\cos(2\beta-2\phi) + 1) + 2e\sin(2\beta-2\phi)(e\cos(2\beta) + 1) }{ (e\cos(2\beta-2\phi) + 1)^2 }  \\
%&= \sfrac{ -2e^2\sin(2\beta)\cos(2\beta-2\phi) - 2e\sin(2\beta) + 2e^2\sin(2\beta-2\phi)\cos(2\beta) + 2e\sin(2\beta-2\phi) }{ (e \cos(2\beta-2\phi) + 1)^2 }\\
%&= \sfrac{ -2e^2(\sin(2\beta) \cos(2\beta-2\phi) - \sin(2\beta-2\phi) \cos(2\beta) ) - 2e( \sin(2\beta) - \sin(2\beta-2\phi) ) }{ (e \cos(2\beta-2\phi) + 1)^2 }\\
&= \sfrac{ -2e^2\sin(2\phi) - 2e( \sin(2\beta) - \sin(2\beta-2\phi) ) }{ (e \cos(2\beta-2\phi) + 1)^2 }.
\end{aligned}
\]
Solving $ q'(\beta) = 0$ we get
\[
\begin{aligned}
0 = e\sin(2\phi) + \sin(2\beta) - \sin(2\beta-2\phi)
\quad\Longleftrightarrow\quad &
%0 = e\sin(2\phi) + \sin(2\beta) - \sin(2\beta)\cos(2\phi) + \cos(2\beta)\sin(2\phi)\\
%\quad\Longleftrightarrow\quad &
%0 = e\sin(2\phi) + \sin(2\beta) (1 - \cos(2\phi)) + \cos(2\beta)\sin(2\phi)\\
%\quad\Longleftrightarrow\quad &
%0 = e\sin(2\phi) + 2\sin(2\beta) \sin^2(\phi) + 2\cos(2\beta)\sin(\phi)\cos(\phi)\\
%\quad\Longleftrightarrow\quad &
%0 = e\sin(2\phi) + 2\sin(\phi) \Pa{ \sin(2\beta) \sin(\phi) + \cos(2\beta)\cos(\phi) }\\
%\quad\Longleftrightarrow\quad &
0 = 2\sin(\phi) \Pa{ e\cos(\phi) + \cos(2\beta - \phi) } \\
\quad\Longleftrightarrow\quad &
- e\cos(\phi) = \cos(2\beta - \phi) .
\end{aligned}
\]
Denote $\zeta = \arccos\pa{- e\cos(\phi) } $, 
%\[
%\zeta = \arccos\Pa{ -\sfrac{\cos(\phi)}{e} }  ,
%\]
then the choices of $\beta$ such that $q'(\beta)=0$ hence $q(\beta)$ reaches extreme values are
\[
\beta_{\max} = \sfrac{\zeta+\phi}{2}
\qandq
\beta_{\min} = \pi - \sfrac{\zeta-\phi}{2}  .
\]
Consequently we get
\[
\max_{\beta \in [0, 2\pi]} q(\beta) = q(\beta_{\max}) = \sfrac{ e \cos(\zeta+\phi) + 1}{ e \cos(\zeta-\phi) + 1}
\qandq
\min_{\beta \in [0, 2\pi]} q(\beta) = q(\beta_{\min}) = \sfrac{ e \cos(\zeta-\phi) + 1}{ e \cos(\zeta+\phi) + 1}  ,
\]
which is the range of $\frac{\norm{x_{+}}^2}{\norm{x}^2}$.

For the angle $\chi$ between $x_{+}$ and $x$, from \eqref{eq:x_prime} we get the following inner product
\beq\label{eq:iprod_xp_x}
\begin{aligned}
\iprod{x_{+}}{x}
% =
% \begin{pmatrix} S \cos(\phi - \beta)  \\ - L \sin(\phi-\beta)   \end{pmatrix}^T
% \begin{pmatrix} S \cos(\beta) \\ L \sin(\beta) \end{pmatrix}
= S^2 \cos(\beta) \cos(\phi-\beta) - L^2 \sin(\phi - \beta) \sin(\beta)  
% &= S^2 \sfrac{\cos(\phi-\beta+\beta) + \cos(\phi-\beta-\beta)}{2} - L^2 \sfrac{\cos(\phi - \beta-\beta) - \cos(\phi - \beta+\beta)}{2}  \\
% &= S^2 \sfrac{\cos(\phi) + \cos(\phi-2\beta)}{2} - L^2 \sfrac{\cos(\phi-2\beta) - \cos(\phi)}{2}  \\
&= \sfrac{S^2+L^2}{2} \cos(\phi) + \sfrac{S^2-L^2}{2} \cos(\phi-2\beta)  , 
\end{aligned}
\eeq
which means
\[
\begin{aligned}
\cos(\chi) 
= \sfrac{ \iprod{x_{+}}{x} }{ \norm{x_{+}} \norm{x} }
% &= \sfrac{ \frac{S^2+L^2}{2} \cos(\phi) + \frac{S^2-L^2}{2} \cos(\phi-2\beta) }{  \sqrt{ L^2 \sin^2(\phi-\beta) + S^2 \cos^2(\phi - \beta) } \sqrt{ L^2 \sin^2(\beta) + S^2 \cos^2(\beta) } }  \\
% &= \sfrac{ \frac{S/L+L/S}{2} \cos(\phi) + \frac{S/L-L/S}{2} \cos(\phi-2\beta) }{  \sqrt{ \sin^2(\phi-\beta) + S^2/L^2 \cos^2(\phi - \beta) } \sqrt{ L^2/S^2 \sin^2(\beta) + \cos^2(\beta) } }  \\
&= \sfrac{ \frac{s/l+l/s}{2} \cos(\phi) + \frac{s/l-l/s}{2} \cos(\phi-2\beta) }{ \ssqrt{ \sin^2(\phi-\beta) + s^2/l^2 \cos^2(\phi - \beta) } \ssqrt{ l^2/s^2 \sin^2(\beta) + \cos^2(\beta) } }  .
\end{aligned}
\]
Since $\phi$ and $s/l$ are constant, consider the function of $\beta$: 
\[
\begin{aligned}
\ell(\beta) 
&\eqdef \Pa{ \sin^2(\phi-\beta) + \tfrac{s^2}{l^2} \cos^2(\phi - \beta) } \Pa{ \tfrac{l^2}{s^2} \sin^2(\beta) + \cos^2(\beta) } \\
&= { \tfrac{l^2}{s^2} \sin^2(\beta) \sin^2(\phi-\beta) + \cos^2(\beta) \sin^2(\phi-\beta) + \sin^2(\beta) \cos^2(\phi - \beta) + \tfrac{s^2}{l^2} \cos^2(\beta) \cos^2(\phi - \beta)  } \\
&= \Pa{ \sin(\beta) \cos(\phi - \beta) + \cos(\beta) \sin(\phi-\beta) }^2 - 2 \sin(\beta) \cos(\phi - \beta) \cos(\beta) \sin(\phi-\beta) \\&\qquad + { \tfrac{l^2}{s^2} \sin^2(\beta) \sin^2(\phi-\beta) + \tfrac{s^2}{l^2} \cos^2(\beta) \cos^2(\phi - \beta)  } \\
&= \Pa{ \sin(\beta) \cos(\phi - \beta) + \cos(\beta) \sin(\phi-\beta) }^2  + \Pa{ \tfrac{s}{l} \cos(\beta) \cos(\phi - \beta) - \tfrac{l}{s} \sin(\beta) \sin(\phi-\beta) }^2 \\
&= \sin^2(\phi)  + \Pa{ \tfrac{s}{l} \cos(\beta) \cos(\phi - \beta) - \tfrac{s}{l} \sin(\beta) \sin(\phi-\beta) - (\tfrac{l}{s}-\tfrac{s}{l}) \sin(\beta) \sin(\phi-\beta) }^2 \\
&= \sin^2(\phi)  + \Pa{ \tfrac{s}{l} \cos(\phi) - (\tfrac{l}{s}-\tfrac{s}{l}) \sin(\beta) \sin(\phi-\beta) }^2 \\
% &= \sin^2(\phi)  + \bPa{ \tfrac{s}{l} \cos(\phi) - (\tfrac{l}{s}-\tfrac{s}{l}) \sfrac{\cos(\phi-\beta - \beta) - \cos(\phi-\beta + \beta)}{2} }^2 \\
&= \sin^2(\phi)  + \bPa{ \tfrac{s}{l} \cos(\phi) - (\tfrac{l}{s}-\tfrac{s}{l}) \sfrac{\cos(\phi-2\beta) - \cos(\phi)}{2} }^2 \\
&= \sin^2(\phi)  + \bPa{ \tfrac{s}{l} \cos(\phi) + (\tfrac{l}{s}-\tfrac{s}{l}) \sfrac{ \cos(\phi)}{2} - (\tfrac{l}{s}-\tfrac{s}{l}) \sfrac{\cos(\phi-2\beta)}{2} }^2 \\
&= \sin^2(\phi)  + \Pa{ \pa{ \tfrac{l}{2s} + \tfrac{s}{2l} } {\cos(\phi)} - \pa{ \tfrac{l}{2s} - \tfrac{s}{2l} } {\cos(\phi-2\beta)} }^2   .
\end{aligned}
\]
Therefore, we have
\beqn%\label{eq:inner-product}
\cos(\chi) 
= \sfrac{ (\frac{s}{2l} + \frac{l}{2s} ) \cos(\phi) + (\frac{s}{2l} - \frac{l}{2s} ) \cos(\phi-2\beta) }{ \ssqrt{\sin^2(\phi)  + \pa{ \pa{ \tfrac{l}{2s} + \tfrac{s}{2l} } {\cos(\phi)} + (\frac{s}{2l} - \frac{l}{2s} ) {\cos(\phi-2\beta)} }^2 } }  .
\eeqn
Consider the following function 
\[
f(x) = \sfrac{x}{\ssqrt{\sin^2(\phi) + x^2}} .
\]
It is easy to verify that the derivative $f'(x) = \frac{\sin^2(\phi)}{(\sin^2(\phi)+x^2)^{3/2}} > 0$ holds for all $x \in \bbR$, hence $f(x)$ is monotonically increasing. 
Now let $x = (\frac{s}{2l} + \frac{l}{2s} ) \cos(\phi) + (\frac{s}{2l} - \frac{l}{2s} ) \cos(\phi-2\beta)$, we have 
\[
x \in \left[ \Pa{ \tfrac{s}{2l} + \tfrac{l}{2s} } \cos(\phi) - \abs{ \tfrac{s}{2l} - \tfrac{l}{2s} } , \Pa{ \tfrac{s}{2l} + \tfrac{l}{2s} } \cos(\phi) + \abs{ \tfrac{s}{2l} - \tfrac{l}{2s} } \right]  ,
\] 
which further implies
\[
\sfrac{ \pa{ \frac{s}{2l} + \frac{l}{2s} } \cos(\phi) - \abs{ \frac{s}{2l} - \frac{l}{2s} } }{ \ssqrt{ \sin^2(\phi)  + \pa{\pa{ \frac{s}{2l} + \frac{l}{2s} } \cos(\phi) - \abs{ \frac{s}{2l} - \frac{l}{2s} } }^2 } }
\leq
\cos(\chi) 
\leq
\sfrac{ \pa{ \frac{s}{2l} + \frac{l}{2s} } \cos(\phi) + \abs{ \frac{s}{2l} - \frac{l}{2s} } }{ \ssqrt{ \sin^2(\phi)  + \pa{\pa{ \frac{s}{2l} + \frac{l}{2s} } \cos(\phi) + \abs{ \frac{s}{2l} - \frac{l}{2s} } }^2 } }  .
\]
Since $\cos(\chi)$ is monotonic decreasing in $[0, \pi/2]$, we obtain the claimed result. \qedhere

\end{proof}

Given an angle $\psi \in ]0, \pi]$, define the circular rotation $\scrR_{\psi} = \begin{bmatrix} \cos(\psi) & -\sin(\psi) \\ \sin(\psi) & \cos(\psi) \end{bmatrix}$. 
%\[
%\scrR_{\psi}
%= \begin{bmatrix} \cos(\psi) & -\sin(\psi) \\ \sin(\psi) & \cos(\psi) \end{bmatrix}  .
%\]
For the composite rotation $\scrR_{\psi} \scrR_{l,s,\phi}$, we have the following.

\begin{proposition}\label{prop:comp-circular-elliptical}
Let $\scrR_{l,s,\phi}$ be an elliptical rotation for $\phi \in ]0, \pi/2]$ and $l , s > 0$ and $\scrR_{\psi}$ be a circular rotation such that $$\Pa{ \pa{ \tfrac{l}{s} + \tfrac{s}{l} } \sin(\psi)\sin(\phi) + 2\cos(\psi)\cos(\phi) }^2 - 4 < 0 .$$ Given an arbitrary point $x \neq 0$ and its rotated point $ x_{+} = \scrR_{\psi} \scrR_{l,s,\phi} x $, 
\begin{itemize}
\item The ratio $\frac{\norm{x_{+}}^2}{\norm{x}^2}
\in \big[ \frac{ e \cos(\zeta-\phi) + 1 }{ e \cos(\zeta+\phi) + 1 } , \frac{ e \cos(\zeta+\phi) + 1 }{ e \cos(\zeta-\phi) + 1  } \big]$ as in Proposition \ref{prop:elliptical-rotation}.

\item Let $\chi_c = \angle(x,x_{+})$ be the angle between $x$ and $x_{+}$, then $\chi_c \in [\psi - \chiM, \psi - \chim]$ 
%\[
%\theta \in [\psi - \chiM, \psi - \chim] ,
%\]
where $\chim, \chiM$ are as defined in Proposition~\ref{prop:elliptical-rotation}. 

%\item Let $\vartheta_c = \angle(x_{+}, x_{+}-x)$ be the angle between $x_{+}$ and $x_{+}-x$, then 
\end{itemize}

\end{proposition}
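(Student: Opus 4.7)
The first claim reduces instantly to Proposition~\ref{prop:elliptical-rotation}. Since the circular rotation $\scrR_{\psi}$ is an orthogonal transformation, $\norm{x_{+}} = \norm{\scrR_{\psi}(\scrR_{l,s,\phi}x)} = \norm{\scrR_{l,s,\phi}x}$, so the ratio $\norm{x_{+}}^{2}/\norm{x}^{2}$ is identical to the ratio treated in Proposition~\ref{prop:elliptical-rotation} and therefore inherits the same range.

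For the angle claim, set $y \eqdef \scrR_{l,s,\phi}x$, so that $x_{+} = \scrR_{\psi}y$. I plan to use the parametrization introduced in the proof of Proposition~\ref{prop:elliptical-rotation}: write $x = (S\cos\beta, L\sin\beta)$ with $S/L = s/l$, which gives $y = (S\cos(\phi-\beta), -L\sin(\phi-\beta))$. A direct computation of the two-dimensional cross product yields $x_{1}y_{2} - x_{2}y_{1} = -SL\sin\phi < 0$, showing that, with the standard orientation, the signed angle from $x$ to $y$ is negative; denote it by $-\chi$, where $\chi \in [\chim,\chiM]$ is the unsigned angle of Proposition~\ref{prop:elliptical-rotation}. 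Since $\scrR_{\psi}$ is a rigid rotation by $\psi$ counterclockwise, it adds $\psi$ to every signed angle, so the signed angle from $x$ to $x_{+}$ equals $\psi - \chi$. In the regime relevant to Theorem~\ref{thm:trajectory-pd-1}, where $\psi - \chi \in [0,\pi]$, the unsigned angle $\chi_{c}$ coincides with $\psi - \chi$, and as $\chi$ sweeps $[\chim,\chiM]$, $\chi_{c}$ sweeps $[\psi-\chiM,\,\psi-\chim]$, as claimed.

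The role of the standing hypothesis $((l/s + s/l)\sin\psi\sin\phi + 2\cos\psi\cos\phi)^{2} - 4 < 0$ is to guarantee that the composite $\scrR_{\psi}\scrR_{l,s,\phi}$ has (unimodular) complex eigenvalues: by $\det(\scrR_{\psi}\scrR_{l,s,\phi}) = 1$ and the trace formula $\trace(\scrR_{\psi}\scrR_{l,s,\phi}) = 2\cos\psi\cos\phi + (l/s + s/l)\sin\psi\sin\phi$, the discriminant $\trace^{2} - 4\det$ is precisely the quantity in the hypothesis, and its negativity is exactly the condition that the composite acts as a bona fide (elliptical) rotation rather than a shear or hyperbolic map. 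This justifies why the ``rotation by $\psi$'' interpretation of $\scrR_{\psi}$ composed nondegenerately with the elliptical rotation produces a new rotation whose signed angle behaves additively as above.

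The main delicate point will be the careful bookkeeping of signs and of the regime in which $\psi - \chi \in [0,\pi]$. A fully sign-free backup is to compute $\iprod{x}{x_{+}}$ and $\norm{x}\norm{x_{+}}$ in the $(S,L,\beta)$ parametrization and observe that, because $\scrR_{\psi}$ preserves norms and affects $\iprod{x}{x_{+}}$ only through a linear combination of $\cos\psi$ and $\sin\psi$ applied to the two coordinates of $y$, the resulting expression for $\cos\chi_{c}$ is again of the same structural form as in Proposition~\ref{prop:elliptical-rotation} (a linear function of $\cos(\phi-2\beta)$ divided by the square root of a constant plus the square of a linear function of $\cos(\phi-2\beta)$). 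Optimizing over $\beta$ by the same monotonicity argument used there then recovers the endpoints $\psi - \chiM$ and $\psi - \chim$ directly, bypassing the signed-angle argument entirely.
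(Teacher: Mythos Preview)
Your proposal is correct and follows essentially the same approach as the paper: decompose $x_{+}=\scrR_{\psi}(\scrR_{l,s,\phi}x)$, use norm-preservation of $\scrR_{\psi}$ for the ratio, observe that $\scrR_{l,s,\phi}$ rotates clockwise by some $\chi\in[\chim,\chiM]$ while $\scrR_{\psi}$ rotates counterclockwise by $\psi$, and verify via the discriminant that the composite has complex eigenvalues. Your trace--determinant computation of the discriminant is slightly slicker than the paper's explicit matrix-entry calculation, and your cross-product check for the orientation of $\scrR_{l,s,\phi}$ makes the sign bookkeeping more explicit than the paper's one-line ``clockwise/counterclockwise'' remark, but the underlying argument is the same.
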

 
%\todo{Can this \emph{circular rotation} be used as a pre-conditioner? or pre-condition the difference vectors.}

\begin{excont}[Continued]
We continue Example \ref{exp:elliptical_rotation} by compositing $\scrR_{l,s,\phi}$ with a circular rotation. 
Let $\psi = \frac{\pi}{3}$ and consider the sequence $\seq{\yk}$ generated by the rotation $\yk = \scrR_{\psi} \scrR_{l,s,\phi} \ykm$ with $y_0 = x_0$, we consider the trajectory of the sequence $\seq{\yk}$ and angle $\chi_{c,k} = \angle(\yk,\ykm), \vartheta_{c,k} = \angle(\yk,\yk-\ykm)$: 
\begin{itemize}
   \item For the elliptical rotation and the composite rotation, trajectories of the sequences $\seq{\xk}, \seq{\yk}$ are shown below in Figure \ref{fig:elliptical_rotation_continue} (a). Note that the trajectory of $\seq{\yk}$ is not equal to rotating the that of $\seq{\xk}$ using $\scrR_{\psi}$. 
   \item For the angle $\chi_{c,k}$, we have that $\chi_{c,k} \in [\psi - \chiM, \psi - \chim] = [0.2908 ,  0.8492]$. 
%   \item For the angle $\vartheta_{c,k}$, we have that $\vartheta_{c,k} \in [\psi - \chiM, \psi - \chim] = [0.8311 ,1.8751]$. 
\end{itemize}

%%%%%%%%%%%%%%%
\begin{figure}[H]
\centering
\subfloat[Trajectory of $\xk$]{ \includegraphics[width=0.4\linewidth]{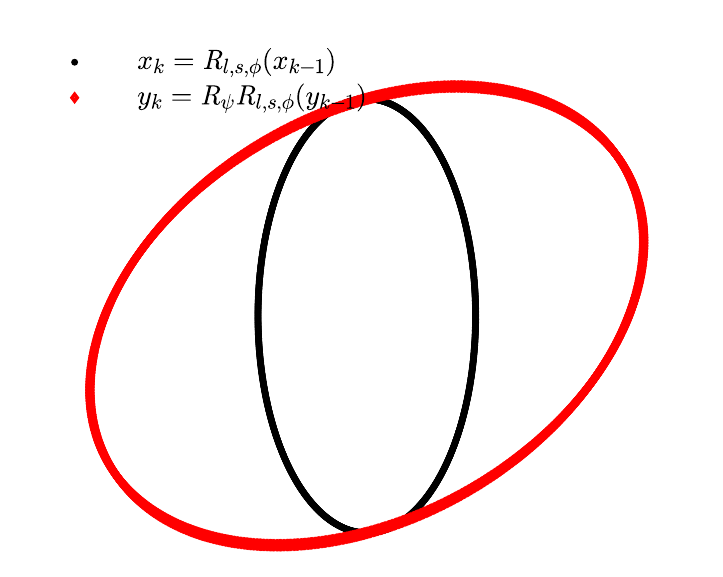} } {\hspace{2pt}}
\subfloat[{${\cos(\chi_{c,k})}$}]{ \includegraphics[width=0.4\linewidth]{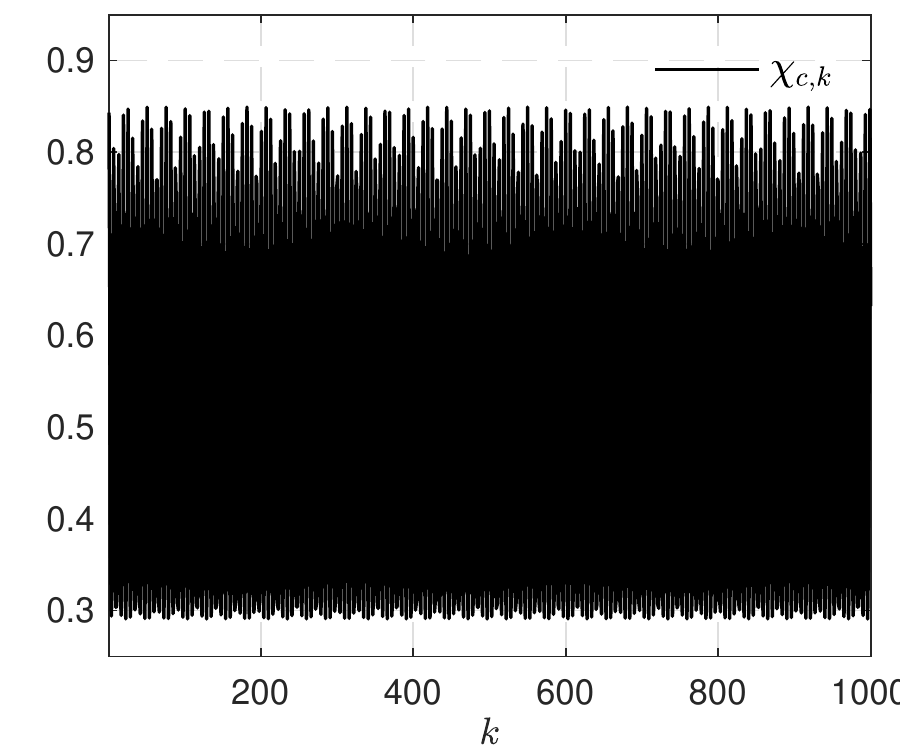} } \\ % {\hspace{2pt}}
%\subfloat[{${\cos(\vartheta_{c,k})}$}]{ \includegraphics[width=0.31\linewidth]{../../matlab/Geometry-FOM/circular_elliptical_rotation_angle_vartheta.pdf} }\\
%%%%%%%%%%%%%%%
\caption{\small Trajectories of sequences $\seq{\xk}, \seq{\yk}$, and the range of $\chi_{c,k}$.}
\label{fig:elliptical_rotation_continue}
\end{figure}

\end{excont}
\begin{proof}%[Proof of Proposition \ref{prop:comp-circular-elliptical}]
Denote $\scrR = \scrR_{\psi}\scrR_{l,s,\phi} $, then we have 
\[
\begin{aligned}
\scrR 
%= \scrR_{\psi}\scrR_{l,s,\phi} 
%&= 
%\begin{bmatrix} \cos(\psi) & -\sin(\psi) \\ \sin(\psi) & \cos(\psi) \end{bmatrix}
%\begin{bmatrix} \cos(\phi) & \sfrac{{s}}{{l}} \sin(\phi) \\ - \sfrac{{l}}{{s}} \sin(\phi) & \cos(\phi) \end{bmatrix}  \\
&= 
\begin{bmatrix} 
\cos(\psi)\cos(\phi) + \sfrac{{l}}{{s}} \sin(\psi)\sin(\phi)  & \sfrac{{s}}{{l}} \cos(\psi)\sin(\phi) - \sin(\psi)\cos(\phi) \\ 
\sin(\psi)\cos(\phi) - \sfrac{l}{s} \cos(\psi)\sin(\phi) & \sfrac{s}{l}\sin(\psi)\sin(\phi) + \cos(\psi)\cos(\phi) 
\end{bmatrix} 
= \begin{bmatrix} \scrR_{1,1} & \scrR_{1,2} \\ \scrR_{2,1} & \scrR_{2,2} \end{bmatrix} 
\end{aligned}
\]
and that
\[
\begin{aligned}
\scrR_{1,1}-\scrR_{2,2}
%&= \cos(\psi)\cos(\phi) + \sfrac{{l}}{{s}} \sin(\psi)\sin(\phi) - \sfrac{s}{l}\sin(\psi)\sin(\phi) - \cos(\psi)\cos(\phi) \\
&= \Pa{ \sfrac{{l}}{{s}} - \sfrac{s}{l} } \sin(\psi)\sin(\phi)   ,\\
\scrR_{1,2}\scrR_{2,1}
%&= \Pa{ \sfrac{{s}}{{l}} \cos(\psi)\sin(\phi) - \sin(\psi)\cos(\phi) } \Pa{ \sin(\psi)\cos(\phi) - \sfrac{l}{s} \cos(\psi)\sin(\phi) } \\
&= \Pa{ \sfrac{{s}}{{l}} + \sfrac{l}{s} } \cos(\psi)\sin(\phi) \sin(\psi)\cos(\phi) - \cos^2(\psi)\sin^2(\phi) - \sin^2(\psi)\cos^2(\phi) .
\end{aligned}
\]
The characteristic polynomial of $\scrR$ reads
\[
x^2 - (\scrR_{1,1}+\scrR_{2,2}) x + \scrR_{1,1}\scrR_{2,2} - \scrR_{1,2}\scrR_{2,1} = 0 ,
\]
whose discriminant is
\[
\begin{aligned}
\Delta 
&=(\scrR_{1,1}+\scrR_{2,2})^2 - 4(\scrR_{1,1}\scrR_{2,2} - \scrR_{1,2}\scrR_{2,1})\\
&= (\scrR_{1,1}-\scrR_{2,2})^2 + 4 \scrR_{1,2}\scrR_{2,1}\\
&= \Pa{ \sfrac{{l}}{{s}} - \sfrac{s}{l} }^2 \sin^2(\psi)\sin^2(\phi)  + 4\Pa{ \sfrac{{s}}{{l}} + \sfrac{l}{s} } \cos(\psi)\sin(\phi) \sin(\psi)\cos(\phi) - 4\cos^2(\psi)\sin^2(\phi) - 4 \sin^2(\psi)\cos^2(\phi)  \\
%&= \Pa{ \sfrac{{l}}{{s}} + \sfrac{s}{l} }^2 \sin^2(\psi)\sin^2(\phi) + 4\Pa{ \sfrac{{s}}{{l}} + \sfrac{l}{s} } \cos(\psi)\sin(\phi) \sin(\psi)\cos(\phi) + 4\cos^2(\psi)\cos^2(\phi) \\&\qquad - 4\cos^2(\psi)\sin^2(\phi) - 4 \sin^2(\psi)\cos^2(\phi) - 4\sin^2(\psi)\sin^2(\phi) - 4\cos^2(\psi)\cos^2(\phi)  \\
&= \bPa{ \Pa{ \sfrac{{l}}{{s}} + \sfrac{s}{l} } \sin(\psi)\sin(\phi) + 2\cos(\psi)\cos(\phi) }^2 - 4.
\end{aligned}
\]
When $\Delta < 0$, $\scrR$ admits two complex eigenvalues, meaning that $\scrR$ is a rotation.  % \todo{No sure what you mean here by $\scrR$ is a rotation. It's not clear to me why we need this assumption on $\Delta$ and we don't seem to check it elsewhere}

Let $y = \scrR_{l,s,\phi} x$ and let $\beta$ be the angle between $x, y$, then owing to Proposition \ref{prop:elliptical-rotation}, we have $\beta \in [\chim, \chiM]$. The angle between $x_{+}$ and $y$ is $\psi$ which is straightforward owing to $\scrR_{\psi}$. Since $\scrR_{l,s,\phi}$ is clockwise rotation and $\scrR_{\psi}$ is counterclockwise, the claimed result follows immediately.  
%
%For the angle $\vartheta_{c}$,
\qedhere
\end{proof}

%In what follows, we discuss two linear systems which result in elliptical rotation. 

%\todo{How many rotations at least is needed to complete a whole circle? naively $2\pi/\chim$...}
%Actually is $2\pi/\phi$ if it is only elliptical rotation, same as circular rotation...

\subsubsection{Type III linear system}

Let $m_1, m_2 \in \bbN_+$ such that $m_1 + m_2 = n$, 
let $A \in \bbR^{m_1 \times m_1}, B \in \bbR^{m_2 \times m_2}$ be symmetric and $C \in \bbR^{m_2 \times m_1}$. 
Define the following $2\times2$ block matrix
\beq\label{eq:mtx-type-iii-a}
M \eqdef \begin{bmatrix} A & - \delta C^T \\ \tau C & B \end{bmatrix}  .
\eeq

\begin{definition}[Type III matrix]\label{def:type-iii-a}
$M \in \bbR^{n\times n}$ is a $2 \times 2$ block matrix defined by \eqref{eq:mtx-type-iii-a}, with all its eigenvalues lying in the complex unit disc. 
\end{definition}

For the sake of brevity, we assume henceforth that $m_1 = m_2 = m = \frac{n}{2}$\footnote{For the case of $m_1\neq m_2$ or $n$ is odd, we can apply the zero padding trick.}. 
Denote $S_A = (a_i)_{i=1,...,m}, S_B = (b_i)_{i=1,...,m}$ and $S_C = (c_i)_{i=1,...,m} \in \bbR^m$ the singular values of $A, B$ and $C$ in \emph{descending} order, respectively. 
For each $i=1,...,m$, define the $2\times 2$ matrix $D_i$ by
$D_i = 
\begin{bmatrix}
a_i & - \delta c_i \\ \tau c_i & b_i 
\end{bmatrix} $.
It is trivial to show that the eigenvalues of $D_i$ read $\frac{1}{2}\pa{ (a_i+b_i) \pm \ssqrt{(a_i-b_i)^2-4\delta\tau c_i^2} }$. 
We impose the following assumptions.

\begin{assumption}\label{assumption-type-iii-a}
Let $C = Y \diag(S_C) X^T$ be the SVD of $C$, suppose that $A$ and $B$ can be diagonalized by $X$ and $Y$ respectively, that is there holds $\diag(S_A) = X^T A X$ and $\diag(S_B) = Y^T B Y$. 
For each $i=1,...,m$:
\begin{enumerate}[label={\rm {(\roman{*})}}]
\item If the eigenvalues are real, \ie $(a_i-b_i)^2-4\delta\tau c_i^2\geq0$, then they are either $0$ or $1$;
\item If $(a_i-b_i)^2-4\delta\tau c_i^2 < 0$, then $\delta\tau c_i^2 + a_ib_i < 1$. Moreover, there exists $1\leq q\leq d \leq m$ such that $D_i=D_j, 1\leq i,j \leq q$ and  $\delta\tau c_1^2 + a_1b_1 = \dotsm = \delta\tau c_q^2 + a_qb_q  > \delta\tau c_{q+1}^2 + a_{q+1}b_{q+1}  \geq \dotsm \geq \delta\tau c_d^2 + a_db_d > 0 $.  
\end{enumerate}
\end{assumption}

Let $\sigma = \ssqrt{\delta\tau c_1^2 + a_1b_1}$ and $\eta = \frac{\sqrt{\delta\tau c_{q+1}^2 + a_{q+1}b_{q+1}}}{\sigma}$, we have the following proposition. Again, let $\Minf \eqdef \lim_{k\to\pinf} M^k $.

\begin{proposition}%[Type III-A trajectory]
\label{prop:type-iii-a}
Consider the linear system \eqref{eq:lineq-vk} whose $M$ is a Type III matrix define in Definition \ref{def:type-iii-a}, suppose that Assumption \ref{assumption-type-iii-a} holds. Then  
\begin{enumerate}[label={\rm {(\roman{*})}}]
\item $\Minf$ is a symmetric matrix with eigenvalues being either $0$ or $1$, and $v_0 \in \ker(M)$. 

\item $\theta_k \in \big[ \psi - \alphaM , \psi - \alpham \big]  $, 
where 
$
\cos(\chiM)
= \frac{ \pa{ \frac{s}{2l} + \frac{l}{2s} } \cos(\phi) - \abs{ \frac{s}{2l} - \frac{l}{2s} } }{ \ssqrt{ \sin^2(\phi)  + \pa{\pa{ \frac{s}{2l} + \frac{l}{2s} } \cos(\phi) - \abs{ \frac{s}{2l} - \frac{l}{2s} } }^2 } }
$ and $
\cos(\chim)
= \frac{ \pa{ \frac{s}{2l} + \frac{l}{2s} } \cos(\phi) + \abs{ \frac{s}{2l} - \frac{l}{2s} } }{ \ssqrt{ \sin^2(\phi)  + \pa{\pa{ \frac{s}{2l} + \frac{l}{2s} } \cos(\phi) + \abs{ \frac{s}{2l} - \frac{l}{2s} } }^2 } }  
$, and
$\psi = \mathrm{arccot}\Pa{ \frac{ (\tau-\delta)c_1 }{ b_1 - a_1 } } $,  
$\phi = \arccos\Pa{ \frac{(\delta+\tau) c_1 \sin(\psi) + (a_1+b_1)\cos(\psi) }{2 {\sigma}} } $
and 
$\sfrac{s}{l} = \frac{b_1}{{\sigma} \sin(\psi)\sin(\phi) } - \cot(\psi)\cot(\phi).
$

%\item There exists $\underline{\vartheta}, \bar{\vartheta}$ such that $\vartheta_{k} \in [\underline{\vartheta}, \bar{\vartheta}]$. %, moreover, we have $\underline{\vartheta} < \frac{\pi}{2} < \bar{\vartheta}$. 

\end{enumerate}
\end{proposition}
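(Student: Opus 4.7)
\textbf{Proof plan for Proposition~\ref{prop:type-iii-a}.}

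The plan is to reduce the analysis of $v_{k+1}=M v_k$ to a single leading $2\times 2$ block, and then apply the composite rotation result of Proposition~\ref{prop:comp-circular-elliptical}. Under Assumption~\ref{assumption-type-iii-a}, the joint spectral decomposition $\diag(S_A)=X^\top AX$, $\diag(S_B)=Y^\top BY$, $\diag(S_C)=Y^\top CX$ gives, after conjugation by $\mathrm{diag}(X,Y)$ and a coordinate permutation, a block-diagonal matrix whose diagonal blocks are precisely the $2\times 2$ matrices $D_i$ introduced before the statement. Each $D_i$ has eigenvalues in $\{0,1\}$ (real case, by hypothesis) or a complex conjugate pair of modulus $\sqrt{\delta\tau c_i^2+a_i b_i}<1$. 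Claim (i) then follows: the complex-eigenvalue blocks vanish in the limit, so $\Minf$ is the spectral projector onto the joint $1$-eigenspace---hence symmetric with spectrum in $\{0,1\}$---and $v_k=M^kv_0\to 0$ forces $\Minf v_0=0$.

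Using $M\Minf=\Minf M=(\Minf)^2=\Minf$, an induction gives $(M-\Minf)^k=M^k-\Minf$, hence $v_k=\widetilde M^k v_0$ for $\widetilde M\eqdef M-\Minf$. On the $2q$-dimensional leading subspace $\mathcal{L}$ corresponding to the $q$ identical copies of $D_1$, $\widetilde M$ acts as $D_1$ on each of $q$ invariant $2$-dimensional subspaces; on $\mathcal{L}^\perp\cap\ker(\Minf)$, the spectral radius of $\widetilde M$ is at most $\sigma\eta$ with $\eta<1$ by Assumption~\ref{assumption-type-iii-a}(ii). Splitting $v_0=w_0+r_0$ with $w_0\in\mathcal{L}$ and $r_0\in\mathcal{L}^\perp$, one obtains $v_k=\bigl(\widetilde M|_{\mathcal{L}}\bigr)^k w_0 + O\bigl((\sigma\eta)^k\bigr)$, so asymptotically the direction of $v_k$ is controlled by the orbit of $w_0$ under the $2\times 2$ operator $\sigma^{-1}D_1$.

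Next I will verify the factorization $D_1=\sigma\,\scrR_\psi\,\scrR_{l,s,\phi}$ by entry matching. Taking determinants immediately yields $\sigma=\sqrt{\delta\tau c_1^2+a_1b_1}$. Writing out the four scalar equations from entry-wise equality, the combination of the diagonal difference with the off-diagonal sum eliminates $\phi$ and $s/l$ and produces $\cot\psi=(\tau-\delta)c_1/(b_1-a_1)$; the complementary combinations give $\cos\phi=\bigl[(\delta+\tau)c_1\sin\psi+(a_1+b_1)\cos\psi\bigr]/(2\sigma)$, and the $(2,2)$-entry then fixes $s/l = b_1/(\sigma\sin\psi\sin\phi)-\cot\psi\cot\phi$. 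Under the complex-eigenvalue hypothesis $(a_1-b_1)^2-4\delta\tau c_1^2<0$ one verifies $\phi\in\,]0,\pi[$ and $l\ne s$, so that $\scrR_{l,s,\phi}$ is a genuine elliptical rotation.

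Since $\angle(\cdot,\cdot)$ is invariant under positive scalar multiplication, $\theta_k=\angle(v_k,v_{k-1})$ asymptotically equals the angle between $u_{k-1}\eqdef(\scrR_\psi\scrR_{l,s,\phi})^{k-1}w_0$ and $\scrR_\psi\scrR_{l,s,\phi}\,u_{k-1}$. Proposition~\ref{prop:comp-circular-elliptical}, applied point-by-point to each $u_{k-1}$, then delivers the claimed inclusion $\theta_k\in[\psi-\chiM,\psi-\chim]$ with the explicit formulas carried over from Proposition~\ref{prop:elliptical-rotation}. The most delicate step is this last one: I must show that the lower-order tail $O((\sigma\eta)^k)$ perturbs $\cos\theta_k$ only by $o(1)$ so that the interval bound passes to the limit. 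This follows the same Cauchy--Schwarz perturbation argument already used in the proofs of Propositions~\ref{prop:type-i} and \ref{prop:type-ii}, and I expect it to adapt without essentially new obstacle.
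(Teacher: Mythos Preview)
Your proposal is correct and follows essentially the same route as the paper: block-diagonalize $M$ via $\mathrm{diag}(X,Y)$ and a permutation into the $2\times 2$ blocks $D_i$, pass to $\widetilde M=M-\Minf$, split off the $q$ identical leading blocks from an $O((\sigma\eta)^k)$ tail, factor $\sigma^{-1}D_1=\scrR_\psi\scrR_{l,s,\phi}$ by entry matching, and invoke Proposition~\ref{prop:comp-circular-elliptical}. The only step the paper makes explicit that you leave implicit is verifying the hypothesis of Proposition~\ref{prop:comp-circular-elliptical}: the paper computes $\bigl((\tfrac{l}{s}+\tfrac{s}{l})\sin\psi\sin\phi+2\cos\psi\cos\phi\bigr)^2-4=\sigma^{-2}\bigl((a_1-b_1)^2-4\delta\tau c_1^2\bigr)<0$ directly from the four entry equations, so you should record this rather than only checking $\phi\in]0,\pi[$ and $l\neq s$.
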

\begin{remark} %$~$
When $\delta = \tau = 1$, then we have $\psi = \frac{\pi}{2} ,
\phi = \arccos\Pa{ \frac{c_1}{ \sqrt{ c_1^2 + a_1b_1 } } } 
$ and $ 
\frac{s}{l} = \frac{b_1}{ \sin(\phi) \sqrt{ c_1^2 + a_1b_1 }} $. 
%\begin{itemize}
%\item The Chambolle-Pock Primal--Dual splitting method with (locally) polyhedral functions have type III trajectory. %; see Section \ref{subsec:trajectory-pd}. 
%\item When $\delta = \tau = 1$, then we have $\psi = \frac{\pi}{2} ,
%\phi = \arccos\Pa{ \frac{c_1}{ \sqrt{ c_1^2 + a_1b_1 } } } 
%$ and $ 
%\frac{s}{l} = \frac{b_1}{ \sin(\phi) \sqrt{ c_1^2 + a_1b_1 }} $. 
%%\[
%%\psi = \sfrac{\pi}{2} ,~~ 
%%\phi = \arccos\bPa{ \sfrac{c_1}{ \ssqrt{ c_1^2 + a_1b_1 } } } 
%%\qandq 
%%\sfrac{s}{l} = \sfrac{b_1}{ \sin(\phi) \ssqrt{ c_1^2 + a_1b_1 }} .
%%\]
%
%\end{itemize}
\end{remark}

\begin{example}
Let $\delta = \tau  = 1$ and $a,b,c,d > 0$, and let 
$
M = U
\begin{bmatrix} a & -c &  \\ c & b &  \\  &  & d \end{bmatrix}
U^T 
$ where $U$ is an orthogonal matrix in $\bbR^{3\times 3}$. 
$M$ has three eigenvalues: $\frac{(a+b) + \sqrt{(a-b)^2-4c^2}}{2} , \frac{(a+b) - \sqrt{(a-b)^2-4c^2}}{2}$ and $d$.
Note that the magnitude of both complex eigenvalues is $\sqrt{ab+c^2}$. We consider the following choice of $a,b,c,d$: $(a,b,c) = (0.95 , 0.75, 0.35)$ and $d = 0.99\sqrt{ab+c^2}$. The observations are shown in Figure \ref{fig:type-iii-a}, where the first figure shows the oscillation behavior of $\cos{\theta_k}$, and the trajectories of $\seq{\xk}$ from different perspectives are provided in the 2nd and 3rd figures.
%\[
%\begin{gathered}
%\textrm{Choice 1}:~~(a,b,c) = (0.95 , 0.75, 0.35) \qandq d = 0.99\sqrt{ab+c^2} , \\
%\textrm{Choice 2}:~~(a,b,c) = (0.98 , 0.91, 0.30) \qandq d = 0.99\sqrt{ab+c^2} . 
%\textrm{Choice 3}:~~(a,b,c) = (0.95 , 0.92, 0.35) \qandq d = 0.99\sqrt{ab+c^2} .
%\end{gathered}
%\]
%The observations are shown in Figure \ref{fig:type-iii}, where the first row corresponds to ``Choice 1'' and the second row for ``Choice 2''. For each row, the first figure shows the oscillation behavior of $\theta_k$, and the trajectories of $\seq{\xk}$ from different perspectives are provided in the 2nd and 3rd figures.

%%%%%%%%%%%%%%%
\begin{figure}[H]
\centering
\subfloat[${\cos(\theta_k)}$]{ \includegraphics[width=0.3\linewidth]{figure/angle-type-iii-A.pdf} } {\hspace{2pt}}
\subfloat[{Top view}]{ \includegraphics[width=0.27\linewidth]{figure/tracjectory-type-iii-b-A.pdf} } {\hspace{2pt}}
\subfloat[{Side view}]{ \includegraphics[width=0.255\linewidth]{figure/tracjectory-type-iii-a-A.pdf} }\\
%%%%%%%%%%%%%%%
\caption{\small Oscillation of $\cos(\theta_k)$ and trajectory of $\seq{\xk}$. (a): Oscillation of $\cos(\theta_k)$; (b) Top view of trajectory of $\seq{\xk}$; (c) Side view of trajectory of $\seq{\xk}$. }
\label{fig:type-iii-a}
\end{figure}

\end{example}

\begin{proof}[Proof of Proposition \ref{prop:type-iii-a}] 
Owing to Assumption \ref{assumption-type-iii-a}, denote $\Sigma_A = \diag(S_A), \Sigma_B = \diag(S_B)$ and $\Sigma_C = \diag(S_C)$, then we have for $M$ that
\beq\label{eq:type-iii-a-M}
M 
= \begin{bmatrix} A & - C^T \\ C & B \end{bmatrix}
= \begin{bmatrix} X \Sigma_A X^T  & - \delta X \Sigma_{{C}} Y^T \\ \tau Y \Sigma_{{C}} X^T  & Y \Sigma_B Y^T \end{bmatrix}  
= \begin{bmatrix} X &  \\  & Y  \end{bmatrix}
\begin{bmatrix} \Sigma_A  & - \delta \Sigma_{{C}} \\ \tau \Sigma_{{C}}  & \Sigma_B \end{bmatrix}
\begin{bmatrix} X^T  &  \\  & Y^T \end{bmatrix}  .
\eeq
Define the following matrix
\[
\Sigma
\eqdef
\begin{bmatrix} \Sigma_A  & - \delta \Sigma_{{C}} \\ \tau \Sigma_{{C}}  & \Sigma_B \end{bmatrix}  
\]
which is block diagonal matrix. 
For each $i=1,...,p$, define the $2\times 2$ matrix $D_i = \begin{bmatrix} a_i & - \delta c_i \\ \tau c_i & b_i \end{bmatrix}$. 
Owing to Assumption \ref{assumption-type-iii-a}, when $(a_i-b_i)^2-4\delta\tau c_i^2 < 0$, the eigenvalues of $D_i$, \ie $\frac{ (a_i+b_i) \pm \sqrt{(a_i-b_i)^2-4\delta\tau c_i^2} } {2}$, are complex and their magnitudes is $\delta\tau c_i^2+a_ib_i < 1$. 
Therefore, we have $\lim_{k\to\pinf} D_i^k = 0$ owing to spectral theorem. 
This further implies 
\[
\Minf \eqdef
\lim_{k\to\pinf} M^k 
=
\begin{bmatrix} Y &  \\  & X  \end{bmatrix}
\begin{bmatrix}
\Id_r & \\
& 0_{n-r}  
\end{bmatrix}
\begin{bmatrix} Y^T &  \\  & X^T  \end{bmatrix}  ,
\]
where $r$ is the multiplicities of eigenvalue $1$.

Following the arguments of the proof of Proposition \ref{prop:type-ii}, we have that $\vk \in \ker(\Minf)$ for all $k\in\bbN$. 
Again, define $\tM \eqdef M-\Minf$. 
Based on \eqref{eq:type-iii-a-M} and block-diagonal nature of $\Sigma$, there exists an elementary transformation matrix $Z$ such that, let $d = m-r$
\beq\label{eq:M_decomposition_pd}
\tM =
\begin{bmatrix} X &  \\  & Y  \end{bmatrix} Z 
\begin{bmatrix}
D_{1} & & &\\
& \ddots &  & \\
&  & D_{d} &\\
& & & 0_{2r}
\end{bmatrix}
Z^T \begin{bmatrix} X^T  &  \\  & Y^T \end{bmatrix}
= W \Gamma W^T ,
\eeq
where $W = \begin{bmatrix} X &  \\  & Y  \end{bmatrix} Z $ and $\Gamma$ is the block diagonal matrix. 
The order of $D_i, i=1,...,d$ is such that it complies with Assumption \ref{assumption-type-iii-a}. 
Consider the following decomposition of $\Gamma$
\[
\Gamma_1
= 
\begin{bmatrix}
D_{1} & & &\\
& \ddots &  & \\
&  & D_{q} &\\
& & & 0_{n-2q}
\end{bmatrix}
\qandq
\Gamma_2
= \Gamma - \Gamma_1 .
\]
Let $\sigma = \sqrt{\delta\tau c_1^2 + a_1b_1}$ and $\eta = \frac{\sqrt{\delta\tau c_{q+1}^2 + a_{q+1}b_{q+1}}}{\sigma}$, then $\sfrac{1}{\sigma^k} \Gamma_2^k = O(\eta^k) \to 0$.

Follow the proof of Proposition \ref{prop:type-ii}, $\theta_k$ eventually is determined by the rotation property of $\Gamma_1$. 
Clearly, there exist some $\psi, \phi \in [0, \pi/2]$ and $l,s >0$ such that
\[
\sfrac{1}{{\sigma}} D_1
= \sfrac{1}{{\sigma}} \begin{bmatrix} a_1 & - \delta c_1  \\ \tau c_1  & b_1 \end{bmatrix} 
= \begin{bmatrix} \cos(\psi) & - \sin(\psi) \\ \sin(\psi) & \cos(\psi) \end{bmatrix} 
\begin{bmatrix} \cos(\phi) & \tfrac{s}{l} \sin(\phi) \\ - \tfrac{l}{s}\sin(\phi) & \cos(\phi) \end{bmatrix} . 
\]
Consequently, we get
\beq\label{eq:4-eq-type-iii}
\begin{aligned}
\cos(\psi)\cos(\phi) + \tfrac{l}{s} \sin(\psi) \sin(\phi) &= \sfrac{a_1}{{\sigma}} ,\\
\tfrac{s}{l}\cos(\psi)\sin(\phi) - \sin(\psi)\cos(\phi) &= - \sfrac{\delta c_1}{{\sigma}},\\
\sin(\psi)\cos(\phi)  - \tfrac{l}{s}\cos(\psi)\sin(\phi) &= \sfrac{\tau c_1}{{\sigma}}  , \\
\tfrac{s}{l}\sin(\psi)\sin(\phi) + \cos(\psi)\cos(\phi) &= \sfrac{b_1}{{\sigma}},
\end{aligned}
\eeq
which yields
\[
\psi = \mathrm{arccot}\bPa{ \tfrac{ (\tau-\delta)c_1 }{ b_1 - a_1 } }  ,~~ 
\phi = \arccos\bPa{ \tfrac{(\delta+\tau) c_1 \sin(\psi) + (a_1+b_1)\cos(\psi) }{2{\sigma}} } 
\qandq 
\sfrac{s}{l} = \tfrac{b_1}{\sin(\psi)\sin(\phi) {\sigma}} - \cot(\psi)\cot(\phi).
\]
{Moreover, we have
\[
\begin{aligned}
\Pa{ \pa{ \tfrac{l}{s} + \tfrac{s}{l} } \sin(\psi)\sin(\phi) + 2\cos(\psi)\cos(\phi) }^2 - 4 
= \sfrac{(a_1+b_1)^2}{\sigma^2} - 4 
&= \sfrac{(a_1-b_1)^2 - 4\delta\tau c_1^2}{\sigma^2} < 0 .
\end{aligned}
\]}
This means that $\sfrac{1}{{\sigma}} D_1$ is the composite rotation of Proposition \ref{prop:comp-circular-elliptical}, and so is $\frac{1}{\sigma}\Gamma_1$. 
Therefore, invoking the result of Proposition \ref{prop:elliptical-rotation} and \ref{prop:comp-circular-elliptical}, we have $\theta_k \in \big[ \psi - \chiM , \psi - \chim \big] $ 
with 
$$
\cos(\chiM)
= \frac{ \pa{ \frac{s}{2l} + \frac{l}{2s} } \cos(\phi) - \abs{ \frac{s}{2l} - \frac{l}{2s} } }{ \ssqrt{ \sin^2(\phi)  + \pa{\pa{ \frac{s}{2l} + \frac{l}{2s} } \cos(\phi) - \abs{ \frac{s}{2l} - \frac{l}{2s} } }^2 } }
\qandq
\cos(\chim)
= \frac{ \pa{ \frac{s}{2l} + \frac{l}{2s} } \cos(\phi) + \abs{ \frac{s}{2l} - \frac{l}{2s} } }{ \ssqrt{ \sin^2(\phi)  + \pa{\pa{ \frac{s}{2l} + \frac{l}{2s} } \cos(\phi) + \abs{ \frac{s}{2l} - \frac{l}{2s} } }^2 } }   . \qedhere
$$

\end{proof}

\section{Proofs of Section \ref{sec:trajectory-fom}}\label{proof:trajectory-fom}

\subsection{Riemannian Geometry}
\label{subsec:riemgeom}

% In this part, we introduce some result of the Riemannian geometry.
Let $\calM$ be a $C^2$-smooth embedded submanifold of $\bbR^n$ around a point $x$. With some abuse of terminology, we shall state $C^2$-manifold instead of $C^2$-smooth embedded submanifold of $\bbR^n$. The natural embedding of a submanifold $\calM$ into $\bbR^n$ permits to define a Riemannian structure and to introduce geodesics on $\calM$, and we simply say $\calM$ is a Riemannian manifold. We denote respectively $\tanSp{\Mm}{x}$ and $\normSp{\Mm}{x}$ the tangent and normal space of $\Mm$ at point near $x$ in $\calM$.
%, meaning that for a given point $x \in \calM$, there exists a local function $\chi: \bbR^n \to \bbR^l$ around $x$ such that $\chi$ is $C^2$-smooth.
% 
%We also denote $\tanSp{\Mm}{x}$ the tangent space of $\Mm$ at a point $x$, and $\normSp{\Mm}{x}$ the normal space.
% 

\vgap

{\noindent}\textbf{Exponential map}~~
Geodesics generalize the concept of straight lines in $\bbR^n$, preserving the zero acceleration characteristic, to manifolds. Roughly speaking, a geodesic is locally the shortest path between two points on $\calM$. We denote by $\mathfrak{g}(t;x, h)$ the value at $t \in \bbR$ of the geodesic starting at $\mathfrak{g}(0;x,h) = x \in \calM$ with velocity $\dot{\mathfrak{g}}(t;x, h) = \qfrac{d\mathfrak{g}}{dt}(t;x,h) = h \in \tanSp{\Mm}{x}$ (which is uniquely defined). 
%Given two points $x, x' \in \calM$, define $\mathfrak{g}(t;x, h), t \in [0,1]$ the geodesic joining them as
%\[
%\mathfrak{g}(0;x, h) = x  ,~~
%\mathfrak{g}(1;x, h) = x'  ~~ \textrm{and}~~
%\dot{\mathfrak{g}}(t;x, h) = h , \forall t \in [0,1]  ,
%\]
%that $\mathfrak{g}$ has constant \emph{velocity} along the geodesic (\ie ). Roughly speaking, geodesics are length-minimizing curves among those traced with constant speed.
% 
For every $h \in \tanSp{\calM}{x}$, there exists an interval $I$ around $0$ and a unique geodesic $\mathfrak{g}(t;x, h): I \to \calM$ such that $\mathfrak{g}(0; x, h) = x$ and $\dot{\mathfrak{g}}(0;x, h) = h$.
The mapping
  % \begin{alignat}{2}
  % \Exp_x 
  % &:&~ \tanSp{\calM}{x} &\to  \calM \\
  % && h &\mapsto \Exp_{x}(h) =  \mathfrak{g}(1;x, h) ,
  % \end{alignat}
 \[
\Exp_x 
: \tanSp{\calM}{x} \to  \calM ,~~   h\mapsto \Exp_{x}(h) =  \mathfrak{g}(1;x, h) ,
\]
is called \emph{Exponential map}.
Given $x, x' \in \calM$, the direction $h \in \tanSp{\calM}{x}$ we are interested in is the one such that $\Exp_x(h) = x' = \mathfrak{g}(1;x, h)$. 
%\[
%\Exp_x(h) = x' = \mathfrak{g}(1;x, h)  .
%\]

\vgap

{\noindent}\textbf{Parallel translation}~~
Given two points $x, x' \in \calM$, let $\tanSp{\calM}{x}, \tanSp{\calM}{x'}$ be their corresponding tangent spaces. Define 
\[
\tau : \tanSp{\calM}{x} \to \tanSp{\calM}{x'} ,
\]
the parallel translation along the unique geodesic joining $x$ to $x'$, which is isomorphism and isometry with respect to the Riemannian metric.

\vgap

{\noindent}\textbf{Riemannian gradient and Hessian}~~
For a vector $v \in \normSp{\calM}{x}$, the Weingarten map of $\calM$ at $x$ is the operator $\Wgtmap{x}\pa{\cdot, v}: \tanSp{\calM}{x} \to \tanSp{\calM}{x}$ defined by
\[
\Wgtmap{x}\pa{\cdot, v} = - \PT{\tanSp{\calM}{x}} \mathrm{d} V[h]  ,
\]
where $V$ is any local extension of $v$ to a normal vector field on $\calM$. The definition is independent of the choice of the extension $V$, and $\Wgtmap{x}(\cdot, v)$ is a symmetric linear operator which is closely tied to the second fundamental form of $\calM$, see \cite[Proposition II.2.1]{chavel2006riemannian}.

Let $G$ be a real-valued function which is $C^2$ along the $\calM$ around $x$. The covariant gradient of $G$ at $x' \in \calM$ is the vector $\nabla_{\calM} G(x') \in \tanSp{\calM}{x'}$ defined by
\[
\iprod{\nabla_{\calM} G(x')}{h} = \qfrac{d}{dt} G\Pa{\PT{\calM}(x'+th)}\big|_{t=0} ,~~ \forall h \in \tanSp{\calM}{x'},
\]
where $\PT{\calM}$ is the projection operator onto $\calM$. 
The covariant Hessian of $G$ at $x'$ is the symmetric linear mapping $\nabla^2_{\calM} G(x')$ from $\tanSp{\calM}{x'}$ to itself which is defined as
\beq\label{eq:rh}
\iprod{\nabla^2_{\calM} G(x') h}{h} = \qfrac{d^2}{dt^2} G\Pa{\PT{\calM}(x'+th)}\big|_{t=0} ,~~ \forall h \in \tanSp{\calM}{x'} .
\eeq
This definition agrees with the usual definition using geodesics or connections \cite{miller2005newton}. 
Now assume that $\calM$ is a Riemannian embedded submanifold of $\bbR^{n}$, and that a function $G$ has a $C^2$-smooth restriction on $\calM$. This can be characterized by the existence of a $C^2$-smooth extension (representative) of $G$, \ie a $C^2$-smooth function $\widetilde{G}$ on $\bbR^{n}$ such that $\widetilde{G}$ agrees with $G$ on $\calM$. Thus, the Riemannian gradient $\nabla_{\calM}G(x')$ is also given by
\beq\label{eq:RieGradient}
\nabla_{\calM} G(x') = \PT{\tanSp{\calM}{x'}} \nabla \widetilde{G}(x')  ,
\eeq
and $\forall h \in \tanSp{\calM}{x'}$, the Riemannian Hessian reads
\beq\label{eq:RieHessian}
\begin{aligned}
\nabla^2_{\calM} G(x') h
&= \PT{\tanSp{\calM}{x'}} \mathrm{d} (\nabla_{\calM} G)(x')[h]
= \PT{\tanSp{\calM}{x'}} \mathrm{d} \Pa{ x' \mapsto \PT{\tanSp{\calM}{x'}} \nabla_{\calM} \widetilde{G} }[h]   \\ 
&= \PT{\tanSp{\calM}{x'}} \nabla^2 \widetilde{G}(x') h + \Wgtmap{x'}\Pa{h, \PT{\normSp{\calM}{x'}}\nabla \widetilde{G}(x')}  ,
\end{aligned}
\eeq
where the last equality comes from  \cite[Theorem~1]{absil2013extrinsic}. 
When $\calM$ is an affine or linear subspace of $\bbR^{n}$, then obviously $\calM = x + \tanSp{\calM}{x}$, and $\Wgtmap{x'}\pa{h, \PT{\normSp{\calM}{x'}} \nabla \widetilde{G}(x')} = 0$, hence \eqref{eq:RieHessian} reduces to
\[
\nabla^2_{\calM} G(x')
= \PT{\tanSp{\calM}{x'}} \nabla^2 \widetilde{G}(x')  \PT{\tanSp{\calM}{x'}}  .
\]
See \cite{lee2003smooth,chavel2006riemannian} for more materials on differential and Riemannian manifolds.

\vgap

The following lemma presents the expressions of the Riemannian gradient and Hessian for the case of partly smooth functions relative to a $C^2$-smooth manifold. %\todo{give a citation}
 The result follows by combining \eqref{eq:RieGradient}, \eqref{eq:RieHessian}, Definition \ref{dfn:psf} and \cite[Proposition~17]{Daniilidis06} (or \cite[Lemma~2.4]{miller2005newton}).

\begin{lemma}[Riemannian gradient and Hessian]\label{def:riemannian-gradhess}
If $R \in \PSF{x}{\calM_x}$, then for any point $x' \in \calM_x$ near $x$
\[
\nabla_{\calM_x} R(x') = \proj_{T_{x'}}\pa{\partial R(x')} ,
\]
and this does not depend on the smooth representation of $R$ on $\calM_x$. In turn, for all $h \in T_{x'}$, let $\widetilde{R}$ be a smooth representative of $R$ on $\calM_x$,
\[
\nabla^2_{\calM_x} R(x')h = \proj_{T_{x'}} \nabla^2 \widetilde{R}(x')h + \Wgtmap{x'}\Pa{h, \proj_{T_{x'}^\perp}\nabla \widetilde{R}(x')}  ,
\]
where $\Wgtmap{x}\pa{\cdot, \cdot}: T_x \times T_x^\perp \to T_x$ is the Weingarten map of $\calM_x$ at $x$. % (see Section~\ref{subsec:riemgeom} for definitions).
\end{lemma}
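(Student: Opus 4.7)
The plan is to deduce both formulas by combining the general extrinsic expressions \eqref{eq:RieGradient}--\eqref{eq:RieHessian} for the Riemannian gradient and Hessian of a $C^{2}$-smooth function restricted to $\calM_{x}$ with the geometric structure of $\partial R$ provided by partial smoothness. Throughout, fix $x'\in \calM_{x}$ close to $x$, and let $\widetilde{R}$ be any $C^{2}$ extension of $R|_{\calM_{x}}$ to a neighbourhood of $x'$ in $\bbR^{n}$; such an extension exists thanks to the Smoothness clause of Definition \ref{dfn:psf}.

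The key ingredient for the gradient formula is the structural result \cite[Proposition~17]{Daniilidis06} (see also \cite[Lemma~2.4]{miller2005newton}) which, under partial smoothness, identifies $\partial R(x')$ as an affine set whose parallel subspace lies in the normal space $\normSp{\calM_{x}}{x'} = T_{x'}^{\perp}$; here the Sharpness clause $\tanSp{\calM_{x}}{x'} = \LinHull(\partial R(x'))^{\perp}$ is precisely what enforces this orthogonality. In particular, $\proj_{T_{x'}}(\partial R(x'))$ is a singleton. Since any smooth representative $\widetilde{R}$ of $R$ on $\calM_{x}$ satisfies $\proj_{T_{x'}}\nabla\widetilde{R}(x') = \nabla_{\calM_{x}} R(x')$ by the extrinsic gradient formula \eqref{eq:RieGradient}, and since the Continuity clause together with the previous affine description implies $\nabla\widetilde{R}(x')\in \partial R(x')$, one concludes
\[
\proj_{T_{x'}}\pa{\partial R(x')} = \ba{ \proj_{T_{x'}} \nabla\widetilde{R}(x') } = \ba{ \nabla_{\calM_{x}} R(x') }  .
\]
This proves the gradient formula, and independence from the chosen smooth representative follows because the right-hand side involves only the intrinsic object $\partial R(x')$.

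For the Hessian, the result is an immediate specialization of the extrinsic Hessian formula \eqref{eq:RieHessian} to the representative $\widetilde{R}$, which is itself $C^{2}$ on a neighbourhood of $x'$ in $\bbR^{n}$. Indeed, applying \eqref{eq:RieHessian} together with \cite[Theorem~1]{absil2013extrinsic} (already invoked in the derivation of \eqref{eq:RieHessian}) gives, for every $h \in T_{x'}$,
\[
\nabla^{2}_{\calM_{x}} R(x')\,h = \proj_{T_{x'}}\nabla^{2}\widetilde{R}(x')\,h + \Wgtmap{x'}\bigl(h, \proj_{T_{x'}^{\perp}}\nabla\widetilde{R}(x')\bigr)  ,
\]
where the Weingarten term accounts for the variation of the projection $\proj_{T_{x'}}$ as $x'$ moves along $\calM_{x}$; this finishes the argument.

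The only delicate point is the first formula: establishing that $\proj_{T_{x'}}(\partial R(x'))$ reduces to a single vector (and in particular a well-defined gradient) requires partial smoothness as a whole, i.e.\ the Smoothness, Sharpness, and Continuity clauses combined, rather than any of them in isolation. This is precisely what \cite[Proposition~17]{Daniilidis06} delivers. Once that proposition is in hand, the remainder of the proof is a mechanical assembly of \eqref{eq:RieGradient}--\eqref{eq:RieHessian} with Definition \ref{dfn:psf}, with no further technicalities.
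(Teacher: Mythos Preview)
Your proposal is correct and follows essentially the same route as the paper: the paper's proof is the single sentence ``The result follows by combining \eqref{eq:RieGradient}, \eqref{eq:RieHessian}, Definition \ref{dfn:psf} and \cite[Proposition~17]{Daniilidis06} (or \cite[Lemma~2.4]{miller2005newton})'', and you have simply unpacked exactly that combination. One small expository wobble: the intermediate claim $\nabla\widetilde{R}(x')\in\partial R(x')$ need not hold for an arbitrary $C^{2}$ extension, but this is harmless since \cite[Proposition~17]{Daniilidis06} directly gives $\partial R(x') = \nabla_{\calM_{x}} R(x') + T_{x'}^{\perp}$, from which the singleton property of $\proj_{T_{x'}}(\partial R(x'))$ and its identification with the Riemannian gradient follow immediately.
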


The result of Lemma \ref{def:riemannian-gradhess} implies that we can linearize the proximity operators along the $C^2$-smooth manifold, which is discussed in Lemma \ref{lem:lin-generalised-ppa}.

\begin{lemma}[{\cite[Lemma 5.1]{liang2014local}}]
\label{lem:proj-M}
Let $\calM$ be a $C^2$-smooth manifold around $x$. Then for any $x' \in \calM \cap \calN$, where $\calN$ is a neighborhood of $x$, the projection operator $\proj_{\calM}(x')$ is uniquely valued and $C^1$ around $x$, and thus 
\[
x' - x = \proj_{\tanSp{\calM}{x}}(x'-x) + o\pa{\norm{x'-x}} .
\]
If moreover $\calM = x + \tanSp{\calM}{x}$ is an affine subspace, then $x' - x = \proj_{\tanSp{\calM}{x}}(x'-x)$.
\end{lemma}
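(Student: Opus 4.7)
\medskip

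\noindent\textbf{Proof proposal.} The plan is to break the statement into three parts: (a) unique-valuedness and $C^{1}$-regularity of $\proj_{\calM}$ in a neighbourhood of $x$, (b) the first-order expansion of $x'-x$ along $\calM$ via the tangent space at $x$, and (c) the affine case.

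For (a), I would invoke the tubular neighbourhood theorem for $C^{2}$-embedded submanifolds of $\bbR^{n}$. Since $\calM$ is $C^{2}$ around $x$, there exists an open neighbourhood $\calU \subset \bbR^{n}$ of $x$ and $\varepsilon > 0$ such that the map $\Psi : \{(y,v) : y \in \calM \cap \calU,\ v \in \normSp{\calM}{y},\ \norm{v} < \varepsilon\} \to \bbR^{n}$ defined by $\Psi(y,v) = y + v$ is a $C^{1}$-diffeomorphism onto its image. Inverting $\Psi$ and taking the first component recovers exactly $\proj_{\calM}$, so this projection is single-valued and $C^{1}$ on a neighbourhood $\calN$ of $x$, which is the first assertion.

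For (b), I would use a local $C^{2}$-parametrisation of $\calM$ around $x$ adapted to the tangent/normal decomposition $\bbR^{n} = \tanSp{\calM}{x} \oplus \normSp{\calM}{x}$. Concretely, shrinking $\calN$ if necessary, one can write $\calM \cap \calN$ as a graph: there exists a $C^{2}$ map $\varphi : V \subset \tanSp{\calM}{x} \to \normSp{\calM}{x}$ with $\varphi(0)=0$ and $\mathrm{d}\varphi(0)=0$, such that every $x' \in \calM \cap \calN$ can be written $x' = x + h + \varphi(h)$ with $h = \proj_{\tanSp{\calM}{x}}(x'-x) \in V$. A second-order Taylor expansion then gives $\varphi(h) = O(\norm{h}^{2})$, hence
\[
x' - x - \proj_{\tanSp{\calM}{x}}(x'-x) \;=\; \varphi(h) \;=\; O(\norm{h}^{2}).
\]
Since $\norm{x'-x}^{2} = \norm{h}^{2} + \norm{\varphi(h)}^{2}$ and $\varphi(h)=O(\norm{h}^2)$, one has $\norm{h} \leq \norm{x'-x} \leq \norm{h} + O(\norm{h}^{2})$, so in particular $\norm{h} \asymp \norm{x'-x}$ and $O(\norm{h}^{2}) = o(\norm{x'-x})$. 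This yields the claimed first-order expansion.

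The affine case (c) is immediate: if $\calM = x + \tanSp{\calM}{x}$, then $x'-x \in \tanSp{\calM}{x}$ already, so the graph map $\varphi$ vanishes identically and the residual is exactly zero. The only mildly technical step is ensuring the graph representation in (b), but this is standard from the implicit function theorem applied to the defining equations of $\calM$ together with the splitting $\bbR^{n} = \tanSp{\calM}{x} \oplus \normSp{\calM}{x}$; I do not anticipate a substantive obstacle, and the whole argument fits in a few lines once the tubular neighbourhood machinery is invoked.
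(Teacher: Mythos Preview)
The paper does not provide its own proof of this lemma; it is simply quoted from \cite[Lemma~5.1]{liang2014local} with no accompanying argument. Your proposal supplies a correct and standard proof: the tubular neighbourhood theorem for $C^{2}$-embedded submanifolds yields part (a), the local graph representation $x' = x + h + \varphi(h)$ with $\varphi(0)=0$, $\mathrm{d}\varphi(0)=0$ and $\varphi$ of class $C^{2}$ yields the $o(\norm{x'-x})$ expansion in (b), and the affine case (c) is immediate. There is nothing in the present paper to compare against; your argument is self-contained and sound.
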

%
%\begin{proof}%[Proof of Lemma \ref{thm:proj_M}]
%As $\calM$ is a $C^2$--manifold around $x$, then $\proj_{\calM}(x')$ is uniquely valued \cite{PoliquinRockafellar2000} and moreover $C^1$ near $x$ \cite[Lemma 4]{LewisMalick08}. Thus, continuous differentiability shows
%\[
%x' - x = \proj_{\calM}(x') - \proj_{\calM}(x) = \deriv \proj_{\calM}(x)(x - x') + O\pa{\norm{x' - x}^2} . 
%\]
%where $\deriv \proj_{\calM}(x)$ is the derivative of $\proj_{\calM}$ at $x$. By virtue of \cite[Lemma 4]{LewisMalick08}, this derivative is given by $\mathrm{D} \proj_{\calM}(x) = \proj_{\tgtManif{\calM}{x}}$. 
%%\[
%%\mathrm{D} \proj_{\calM}(x) = \proj_{\tgtManif{\calM}{x}}  ,% = \proj_{\tanSp{\calM}{x}} ,
%%\]
%The case where $\calM$ is an affine subspace is immediate. We conclude the proof.
%\end{proof}

%%%%%%%%%%%%%%%%%%%%%
\begin{lemma}[{\cite[Lemma B.1]{liang2017activity}}]\label{lem:parallel-translation}
Let $x \in \calM$, and $\xk$ a sequence converging to $x$ in $\calM$. Denote $\tau_k : \tanSp{\calM}{x} \to \tanSp{\calM}{\xk}$ be the parallel translation along the unique geodesic joining $x$ to $\xk$. Then, for any bounded vector $u \in \bbR^n$, we have
\[
\pa{\tau_k^{-1}\proj_{\tanSp{\calM}{\xk}} - \proj_{\tanSp{\calM}{x}}}u = o(\norm{u})  .
\]
\end{lemma}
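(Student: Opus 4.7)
The claim is that the operator $A_k \eqdef \tau_k^{-1}\proj_{\tanSp{\calM}{\xk}} - \proj_{\tanSp{\calM}{x}}$, viewed as a linear map $\bbR^n\to\tanSp{\calM}{x}$, converges to $0$ in operator norm as $k\to\infty$; the stated conclusion then follows since $\norm{A_k u}\leq \norm{A_k}_{\mathrm{op}}\cdot \norm{u} = o(1)\cdot \norm{u}$ for any bounded $u$. My plan is to establish this operator-norm convergence by splitting the difference into a ``projection-continuity'' piece and a ``parallel-translation'' piece:
\begin{equation*}
A_k = \tau_k^{-1}\bpa{\proj_{\tanSp{\calM}{\xk}}-\proj_{\tanSp{\calM}{x}}} + \bpa{\tau_k^{-1}-\Id}\proj_{\tanSp{\calM}{x}} .
\end{equation*}
Here the composition makes sense since $\proj_{\tanSp{\calM}{x}}$ has range in $\tanSp{\calM}{x}$, and $\tau_k^{-1}$ is an isometric isomorphism $\tanSp{\calM}{\xk}\to\tanSp{\calM}{x}$, which will be extended to $\bbR^n$ by precomposing with $\proj_{\tanSp{\calM}{\xk}}$ where needed.

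Step one is the continuity of the projector $y\mapsto \proj_{\tanSp{\calM}{y}}$. Because $\calM$ is a $C^2$ embedded submanifold, near $x$ it admits a $C^2$ local parametrisation, so $y\mapsto \tanSp{\calM}{y}$ varies in a $C^1$ manner, and in particular $y\mapsto \proj_{\tanSp{\calM}{y}}$ is continuous (even $C^1$) as a map into $\bbR^{n\times n}$ with the operator norm. Since $\xk\to x$, this gives $\norm{\proj_{\tanSp{\calM}{\xk}}-\proj_{\tanSp{\calM}{x}}}_{\mathrm{op}}\to 0$. Combined with $\norm{\tau_k^{-1}}_{\mathrm{op}}=1$ (parallel translation is an isometry), the first piece in the splitting has operator norm $o(1)$.

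Step two handles the parallel translation term. Let $\mathfrak{g}_k:[0,1]\to \calM$ be the unique minimising geodesic with $\mathfrak{g}_k(0)=x$ and $\mathfrak{g}_k(1)=\xk$; existence and uniqueness for $k$ large follow from $\xk\to x$ and the local existence of a normal neighbourhood on a $C^2$ submanifold. The length of $\mathfrak{g}_k$ is comparable to $\norm{\xk-x}$ and tends to $0$. For any fixed $h\in \tanSp{\calM}{x}$, the parallel transport $\tau_k(h)\in \tanSp{\calM}{\xk}$ is the value at $t=1$ of the solution $V(t)$ of the linear ODE $\nabla_{\dot{\mathfrak{g}}_k} V = 0$ with $V(0)=h$, which in ambient coordinates reads $\dot{V}(t) = -\Wgtmap{\mathfrak{g}_k(t)}\bpa{\dot{\mathfrak{g}}_k(t),\proj_{\normSp{\calM}{\mathfrak{g}_k(t)}} V(t)}$ by the Gauss formula, where $\Wgtmap{\cdot}(\cdot,\cdot)$ is the Weingarten map introduced in Section \ref{subsec:riemgeom}. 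Since $\Wgtmap{}$ is bounded on a neighbourhood of $x$ (as $\calM$ is $C^2$) and $\dot{\mathfrak{g}}_k$ has norm $O(\norm{\xk-x})$, a Gr\"onwall estimate yields $\norm{\tau_k h - h}\leq C\norm{\xk-x}\,\norm{h}$ for all $h\in \tanSp{\calM}{x}$, with $C$ independent of $k$. Equivalently $\norm{\tau_k^{-1}-\Id}_{\mathrm{op}}\to 0$ on $\tanSp{\calM}{\xk}$, after identifying both sides as operators into $\tanSp{\calM}{x}$. Combined with $\norm{\proj_{\tanSp{\calM}{x}}}_{\mathrm{op}}=1$, the second piece of the splitting has operator norm $o(1)$ as well.

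Adding the two pieces gives $\norm{A_k}_{\mathrm{op}}\to 0$, hence $A_k u = o(\norm{u})$ uniformly in $k$-dependence. The only technical obstacle is step two: justifying the Gr\"onwall estimate for the parallel-transport ODE requires a coordinate-independent bound on the second fundamental form of $\calM$ on a neighbourhood of $x$, which is precisely where the $C^2$ smoothness of the manifold is used. Everything else reduces to continuity of the orthogonal projector with respect to a smoothly varying subspace, together with the basic isometry property of $\tau_k$.
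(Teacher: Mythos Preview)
The paper does not supply its own proof of this lemma; it is quoted verbatim from \cite[Lemma~B.1]{liang2017activity} and used as a black box in the proof of Lemma~\ref{lem:lin-generalised-ppa}. So there is no in-paper argument to compare against.

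Your sketch is nonetheless the standard route and is essentially correct: reduce to operator-norm convergence of $A_k$, then split into (i) continuity of $y\mapsto \proj_{\tanSp{\calM}{y}}$ and (ii) a Gr\"onwall bound showing $\tau_k\to\Id$ on $\tanSp{\calM}{x}$. Two small technical corrections. First, your decomposition only type-checks once you adopt the extension $\widetilde{\tau}_k^{-1}\eqdef \tau_k^{-1}\circ\proj_{\tanSp{\calM}{\xk}}$ you allude to; with that convention the algebra $\widetilde{\tau}_k^{-1}(\proj_{\tanSp{\calM}{\xk}}-\proj_{\tanSp{\calM}{x}})+(\widetilde{\tau}_k^{-1}-\Id)\proj_{\tanSp{\calM}{x}}=A_k$ is indeed valid. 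Second, the ambient ODE for a parallel field $V$ along $\mathfrak{g}_k$ is $\dot V(t)=\mathrm{II}_{\mathfrak{g}_k(t)}\bigl(\dot{\mathfrak{g}}_k(t),V(t)\bigr)$, the second fundamental form applied to two tangent vectors; your expression $\Wgtmap{\mathfrak{g}_k(t)}\bigl(\dot{\mathfrak{g}}_k(t),\proj_{\normSp{\calM}{\mathfrak{g}_k(t)}}V(t)\bigr)$ vanishes identically because $V(t)$ is tangent. The Gr\"onwall conclusion $\norm{\tau_k h-h}\leq C\norm{\xk-x}\,\norm{h}$ survives unchanged once the formula is fixed, since $\mathrm{II}$ is locally bounded on a $C^2$ manifold.
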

%
%\begin{proof}
%From \cite[Chapter~5]{absil2009optimization}, we deduce that for $k$ sufficiently large,
%\[
%\tau_k^{-1} = \PT{\tanSp{\calM}{x}} + o(\norm{\xk-x}) .
%\]
%In addition, locally near $x$ along $\calM$, the operator $x \mapsto \PT{\tanSp{\calM}{x}}$ is $C^1$, hence,
%\begin{align*}
%\lim_{k \to +\infty} \qfrac{ \norm{\pa{\tau_k^{-1}\proj_{\tanSp{\calM}{\xk}} - \proj_{\tanSp{\calM}{x}}}u} }{ \norm{u}^2}
%&\leq \lim_{k \to +\infty} \qfrac{ \norm{\proj_{\tanSp{\calM}{x}}\pa{\proj_{\tanSp{\calM}{\xk}} - \proj_{\tanSp{\calM}{x}}} }  + o(\norm{\xk-x})}{\norm{u}}  
% = 0  .  \qedhere
%\end{align*} 
%%
%\end{proof}

%%%%%%%%%%%%%%%%%%%%%
\begin{lemma}[{\cite[Lemma B.2]{liang2017activity}}]\label{lem:taylor-expn}
Let $x, x'$ be two close points in $\calM$, denote $\tau : \tanSp{\calM}{x} \to \tanSp{\calM}{x'}$ the parallel translation along the unique geodesic joining $x$ to $x'$. The Riemannian Taylor expansion of $\Phi \in C^2(\calM)$ around $x$ reads,
\[\label{eq:taylor-expn}
\tau^{-1} \nabla_{\calM} \Phi(x') = \nabla_{\calM} \Phi(x) + \nabla^2_{\calM} \Phi(x)\proj_{\tanSp{\calM}{x}}(x'-x) + o(\norm{x'-x})  .
\]
\end{lemma}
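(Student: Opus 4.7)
The plan is to reduce the statement to a one-variable Taylor expansion along the unique geodesic joining $x$ to $x'$, using parallel translation to transfer everything back to the fixed tangent space $\tanSp{\calM}{x}$, and then convert the intrinsic step $h = \Exp_x^{-1}(x') \in \tanSp{\calM}{x}$ to the extrinsic projection $\proj_{\tanSp{\calM}{x}}(x'-x)$ via Lemma \ref{lem:proj-M}.

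First, since $x'$ is close enough to $x$ on $\calM$, define $h \eqdef \Exp_x^{-1}(x') \in \tanSp{\calM}{x}$ and let $\gamma(t) \eqdef \Exp_x(th)$ for $t \in [0,1]$, so $\gamma(0) = x$, $\gamma(1) = x'$ and $\dot\gamma(0) = h$. Let $P_t : \tanSp{\calM}{x} \to \tanSp{\calM}{\gamma(t)}$ denote parallel translation along $\gamma$, with $P_0 = \Id$ and $P_1 = \tau$. Because $\gamma$ is a geodesic, $\dot\gamma(t) = P_t h$ for all $t$. Define the $\tanSp{\calM}{x}$-valued curve $V(t) \eqdef P_t^{-1} \nabla_{\calM}\Phi(\gamma(t))$, so that $V(0) = \nabla_{\calM}\Phi(x)$ and $V(1) = \tau^{-1}\nabla_{\calM}\Phi(x')$.

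Second, by the standard identity relating parallel transport to the Levi-Civita connection, $V'(t) = P_t^{-1}\nabla_{\dot\gamma(t)}(\nabla_{\calM}\Phi) = [P_t^{-1} \nabla^2_{\calM}\Phi(\gamma(t)) P_t](h)$. Writing $h = \epsilon u$ with $\epsilon \eqdef \norm{h}$ and $\norm{u}=1$, the chain rule gives $V'(t) = \epsilon [P_t^{-1}\nabla^2_{\calM}\Phi(\gamma(t)) P_t](u)$. Since $\Phi \in C^2(\calM)$, the integrand in
\[
V(1) - V(0) = \epsilon \int_0^1 [P_t^{-1}\nabla^2_{\calM}\Phi(\gamma(t)) P_t](u)\,dt
\]
is continuous in $(t,\epsilon)$ and converges, as $\epsilon \to 0$, uniformly in $t \in [0,1]$ to $\nabla^2_{\calM}\Phi(x)(u)$. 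Hence $V(1) - V(0) = \epsilon\,\nabla^2_{\calM}\Phi(x)(u) + o(\epsilon) = \nabla^2_{\calM}\Phi(x)(h) + o(\norm{h})$.

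Third, I convert $h$ to the extrinsic projection. Because $\Exp_x$ is $C^2$ with $d\Exp_x|_0 = \Id_{\tanSp{\calM}{x}}$, the ambient identity $x' - x = h + O(\norm{h}^2)$ holds; projecting onto $\tanSp{\calM}{x}$ and using Lemma \ref{lem:proj-M} (which yields $x'-x = \proj_{\tanSp{\calM}{x}}(x'-x) + o(\norm{x'-x})$) gives $h = \proj_{\tanSp{\calM}{x}}(x'-x) + o(\norm{x'-x})$ and $\norm{h} \asymp \norm{x'-x}$. Substituting into the expansion for $V(1)$ and using continuity (hence boundedness) of the linear map $\nabla^2_{\calM}\Phi(x)$ absorbs the discrepancy into the $o$-term, delivering
\[
\tau^{-1}\nabla_{\calM}\Phi(x') = \nabla_{\calM}\Phi(x) + \nabla^2_{\calM}\Phi(x)\proj_{\tanSp{\calM}{x}}(x'-x) + o(\norm{x'-x}).
\]
The main obstacle is the last bookkeeping step: reconciling the intrinsic geodesic parametrization (needed to make $V'(0) = \nabla^2_{\calM}\Phi(x)(h)$ exact) with the extrinsic projection appearing in the statement; this is precisely where Lemma \ref{lem:proj-M} and the identity $d\Exp_x|_0 = \Id$ must be invoked cleanly so as to keep the error below $o(\norm{x'-x})$ rather than merely $O(\norm{x'-x})$.
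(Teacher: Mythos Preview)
Your proof is correct and follows the standard derivation of the first-order Riemannian Taylor expansion: pull the gradient back along the geodesic via parallel transport, differentiate using the identity $\frac{d}{dt}P_t^{-1}X(\gamma(t)) = P_t^{-1}\nabla_{\dot\gamma(t)}X$, and integrate. The conversion from the intrinsic step $h = \Exp_x^{-1}(x')$ to $\proj_{\tanSp{\calM}{x}}(x'-x)$ via $d\Exp_x|_0 = \Id$ and Lemma~\ref{lem:proj-M} is also handled correctly.

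The paper itself does not prove this lemma; it simply quotes it from \cite[Lemma~B.2]{liang2017activity}. So there is no ``paper's own proof'' to compare against here --- your argument is exactly the kind of proof one would give for this cited result, and nothing is missing.
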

%
%\begin{proof}
%Since $x, x' \in \calM$ are close, we have $x'=\Exp_{x}(h)$ for some $h \in \tanSp{\calM}{x}$ small enough, and thus, the Taylor expansion \cite[Remark 4.2]{smith1994optimization} of $\nabla_{\calM} \Phi$ around $x$ reads
%\beq\label{eq:taylor-expn-1}
%\tau^{-1} \nabla_{\calM} \Phi(x') = \nabla_{\calM} \Phi(x) + \nabla^2_{\calM} \Phi(x)h + O(\norm{h}^2)  .
%\eeq
%Moreover, form the proof of \cite[Theorem~4.9]{miller2005newton}, one can show that
%\[
%\PT{\tanSp{\calM}{x}}(x') = \PT{\tanSp{\calM}{x}}(\Exp_{x}(h)) = \PT{\tanSp{\calM}{x}}(x) + h + O(\norm{h}^2) .
%\]
%Substituting back into \eqref{eq:taylor-expn-1} we get the claimed result.
%%\[
%%\tau^{-1} \nabla_{\calM} \Phi(x') = \nabla_{\calM} \Phi(x) + \nabla^2_{\calM} \Phi(x)\proj_{\tanSp{\calM}{x}}(x'-x) + o(\norm{x'-x})  ,
%%\]
%%which is the claimed result.
%\end{proof}

\subsection{Linearization of proximal mapping}

When dealing with non-smooth optimization, one fundamental result, provided by partial smoothness, is the linearization of proximal mapping. 
We first discuss the property of the Riemannian Hessian of a partly smooth function. Let $R \in \lsc(\bbR^n)$ be partly smooth at $\xbar$ relative to $\calM_{\xbar}$ and $\ubar \in \partial R(\xbar)$, define the following smooth perturbation of $R$
\beq\label{eq:smooth-pert}
\barR(x) \eqdef R(x) - \iprod{x}{\ubar} ,
\eeq
whose Riemannian Hessian at $\xbar$ reads $H_{\barR} \eqdef \proj_{T_{\xbar}} \nabla^2_{\calM_{\xbar}} \barR \pa{\xbar} \proj_{T_{\xbar}}  $.

\begin{lemma}[{\cite[Lemma 4.2]{liang2017activity}}]\label{lem:riemhesspsd} 
Let $R \in \lsc(\bbR^n)$ be partly smooth at $\xbar$ relative to $\calM_{\xbar}$, then $H_{\barR}$ is symmetric positive semi-definite if either of the following is true:
\begin{itemize}[leftmargin=2em]
\item $\ubar \in \ri(\partial R(\xbar))$ is non-degenerate. 
\item $\calM_{\xbar}$ is an affine subspace.
\end{itemize}
In turn, $\Id + H_{\barR}$ is invertible and $\pa{\Id + H_{\barR}}^{-1}$ is symmetric positive definite with all eigenvalues in~$]0,1]$.
\end{lemma}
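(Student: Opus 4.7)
The plan is to reduce the statement to a second-order optimality condition on the manifold $\calM_{\xbar}$. First I would observe that since $R$ is convex and $\ubar$ is a linear functional, the perturbed function $\barR = R - \iprod{\cdot}{\ubar}$ is convex on $\bbR^n$. By the hypothesis $\ubar \in \partial R(\xbar)$, one has $0 = \ubar - \ubar \in \partial \barR(\xbar)$, and Fermat's rule for convex functions then gives that $\xbar$ is a \emph{global} minimizer of $\barR$ over $\bbR^n$. In particular, $\xbar$ minimizes $\barR$ on the submanifold $\calM_{\xbar}$.

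Next I would invoke the Riemannian second-order necessary optimality condition. For any tangent vector $h \in T_{\xbar} \eqdef \tanSp{\calM_{\xbar}}{\xbar}$, take the geodesic $t \mapsto \mathfrak{g}(t;\xbar,h)$ from Section \ref{subsec:riemgeom}: since $\barR$ is $C^2$ along $\calM_{\xbar}$ by the Smoothness clause of Definition \ref{dfn:psf}, the map $t \mapsto \barR(\mathfrak{g}(t;\xbar,h))$ is a $C^2$ real-valued function with a local minimum at $t=0$, whence
\[
\iprod{\nabla^2_{\calM_{\xbar}} \barR(\xbar) \, h}{h} \;=\; \tfrac{d^2}{dt^2} \barR\bigl(\mathfrak{g}(t;\xbar,h)\bigr)\Big|_{t=0} \;\geq\; 0 .
\]
This shows that the Riemannian Hessian is symmetric positive semi-definite as a bilinear form on $T_{\xbar}$, and hence that $H_{\barR} = \proj_{T_{\xbar}} \nabla^2_{\calM_{\xbar}} \barR(\xbar) \proj_{T_{\xbar}}$ satisfies $\iprod{H_{\barR} v}{v} = \iprod{\nabla^2_{\calM_{\xbar}} \barR(\xbar) \proj_{T_{\xbar}} v}{\proj_{T_{\xbar}} v} \geq 0$ for every $v \in \bbR^n$.

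The step I expect to require the two alternative hypotheses is verifying that $H_{\barR}$ is actually well-defined and symmetric on $\bbR^n$ via the extrinsic formula \eqref{eq:RieHessian}. Under the non-degeneracy assumption $\ubar \in \ri(\partial R(\xbar))$, the continuity clause of partial smoothness together with the sharpness condition $T_{\xbar} = \LinHull(\partial R(\xbar))^\perp$ guarantees that any $C^2$ smooth representative $\widetilde{R}$ of $R$ on $\calM_{\xbar}$ yields $\proj_{T_{\xbar}^\perp}\nabla \widetilde{\barR}(\xbar) = \proj_{T_{\xbar}^\perp}(\nabla \widetilde{R}(\xbar) - \ubar)$, a fixed vector independent of the extension, so the Weingarten correction $\Wgtmap{\xbar}(\cdot, \proj_{T_{\xbar}^\perp}\nabla \widetilde{\barR}(\xbar))$ is a well-defined symmetric operator on $T_{\xbar}$. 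When $\calM_{\xbar}$ is an affine subspace, the Weingarten map vanishes identically and the formula collapses to $\proj_{T_{\xbar}}\nabla^2 \widetilde{\barR}(\xbar)\proj_{T_{\xbar}}$, which is manifestly symmetric. In either case, $H_{\barR}$ is the symmetric PSD operator produced in the previous step.

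Finally, to conclude the statement about $\Id + H_{\barR}$: once $H_{\barR}$ is symmetric PSD, its spectrum lies in $[0,+\infty[$, so $\Id + H_{\barR}$ is symmetric with spectrum in $[1,+\infty[$; in particular it is invertible, and $(\Id + H_{\barR})^{-1}$ is symmetric with eigenvalues $\{1/(1+\lambda_i)\} \subset \,]0,1]$. This gives the claimed bound on the spectrum of the inverse.
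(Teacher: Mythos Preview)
The paper does not prove this lemma; it is cited from \cite[Lemma~4.2]{liang2017activity} and used as a black box, so there is no in-paper proof to compare against. Your core argument --- convexity of $R$ together with $\ubar \in \partial R(\xbar)$ forces $\xbar$ to be a global minimizer of $\barR$, hence a minimizer of $\barR|_{\calM_{\xbar}}$, and the second-order necessary condition along geodesics gives $\iprod{\nabla^2_{\calM_{\xbar}}\barR(\xbar)h}{h}\geq 0$ for all $h\in T_{\xbar}$ --- is correct, as is the final spectral conclusion for $(\Id+H_{\barR})^{-1}$.

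Where your write-up drifts is in attributing the two alternative hypotheses to well-definedness or symmetry of $H_{\barR}$ via the extrinsic formula. The intrinsic Riemannian Hessian of a $C^2$ function on a manifold is always a well-defined symmetric bilinear form on the tangent space, independent of any ambient extension; your own geodesic computation uses only that. In fact your first two paragraphs already establish PSD \emph{without} invoking either hypothesis, because convexity (built into $R\in\lsc(\bbR^n)$ here) is what makes $\xbar$ a minimizer of $\barR$. The two hypotheses are inherited from the cited reference, where the statement is formulated in a broader setting in which convexity alone is not assumed and one needs either non-degeneracy (to force local minimality of $\barR$ on $\calM_{\xbar}$ via a tilt-type argument) or an affine manifold (to kill the Weingarten correction and reduce to ordinary second-order convexity along a flat). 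Under the standing convexity assumption of this paper they are redundant for the PSD conclusion, so your argument actually proves a slightly stronger statement than the lemma asserts.
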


Together with the previous results, Lemma \ref{lem:riemhesspsd} allows us to linearize the generalized proximal mapping defined below. 

\begin{definition}[Generalised proximal mapping]\label{def:gprox}
Let $R \in \lsc(\bbR^n)$ and $\gamma > 0$, the generalized proximal mapping of $R$ is defined by
\beq\label{eq:gprox}
\prox_{\gamma R}^{A}(\wbar) \eqdef \argmin_{x\in \bbR^n} \gamma R(x) + \sfrac{1}{2} \norm{Ax - \wbar}^2  ,
\eeq
where $\wbar \in \bbR^p$ and $A \in \bbR^{p \times n}$ has full column rank.
\end{definition}

Since $A$ has full column rank, $\prox_{\gamma R}^{A}$ is a single-valued mapping. When $A=\Id$, \eqref{eq:gprox} reduces to the standard definition of proximity operator. 
Denote $\xbar \eqdef \prox_{\gamma R}^{A}(\wbar)$, owing to the optimality condition, we have $\ubar \eqdef - A^T(A\xbar - \wbar)/\gamma \in \partial R(\xbar)$. Suppose $R$ is partly smooth at $\xbar$ relative to $\calM_{\xbar}$, and let $\barR \eqdef \gamma R(x) - \iprod{x}{\gamma \ubar} $ be a smooth perturbation of $\gamma R$. 
Define $A_{T_{\xbar}} = A \circ \PT{T_{\xbar}}$ which also has full column rank, hence $A_{T_{\xbar}}^TA_{T_{\xbar}}$ is invertible and we define 
\[
M_{\barR} \eqdef A_{T_{\xbar}} \pa{ \Id + (A_{T_{\xbar}}^T A_{T_{\xbar}})^{-1} H_{\barR} }^{-1} (A_{T_{\xbar}}^T A_{T_{\xbar}})^{-1} A_{T_{\xbar}}^T  . 
\]
 %, then
%\[
%A_{T_{\xbar}} (\xk-\xkm)    
%= M_{\barR} \pa{\wk-\wkm} + o(\norm{ \wk-\wkm } )   .
%\] 

\begin{lemma}[{\cite[Lemma C.9]{PoonLiang2019b}}]%[Linearization of $\prox_{\gamma R}^{A}(\wbar)$]
\label{lem:lin-generalised-ppa}
For the proximal mapping defined in \eqref{eq:gprox}, suppose $R \in \lsc(\bbR^n)$ is partly smooth at $\xbar$ relative to a $C^2$-smooth manifold $\calM_{\xbar}$ and $\ubar \in \ri \pa{\partial R(\xbar)}$. 
Let $\seq{\wk}$ be a sequence such that $\wk \to \wbar$ and $\xk  = \prox_{\gamma R}^{A}(\wk) \to \xbar$, then for all $k$ large enough, there hold $\xk \in \calM_{\xbar}$ and 
\beq\label{eq:lin-generalised-ppa}
A_{T_{\xbar}} (\xk-\xkm)  = M_{\barR} \pa{\wk - \wkm} + o(\norm{\wk-\wkm})   . %,
\eeq
\end{lemma}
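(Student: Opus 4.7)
The plan is to mimic the linearization proofs for the standard proximal mapping (cf.\ \cite{liang2017activity}), but generalised to the presence of the linear operator $A$. First, I would invoke finite manifold identification: because $R\in\PSF{\xbar}{\calM_{\xbar}}$, $\ubar\eqdef -A^{T}(A\xbar-\wbar)/\gamma\in\ri(\partial R(\xbar))$, and the continuous subgradient selection $-A^{T}(A\xk-\wk)/\gamma\in\partial R(\xk)$ converges to $\ubar$, the standard Hare--Lewis identification argument yields $\xk\in\calM_{\xbar}$ for all $k$ large enough; this also makes the Riemannian gradient $\nabla_{\calM_{\xbar}}R(\xk)$ well defined.

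Next I would transform the ambient optimality condition into a Riemannian one. Projecting the inclusion $A^{T}(\wk-A\xk)\in\gamma\partial R(\xk)$ onto $\tanSp{\calM_{\xbar}}{\xk}$ and using sharpness gives $P_{T_{\xk}}A^{T}(\wk-A\xk)=\gamma\nabla_{\calM_{\xbar}}R(\xk)$, hence, for the perturbed smooth representative $\bar R$ of \eqref{eq:smooth-pert},
\[
P_{T_{\xk}}\bigl[A^{T}(\wk-A\xk)-\gamma\ubar\bigr]=\nabla_{\calM_{\xbar}}\bar R(\xk).
\]
Writing the analogous identity at $\xkm$, parallel-translating to $\tanSp{\calM_{\xbar}}{\xbar}$ via $\tau_{k}$ and invoking Lemma~\ref{lem:parallel-translation} to replace each $P_{T_{\xk}},P_{T_{\xkm}}$ by $\PT{T_{\xbar}}$ up to $o$-terms, then applying the Riemannian Taylor expansion of Lemma~\ref{lem:taylor-expn} at $\xbar$ (the zeroth-order terms cancel by construction of $\bar R$), I obtain
\[
\PT{T_{\xbar}}A^{T}(\wk-\wkm)-\PT{T_{\xbar}}A^{T}A(\xk-\xkm)=H_{\barR}\,\PT{T_{\xbar}}(\xk-\xkm)+o(\norm{\xk-\xkm})+o(\norm{\wk-\wkm}).
\]

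Using Lemma~\ref{lem:proj-M} (two points in $\calM_{\xbar}$ differ by their tangent projection up to $o$), and $A_{T_{\xbar}}^{T}A_{T_{\xbar}}=\PT{T_{\xbar}}A^{T}A\PT{T_{\xbar}}$, I can replace $\PT{T_{\xbar}}A^{T}A(\xk-\xkm)$ by $A_{T_{\xbar}}^{T}A_{T_{\xbar}}\PT{T_{\xbar}}(\xk-\xkm)$ at the same cost. The resulting identity reads
\[
A_{T_{\xbar}}^{T}(\wk-\wkm)=\bigl(A_{T_{\xbar}}^{T}A_{T_{\xbar}}+H_{\barR}\bigr)\PT{T_{\xbar}}(\xk-\xkm)+o(\norm{\xk-\xkm})+o(\norm{\wk-\wkm}).
\]
By Lemma~\ref{lem:riemhesspsd}, $H_{\barR}$ is symmetric PSD on $T_{\xbar}$, so $A_{T_{\xbar}}^{T}A_{T_{\xbar}}+H_{\barR}$ is invertible on $T_{\xbar}$. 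Solving for $\PT{T_{\xbar}}(\xk-\xkm)$, pre-multiplying by $A_{T_{\xbar}}$, and using the algebraic identity $A_{T_{\xbar}}(A_{T_{\xbar}}^{T}A_{T_{\xbar}}+H_{\barR})^{-1}A_{T_{\xbar}}^{T}=M_{\barR}$ yields the claim, provided the $o(\norm{\xk-\xkm})$ remainder is absorbed into $o(\norm{\wk-\wkm})$; this last step follows from the non-expansiveness of $\prox_{\gamma R}^{A}$ (up to the constant $\norm{(A_{T_{\xbar}}^{T}A_{T_{\xbar}}+H_{\barR})^{-1}A_{T_{\xbar}}^{T}}$) which gives $\norm{\xk-\xkm}=O(\norm{\wk-\wkm})$ locally. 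The main technical hurdle will be Step~2/3: keeping track of which projections and parallel translations are being used and verifying that the error terms genuinely remain of the claimed order when one lumps the Weingarten contribution inside $H_{\barR}$ together with the linearised Euclidean Hessian.
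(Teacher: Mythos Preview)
Your strategy matches the paper's: identification via Hare--Lewis, project the optimality condition onto the tangent space, Riemannian Taylor expansion, then algebraically solve. The Lipschitz bound $\norm{\xk-\xkm}=O(\norm{\wk-\wkm})$ is obtained in the paper not from any ``non-expansiveness'' but directly from monotonicity of $\partial R$: $\iprod{\hk-\hkm}{\xk-\xkm}\geq 0$ together with $A^TA\succeq\kappa\Id$ gives $\norm{\xk-\xkm}\leq\tfrac{\norm{A}}{\kappa}\norm{\wk-\wkm}$. Your final algebraic identity $A_{T_{\xbar}}(A_{T_{\xbar}}^{T}A_{T_{\xbar}}+H_{\barR})^{-1}A_{T_{\xbar}}^{T}=M_{\barR}$ is correct.

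There is one genuine technical point where your route and the paper's diverge, and it is exactly the step you flag as the ``main hurdle''. You propose to Taylor expand $\nabla_{\calM_{\xbar}}\barR$ at $\xbar$ for both $\xk$ and $\xkm$ and subtract. That produces remainders $o(\norm{\xk-\xbar})$ and $o(\norm{\xkm-\xbar})$, which are \emph{not} automatically $o(\norm{\xk-\xkm})$; likewise, replacing $P_{T_{\xk}},P_{T_{\xkm}}$ by $P_{T_{\xbar}}$ term-by-term via Lemma~\ref{lem:parallel-translation} gives errors of size $o(\norm{g_k})$ with $g_k\eqdef A^{T}(\wk-A\xk)-\gamma\ubar$, again not obviously $o(\norm{\wk-\wkm})$. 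The paper sidesteps both issues by expanding at the \emph{iterate} $\xkm$ rather than at $\xbar$, and by using the $k$-dependent perturbation $\barR_{k-1}(x)=\gamma R(x)-\iprod{x}{\hkm}$ with $\hkm=-A^{T}(A\xkm-\wkm)$ in place of your fixed $\barR$ with $\gamma\ubar$. This choice does two things at once: the Taylor remainder is natively $o(\norm{\xk-\xkm})$, and the awkward parallel-translation term $(\tau_k P_{T_{\xk}}-P_{T_{\xkm}})\hkm$ is absorbed exactly into $\tau_k\nabla_{\calM_{\xbar}}\barR_{k-1}(\xk)-\nabla_{\calM_{\xbar}}\barR_{k-1}(\xkm)$. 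Only at the very end does the paper pass from $H_{\barR,k-1},P_{T_{\xkm}}$ to $H_{\barR},P_{T_{\xbar}}$ by continuity, and there the extra error is manifestly $o(\norm{\xk-\xkm})$ because the operator differences vanish while the vector they act on already has the right size. Your route can be repaired with a mean-value/continuity argument of the same flavour, but as written the claimed $o(\norm{\xk-\xkm})$ after ``Taylor at $\xbar$ and subtract'' needs justification.
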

\begin{remark}
When $A = \Id$, $\prox_{\gamma R}^{A}$ reduces to the standard proximal mapping and \eqref{eq:lin-generalised-ppa} simplifies to
\[
\xk-\xkm  = \PT{T_{\xbar}} \pa{\Id + H_{\barR}}^{-1} \PT{T_{\xbar}} \pa{\wk - \wkm} + o(\norm{\wk-\wkm})   . 
\]
In \cite{liang2016thesis} and references therein, to study the local linear convergence of first-order methods, linearization with respect to the limiting point $\xbar$ is provided, that is
$
\xk-\xbar  = \PT{T_{\xbar}} \pa{\Id + H_{\barR}}^{-1} \PT{T_{\xbar}} \pa{\wk - \wbar} + o(\norm{\wk-\wbar}) 
$. 
\end{remark}
For the sake of completeness, we provide the proof of Lemma \ref{lem:lin-generalised-ppa} below.
\begin{proof}
Since $R$ is proper convex and lower semi-continuous, we have $R(\xk) \to R(\xbar)$ and $\partial R(\xk) \ni \uk = - A^T (A\xk - \wk)/\gamma \to \ubar \in \ri \pa{\partial R(\xbar)}$, hence $\dist\pa{\uk, \partial R(\xbar)} \to 0$. As a result, we have $\xk \in \calM_{\xbar}$ owing to \cite[Theorem~5.3]{hare2004identifying} and $\uk \in \ri\pa{\partial R(\xk)}$ owing to \cite{vaiter2018model} for all $k$ large enough.

Denote $T_{\xk}, T_{\xkm}$ the tangent spaces of $\calM_{\xbar}$ at $\xk$ and $\xkm$. Denote $\tau_{k} : T_{\xk} \to T_{\xkm}$ the parallel translation along the unique geodesic on $\calM_{\xbar}$ joining $\xk$ to $\xkm$. 
From the definition of $\xk$, let $\hk = \gamma \uk$, we get
\[
\hk \eqdef - A^T\pa{ A\xk - \wk } \in \gamma \partial R(\xk)   
\qandq
\hkm \eqdef - A^T\pa{ A\xkm - \wkm } \in \gamma \partial R(\xkm)   .
\]
Projecting onto corresponding tangent spaces, applying Lemma \ref{def:riemannian-gradhess} % \todo{you mean Lemma \ref{def:riemannian-gradhess}?} 
and the parallel translation $\tau_{k}$ leads to
\[
\begin{aligned}
\gamma \tau_{k} \nabla_{\calM_{\xbar}} R(\xk) 
&= \tau_{k} \PT{T_{\xk}} \pa{ \hk } 
= \PT{T_{\xkm}} \pa{ \hk } + \Pa{\tau_{k} \PT{T_{\xk}} - \PT{T_{\xkm}}} \pa{ \hk } , \\
\gamma \nabla_{\calM_{\xbar}} R(\xkm) 
&= \PT{T_{\xkm}} \pa{ \hkm } .
\end{aligned}
\]
The difference of the above two equalities yields
\beq\label{eq:wk-xk}
\begin{aligned}
\gamma \tau_{k} \nabla_{\calM_{\xbar}} R(\xk)  - \gamma \nabla_{\calM_{\xbar}} R(\xkm) - \Pa{\tau_{k} \PT{T_{\xk}} - \PT{T_{\xkm}}} \pa{ \hkm  }  
&= \PT{T_{\xkm}} \pa{ \hk - \hkm } + \Pa{\tau_{k} \PT{T_{\xk}} - \PT{T_{\xkm}}} \pa{ \hk - \hkm }   .
\end{aligned}
\eeq
Owing to the monotonicity of sub-differential, \ie $\iprod{\hk-\hkm}{\xk-\xkm} \geq 0$, we get
\[
\iprod{A^TA(\xk-\xkm)}{\xk-\xkm} 
\leq \iprod{A^T(\wk-\wkm)}{\xk-\xkm}  
\leq \norm{A}\norm{\wk-\wkm}\norm{\xk-\xkm}  .
\]
Since $A$ has full column rank, $A^TA$ is symmetric positive definite, and there exists $\kappa > 0$ such that $\kappa \norm{\xk-\xkm}^2 \leq \iprod{A^TA(\xk-\xkm)}{\xk-\xkm}$. Back to the above inequality, we get $\norm{\xk-\xkm} \leq \frac{\norm{A}}{\kappa} \norm{\wk-\wkm}$.  
Therefore for $\norm{\hk-\hkm}$, we get
\[
\begin{aligned}
\norm{\hk - \hkm} 
=\norm{ A^T\pa{ A\xk - \wk } - A^T\pa{ A\xkm - \wkm } }  
\leq \norm{A}^2 \norm{\xk-\xkm} + \norm{A}\norm{\wk-\wkm}  
&\leq \Pa{ \sfrac{\norm{A}^3}{\kappa} + \norm{A}} \norm{\wk - \wkm}  .
\end{aligned}
\]
%\todo{Why do you write 'for the last line of  \eqref{eq:wk-xk}'? This statement is neither directly bounding the last line or derived from it (i see you use this later on, but not right here)}
As a result, owing to Lemma \ref{lem:parallel-translation}, we have for the term $\pa{\tau_{k} \PT{T_{\xk}} - \PT{T_{\xkm}}} \pa{ \hk - \hkm }$ in \eqref{eq:wk-xk} that
\[
\Pa{\tau_{k} \PT{T_{\xk}} - \PT{T_{\xkm}}} \pa{ \hk - \hkm }	\\
= o(\norm{ \hk-\hkm } )   
= o(\norm{ \wk-\wkm } )   .
\] 
Define $\barR_{k-1}(x) \eqdef \gamma R(x) - \iprod{x}{\hkm} $ and $H_{\barR, k-1} \eqdef \PT{T_{\xkm}} \nabla_{\calM_{\xbar}}^2 \barR_{k-1}(\xkm) \PT{T_{\xkm}}$, then with Lemma \ref{lem:taylor-expn} the Riemannian Taylor expansion, we have for the first line of \eqref{eq:wk-xk}
\beq\label{eq:rie-hess-xk}
\begin{aligned}
 \gamma \tau_{k} \nabla_{\calM_{\xbar}} R(\xk) - \gamma \nabla_{\calM_{\xbar}} R(\xkm) - \Pa{\tau_{k} \PT{T_{\xk}} - \PT{T_{\xkm}}} \pa{\hkm} 
&= \tau_{k} \Pa{\gamma \nabla_{\calM_{\xbar}} R(\xk) - \PT{T_{\xk}} \pa{\hkm}} - \Pa{ \gamma \nabla_{\calM_{\xbar}} R(\xkm) -  \PT{T_{\xkm}} \pa{\hkm} } \\
&= \tau_{k} \nabla_{\calM_{\xbar}} \barR_{k-1} (\xk) - \nabla_{\calM_{\xbar}} \barR_{k-1} (\xkm)  \\
&= H_{\barR, k-1} (\xk-\xkm) + o(\norm{\xk-\xkm})  \\
&= H_{\barR, k-1} (\xk-\xkm) + o(\norm{\wk-\wkm})   .
\end{aligned}
\eeq
Back to \eqref{eq:wk-xk}, we get
\beq\label{eq:wk-xk-2}
\begin{aligned}
H_{\barR, k-1} (\xk-\xkm)
&= \PT{T_{\xkm}} \pa{ \hk - \hkm } + o(\norm{\wk-\wkm})    .
\end{aligned}
\eeq
Define $ \barR(x) \eqdef  \gamma R(x) - \iprod{x}{ \bar{h} } $ and $H_{\barR} = \PT{T_{\xbar}} \nabla^2_{\calM_{\xbar}} \barR\pa{\xbar} \PT{T_{\xbar}}$, then from \eqref{eq:wk-xk-2} that
\beq\label{eq:xk-xkm-HbarR}
\begin{aligned}
H_{\barR} (\xk-\xkm) + \Pa{ H_{\barR, k-1} - H_{\barR} } (\xk-\xkm)  
&=  \PT{T_{\xbar}} \pa{ \hk - \hkm } +  \Pa{ \PT{T_{\xkm}} - \PT{T_{\xbar}} } \pa{ \hk - \hkm }  + o(\norm{\wk-\wkm})  .
\end{aligned}
\eeq
Owing to continuity, we have $H_{\barR, k-1} \to H_{\barR}$ and $\PT{T_{\xkm}} \to \PT{T_{\xbar}}$, %and (\tcr{Owing to the definition, $H_{\barR, k-1}(\xkm) = H_{\barR}(\xkm)$})
\[
\begin{gathered}
\lim_{k\to\pinf} \tfrac{ \norm{ \pa{ H_{\barR, k-1} - H_{\barR} } (\xk-\xkm) } }{ \norm{\xk-\xkm} }
\leq \lim_{k\to\pinf} \tfrac{ \norm{ H_{\barR, k-1} - H_{\barR} } \norm{ \xk-\xkm } }{ \norm{\xk-\xkm} }
= \lim_{k\to\pinf} \norm{ H_{\barR, k-1} - H_{\barR} } = 0  ,  \\
\lim_{k\to\pinf} \tfrac{ \norm{ \pa{ \PT{T_{\xkm}} - \PT{T_{\xbar}} } \pa{\wk - \wkm}  } }{ \norm{\wk-\wkm} }
\leq \lim_{k\to\pinf} \tfrac{ \norm{ \PT{T_{\xkm}} - \PT{T_{\xbar}} } \norm{ \wk-\wkm } }{ \norm{\wk-\wkm} }
= \lim_{k\to\pinf} \norm{ \PT{T_{\xkm}} - \PT{T_{\xbar}} } = 0  , % \\
%\lim_{k\to\pinf} \tfrac{ \norm{ \pa{ \PT{T_{\xkm}} - \PT{T_{\xbar}}} \pa{\xk - \xkm}  } }{ \norm{\xk-\xkm} } 
%= 0  .
\end{gathered}
\]
and $\lim_{k\to\pinf} \tfrac{ \norm{ \pa{ \PT{T_{\xkm}} - \PT{T_{\xbar}}} \pa{\xk - \xkm}  } }{ \norm{\xk-\xkm} } = 0$. 
Combining this with the definition of $\uk$, the fact that  $\xk-\xkm = \PT{T_{\xbar}}(\xk-\xkm) + o(\norm{\xk-\xkm})$ from Lemma \ref{lem:proj-M}, and denoting  $A_{T_{\xbar}} = A \circ \PT{T_{\xbar}}$, equation \eqref{eq:xk-xkm-HbarR} can be written as
\beq\label{eq:aaa}
\begin{aligned}
H_{\barR} (\xk-\xkm)  
=  \PT{T_{\xbar}} \pa{ \uk - \ukm } + o(\norm{\wk-\wkm})  
&=  - \PT{T_{\xbar}} \pa{ A^T\pa{ A\xk - \wk }  -  A^T\pa{ A\xkm - \wkm } } + o(\norm{\wk-\wkm})  \\
&=  - \PT{T_{\xbar}} A^T A \pa{ \xk - \xkm } + \PT{T_{\xbar}} A^T \pa{ \wk - \wkm } + o(\norm{\wk-\wkm})  \\
&=  - A_{T_{\xbar}}^T A_{T_{\xbar}} \pa{ \xk - \xkm } + A_{T_{\xbar}}^T \pa{ \wk - \wkm } + o(\norm{\wk-\wkm}) .
\end{aligned}
\eeq
Since $A$ has full rank, so is $A_{T_{\xbar}}$. Hence $A_{T_{\xbar}}^TA_{T_{\xbar}}$ is invertible and from above we have
\[
\begin{aligned}
\Pa{ \Id + (A_{T_{\xbar}}^T A_{T_{\xbar}})^{-1} H_{\barR} } (\xk-\xkm)  
&= (A_{T_{\xbar}}^T A_{T_{\xbar}})^{-1} A_{T_{\xbar}}^T \pa{\wk-\wkm} + o(\norm{\wk-\wkm})   .
\end{aligned}
\]
Denote $M_{\barR} = A_{T_{\xbar}} \pa{ \Id + (A_{T_{\xbar}}^T A_{T_{\xbar}})^{-1} H_{\barR} }^{-1} (A_{T_{\xbar}}^T A_{T_{\xbar}})^{-1} A_{T_{\xbar}}^T$, then
\beq\label{eq:xkxkm-MbarR-wkwkm}
\begin{aligned}
A_{T_{\xbar}} (\xk-\xkm)    
&= M_{\barR} \pa{\wk-\wkm} + o(\norm{\wk-\wkm})   ,
\end{aligned}
\eeq 
which concludes the proof. 
\qedhere
\end{proof}

\subsection{Proof of main results}\label{annex:linearisation}

\subsubsection{Forward--Backward splitting method}\label{proof:trajectory-fb}

\begin{proof}[Proof of Theorem \ref{thm:trajectory-fb}]
The proof of the result is split into several steps. 

\vgap

{\noindent}\textbf{Linearization of Forward--Backward splitting}~~
The convergence of $\seq{\xk}$ to a global minimizer $\xsol \in \Argmin(\Phi)$ can be guaranteed under proper choices of $\ak$ and $\gamma$, we refer to \cite{liang2017activity} and the reference therein for detailed discussion. 
When $R \in \PSF{\xsol}{\Msol}$ and the non-degeneracy condition \eqref{eq:ndc-fb} holds, then there exists $K > 0$ such that for all $k \geq K$,  there holds $\xk \in \Msol$; see \cite[Theorem 3.4]{liang2017activity}.

Denote $\usol = -\nabla F(\xsol)$, from the non-degeneracy condition \eqref{eq:ndc-fb} we have $\usol \in \ri \pa{\partial R(\xsol)}$. Define $\barR(x) = \gamma R(x) - \iprod{x}{-\nabla F(\xsol)}$, and the following matrices
\beqn
H_{\barR} \eqdef \PT{T_{\xsol}}  \nabla^2_{\Msol} {\barR}(\xsol) \PT{T_{\xsol}} 
\qandq
M_{\barR} \eqdef \PT{T_{\xsol}} (\Id + H_{\barR})^{-1}  \PT{T_{\xsol}}  .
\eeqn
Let $\wk \eqdef \xk - \gamma \nabla F(\xk)$, then the update of $\xk$ entails that $\xkp = \argmin_{x\in\bbR^n} \gamma R(x) + \frac{1}{2} \norm{x - \wk}^2$. 
%\[
%\xkp = \argmin_{x\in\bbR^n} \gamma R(x) + \sfrac{1}{2} \norm{x - \wk}^2  .
%\]
Owing to Lemma \ref{lem:lin-generalised-ppa}, we get that
\beq\label{eq:lin-fb-1}
\xkp-\xk    
= M_{\barR} \pa{\wk-\wkm} + o(\norm{ \wk-\wkm } )  .
\eeq
Next we deal with the term $\wk-\wkm$, from the local $C^2$-smoothness of $F$ we get
\[
\begin{aligned}
\wk-\wkm
&= \xk - \xkm - \gamma \Pa{ \nabla F(\xk) - \gamma \nabla F(\xkm) }  \\
&= \xk - \xkm - \gamma \nabla^2 F(\xkm) (\xk - \xkm)  +  o(\norm{\xk-\xkm})  \\
&= \xk - \xkm - \gamma \nabla^2 F(\xsol) (\xk - \xkm) - \gamma \Pa{ \nabla^2 F(\xkm) - \nabla^2 F(\xsol) } (\xk - \xkm)  +  o(\norm{\xk-\xkm})   .
\end{aligned}
\]
Since $\xk \to \xsol$, we have $\nabla^2 F(\xkm) \to \nabla^2 F(\xsol)$, then from above we get
\[
\wk-\wkm
= \Pa{\Id - \gamma \nabla^2 F(\xsol) } \pa{ \xk - \xkm } +  o(\norm{\xk-\xkm})   .
\]
As $\Id-\gamma \nabla F$ is non-expansive, then $\norm{ \wk-\wkm } \leq \norm{ \xk-\xkm }$. Therefore, back to \eqref{eq:lin-fb-1}, 
\beq\label{eq:lin-fb-2}
\xkp-\xk    
= M_{\barR} \Pa{\Id - \gamma \nabla^2 F(\xsol) } \pa{\xk-\xkm} + o(\norm{\xk-\xkm})  ,
\eeq
which is the desired linearization with $\mFB = M_{\barR} \Pa{\Id - \gamma \nabla^2 F(\xsol)} $.

\vgap

{\noindent}\textbf{Spectral properties of $\mFB$}~~ 
In this part, we briefly discuss the spectrum of $\mFB$ where more detailed accountant can be found in \cite{liang2017activity} and \cite[Chapter 6]{liang2016thesis}. 
Owing to Lemma \ref{lem:riemhesspsd}, we have $H_{\barR}$ is symmetric positive semi-definite, hence $M_{\barR}$ is symmetric positive definite with all its eigenvalues in $]0, 1]$. As a result, we have
\beq\label{eq:MG-sim}
M_{\barR} \Pa{\Id - \gamma \nabla^2 F(\xsol)}
= M_{\barR}^{1/2} M_{\barR}^{1/2} \Pa{\Id - \gamma \nabla^2 F(\xsol)} M_{\barR}^{1/2} M_{\barR}^{-1/2} 
\sim M_{\barR}^{1/2} \Pa{\Id - \gamma \nabla^2 F(\xsol)} M_{\barR}^{1/2} ,
\eeq
where $M_{\barR}^{1/2} \Pa{\Id - \gamma \nabla^2 F(\xsol)} M_{\barR}^{1/2}$ is symmetric, with all its eigenvalues in $]-1, 1]$ since $\gamma \in ]0, 2/L[$. % \todo{since we say this matrix is positive, eigenvalues are in $]0,1]$?}

\vgap

{\noindent}\textbf{Trajectory of Forward--Backward splitting}~~%\todo{To be updated...}
%Let $1 \geq e_1 > e_2$ be the two largest eigenvalues of $\Gamma$, and suppose $a \leq \frac{(1 - \sqrt{1-e_2})^2}{e_2}$, then the leading eigenvalue of $\Sigma$ is real which is denoted by $\sigma$. Then
%\[
%\cos(\theta_k) = \sfrac{\dotp{\vk}{\vkm}}{\norm{\vk}\norm{\vkm}}
%= \sfrac{\dotp{M \vkm}{\vkm}}{\norm{\vkm}^2}   \sfrac{\norm{\vkm}}{\norm{M \vkm}}  
%= \sfrac{\dotp{M \vkm}{\vkm}}{\norm{\vkm}^2}  \sfrac{\norm{\vkm}}{\norm{M \vkm}}  .
%\]
%Note that  $\frac{\dotp{M \vkm}{\vkm}}{\norm{\vkm}^2}$ is simply the power iteration of $M$ with starting vector $v_0$ and converges to $\sigma_1$ with $\frac{\dotp{M \vkm}{\vkm}}{\norm{\vkm}^2} = \sigma + o(1)$, $\frac{\norm{\vkm}}{\norm{M \vkm}} \to 1/\sigma$. Combine them together we get $\cos(\theta_k) \to 1$, hence $\theta_k \to 0$. 
%
%
Let $\vk = \xk - \xkm$, from the above discussion, we have
\[
\begin{aligned}
\vkp 
&= \mFB \vk + o(\norm{\vk})  \\
&= M_{\barR}^{1/2} M_{\barR}^{1/2} \Pa{\Id - \gamma \nabla^2 F(\xsol)} M_{\barR}^{1/2} M_{\barR}^{-1/2} \vk + o(\norm{\vk}) .
\end{aligned}
\]
Denote $\uk = M_{\barR}^{-1/2} \vk$ and $\psi_k =  M_{\barR}^{-1/2} o(\norm{\vk})$, then we reach
\[
\begin{aligned}
\ukp 
&= M_{\barR}^{1/2} \Pa{\Id - \gamma \nabla^2 F(\xsol)} M_{\barR}^{1/2} \uk + \psi_k .
\end{aligned}
\]
Since $M \eqdef M_{\barR}^{1/2} \Pa{\Id - \gamma \nabla^2 F(\xsol)} M_{\barR}^{1/2}$ is symmetric positive semidefinite, there exist an orthogonal matrix $V$ such that $M = V \Sigma V^\top$ where $\Sigma$ is a diagonal matrix with eigenvalues of $M$ on its diagonal. Without loss of generality, let $\Sigma = \Sigma_1 + \Sigma_2$, where
$$
\Sigma_1 = \sigma_1 \begin{bmatrix}
\Id_{p\times p} &\\
& 0
\end{bmatrix}
\qandq 
\Sigma_2 =  \begin{bmatrix}
0_{p\times p} \\
& \sigma_2 &\\
& & \ddots &\\
&&& \sigma_N
\end{bmatrix}
$$
where $\sigma_1>\sigma_2\geq \sigma_3 \geq \cdots\geq \sigma_N$. Let $P_1 = \frac{1}{\sigma_1}\Sigma_1$ and $P_2 = \Id - P_1$.  
Then, letting $w_k = V^\top u_k$ and $\phi_k = V^\top \psi_k$, we have
$$
w_{k+1} = \Sigma_1 w_k + \Sigma_2 w_k + \phi_k.
$$
In particular, 
$$
\sigma_1 w_k - w_{k+1} = \sigma_1 P_2 w_k - \Sigma_2 w_k + \sigma \psi_{k-1}-\psi_k
$$
To bound $\norm{P_2 u_k}$, we have
\[
P_2 w_k = \Sigma_2 w_{k-1} + P_2 \phi_{k-1} = \Sigma_2^k w_{0} + \msum_{j=0}^{k-1} \Sigma_2^j P_2 \phi_{k-j}
\]
which implies
\[\norm{P_2 w_k} \leq \sigma_2^J \norm{P_2 w_{k-J}} + \msum_{j=0}^{J-1} \sigma_2^j o(\norm{w_{k-j}}) = \sigma_2^J \sigma_1^{k-J} +   \msum_{j=0}^{J-1} (\sigma_2/\sigma_1)^j   \alpha_{k-j} \sigma_1^{k}  = o(\sigma_1^k),
\]
where $\norm{\phi_k} = \alpha_k \sigma_1^k$ with $\alpha_k \to 0$, we choose $J= k/2$ and
since $\norm{w_k} = O(\sigma_1^k)$. Therefore,
$$
\min_c \norm{c w_k - w_{k+1}} \leq\norm{\sigma_1 w_k - w_{k+1}} = o(\sigma_1^{k}). 
$$
On the other hand, note that $c_{k}^\star = \argmin_c \norm{c w_k - w_{k+1}}$ satisfies $c_{k}^\star = \cos(\beta_{k+1}) \norm{w_{k+1}}/\norm{w_k}<1$ where $\beta_k$ is the angle between $\wkp,\wk$, therefore we have
\[
s_k \eqdef \norm{c_{k}^\star w_k - w_{k+1}} = \sin(\beta_{k+1}) \norm{\wk} = o(\norm{\wk}) ,
\]
which implies $\sin(\beta_{k+1}) \to 0$, hence we get $\beta_{k+1} \to 0$. 
Since $V$ is an orthogonal matrix, $\beta_{k+1}$ also is the angle between $\uk$ and $\ukp$. 
Back to $\vk$, for which we have
\[
\begin{aligned}
\vkp 
= M_{\barR}^{1/2} \ukp 
= M_{\barR}^{1/2} V \wkp
&= c_k^\star \cos(\beta_{k+1}) M_{\barR}^{1/2} V \wk + o(\norm{\wk})  \\
&= c_k^\star \cos(\beta_{k+1}) \vk + o(\norm{\vk})  .
\end{aligned}
\]
As a result, we obtain
\[
\begin{aligned}
\cos(\theta_{k+1})
= \sfrac{\iprod{\vk}{\vkp}}{\norm{\vk} \norm{\vkp}}
= \sfrac{\iprod{\vk}{c_k^\star \cos(\beta_{k+1}) \vk}}{\norm{\vk} \norm{c_k^\star \cos(\beta_{k+1}) vk + o(\norm{\vk})}} + \sfrac{\iprod{\vk}{o(\norm{\vk})}}{\norm{\vk} \norm{\vkp}} \to 1 ,
\end{aligned}
\] 
hence conclude the proof. \qedhere
%$$
%\min_c \norm{c w_k - w_{k+1}}^2 = \norm{w_{k+1}}^2 (1-\cos^2(\theta_k)),
%$$
%which implies that $\cos^2(\theta_k) \geq 1- \frac{o(\sigma_1^{2k})}{\norm{u_{k+1}}^2}$. 

%Finally, $1-c_* = 1- \cos(\theta_k)\norm{u_{k+1}}/\norm{u_k} \geq 1- \cos(\theta_k)(\sigma_1  + o(1))  $, since $\norm{u_{k+1}} \leq \sigma_1 \norm{u_k} + o(\norm{u_k})$.

%\todo{The above power iteration argument requires leading eigenvalue to be unique...}

%\todo{For the angle, we should discuss the case of no small o terms (polyhedral) and with small o terms, see 'the symmetric case' below. To what extent do we need $\rho(\mFB)<1$? We seem to need this in the latter case but not the former?}

\end{proof}

\subsubsection{Douglas--Rachford splitting and ADMM}\label{proof:trajectory-dr}

\begin{proof}[Proof of Theorems \ref{thm:linearization-dr} \& \ref{thm:trajectory-dr-1}]
The proof is also divided into parts. 

\vgap

{\noindent}\textbf{Linearization of Douglas--Rachford}~~ 
Owing to \cite[Theorem 5.1]{liang2017localDR}, we have that when $R \in \PSF{\xsol}{\MmR} ,~ J \in \PSF{\xsol}{\MmJ}$ and the non-degeneracy condition \eqref{eq:ndc-dr} holds, there exists $K > 0$ such that for all $k \geq K$, $(\uk,\xk) \in \MmR \times \MmJ$. 
%
%Due to Lemma~\ref{lem:riemhesspsd}, we have that under condition \eqref{eq:ndc-dr}, $M_{\barR}$ and $M_{\barJ}$ are firmly non-expansive \cite[Example~4.7]{bauschke2011convex}. Hence, $\mDR$ is also firmly non-expansive; See also \cite[Lemma 6.1]{liang2017localDR}. 
%
%For all $k$ large enough, denote $\TJ{\xk}$ and $\TJ{\xkm}$ the tangent spaces of $\MmJ$ at $\xk$ and $\xkm$ respectively, and $\TR{\ukp}$ and $\TR{\uk}$ the tangent spaces of $\MmR$ at $\ukp$ and $\uk$ respectively. Denote $\tau_{k}^{J} : \TJ{\xk} \to \TJ{\xkm}$ (resp. $\tau_{k+1}^{R} : \TR{\ukp} \to \TR{\uk}$) the parallel translation along the unique geodesic on $\MmJ$ (resp. $\MmR$) joining $\xk$ to $\xkm$ (resp. $\ukp$ to $\uk$). 

From \eqref{eq:dr}, the update of $\xk$: define $\barJ(x) \eqdef \gamma J(x) - \iprod{x}{\zsol-\xsol} $, $H_{\barJ} \eqdef \PTJ{\xsol} \nabla_{\MmJ}^2 \barJ(\xsol) \PTJ{\xsol}$ and $\MJ = \PTJ{\xsol} \pa{\Id + H_{\barJ}}^{-1} \PTJ{\xsol}$, then from Lemma \ref{lem:lin-generalised-ppa} we get
\beq\label{eq:xkzk}
\xk-\xkm 
= \MJ \pa{\zk - \zkm} + o(\norm{\zk-\zkm})    .
\eeq
Now for $\ukp$, let $\wk = 2\xk - \zk$, we get from Lemma \ref{lem:lin-generalised-ppa} that
\beqn
\ukp - \uk
= \MR \pa{\wk - \wkm} + o(\norm{\wk-\wkm})  ,
\eeqn
Define $\barR(x) \eqdef \gamma R(x) - \iprod{x}{\xsol - \zsol}$, $H_{\barR} = \PTR{\xsol} \nabla^2_{\MmR} \barR \pa{\xsol} \PTR{\xsol}$ and $\MR = \PTR{\xsol} \pa{\Id + H_{\barR}}^{-1} \PTR{\xsol}$. 
Since $\norm{\xk-\xkm} \leq \norm{\zk-\zkm}$, we get from above that
\beq\label{eq:vkpzk}
\ukp - \uk
= 2\MR \MJ \pa{\zk - \zkm} - \MR \pa{\zk-\zkm} + o(\norm{\zk-\zkm})   .
\eeq
Summing up \eqref{eq:xkzk} and \eqref{eq:vkpzk}, we get
\beq\label{eq:zkp-zk-dr}
\begin{aligned}
\zkp - \zk
&=\pa{\zk + \ukp - \xk} - \pa{\zkm + \uk - \xkm}   \\
&= \pa{\zk - \zkm} + \pa{\ukp - \uk} - \pa{\xk - \xkm} \\
&= \pa{\Id + 2\MR \MJ - \MR - \MJ}\pa{\zk - \zkm} + o(\norm{\zk-\zkm})   \\
&= \mDR \pa{\zk - \zkm} + o(\norm{\zk-\zkm})   .
\end{aligned}
\eeq
% From the Douglas--Rachford iteration, we have $\norm{\zkp-\zk} \leq \norm{\zk-\zkm}$, hence $o(\norm{\zkp-\zk}) = o(\norm{\zk-\zkm})$. 
Owing to Lemma \ref{lem:riemhesspsd}, we have $H_{\barR}, H_{\barJ}$ are symmetric positive semi-definite, hence maximal monotone, consequently $(\Id+H_{\barR})^{-1}, (\Id+H_{\barJ})^{-1}$ are firmly non-expansive. Since $\PTR{\xsol}, \PTJ{\xsol}$ are projection operators, thens firmly non-expansive. As a result, both $M_{\barR} = \PTR{\xsol} (\Id+H_{\barR})^{-1} \PTR{\xsol} , M_{\barJ} = \PTJ{\xsol} (\Id+H_{\barJ})^{-1} \PTJ{\xsol}$ are firmly non-expansive owing to \cite[Example 4.14]{bauschke2011convex}, and $\mDR$ is firmly non-expansive \cite[Proposition 4.31]{bauschke2011convex}. 

\vgap 

{\noindent}\textbf{Spectral properties of $\mDR$}~~
Here we present a brief summary on the spectral properties of $\mDR$ and refer to \cite{liang2017localDR,bauschke2014rate} and the reference therein for detailed analysis of the spectral properties of $\mDR$. 
When $R, J$ are locally polyhedral around $\xsol$, then $H_{\barR}, H_{\barJ}$ vanish and consequently 
\[
\begin{aligned}
M_{\barR} = \PTR{\xsol}  , \enskip M_{\barJ} = \PTJ{\xsol}  \qandq  
\mDR = \PTR{\xsol} \PTJ{\xsol} + \PSR{\xsol} \PSJ{\xsol} 
\end{aligned}
\]
where $S_{\xsol}^R = (T_{\xsol}^R)^\bot$ and $S_{\xsol}^J = (T_{\xsol}^J)^\bot$. 
Denote the dimension of $T_{\xsol}^R, T_{\xsol}^J$ are $\dim(T_{\xsol}^R) = p, \dim(T_{\xsol}^J) = q$. Without the loss of generality, we assume that $1 \leq p \leq q \leq n, v$ and $\dim(T_{\xsol}^R \cap T_{\xsol}^J) = d$. Consequently, there are $r=p-d$ principal angles $(\zeta_i)_{i=1,...,r}$ between $T_{\xsol}^R$ and $T_{\xsol}^J$ that are strictly greater than $0$ and smaller than $\pi/2$. Suppose that $\zeta_1\leq \dotsm \leq \zeta_r$.  
Define the following two diagonal matrices
\[
C = \diag\Pa{ \cos(\zeta_1), \dotsm, \cos(\zeta_r) }  \qandq
S = \diag\Pa{ \sin(\zeta_1), \dotsm, \sin(\zeta_r) }  .
\]
Owing to \cite{bauschke2014rate,demanet2016eventual}, there exists a real orthogonal matrix $U$ such that
\[
\mDR
= U
\left[
\begin{array}{cc|cc}
C^2 & CS & 0 & 0\\
- CS  &  C^2  & 0 & 0\\
\hline
0 & 0 & 0_{q-p+2d} & 0\\
0 & 0 & 0 & \Id_{n-p-q}
\end{array}
\right]
U^T ,
\]
which indicates $\mDR$ is normal and all its eigenvalues are inside unit disc.

\vgap

{\noindent}\textbf{Trajectory of Douglas--Rachford}~~
The above spectral properties of $\mDR$ indicates that $\mDR$ is a Type II matrix. 
Let $\mDRinf = \lim_{k\to+\infty}\mDR^k$ and $\tmDR = \mDR - \mDRinf$, then we have
\[
\tmDR
= U
\left[
\begin{array}{cc|c}
C^2 & CS & 0 \\
- CS  &  C^2  & 0 \\
\hline
0 & 0 & 0_{n-2r}
\end{array}
\right]
U^T  .
\]
Denote $\theta_{F} = \zeta_1$ the Friedrichs angle between $T_{\xsol}^R$ and $T_{\xsol}^J$, then invoking Proposition \ref{prop:type-ii} we obtain the trajectory of Douglas--Rachford splitting method. \qedhere
\end{proof}

\begin{proof}[{Proof of Theorem \ref{thm:trajectory-dr-2}}]
From the above proof, we have for $\xk$ that $\xk - \xkm = \MJ \pa{\zk - \zkm} + o(\norm{\zk-\zkm})$ Lemma \ref{lem:proj-M}.
Now for $\ukp$, since $R$ is smooth differentiable, we have
\[
2\xk - \zk - \ukp = \gamma \nabla R(\ukp)  
\qandq
2\xkm - \zkm - \uk = \gamma \nabla R(\uk) .
\]
Since we assume that $R$ is locally $C^2$-smooth around $\xsol$, then when $\ukp, \uk$ is close enough, \ie for sufficiently large $k$, 
\[
\begin{aligned}
\pa{2\xk - \zk - \ukp} - \pa{2\xkm - \zkm - \uk}  
&= \gamma \nabla R(\ukp) - \gamma \nabla R(\uk)\\
&= \gamma \nabla^2 R(\uk) (\ukp-\uk) + o(\norm{\ukp-\uk})  \\
&= \gamma \nabla^2 R(\usol) (\ukp-\uk) + \gamma \Pa{ \nabla^2 R(\uk) - \nabla^2 R(\usol)} (\ukp-\uk)  + o(\norm{\ukp-\uk})  \\
&= \gamma \nabla^2 R(\usol) (\ukp-\uk) + o(\norm{\ukp-\uk})  \\
&= \gamma \nabla^2 R(\usol) (\ukp-\uk) + o(\norm{\zk-\zkm})  ,
\end{aligned}
\]
and consequently, let $M_R = \pa{\Id + \gamma \nabla^2 R(\usol)}^{-1}$,
\[
\begin{aligned}
\ukp - \uk
&= M_R \Pa{2(\xk-\xkm) - (\zk-\zkm)} + o(\norm{\zk-\zkm})   \\
&= 2 M_R (\xk-\xkm)  - M_R (\zk-\zkm) + o(\norm{\zk-\zkm})   \\
&= 2 M_R \MJ(\xk-\xkm)  - M_R (\zk-\zkm) + o(\norm{\zk-\zkm})   .
\end{aligned}
\]
Summing up we get
\[
\begin{aligned}
\zkp - \zk
%&=\pa{\zk + \ukp - \xk} - \pa{\zkm + \uk - \xkm}    \\
&= \pa{\zk - \zkm} + \pa{\ukp - \uk} - \pa{\xk - \xkm} \\
&= \pa{\Id + 2 M_R \MJ - M_R - \MJ}\pa{\zk - \zkm} + o(\norm{\zk-\zkm})   \\
&= \Pa{\sfrac{1}{2}\Id + \sfrac{1}{2} \pa{2 M_R - \Id} \pa{2\MJ - \Id} }  \pa{\zk - \zkm} + o(\norm{\zk-\zkm})    .
\end{aligned}
\]
Now we discuss the spectral property of $\mDR$. Owing to convexity, we have $\nabla^2 R(\usol)$ is symmetric positive semi-definite, hence maximal monotone and $M_R$ is firmly non-expansive. When $\gamma < \frac{1}{\norm{\nabla^2 R(\xsol)}}$, we have that all the eigenvalues of $M_R$ are in $]1/2, 1]$, consequently $W_R \eqdef 2 M_R - \Id$ is symmetric positive definite. Therefore, we get
\[
\begin{aligned}
\sfrac{1}{2}\Id + \sfrac{1}{2} W_R \Pa{2\MJ - \Id} 
&= W_R^{1/2} \bPa{ \sfrac{1}{2}\Id + \sfrac{1}{2} W_R^{1/2} \Pa{2\MJ - \Id} W_R^{1/2}  } W_R^{-1/2}  \\
&\sim \sfrac{1}{2}\Id + \sfrac{1}{2} W_R^{1/2} \Pa{2\MJ - \Id} W_R^{1/2}  ,
\end{aligned}
\]
and $\frac{1}{2}\Id + \frac{1}{2} W_R^{1/2} \pa{2\MJ - \Id} W_R^{1/2}$ is symmetric positive semi-definite with all it eigenvalues in $[0,1]$. 
Therefore, following the proof of Theorem \ref{thm:trajectory-fb} for Forward--Backward splitting, we obtain that the angle $\theta_k $ is convergent to $0$. 
\end{proof}

\subsubsection{Primal--Dual splitting}\label{proof:trajectory-pd}

\begin{proof}[Proof of Theorems \ref{thm:linearization-pd} \& \ref{thm:trajectory-pd-1}]
Similar to above, the proof is divided into parts. 

\vgap

{\noindent}\textbf{Linearization of Primal--Dual}~~
From the $\xk$ in \eqref{eq:pd}, define $\barR(x) \eqdef \gammaR R(x) - \iprod{x}{- \gammaR L^T\wsol}$ and $H_{\barR} \eqdef \PTR{\xsol} \nabla_{\MxR}^2 \barR(\xsol) \PTR{\xsol}$. 
Applying Lemma \ref{lem:lin-generalised-ppa} we get
\beq\label{eq:xkp-xsol-2}
\begin{aligned}
 {\xkp-\xk} 
&= M_{\barR} \pa{ \xk - \xkm } - \gammaR M_{\barR} \barL^T \pa{ \wk - \wkm } + o\pa{ \norm{\xk - \xkm} + \gammaR \norm{L} \norm{\wk - \wkm} } .
\end{aligned}
\eeq
Now for the update of $\wkp$. Since $\tau \in [0, 1]$, we have
\[
\begin{aligned}
\norm{\xbarkp-\xbark}
\leq (1+\tau) \norm{\xkp-\xk} + \tau\norm{\xk-\xkm} 
&\leq 4 \pa{ \norm{\xk - \xkm} + \gammaR \norm{L} \norm{\wk - \wkm} } + \norm{\xk-\xkm} \\
&= 5 \norm{\xk - \xkm} + 4\gammaR \norm{L} \norm{\wk - \wkm}  .
\end{aligned}
\]
Define $\barJc(w) \eqdef \gammaJ J^*(w) - \iprod{w}{\gammaJ L \xsol}$ and $H_{\barJc} = \PTJc{\wsol} \nabla_{\MwJc}^2 {\overbar{J^{*}}}(\wsol) \PTJc{\wsol}$, applying Lemma \ref{lem:lin-generalised-ppa} then yields
\beq\label{eq:ykp-ysol-2}
\begin{aligned}
\wkp-\wk
&= M_{\barJc} (\wk-\wkm) + \gammaJ M_{\barJc} \barL \pa{ \xbarkp - \xbark } + \textrm{small~$o$-terms}  \\
&= M_{\barJc} (\wk-\wsol) + (1+\tau)\gammaJ M_{\barJc} \barL (\xkp-\xk) - \tau\gammaJ M_{\barJc} \barL (\xk-\xkm) + \textrm{small~$o$-terms}  \\
&= M_{\barJc} (\wk-\wkm) - \tau\gammaJ M_{\barJc} \barL (\xk-\xkm) \\&\qquad + (1+\tau)\gammaJ M_{\barJc} \barL \Pa{ M_{\barR}  \pa{ \xk - \xkm } - \gammaR M_{\barR} \barL^T \pa{ \wk - \wkm } } + \textrm{small~$o$-terms} \\
&= \Pa{ M_{\barJc} - (1+\tau)\gammaJ\gammaR M_{\barJc} \barL M_{\barR} \barL^T } (\wk-\wkm)  + \Pa{ (1+\tau)\gammaJ M_{\barJc} \barL M_{\barR}  - \tau\gammaJ M_{\barJc} \barL } \pa{ \xk - \xkm }  \\&\qquad + o(\norm{\wkp-\wk}) + o(\norm{\xkp-\xk}) \\&\qquad + o\pa{ \norm{\xk - \xkm} + \gammaR \norm{L} \norm{\wk - \wkm} } + o\pa{ \norm{\wk-\wkm} + \gammaJ\norm{L} \norm{\xk - \xkm} }  .
\end{aligned}
\eeq
Combining \eqref{eq:xkp-xsol-2} and \eqref{eq:ykp-ysol-2}, we get
\beq\label{eq:combination}
\begin{aligned}
\begin{pmatrix} \xkp - \xk \\ \wkp - \wk \end{pmatrix}
&= \begin{bmatrix} M_{\barR} & - \gammaR M_{\barR} \barL^T \\ (1+\tau)\gammaJ M_{\barJc} \barL M_{\barR}  - \tau\gammaJ M_{\barJc} \barL & M_{\barJc} - (1+\tau)\gammaJ\gammaR M_{\barJc} \barL M_{\barR} \barL^T \end{bmatrix}
\begin{pmatrix} \xk - \xkm \\ \wk - \wkm \end{pmatrix}  \\&\qquad
+ o(\norm{\wkp-\wk}) + o( \norm{\xkp-\xk} ) + o\pa{ \norm{\xk - \xkm} + \gammaR \norm{L} \norm{\wk - \wkm} }  \\ &\qquad
+ o\pa{ \norm{\wk-\wkm} + \gammaJ\norm{L} \norm{\xk - \xkm} }
\end{aligned}
\eeq
Now we consider the small $o$-terms. 
Let $a_1,a_2$ be two constants, then we have
\[
\abs{a_1} + \abs{a_2}
= \sqrt{\pa{\abs{a_1} + \abs{a_2}}^2}
\leq \sqrt{2(a_1^2 + a_2^2)}
= \sqrt{2}\begin{Vmatrix} \begin{pmatrix} a_1 \\ a_2 \end{pmatrix} \end{Vmatrix} .
\]
Denote $b = \max\ba{1, \gammaJ\norm{L}, \gammaR\norm{L}}$, then
\[
\begin{aligned}
& \pa{\norm{\wk-\wkm} + \gammaJ\norm{L} \norm{\xk - \xkm}} + \pa{ \norm{\xk - \xkm} + \gammaR \norm{L} \norm{\wk - \wkm} } \\
&\leq 2b \pa{ \norm{\xk - \xkm} + \norm{\wk - \wkm} }
\leq 2\sqrt{2}b \begin{Vmatrix} \begin{pmatrix} \xk-\xkm \\ \wk-\wkm \end{pmatrix} \end{Vmatrix} .
\end{aligned}
\]
Then for $\norm{\wkp-\wk} + \norm{\xkp-\xk}$, we have
\[
\norm{\wkp-\wk} + \norm{\xkp-\xk}
\leq \sqrt{2} \begin{Vmatrix} \begin{pmatrix} \xkp-\xk \\ \wkp-\wk \end{pmatrix} \end{Vmatrix}  .
%\leq \sqrt{2} \norm{\mPD} \begin{Vmatrix} \begin{pmatrix} \xk-\xkm \\ \wk-\wkm \end{pmatrix} \end{Vmatrix}  .
\]
Combining these into the small $o$-terms of \eqref{eq:combination}, we obtain
\[
\zkp - \zk = \mPD (\zk-\zkm) + o(\norm{\zk-\zkm})  .
\]

\vgap

{\noindent}\textbf{Spectral properties of $\mPD$}~~ 
We refer to \cite[Proposition 3.5]{liang2018localPD} about the non-expansiveness of $\mPD$, below we provide the spectral analysis of $\mPD$ for the case when both $R, J^*$ are locally polyhedral around $\xsol$ and $\wsol$ respectively, the analysis can also be found in \cite{liang2018localPD}.

When $R, J$ are locally polyhedral around $\xsol$, then $H_{\barR}, H_{\barJ}$ vanish and consequently 
\[
\begin{aligned}
M_{\barR} = \Id_{n}  , \enskip M_{\barJ} = \Id_{m}  \qandq  
\mPD 
= \begin{bmatrix} \Id_{n} & - \gammaR \barL^T \\ \gammaJ \barL  & \Id_{m} - (1+\tau)\gammaJ\gammaR \barL \barL^T \end{bmatrix}   .
\end{aligned}
\]
%
%Next we study the spectral properties of the above simple $\mPD$. 
Let $p \eqdef \dim(\TR{\xsol}), q \eqdef \dim(\TJc{\wsol})$ be the dimensions of $\TR{\xsol}$ and $\TJc{\wsol}$ respectively, define $\SR{\xsol} = (\TR{\xsol})^{\bot}$ and $\SJc{\wsol} = (\TJc{\wsol})^{\bot}$.
Assume that $q \geq p$, where as the other direction can be treated similarly. 
Let $\barL = X \Sigma_{\barL} Y^T$ the singular value decomposition of $\barL$, denote the rank of $\barL$ as $l \eqdef \rank(\barL)$. Clearly, we have $l \leq p$.
With the SVD of $\barL$, for $\mPD$, we have
\beq\label{eq:mtx-hatM-2}
\begin{aligned}
\mPD
= 
\begin{bmatrix}
\Idn & - \gammaR \barL^T  \\
\gammaJ \barL &  \Idm - (1+\tau) \gammaR \gammaJ \barL \barL^T 
\end{bmatrix} 
&= 
\begin{bmatrix}
Y Y^T &  - \gammaR Y \Sigma_{\barL}^* X^T \\
\gammaJ X \Sigma_{\barL} Y^T  &  X X^T - (1+\tau) \gammaR \gammaJ X \Sigma_{\barL}^2 X^T
\end{bmatrix}  \\
&= 
\begin{bmatrix} Y & ~ \\ ~ & X \end{bmatrix}
\underset{W}{\underbrace{
\begin{bmatrix}
\Idn  &  - \gammaR \Sigma_{\barL}^*  \\
\gammaJ \Sigma_{\barL}  &  \Idm  - (1+\tau) \gammaR \gammaJ \Sigma_{\barL}^2 
\end{bmatrix} 
}}
\begin{bmatrix} Y^T & ~ \\ ~ & X^T \end{bmatrix} .
\end{aligned}
\eeq
Since we assume that $\rank(\barL) = l \leq p$, then $\Sigma_{\barL}$ can be represented as $\Sigma_{\barL}
=
\begin{bmatrix}
\Sigma_{l} & 0_{l, n-l} \\
0_{m-l, l} & 0_{m-l, n-l}  
\end{bmatrix} $ 
%\[
%\Sigma_{\barL}
%=
%\begin{bmatrix}
%\Sigma_{l} & 0_{l, n-l} \\
%0_{m-l, l} & 0_{m-l, n-l}  
%\end{bmatrix} ,
%\]
where $\Sigma_{l} = (\sigma_{j})_{j=1,...,l}$. 
Back to $W$, we have there exists an elementary transformation $E$ such that
\[
W\!=\!
\begin{bmatrix}
\Id_{l} & 0_{l, n-l} & - \gammaR \Sigma_{l} & 0_{l, m-l} \\
0_{n-l,l} & \Id_{n-l} & 0_{n-l,l} & 0_{n-l, m-l} \\
\gammaJ \Sigma_{l} & 0_{l, n-l} & \Id_{l}-(1+\tau) \gammaR \gammaJ \Sigma_{l}^2 & 0_{l, m-l} \\
0_{m-l, l} & 0_{m-l, n-l} & 0_{m-l, l} & \Id_{m-l} 
\end{bmatrix} % \\
%&=
%E
%\begin{bmatrix}
%\Id_{l} & - \gammaR \Sigma_{l} & 0_{l, n-l} & 0_{l, m-l} \\
%\gammaJ \Sigma_{l} & \Id_{l}-(1+\tau) \gammaR \gammaJ \Sigma_{l}^2 & 0_{l, n-l} & 0_{l, m-l} \\
%0_{n-l,l} & 0_{n-l,l} & \Id_{n-l} & 0_{n-l, m-l} \\
%0_{m-l, l} & 0_{m-l, l} & 0_{m-l, n-l} & \Id_{m-l} 
%\end{bmatrix} 
%E 
\!=\!
E\!
\begin{bmatrix}
\Id_{l} & - \gammaR \Sigma_{l} & 0_{l, m+n-2l} \\
\gammaJ \Sigma_{l} & \Id_{l}-(1+\tau) \gammaR \gammaJ \Sigma_{l}^2 & 0_{l, m+n-2l} \\
0_{m+n-2l,l} & 0_{m+n-2l,l} & \Id_{m+n-2l} 
\end{bmatrix} 
\!\! E .
\]
Clearly, $1$ is an eigenvalue of $\mPD$ with multiplicity $m+n-2l$. % (\todo{no eigenvalue $0$? seems yes from past result...}) 
Next we deal with the block diagonal matrix
\[
D = 
\begin{bmatrix}
\Id_{l} & - \gammaR \Sigma_{l}  \\
\gammaJ \Sigma_{l} & \Id_{l}-(1+\tau) \gammaR \gammaJ \Sigma_{l}^2 \end{bmatrix}  .
\]
Again, there exists another elementary transformation $E'$ such that
\beq\label{eq:mtxD}
D = 
E'
\begin{bmatrix}
D_{1} & &  \\
& \ddots &  \\
& & D_{l}  ,
\end{bmatrix}  
E'  ,
\eeq
where for each $D_i,~ i=1,...,l$, we have $D_{i} = \begin{bmatrix} 1 & - \gammaR \sigma_{i} \\ \gammaJ \sigma_{i} & 1 - (1+\tau) \gammaR\gammaJ \sigma_{i}^2 \end{bmatrix}$, which is the $2\times 2$ matrix studied in Type III matrix. 
Therefore, for each $i=1,...,l$, the eigenvalue of $D_i$ is
\[
\rho_i = \qfrac{ \Pa{2-(1+\tau) \gammaJ\gammaR \sigma_j^2} \pm  \ssqrt{ (1+\tau)^2 \gammaJ^2\gammaR^2 \sigma_j^4 - 4 \gammaJ\gammaR \sigma_j^2 } } {2} .
\]
Since $ \gammaJ\gammaR \sigma_j^2 \leq \gammaJ\gammaR \norm{L}^2 < 1$, then $\rho_i$ is complex and 
\[
\abs{\rho_i}
= \qfrac12 \sqrt{ \Pa{2-(1+\tau) \gammaJ\gammaR \sigma_j^2}^2 - \Pa{ (1+\tau)^2 \gammaJ^2\gammaR^2 \sigma_j^4 - 4 \gammaJ\gammaR \sigma_j^2 } } 
= \sqrt{ 1- \tau \gammaJ\gammaR \sigma_j^2 } < 1 .
\]
As a result, $\lim_{k\to+\infty}D_i^k = 0$, $\mPD^k$ is convergent and 
\[
\mPDinf
= 
\begin{bmatrix} Y & ~ \\ ~ & X \end{bmatrix}
\begin{bmatrix}
0_{l} &  & &   \\
& \Id_{n-l} & &   \\
& & 0_{l} &   \\
& & &  \Id_{m-l}  \\
\end{bmatrix} 
\begin{bmatrix} Y^T & ~ \\ ~ & X^T \end{bmatrix} 
\]
owing to \eqref{eq:mtx-hatM-2}, which is symmetric and positive semi-definite. 

\vgap

{\noindent}\textbf{Trajectory of Primal--Dual}~~ 
{From the above discussion, we have that
\begin{itemize}
	\item The real eigenvalues of $\mPD$ are $0$ and $1$. 
	\item For each $D_i$ in $D$ of \eqref{eq:mtxD}, (ii) of Assumption \ref{assumption-type-iii-a} is satisfied. 
\end{itemize}}
{\noindent}This means that the Assumption \ref{assumption-type-iii-a} is verified by $\mPD$, hence we obtain immediately the trajectory of Primal--Dual owing to Proposition \ref{prop:type-iii-a}. %For the case $L = \Id$, then we have $\barL = \PJc \PR$, and the SVD values of $\barL$ corresponds to the cosine value of the principal angle between $\TR{\xsol}$ and $\TJc{\wsol}$...
\qedhere
\end{proof}

\section{Proofs of Section \ref{sec:lp}}\label{sec:proof_acceleration}

For the easy of notation, let $M = \mF$, $c = c_k, C = C_k$ and $\epsilon = \epsilon_k$.

\begin{proof}[Proof of theorem \ref{thm:extrap-error}]
Since $k\in \NN$ is fixed throughout, we let $E_\ell \eqdef E_{k,\ell}$. % and $C = C_k$. 
We first show that for $\ell \in \NN$, there holds
\begin{equation}\label{eq:En}
E_\ell = - \sum_{j=0}^{\ell -1} M^{\ell -1-j} F_{k+j} + \sum_{j=1}^\ell  M^j E_0 C^{\ell -j} + \sum_{j=1}^\ell  M^{\ell-j} F_{k-1} C^j.
\end{equation}
We shall prove this by induction.
First note that $E_0 = V_{k-1}C - V_k$
\begin{align*}
V_{k} C 
\overset{\eqref{eq:linear_rel}}{=}\pa{M V_{k-1} + F_{k-1}} C 
&\overset{\eqref{eq:e0}}{=} MV_k + M E_0 +  F_{k-1} C\\
&\overset{\eqref{eq:linear_rel}}{=}  V_{k+1} - F_k   + M E_0 +    F_{k-1} C.
\end{align*}
Hence, $E_1 =- F_k   + M E_0 +    F_{k-1} C$ and \eqref{eq:En} is true for $\ell = 1$. Assume that \eqref{eq:En} is true up to $\ell = m$, then,
\begin{align*}
V_k C^{m+1} 
\overset{\eqref{eq:linear_rel}}{=}\pa{M V_{k-1} + F_{k-1}} C^{m+1} 
&= M V_{k-1} C^{m+1} + F_{k-1} C^{m+1}\\
&\overset{\eqref{eq:e0}}{=} M V_kC^{m}   + M E_0C^{m} + F_{k-1} C^{m+1}\\
&= M \pa{V_{k+m} + E_{m}}  + M E_0C^{m} + F_{k-1} C^{m+1}\\
&\overset{\eqref{eq:linear_rel}}{=} V_{k+m+1} - F_{k+m} +  M E_{m}  + M E_0C^{m} + F_{k-1} C^{m+1}  .
\end{align*}
Therefore, plugging in our assumption on $E_m$ yields
\begin{align*}
E_{m+1} &= - F_{k+m} +  M E_{m}  + M E_0C^{m} + F_{k-1} C^{m+1}\\
&=  - F_{k+m} + \Pa{- \msum_{j=0}^{m -1} M^{m-j} F_{k+j} + \msum_{j=1}^m  M^{j+1} E_0 C^{m -j} + \msum_{j=1}^m  M^{m+1-j} F_{k-1} C^j}  + M E_0C^{m} + F_{k-1} C^{m+1} \\
&= - \sum_{j=0}^{m} M^{m-j} F_{k+j} +  \sum_{j=1}^{m+1} M^{j} E_0 C^{m+1-j} + \sum_{j=1}^{m+1} M^{m+1-j} F_{k-1} C^j.
\end{align*}
To bound the extrapolation error, observe that
\begin{equation}\label{eq:extrap_error}
\begin{split}
 \sum_{m=1}^s  E_m 
 &=  \sum_{m=1}^s \Pa{- \msum_{j=0}^{m-1} M^{m-1-j} F_{k+j} + \msum_{j=1}^m M^j E_0 C^{m-j} + \msum_{j=1}^m M^{m-j} F_{k-1} C^j}\\
  &=-\sum_{\ell=0}^{s-1} M^\ell \sum_{j=0}^{s-1-\ell} F_{k+j}
 + \sum_{\ell=1}^{s} M^\ell E_0 \Pa{\msum_{i=0}^{s-\ell} C^i} + \sum_{\ell=0}^{s-1} M^\ell F_{k-1} \Pa{\msum_{i=1}^{s-\ell} C^i} .
\end{split}
\end{equation}
Note that if $F_{k+j} = 0$ for all $j$, then
\begin{align*}
 \sum_{m=1}^s  E_m &=  \sum_{\ell=1}^{s} M^\ell E_0 \Pa{\msum_{i=0}^{s-\ell} C^i} 
\end{align*}
and
$$
\norm{\bar z_{k,s} - \zsol} \leq \norm{z_{k+s} - \zsol} +  \sum_{\ell=1}^{s} \norm{M^\ell } \norm{ \pa{\msum_{i=0}^{s-\ell} C^i}_{(1,1)}} \epsilon .
$$
%where $$
%\epsilon \eqdef \min_{c\in \RR^{q}} \norm{\sum_{j=1}^{q} c_j v_{k-j} - v_{k}}.
%$$
%is the coefficients fitting error.
%
In the general setting where $F_{k+j}\neq 0$, to bound the first term of \eqref{eq:extrap_error},
define
$$
Z_k \eqdef \begin{bmatrix}
z_{k}|\cdots| z_{k-q+1}
\end{bmatrix}
$$
and note that $V_k= Z_k - Z_{k-1}$. So,
$$
\sum_{j=0}^m F_{k+j} = \sum_{j=0}^m V_{k+j+1}-M V_{k+j} = Z_{k+m+1}-Z_{k} - M Z_{k+m} + M Z_{k-1},
$$
and
\begin{align*}
\Pa{\msum_{\ell=0}^{s-1} M^\ell \msum_{j=0}^{s-1-\ell} F_{k+j}}_{(:,1)}
&=\Pa{\msum_{\ell=0}^{s-1} M^\ell Z_{k+ s-\ell}- M^\ell Z_{k} - M^{\ell+1} Z_{k+s-1-\ell} + M^{\ell+1} Z_{k-1}}_{(:,1)}\\
&= \Pa{Z_{s+k} - M^s Z_k +\msum_{\ell=0}^{s-1}  M^{\ell} (M Z_
{k-1}- Z_k)}_{(:,1)}\\
&= z_{k+s} - M^s z_k  +\sum_{\ell=0}^{s-1} M^\ell (M z_{k-1} - z_k) \\
&= z_{k+s} - M^s z_k  +\sum_{\ell=0}^{s-1} M^\ell \Pa{ M (z_{k-1} - \zsol) - (z_k-\zsol) } + \sum_{\ell=0}^{s-1} M^\ell(M \zsol - \zsol)\\
&= z_{k+s} - M^s z_k  +\sum_{\ell=0}^{s-1} M^\ell \Pa{ M (z_{k-1} - \zsol) - (z_k-\zsol) } + M^s \zsol - \zsol\\
&= z_{k+s}-\zsol - M^s (z_k - \zsol) + \sum_{\ell=0}^{s-1} M^\ell \Pa{ M (z_{k-1} - \zsol) - (z_k-\zsol) }  .
\end{align*}
Therefore we arrive at, 
\begin{align*}
\norm{\bar z_{k,s} - \zsol} 
&\leq  \norm{M^s \pa{z_k - \zsol} } + \norm{\msum_{\ell=0}^{s-1} M^\ell} \norm{ (M (z_{k-1}-\zsol) - (z_k-\zsol)} \\
&\qquad 
+ \sum_{\ell=1}^s \norm{M^\ell} \norm{E_0 \msum_{i=0}^{s-\ell} C^i_{(:,1)}} + \sum_{\ell=0}^{s-1} \norm{M^\ell} \norm{F_{k-1} \msum_{i=1}^{s-\ell} C^i_{(:,1)}}.
\end{align*}
In the case of $s=+\infty$, we have
\begin{align*}
\norm{\bar z_{k,\infty} - \zsol} 
&\leq  \norm{(\Id - M)^{-1}} \norm{ (M (z_{k-1}-\zsol) - (z_k-\zsol)} \\
&\qquad 
+ \sum_{\ell=1}^\infty \norm{M^\ell} \norm{E_0 (\Id - C)^{-1}_{(:,1)}} + \sum_{\ell=0}^{\infty} \norm{M^\ell} \norm{F_{k-1} ((\Id - C)^{-1} - \Id)_{(:,1)}}.
\end{align*}
We have $ \norm{E_0 (\Id - C)^{-1}_{(:,1)}} = \frac{\epsilon}{1-\sum_i c_i}$ where
$$
\epsilon \eqdef \min_{c\in \RR^{q}} \norm{\msum_{j=1}^{q} c_j v_{k-j} - v_{k}}.
$$
Letting $b_i\eqdef \sum_{\ell=i}^{q}c_\ell$ for $i\geq 2$ and $b_1 = 1$,
\[
\begin{aligned}
F_{k-1} ((\Id - C)^{-1} - \Id)_{(:,1)} 
= \sfrac{1}{1-\sum_\ell c_\ell} \sum_{i=1}^{q} b_i f_{k-i} - f_{k-1} 
= \sfrac{1}{1-\sum_\ell c_\ell}\sum_{i=1}^{q}  \Pa{\msum_{\ell=i}^{q}c_\ell}  f_{k-i}  .
\end{aligned}\qedhere
\]
\end{proof}

\begin{proof}[Proof of Theorem \ref{thm:acc-guarantee}]
The first result of Theorem \ref{thm:acc-guarantee} is simply a consequence of Theorem \ref{thm:extrap-error}. 
%
%\begin{theorem}
%Suppose that $M$ is diagonalizable, with eigenvalues $\lambda_j$ ordered such that $\abs{\lambda_j}\geq \abs{\lambda_{j+1}}$ and $\abs{\lambda_1}= \rho(M)<1$. Then, for all $n$ sufficiently large, $\rho(C_n)<1$. In particular, the eigenvalues of $C$ are precisely the roots of the polynomial $Q(z) = z^{k-1} - \sum_{i=1}^{k-1} c_j z^{k-1-i}$. If $\abs{\lambda_{q}}>\abs{\lambda_{q+1}}$, then $Q$ has precisely $q$ roots $r_1,\ldots, r_q$ satisfying $r_j = \lambda_j + \Oo(\abs{\lambda_{q+1}/\lambda_j}^n)$.
%
%Asymptotic results (fixed $q$ and as $k\to \infty$):
%\begin{align*}
%&\norm{\bar z_{k,s} - \zsol} \leq \norm{z_{k+s} - \zsol} + \Oo\pa{\abs{\lambda_{q+1}}^k}.
%\end{align*}
%
%
%Non-asymptotic results (fixed $q$ and $k$):
%Suppose that $\lambda(M) \subset [\alpha,\beta]$ with $\beta<1$.
%Then,
%\begin{equation}
%\norm{\bar z_{k,\infty}-\zsol} \leq K \beta^{k-q} \pa{\frac{\sqrt{\eta}-1}{\sqrt{\eta}+1}}^q
%\end{equation}
%where $\eta = \frac{1-\alpha}{1-\beta}$, where $K\eqdef \frac{2 \norm{z_1-z_0}}{1-\beta}$ in the case of RRE coefficient update, and $K\eqdef \frac{2 \norm{\zsol - z_0}}{1-\beta}\norm{(\Id-M)^{\frac12}}$ in the case of MPE coefficient update under the additional assumption that $(\Id - M)_H$ is positive definite.
%
%\end{theorem}
%
%\begin{proof}
To control the 
coefficients fitting error $\epsilon_k$, 
we follow closely the arguments of \cite[Section 6.7]{sidi2017vector}, since this amounts to understanding the behavior of the coefficients $c_k$, which are precisely the MPE coefficients. 
Recall our assumption that $M$ is diagonalizable, so $M = U^\top \Sigma U$ where $U$ is an orthogonal matrix and $\Sigma$ is a diagonal matrix with the eigenvalues of $M$ as its diagonal. Then, letting $u_k \eqdef U v_k$,
\begin{align*}
\epsilon_k &= \min_{c\in \RR^{q}} \norm{\msum_{i=1}^{q} c_i v_{k-i} - v_{k}}
= \min_{c\in \RR^{q}} \norm{\msum_{i=1}^{q} c_i \Sigma^{k-i} u_0 - \Sigma^{k} u_0}
= \min_{g\in\calP_q}\norm{ \Sigma^{k-q} g(\Sigma) u_0} 
\leq \norm{u_0} \min_{g\in \Pp_q}\max_{z\in \lambda(M)} \abs{z}^{k-q} \abs{g(z)} 
\end{align*}
where $\calP_q$ is the set of monic polynomials of degree $q$ and $\lambda(M)$ is the spectrum of $M$. Choosing $g = \prod_{j=1}^{q} (z-\lambda_j)$, we have $g(\lambda_j) = 0$ for $j=1,\ldots,q$, so
\begin{equation}
\epsilon_k \leq \norm{u_0} \abs{\lambda_{q+1}}^{k-q} \max_{\ell>q}\prod_{j=1}^{q} \abs{\lambda_j-\lambda_\ell}.
\end{equation}
The claim that $\rho(C_k)<1$ holds since the eigenvalues of $C$ are precisely the roots of the polynomial $Q(z) = z^{k-1} - \sum_{i=1}^{k-1} c_j z^{k-1-i}$, and from \cite{sidi2017vector}, if $\abs{\lambda_{q}}>\abs{\lambda_{q+1}}$, then $Q$ has precisely $q$ roots $r_1,\ldots, r_q$ satisfying $r_j = \lambda_j + \Oo(\abs{\lambda_{q+1}/\lambda_j}^k)$. So, $\abs{r_j}<1$ for all $k$ sufficiently large.
To prove the non-asymptotic bounds on $\epsilon_k$, first
observe that $z_{k+1} - z_{k} = M(z_k - z_{k-1})$ implies $z_{k+1} - \zsol = M(z_k - z_{*})$ and $z_{k+1} - z_k = (M-\Id)(z_k - \zsol)$. So, letting $\gamma_i = -c_{k,i}/(1-\sum_i c_{k,i})$ for $i=1,\ldots, q$ and $\gamma_0 = 1/(1-\sum_i c_{k,i})$, we have
\begin{equation}\label{eq:coefficient_vec}
\mfrac{1}{1-\ssum_i c_{k,i}} \Pa{ v_{k} -\msum_{i=1}^{q} c_{k,i} v_{k-i} } 
= \msum_{i=0}^{q} \gamma_i v_{k-i} 
= (M-\Id) \msum_{i=0}^{q} \gamma_i( z_{k-i-1} - \zsol).
\end{equation}
Now, $y\eqdef \sum_{i=0}^{q} \gamma_i z_{k-i-1} $ is precisely the MPE update and norm bounds on this are presented in \cite{sidi2017vector}. For completeness, we reproduce their arguments here: Let $A\eqdef \Id - M$, by our assumption of $\lambda(M) \subset (-1,1)$, we have that $A$ is positive definite. Then,
\begin{align*}
\norm{A^{1/2}(y - \zsol)}^2 
&= \dotp{A (y-\zsol)}{(y-\zsol)} 
= - \dotp{\ssum_{i=0}^{q} \gamma_i v_{k-i} }{(y-\zsol) + w}
\end{align*}
where $w = \sum_{j=1}^{q} a_j v_{k-j}$ with $a\in \RR^{q}$ being arbitrary, since by definition of $\gamma$, $\dotp{\sum_{i=0}^{q} \gamma_i v_{k-i}}{v_\ell}=0$ for all $\ell=k-q,\ldots, k-1$. We can write
$$
w = \sum_{j=1}^{q} a_j (M-\Id)(z_{k-j-1}-\zsol) = \sum_{j=1}^{q} a_j (M-\Id)M^{k-j-1}(z_{0}-\zsol) = f(M) (z_{k-q-1} - \zsol)
$$
where $f(z) =  (z-1) \sum_{j=1}^{q} a_j z^{q-j}$, and we can write
$$
y - \zsol = \sum_{i=0}^{q} \gamma_i M^{k-i-1} (z_0- \zsol) = g(M)(z_{k-q-1}-\zsol)
$$
where $g(z) = \sum_{i=0}^{q} \gamma_i z^{q-i}$. Therefore, $f(z) + g(z) =  h(z)$, where $h$ is a polynomial of degree $q$ such that $h(1) = 1$. Moreover, since the coefficients $a_j$ are arbitrary, $h$ can be considered as an arbitrary element of $\tilde \Pp_q$, the set of all polynomials of degree $q$ such that $h(1) = 1$. Therefore
\begin{align*}
\norm{A^{1/2}(y - \zsol)}^2 &\leq \norm{A^{1/2}(y - \zsol)} \min_{h\in \tilde \Pp_q}\norm{ h(M) (z_{k-q-1} - \zsol)} \\
&\leq \norm{A^{-1/2}(y - \zsol)} \min_{h\in \tilde \Pp_q}\max_{t\in \lambda(M)}\abs{ h(t)} \norm{z_{k-q-1} - \zsol}  .
\end{align*}
In particular, combining this with \eqref{eq:coefficient_vec}, we have
$$
\mfrac{\epsilon_k}{\abs{1-\ssum_i c_{k,i}}}\leq \norm{z_{k-q-1}-\zsol} \norm{(\Id-M)^{1/2}}  \min_{h\in \tilde \Pp_q}\max_{t\in \lambda(M)}\abs{ h(t)} 
$$
Finally, in our case where $\lambda(M) =[\alpha,\beta]$ with $1>\beta>\alpha >-1$, it is well known that $\min_{h\in \tilde \Pp_q}\max_{t\in \lambda(M)}\abs{ h(t)}$ has an explicit expression (see, for example, \cite{borwein2003monic} or \cite[Section 7.3.1]{sidi2017vector}):
$$
\min_{h\in \tilde \Pp_q}\max_{z\in \lambda(M)} \abs{h(z)} \leq \max_{z\in \lambda(M)} \abs{h_*(z)},
$$
where $h_*(z) \eqdef \frac{T_q\pa{\frac{2z-\alpha-\beta}{\beta-\alpha}}}{T_q\pa{\frac{2-\alpha-\beta}{\beta-\alpha}}}$ where $T_q(x)$ is the $q^{th}$ Chebyshev polynomial and it is well known that
\begin{equation}
\min_{h\in \tilde \Pp_q}\max_{z\in [\alpha,\beta]} \abs{h(z)} 
\leq 2 \BPa{\sfrac{\sqrt{\eta}-1}{\sqrt{\eta}+1}}^q
\end{equation}
where $\eta = \frac{1-\alpha}{1-\beta}$. 
\qedhere
\end{proof}

\end{small}

%}% end of selectfont
  
%%%%%%%%%%%%%%%%%%%%%%%%%%%%%%%%%%%

%%%%%%%%%%%%%%%%%%%%%%%%%%%%%%%%%%%
\end{document}